\definecolor{Gray}{gray}{0.9}
\newcolumntype{g}{>{\columncolor{Gray}}c}
\newtheorem{thm}{Theorem}[section]
\newtheorem{cor}[thm]{Corollary}
\newtheorem{lem}[thm]{Lemma}
\newtheorem{prop}[thm]{Proposition}
\newtheorem{defn}[thm]{Definition}
\newtheorem{exa}[thm]{Example}
\newtheorem{nota}[thm]{Notation}
\newtheorem{rem}[thm]{Remark}
\numberwithin{equation}{section}
\newenvironment{proof}{\noindent \emph{Proof.}}{\hspace{\stretch{1}}$\Box$}
\newcommand{\parderv}[2] {\frac{\partial#1}{\partial#2}}
\newcommand{\mcB} {\mathcal{B}}
\newcommand{\mcD} {\mathcal{D}}
\newcommand{\mcF} {\mathcal{F}}
\newcommand{\mcH} {\mathcal{H}}
\newcommand{\mcK} {\mathcal{K}}
\newcommand{\mcL} {\mathcal{L}}
\newcommand{\mcM} {\mathcal{M}}
\newcommand{\mcN} {\mathcal{N}}
\newcommand{\mcU} {\mathcal{U}}
\newcommand{\mcY} {\mathcal{Y}}
\newcommand{\dd} {\mathrm{d}}
\newcommand{\ii} {\mathrm{i}}
\newcommand{\dbl} {[\![}
\newcommand{\dbr} {]\!]}
\newcommand{\ind} {\indices}
\newcommand{\lp} [1] {{\left( #1 \right. }}
\newcommand{\rp} [1] {{\left. #1 \right) }}
\newcommand{\lb} [1] {{\left[ #1 \right. }}
\newcommand{\rb} [1] {{\left. #1 \right] }}
\newcommand{\im} {\mathop{\mathrm{im}}}
\newcommand{\End} {\mathrm{End}}
\newcommand{\gr} {\mathrm{gr}}
\newcommand{\SO} {\mathrm{SO}}
\newcommand{\OO} {\mathrm{O}}
\newcommand{\U} {\mathrm{U}}
\newcommand{\SU} {\mathrm{SU}}
\newcommand{\Gr} {\mathrm{Gr}}
\newcommand{\Tgt} {\mathrm{T}}
\newcommand{\Sim} {\mathrm{Sim}}
\newcommand{\so} {\mathfrak{so}}
\newcommand{\g} {\mathfrak{g}}
\newcommand{\prb} {\mathfrak{p}}
\newcommand{\uu} {\mathfrak{u}}
\newcommand{\su} {\mathfrak{su}}
\newcommand{\simalg} {\mathfrak{sim}}
\newcommand{\mfz} {\mathfrak{z}}
\newcommand{\mfF} {\mathfrak{F}}
\newcommand{\mfC} {\mathfrak{C}}
\newcommand{\co} {\mathfrak{co}}
\newcommand{\cu} {\mathfrak{cu}}
\newcommand{\mfK} {\mathfrak{K}}
\newcommand{\mfN} {\mathfrak{N}}
\newcommand{\mfV} {\mathfrak{V}}
\newcommand{\mfU} {\mathfrak{U}}
\newcommand{\mfL} {\mathfrak{L}}
\newcommand{\mfS} {\mathfrak{S}}
\newcommand{\mfD} {\mathfrak{D}}
\newcommand{\mfA} {\mathfrak{A}}
\newcommand{\mfR} {\mathfrak{R}}
\newcommand{\CP} {\mathbb{CP}}
\newcommand{\R} {\mathbb{R}}
\newcommand{\C} {\mathbb{C}}
\newcommand{\Z} {\mathbb{Z}}
\newcommand{\Cl} {\mathcal{C}\ell}
\newcounter{mnotecount}[section]
\renewcommand{\themnotecount}{\thesection.\arabic{mnotecount}}
\newcommand{\mnote}[1]
{\protect{\stepcounter{mnotecount}}$^{\mbox{\footnotesize
$
\bullet$\themnotecount}}$ \marginpar{
\raggedright\tiny\em
$\!\!\!\!\!\!\,\bullet$\themnotecount: #1} }
\begin{document}
\title{The curvature of almost Robinson manifolds}
 \author{Arman Taghavi-Chabert\\
 {\small Masaryk University, Faculty of Science, Department of Mathematics and Statistics,}\\
  {\small Kotl\'{a}\v{r}sk\'{a} 2, 611 37 Brno, Czech Republic } }
\date{}

\maketitle

\begin{abstract}
An almost Robinson structure on an $n$-dimensional Lorentzian manifold $(\mcM,g)$, where $n=2m+\epsilon$, $\epsilon \in \{ 0 ,1 \}$, is a complex $m$-plane distribution $\mcN$ that is totally null with respect to the complexified metric, and intersects its complex conjugate in a real null line distribution $\mcK$, say. When $\mcN$ and its orthogonal complement $\mcN^\perp$ are in involution, the line distribution $\mcK$ is tangent to a congruence of null geodesics, and the quotient of $\mcM$ by this flow acquires the structure of a CR manifold. In four dimensions, such a congruence is shearfree.

We give classifications of the tracefree Ricci tensor, the Cotton-York tensor and the Weyl tensor, invariant under i) the stabiliser of a null line, and ii) the stabiliser of an almost Robinson structure. For the Weyl tensor, these are generalisations of the Petrov classification to higher dimensions. Since an almost Robinson structure is equivalent to a projective pure spinor field of real index $1$, the present work can also be viewed as spinorial classifications of curvature tensors.

We illustrate these algebraic classifications by a number of examples of higher-dimensional general relativity that admit integrable almost Robinson structures, emphasising the degeneracy type of the Weyl tensor in each case.
\end{abstract}

\section{Introduction}
Let $(\mcM,g)$ be a Lorentzian manifold of dimension $n=2m+\epsilon$, $\epsilon \in \{0,1\}$. There are two natural geometric structures we can endow $(\mcM,g)$ with:
\begin{enumerate}
\item \label{item-line} either a preferred real null line distribution, whereby the structure group of the frame bundle is reduced to $\Sim(n-2) \subset \SO(n-1,1)$,
\item \label{item-plane} or a preferred totally null \emph{complex} distribution of rank $m$ intersecting its complex conjugate in a real null line distribution,  whereby the structure group is reduced to $\Sim(m-1,\C) \subset \Sim(n-2) \subset \SO(n-1,1)$.
\end{enumerate}
This latter structure will be referred to as an \emph{almost Robinson structure} \cites{Trautman2002,Nurowski2002}. It has also been referred to as an \emph{optical structure} in other places e.g. \cites{Nurowski1996,Taghavi-Chabert2011}. Clearly, structure \ref{item-plane} implies structure \ref{item-line}, and the additional geometric datum here can be seen to be a \emph{Hermitian structure} on the fibers of the screenspace bundle of the real null line distribution.

The main aims of the article are
\begin{itemize}
\item  to give $\Sim(n-2)$-invariant classifications of the Weyl tensors and other curvature tensors;
\item  to give $\Sim(m-1,\C)$-invariant classifications of the Weyl tensors and other curvature tensors;
\item to apply the classifications to known solutions of Einstein's equations.
\end{itemize}

\subsection{Motivation}
The motivation for the study of such geometrical structures comes from four-dimensional general relativity, where they are equivalent and intrinsically related to Petrov's classification of the Weyl tensor \cite{Petrov2000}.

In four-dimensional general relativity, there is a fundamental relation between the existence of \emph{shearfree congruences of null geodesics (SCNG)} on a spacetime $(\mcM,g)$ and \emph{algebraically degenerate or special} solutions to certain field equations. For instance, a $2$-form $F \ind{_{ab}}$  that is a solution of the vacuum Maxwell equations $\nabla \ind{_{[a}} F \ind{_{bc]}} = 0$, and $\nabla \ind{^b} F \ind{_{ab}} = 0$ is algebraically special if and only if $(\mcM,g)$ admits a SCNG -- this is the content of the Robinson theorem \cite{Robinson1961}. Another important example comes from the study of Einstein's field equations: the Goldberg-Sachs theorem \cites{Goldberg2009} tells us that the Weyl tensor $C \ind{_{abcd}}$ of an Einstein spacetime $(\mcM,g)$ is algebraically special, i.e. of \emph{Petrov type II or more degenerate} \cite{Petrov2000} if and only if $(\mcM,g)$ admits a SCNG. In both cases, the algebraic degeneracy of $F \ind{_{ab}}$ and $C \ind{_{abcd}}$ can be characterised by the existence of a repeated \emph{principal null direction (PND)}, i.e. a null vector $k^a$ such that $F \ind{_{ab}} k \ind{^a} = 0$ and $k \ind{^d} k \ind{^e} C \ind{_{ade[b}} k \ind{_{c]}} = 0$, respectively, and such a vector field generates the SCNG by virtue of the field equations.

A further development in the theory came from the introduction of spinors, which simplified much of the formalism of general relativity, and stems essentially from the remark that in four dimensions, a null vector field is equivalent to a chiral spinor field (up to scale): such a spinor defines a complex null $2$-plane distribution, which intersects its complex conjugate in a line bundle spanned by this real null vector field. Quite naturally, to the notion of principal null direction corresponds a notion of \emph{principal spinor}, and the Petrov classification can then be expressed more transparently in this context \cites{Witten1959,Penrose1960}. Similarly, the geometric properties of a PND can be translated into this formalism: a SCNG is equivalent to its corresponding principal spinor being \emph{foliating} in the sense that its associated complex null $2$-plane distribution is integrable. Many of the classical results of general relativity such as the Robinson and Goldberg-Sachs theorems can then be viewed from this spinorial angle, as explained in the monographs \cites{Penrose1984,Penrose1986}.

How does all this generalise to higher dimensions? Unless $n=4$, the correspondence between null lines and totally null complex $m$-planes is not one-to-one: each null line lifts to a compact subset of the grassmannian of totally null complex $m$-planes of dimension $(m-1)(m-2(1-\epsilon))$. This implies that, for spacetimes of dimensions higher than four, one can define two distinct, yet related, algebraic classifications of the Weyl tensor, generalising the Petrov classification:
\begin{itemize}
\item one, put forward in \cites{Coley2004,Pravda2004}, is based on the notion of principal null directions, and is now known as the \emph{null alignment formalism}. It has given rise to a large amount of literature, generalising, or at least partially, results from four to higher general relativity - for a survey see \cite{Ortaggio2013}. Our classification essentially emphasises the $\Sim(n-2)$-invariance of the null alignment formalism.
\item the other, already advocated in \cites{Hughston1995,Jeffryes1995,Taghavi-Chabert2012a,Taghavi-Chabert2013}, is based on the notion of principal spinors, and provides a convenient setting in which generalisations of the Robinson and Kerr theorems \cite{Hughston1988}, and to some extent, the Goldberg-Sachs theorem \cites{Taghavi-Chabert2011,Taghavi-Chabert2012} can be formulated. Since an almost Robinson structure is also equivalent to the line spanned by a \emph{pure} spinor field \emph{of real index $1$}, the $\Sim(m-1,\C)$-invariant classification of the Weyl tensor is essentially a spinorial classification.
\end{itemize}
Furthermore, we can view the $\Sim(m-1,\C)$-invariant classification of the Weyl tensor as a refinement of the $\Sim(n-2)$-invariant classification, which also partakes of the $\U(m-1)$-invariant classifications of the Weyl and other curvature tensors of almost Hermitian manifolds found in \cites{Tricerri1981,Falcitelli1994}.

Finally, as shown in \cites{Mason2010,Taghavi-Chabert2011}, \emph{integrable} almost Robinson structures occur in important higher-dimensional solutions to Einstein's field equations such as the Kerr black hole \cites{Myers1986,Gibbons2005,Hawking1999,Chen2006} or the black ring \cite{Emparan2002}. This is unlike shearfree congruences of null geodesics, which have not been as ubiquitous as in dimension four.

\subsection{Structure of the paper}
The structure of the paper is as follows. Section \ref{sec-algebra} lays the algebraic foundation of the paper. We first review the properties of the stabiliser $\simalg(n-2)$ of a (real) null line in $n$-dimensional Minkowski space. We broadly follow the theory of parabolic Lie algebras of \cite{vCap2009} by making use of invariant filtrations and gradings on vector spaces. We then show how a Robinson structure, i.e. a conjugate pair of totally null complex $m$-planes intersecting in a real null line, induces a Hermitian structure on the orthogonal complement of this null line. Drawing from standard results of Hermitian geometry, e.g. \cite{Salamon1989}, we describe the stabiliser $\simalg(m-1,\C)$ in the Lie algebra $\so(n-1,1)$ of a Robinson structure and its irreducible representations. In addition, Proposition \ref{prop-twistor} describes the space of all Robinson structures on Minkowski space, while Propositions \ref{prop-null2Rob} the space of all Robinson structures intersecting a given real null line.

This calculus is then extended in section \ref{sec-curvature} to the classifications of curvature tensors. We first focus on the $\Sim(n-2)$-invariant case: Propositions \ref{prop-Ricci-sim}, \ref{prop-CY-sim}, \ref{prop-CY-sim6}, \ref{prop-Weyl-sim} and \ref{prop-Weyl-sim6} are $\Sim(n-2)$-invariant classifications of the tracefree Ricci tensor, the Cotton-York tensor and the Weyl tensor -- the relation to the work of \cites{Coley2004,Pravda2004,Ortaggio2009b} is explained in section \ref{rem-PW-types}. We then go a step further and classify these same curvature tensors now with respect to a Robinson structure, leading to Propositions \ref{prop-Ricci-rob}, \ref{prop-CY-rob} and \ref{prop-Weyl-rob}. All these classifications are expressed in terms of diagrams, which encode the invariance of the irreducible decompositions of the curvature tensors under $\Sim(n-2)$ or $\Sim(m-1,\C)$.

Section \ref{sec-geomexa} is concerned with the geometric applications of the classifications of section \ref{sec-curvature}. After a brief review of the geometry associated to a null line distribution, we shift our attention to Robinson manifolds, i.e. Lorentzian manifolds equipped with an integrable almost Robinson structure. We distinguish various degrees of integrability in odd dimensions in Definition \ref{def-Robinson-manifold}. We recast some of the previous results of the author \cites{Taghavi-Chabert2011,Taghavi-Chabert2012,Taghavi-Chabert2012a,Taghavi-Chabert2013} within the framework of the present $\Sim(m-1,\C)$-invariant classification of the Weyl tensor. Proposition \ref{prop-int-cond-Robinson} gives necessary Weyl curvature conditions for the existence of a Robinson structure. This yields a natural definition of an almost Robinson structure \emph{aligned} with the Weyl tensor - Definition \ref{def-aligned-Rob}. We exhibit aligned Robinson structures in the Petrov type G static KK bubble -- Example \ref{exa-static-KK5}. Proposition \ref{prop-multi-Rob} and Corollary \ref{cor-multi-int-Rob} establish a relation between a $\Sim(n-2)$-invariant algebraic degeneracy of the Weyl tensor (a `hybrid of Petrov types I and II(c)' in the null alignment formalism) and the existence of infinitely many aligned almost Robinson structures.

Based on the notion of algebraically special spacetimes introduced in \cites{Taghavi-Chabert2011,Taghavi-Chabert2012}, a definition of \emph{repeated} aligned almost Robinson structures is given in Definition \ref{defn-alg-sp}. The algebraically special condition is then reformulated in the present context in Proposition \ref{prop-alg-sp}. Under additional curvature assumptions, this forms the basis of a higher-dimensional version of the Goldberg-Sachs theorem given in the same references, and restated as Theorem \ref{thm-GS}. Proposition \ref{prop-multi-special-Rob} 
establishes a relation between a $\Sim(n-2)$-invariant algebraic degeneracy of the Weyl tensor (a `hybrid of Petrov types II and III(b)' in the null alignment formalism) and the existence of infinitely many repeated aligned almost Robinson structures.

We then give a number of examples of metrics locally admitting repeated aligned (almost) Robinson structures. While many of these examples are not new, we emphasise the relation between the algebraic degeneracy of the Weyl tensor with respect to (almost) Robinson structures. We start with manifolds equipped with a conformal Killing-Yano (CKY) $2$-form. While the Iwasawa manifold -- Example \ref{exa-Iwasawa} -- is a proper Riemannian manifold, it features a number of geometric properties that find parallels in Lorentzian geometry: Cotton-York obstruction to the integrability of an almost Robinson structure, and families of Hermitian structures. Next, we state Proposition \ref{prop-KS-Robinson} on Kerr-Schild metrics admitting a Robinson structure, which is illustrated by the Myers-Perry black hole -- Example \ref{exa-Kerr-MP} -- and as a limiting case, we show that the Schwarzdchild spacetime -- Example \ref{exa-Schwarzschild} -- admits infinitely many repeated aligned almost Robinson structures, both integrable and non-integrable -- Proposition \ref{prop-alg-sp-NInt}. We carry on with a study of Robinson-Trautman spacetimes leading to Proposition \ref{prop-Robinson-Trautman-6}. Taub-NUT spacetimes are examined in Example \ref{exa-Taub-NUT-(A)dS}. We conclude our collection of examples by considering Lorentzian manifolds that admit a parallel Robinson structure in Proposition \ref{prop-int-parallel_Rob}, and those that admit a parallel pure spinor of real index $1$ in Proposition \ref{prop-parallel-pure-spinor}.

The paper ends with three appendices, which contain more lengthy and descriptive material referred to in the core sections of the article. Appendix \ref{sec-spinors} contains a spinor approach to Robinson structures. In appendix \ref{sec-spinor-descript}, we give generalisations of the Bel-Debever criterion for the $\Sim(n-2)$-invariant classifications of section \ref{sec-curvature}, and `basis decompositions' of irreducible curvature tensors according to both the $\Sim(n-2)$-invariant and $\Sim(m-1,\C)$-invariant classifications of section \ref{sec-curvature}. The short appendix \ref{sec-low-dim} highlights some of the special features occuring in low dimensions, where a number of simplifications can be made.

\section{Algebraic background}\label{sec-algebra}
\subsection{Null lines and their stabilisers}\label{sec-sim-alg}
\paragraph{Null lines}
Let $(\mfV,g)$ be oriented $n$-dimensional Minkowski space, i.e. a vector space $\mfV$ equipped with a non-degenerate symmetric bilinear form $g_{ab}$ of signature $(n-1,1)$ (i.e. $(+,\ldots ,+,-)$). We adopt the abstract index convention of \cite{Penrose1984}, elements of $\mfV$ and $\mfV^*$ will carry upstairs and downstairs lower case Roman indices respectively. Indices will be lowered and raised freely be means of $g_{ab}$ and its inverse $g^{ab}$ respectively. Symmetrisation and skew-symmetrisation will be denoted by round and square brackets around groups of indices respectively, e.g. $g \ind{_{ab}} = g \ind{_{(ab)}} \in \odot^2 \mfV^*$ and $\alpha \ind{_{abc}} =  \alpha \ind{_{[abc]}} \in \wedge^3 \mfV^*$.

Let $\mfK$ be a null line in $\mfV$, i.e. for any $V^a \in \mfK$, $V^a V_a = 0$. For convenience, we fix a generator $k^a$ in $\mfK$. Since $\mfK$ is null, it is contained in its orthogonal complement $\mfK^\perp = \left\{ V^a \in \mfV : g_{ab} k^a V^b = 0 \right\}$. Thus, a null line in $\mfV$ induces a filtration
\begin{align}\label{eq-sim-filtration}
 \{ 0 \} =: \mfV^2 \subset \mfV^1 \subset \mfV^0 \subset \mfV^{-1} := \mfV \, .
\end{align}
of vector subspaces on $\mfV$, where $\mfV^1 := \mfK$ and $\mfV^0 := \mfK^\perp$. By the metric isomorphism $\mfV^* \cong \mfV$, it is clear that $\mfK$ also defines a filtration $\{ (\mfV^*)^i\} $ on the dual vector space $\mfV^*$ where $(\mfV^*)^i \cong \mfV^i$ for each $i$.

One can fix a vector subspace $\mfV_{-1}$ dual to $\mfV_1$, so that
\begin{align}\label{eq-sim-grading}
 \mfV & = \mfV_1 \oplus \mfV_0 \oplus \mfV_{-1} \, ,
\end{align}
where $\mfV_1 \cong \mfV^1$, $\mfV_0 \oplus \mfV_1 \cong \mfV^0$.  In this case, $\mfV_0$ is the orthogonal complement of both $\mfV_1$ and $\mfV_{-1}$. Fix a generator $\ell^a$ of $\mfV_{-1}$ satisfying the normalisation $\ell_a k^a = 1$.
The metric tensor $g_{ab}$ induces a non-degenerate positive definite symmetric bilinear form
\begin{align*}
 h \ind{_{ab}} & = g \ind{_{ab}} - S \ind{_{ab}} \, , & \mbox{where} & & S \ind{_{ab}} & := 2 \, k \ind{_{\lp{a}}} \ell \ind{_{\rp{b}}} \, , 
\end{align*}
on $\mfV_0$.

\paragraph{The Lie subalgebra $\simalg(n-2)$}
The filtration \eqref{eq-sim-filtration} and grading \eqref{eq-sim-filtration} on $\mfV$ induce filtrations and gradings on any tensor product of $\mfV$. In particular, the Lie algebra $\g := \so(n-1,1)$, which will be identified with $\wedge^2 \mfV$, admits the filtration
\begin{align}\label{eq-filtration-sim}
 \{ 0 \} =: \g^2 \subset \g^1 \subset \g^0 \subset \g^{-1} := \g \, ,
\end{align}
of vector subspaces $\g^i$, and a grading
\begin{align}\label{eq-grading-sim}
 \g & = \g_{-1} \oplus \g_0 \oplus \g_1 \, ,
\end{align}
where $\g^i = \g_i \oplus \g^{i+1} $ for $i=-1,0,1$, and $\g_{\pm1} \cong \mfV_{\pm1} \otimes \mfV_0$ and $\g_0 \cong \wedge^2 \mfV_0 \oplus \left( \mfV_1 \otimes \mfV_{-1} \right)$. It is straightforward to see that the vector subspace $\g^0$ is the stabiliser of $\mfV^1$, i.e. $k \ind{^c} \phi \ind{_c^{[a}} k \ind{^{b]}} = 0$ for any $\phi \ind{_{ab}} \in \g^0$. Such a Lie subalgebra is known  as a \emph{parabolic Lie subalgebra} of $\g$ \cites{Fulton1991,Baston1989,vCap2009}. It is also known as the Lie algebra $\simalg(n-2)$ of the group $\Sim(n-2)$ of similarities of $\R^{n-2}$. As for any parabolic Lie subalgebra, it consists of a nilpotent part $\g_1 \cong \R^{n-2}$, and a reductive part $\g_0 = \co(n-2) := \so(n-2) \oplus \R$. For convenience, we shall denote the center of $\g_0$ and its complement in $\g_0$ by $\mfz_0$ and $\so_0$ respectively. The center contains a unique \emph{grading element}, which, given our choice of $k^a$ and $\ell^a$, can be expressed as $E_{ab} := - 2 k_{[a} \ell_{b]}$, and has eigenvalues $i$ on $\mfV_i$, i.e. $V \ind{^b} E \ind{_b^a} = \pm V \ind{^a}$ if $V \ind{^a} \in \mfV_{\pm1}$, and $V \ind{^b} E \ind{_b^a} = 0$ if $V \ind{^a} \in \mfV_0$, and so on for tensor products of $\mfV$.

At this stage, we note that $\g_1 + \mfz_0$ and $\g_1 + \so_0$ are $\simalg(n-2)$-invariant subspaces\footnote{In fact, their invariance can be made more explicit by noting that they can be defined in terms of kernels of the maps ${}^\g_\mfK \Pi_i^j$ defined in appendix \ref{ref-spinor-descript-proj}:
\begin{align*}
 \g^0 & = \{ \phi_{ab} \in \g : {}^\g_\mfK \Pi_{-1}^0 (\phi) =  0 \} \, , &
 \g_1 + \mfz_0 & := \{ \phi_{ab} \in \g : {}^\g_\mfK \Pi_0^1 (\phi) =  0 \}   \, , &
  \g_1 + \so_0 & := \{ \phi_{ab} \in \g : {}^\g_\mfK \Pi_0^0 (\phi) =  0 \} \, .
\end{align*}}  of $\g$.
Their quotients by $\g_1$ are linear isomorphic to the irreducible $\g_0$-modules $\mfz_0$ and $\so_0$. In particular, they are irreducible $\simalg(n-2)$-modules. Introduce the graded module $\gr(\g)$ associated to the filtration \eqref{eq-filtration-sim}, i.e.
\begin{align*}
\gr(\g) & = \gr_{-1} (\g) \oplus \gr_0 (\g) \oplus \gr_1 (\g) \, , & \mbox{where} & &\gr_i (\g) & := \g^i /\g^{i+1} \, ,
\end{align*}
Each summand $\gr_i (\g)$ is a totally reducible $\simalg(n-2)$-module, linearly isomorphic to a $\co(n-2)$-module $\g_i$ that depends on the splitting \eqref{eq-grading-sim} chosen. When $n \neq 6$, each irreducible $\simalg(n-2)$-submodule $\g_i^j$, say, of $\gr_i(\g)$ is linearly isomorphic to a $\co(n-2)$-submodule $\breve{\g}_i^j$, say, of $\g_i$ -- here, $\breve{\g}_{\pm1}^0 := \g_{\pm1}$, $\breve{\g}_0^0 := \mfz_0$ and $\breve{\g}_0^1 := \so (n-2)$.
When $n=6$, the $\simalg(4)$-module $\g_0^1$ splits further into a self-dual part $\g_0^{1,+}$ and an anti-self-dual part $\g_0^{1,+}$, i.e. $\g_0^1 \cong \g_0^{1,+} \oplus \g_0^{1,-}$. In both cases, the nilpotent part $\g_1$ of $\simalg(n-2)$ acts trivially on each $\g_i^j$. The splitting of $\gr(\g)$ into irreducibles together with the effect of the action of $\g_1$ on the corresponding $\co(n-2)$-modules $\breve{\g}_i^j$ can be conveniently represented in terms of the graph
\begin{align}
\begin{aligned}
 \xymatrix@R=1mm{
 & \g_0^1 \ar[dr] & \\
 \g_1^0 \ar[dr] \ar[ur] & & \g_{-1}^0 \\
 & \g_0^0 \ar[ur] & }
 \end{aligned}\label{eq-Pendiag-sim}
\end{align}
where we draw an arrow from $\g_i^j$ to $\g_{i-1}^k$ whenever $\breve{\g}_i^j \subset \g_1 \cdot \breve{\g}_{i-1}^k$ -- here, the $\cdot$ denotes the action of $\g$ on a $\g$-module, and in this case, corresponds with the Lie bracket.

\paragraph{Lie group level and the null grassmanian of null lines}
The stabiliser of an (unoriented) null line $\mfK \subset \mfV$ in the orthogonal group $\OO(\mfV,g) \cong \OO(n-1,1)$ is isomorphic to the group $\Sim(n-2) \cong \left( \R^* \times \OO(n-2) \right) \ltimes \R^{n-2}$ of similarities of $\R^{n-2}$, with Lie algebra $\simalg(n-2)$ as described above.\footnote{In the extant literature, $\Sim(n-2)$ may be taken to stabilise an \emph{oriented} null line.} We can identify the space $\Gr_1(\mfV,g)$ of all unoriented null lines in $\mfV$ with the homogeneous space
\begin{align*}
\Gr_1 (\mfV,g) & = \OO (n-1,1)/ \Sim(n-2) \, ,
\end{align*}
which is simply diffeomorphic to the $(n-2)$-dimensional sphere $S^{n-2}$.

We note in passing that $\Sim(n-2)$ contains the Lie subgroup $\Sim_+(n-2) \cong \left( \R^+ \times \OO(n-2) \right) \ltimes \R^{n-2}$ stabilising an \emph{oriented} null line. Thus, the space of all oriented null lines is given by the homogeneous space $\OO(n-1,1)/\Sim_+(n-2)$ and splits into two connected components, the space of all future-pointing null lines and the space of all past-pointing null lines, each being isomorphic to the $(n-2)$-dimensional sphere $S^{n-2}$.

\subsection{Robinson structure}\label{sec-rob-alg}
As before, $(\mfV, g)$ will denote oriented $n$-dimensional Minkowski space. Denote by $({}^\C \mfV , g)$ the complexification of $(\mfV,g)$, where it is now understood that $g$ becomes complex-valued. Let $\bar{}: {}^\C \mfV \rightarrow {}^\C \mfV: V^a \mapsto \bar{V}^a$ be the real structure, or complex conjugation, on ${}^\C \mfV$ preserving $\mfV$, i.e. $\bar{V}^a = V^a$ if and only if $V^a \in \mfV$.

Set $n=2m+\epsilon$ where $\epsilon \in \{ 0 ,1 \}$. Let $\mfN$ be a totally null complex $m$-plane, i.e. an $m$-dimensional vector subspace of ${}^\C \mfV$, on which the complexified bilinear form $g_{ab}$ is totally degenerate. 
Denote by $\bar{\mfN}$ the complex conjugate of $\mfN$, i.e. $\bar{\mfN} := \{ V^a \in {}^\C \mfV : \bar{V}^a \in \mfN \}$.

\begin{defn}
We call the dimension of the intersection of $\mfN$ and $\bar{\mfN}$ the \emph{real index} of $\mfN$.
\end{defn}

\begin{lem}[\cites{Kopczy'nski1992,Kopczy'nski1997}]
When $\epsilon=0$, the real index of $\mfN$ is $1$. When $\epsilon=1$, the real index of $\mfN$ is either $0$ or $1$.
\end{lem}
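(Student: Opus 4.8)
The plan is to reduce the complex statement to one about \emph{real} totally isotropic subspaces, where the Lorentzian signature forces the bound $\le 1$, and then to dispose of the even case by a parity obstruction on signatures.

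First I would note that $\mfN \cap \bar{\mfN}$ is invariant under the real structure, hence equals the complexification of the real subspace $\mfW := (\mfN \cap \bar{\mfN}) \cap \mfV$; in particular the real index of $\mfN$ is $\dim_{\R} \mfW$. Since $\mfW \subseteq \mfN$ and $\mfN$ is totally null for $g$, the restriction of $g$ to $\mfW$ vanishes, so $\mfW$ is a totally null real subspace of the Lorentzian space $(\mfV, g)$. As any totally isotropic subspace of a form of signature $(n-1,1)$ has dimension at most $\min(n-1,1) = 1$, it follows that $\dim_{\R} \mfW \le 1$, i.e. the real index is $0$ or $1$. This proves the assertion when $\epsilon = 1$, and reduces the case $\epsilon = 0$ to ruling out real index $0$.

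So suppose $\epsilon = 0$, i.e. $n = 2m$, and assume for contradiction that $\mfN \cap \bar{\mfN} = \{ 0 \}$. Counting complex dimensions gives ${}^\C \mfV = \mfN \oplus \bar{\mfN}$. Then the $\R$-linear map $\rho \colon \mfN \to \mfV$, $v \mapsto v + \bar v$, has kernel contained in $\mfN \cap \bar{\mfN} = \{0\}$, hence is injective; since $\dim_{\R} \mfN = 2m = n = \dim_{\R} \mfV$, it is an isomorphism. Using that $\mfN$ is totally null, $g(v + \bar v, v + \bar v) = g(v,v) + 2 g(v, \bar v) + g(\bar v, \bar v) = 2\, g(v, \bar v)$, so $\rho$ carries the quadratic form $g$ on $\mfV$ to the real quadratic form $v \mapsto 2\, g(v, \bar v)$ on $\mfN$. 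The latter is (twice) the real quadratic form associated to the Hermitian form $(v,w) \mapsto g(v, \bar w)$ on the complex $m$-dimensional space $\mfN$, so as a real quadratic form on $\mfN \cong \R^{2m}$ it has signature $(2a, 2b)$ for some $a,b$ with $a + b \le m$; both entries are even. But because $\rho$ is an isomorphism and $g$ is non-degenerate on $\mfV$, this signature must coincide with that of $g$ on $\mfV$, namely $(n-1, 1) = (2m-1, 1)$, whose entries are odd -- a contradiction. Hence $\mfN \cap \bar{\mfN} \neq \{0\}$, and together with the previous bound the real index equals $1$.

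The one delicate point is the signature computation: one should verify that $(v,w) \mapsto g(v, \bar w)$ is genuinely a Hermitian form -- it is $\C$-linear in $v$, conjugate-linear in $w$, and $\overline{g(v, \bar w)} = g(\bar v, w) = g(w, \bar v)$ because the complexified $g$ commutes with conjugation and is symmetric -- so that its diagonal is real-valued and its associated real form has even signature; and that this real form is non-degenerate (forced by $\rho$ being an isomorphism), so that its signature is literally $(2m-1,1)$ rather than that of a form of smaller rank. Everything else is dimension bookkeeping with the real structure.
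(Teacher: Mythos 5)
Your proof is correct. Note that the paper itself offers no argument for this lemma: it is quoted from Kopczy\'nski--Trautman, where it appears as a special case of a general statement about the real index of maximal totally null subspaces in signature $(p,q)$ (namely $r\le\min(p,q)$ together with a parity constraint on $r$), proved there in the language of pure spinors. Your argument is a clean, self-contained tensorial proof of exactly the Lorentzian case. Both halves check out: the bound $r\le 1$ follows, as you say, because $\mfN\cap\bar{\mfN}$ is conjugation-invariant and hence the complexification of a real totally isotropic subspace of $(\mfV,g)$, whose dimension is capped by $\min(n-1,1)=1$; and in the even case the map $\rho(v)=v+\bar v$ is a real-linear isomorphism $\mfN\to\mfV$ precisely when the real index is $0$, transporting $g$ to twice the real quadratic form of the Hermitian form $h(v,w)=g(v,\bar w)$, whose signature has even entries and therefore cannot equal $(2m-1,1)$. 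The Hermitian-form verification you flag as the delicate point is indeed the crux, and your check of it ($\C$-linearity, conjugate symmetry via $\overline{g(x,y)}=g(\bar x,\bar y)$, and nondegeneracy forced by $\rho$ being an isomorphism) is complete. Compared with the spinorial route in the cited references, your approach buys elementary self-containedness at the cost of not generalising as directly to other signatures, where the full statement is that the real index is congruent to $\min(p,q)$ modulo $2$ -- your parity obstruction is precisely the $q=1$ instance of that congruence.
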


\begin{defn}
A \emph{Robinson structure} on $(\mfV,g)$ is a totally null complex $m$-plane of $(\mfV,g)$ of real index $1$.
\end{defn}

Since a Robinson structure $\mfN$ determines a null line $\mfK$ in $\mfV$, we can henceforth assume the setting of the previous section \ref{sec-sim-alg} with $\mfV^1 := \mfK$ and $\mfV^0 := \mfK^\perp$ together with the filtration \eqref{eq-sim-filtration} and grading \eqref{eq-sim-grading} on $\mfV$. Clearly both $\mfN$ and $\bar{\mfN}$ are contained in the complexification of $\mfV^0$. For clarity, we first deal with the even- and odd-dimensional cases separately.

\subsubsection{Even dimensions ($\epsilon=0$)}
In this case, it is a standard result \cite{Penrose1986} that the totally null complex $m$-plane $\mfN$ of the Robinson structure can be either self-dual or anti-self-dual according to a chosen orientation on $\mfV$, i.e. $* \omega = \pm ( - \ii )^{(m-1)(m-3)} \omega$ for $\omega \in \wedge^m \mfN$, where $* : \wedge^k \mfV \rightarrow \wedge^{n-k} \mfV$ is the Hodge duality operator. The orientation of the complex conjugate $\bar{\mfN}$ relative to that of $\mfN$ depends on the dimension of $\mfV$. The following lemma is standard (see eg. \cites{Cartan1981,Kopczy'nski1992}).

\begin{lem}
When $m$ is even, a Robinson structure is self-dual, respectively anti-self-dual, if and only if its complex conjugate is anti-self-dual, respectively self-dual.

When $m$ is odd, a Robinson structure is self-dual, respectively anti-self-dual, if and only if its complex conjugate is self-dual, respectively anti-self-dual.
\end{lem}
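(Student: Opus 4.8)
The plan is to pick a non-zero decomposable form $\omega \in \wedge^m \mfN$ — the space $\wedge^m \mfN$ is one-dimensional since $\dim_\C \mfN = m$ — note that its complex conjugate $\bar\omega$ is then a corresponding non-zero form in $\wedge^m \bar{\mfN}$, and compare $* \omega$ with $* \bar\omega$. The argument rests on one elementary observation: because the orientation is chosen on the \emph{real} vector space $\mfV$, the associated volume form $\vol$ is real, and since $g_{ab}$ is real on $\mfV$ the complex-bilinearly extended Hodge operator $* \colon \wedge^m ({}^\C \mfV) \to \wedge^m ({}^\C \mfV)$ commutes with complex conjugation,
\[
 \overline{* \alpha} = * \bar\alpha \, , \qquad \alpha \in \wedge^m ({}^\C \mfV) \, ,
\]
as one sees by conjugating the defining relation $\alpha \wedge * \beta = \langle \alpha , \beta \rangle \vol$ and invoking uniqueness of the Hodge dual.

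With this in hand, set $\mu := (-\ii)^{(m-1)(m-3)}$, so that by the quoted standard result $\mfN$ is self-dual exactly when $* \omega = \mu \, \omega$ and anti-self-dual exactly when $* \omega = - \mu \, \omega$. Applying the conjugation identity gives $* \bar\omega = \bar\mu \, \bar\omega$ in the first case and $* \bar\omega = - \bar\mu \, \bar\omega$ in the second, so the statement reduces to computing $\bar\mu$. Since $(m-1)(m-3) = m^2 - 4m + 3$ is odd for $m$ even and even for $m$ odd, $\mu$ is purely imaginary with $\bar\mu = - \mu$ when $m$ is even, while $\mu = \pm 1$ is real with $\bar\mu = \mu$ when $m$ is odd. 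Hence for $m$ even, $\mfN$ (anti-)self-dual forces $\bar{\mfN}$ to have the opposite (anti-)self-duality, and for $m$ odd it forces $\bar{\mfN}$ to have the same one; the reverse implications come for free because complex conjugation is an involution ($\bar{\bar{\omega}} = \omega$).

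I expect the only delicate point to be the phase bookkeeping: one must be consistent about the convention concealed in the quoted normalisation $* \omega = \pm (-\ii)^{(m-1)(m-3)} \omega$, in particular about which sign is declared self-dual, and one should sanity-check the identity $\overline{* \alpha} = * \bar\alpha$ against the orientation conventions, since mishandling a real versus complex volume form would reverse the conclusion. A more computational route, which sidesteps the quoted normalisation altogether, is to work in an explicit null frame: take $\mfN = \sspan (k^a , W_1^a , \ldots , W_{m-1}^a)$ with the $W_j^a$ spanning a maximal totally null subspace of the even-dimensional screenspace $\mfV_0$, evaluate $*(k \wedge W_1 \wedge \cdots \wedge W_{m-1})$ directly, and then replace each $W_j$ by $\bar W_j$; but this merely recovers the same parity dichotomy with more effort.
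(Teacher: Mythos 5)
Your proof is correct; the paper itself offers no proof, merely citing Cartan and Kopczy\'nski, and your argument --- that $*$ commutes with complex conjugation because $g_{ab}$ and the volume form are real on $\mfV$, so the duality eigenvalue $\pm(-\ii)^{(m-1)(m-3)}$ conjugates to itself or to its negative according as $(m-1)(m-3)$ is even ($m$ odd) or odd ($m$ even) --- is exactly the standard route those references take. The parity bookkeeping checks out (both factors $m-1$, $m-3$ are odd precisely when $m$ is even), and it is consistent with $*^2=(-1)^{m+1}$ on middle forms in signature $(2m-1,1)$, so no further verification is needed.
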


Referring to the grading \eqref{eq-sim-grading} adapted to the filtration \eqref{eq-sim-filtration}, we can define the $(m-1)$-dimensional vector subspaces
\begin{align}\label{eq+-eigenspaces}
 \mfV_0^{(1,0)} & := {}^\C \mfV_0 \cap \mfN \, , & \mfV_0^{(0,1)} & := {}^\C \mfV_0 \cap \bar{\mfN} \, . 
\end{align}
of ${}^\C \mfV_0 := \C \otimes_\R \mfV_0$, so that
\begin{align}\label{eq-splitting-V0-even}
{}^\C \mfV_0 & = \mfV_0^{(1,0)} \oplus \mfV_0^{(0,1)} \, .
\end{align}
Being subspaces of $\mfN$ and $\bar{\mfN}$ respectively, each of $\mfV_0^{(1,0)}$ and $\mfV_0^{(0,1)}$ are totally null with respect to the complexification of $g_{ab}$, and in fact, with respect to the complexification of $h_{ab}$ since they are also orthogonal to both  $\mfV_{\pm1}$. By a standard argument of linear algebra \cite{Nurowski2002}, they can therefore be identified with the $+\ii$ and $-\ii$-eigenspaces of a complex structure $J \ind{_a^b}$ on $\mfV_0$ compatible with $H_{ab} := h_{ab}$, i.e. $J \ind{_a^c} J \ind{_c^b} = - H \ind{_a^b}$. The reason for the relabelling of $h_{ab}$ as $H_{ab}$ is for notational consistency with the odd-dimensional case. We take the convention that $V^a J \ind{_a^b} = \ii V^a$ for any $V^a \in \mfV_0^{(1,0)}$. In fact, this complex structure has the associated hermitian $2$-form $\omega_{ab} := J \ind{_a^c} H \ind{_{cb}}$ on $\mfV_0$.

To make the description independent of the splitting \eqref{eq-sim-grading}, we extend the vector subspaces $\mfV_0^{(1,0)}$ and $\mfV_0^{(0,1)}$ of ${}^\C\mfV_0$ to vector subspaces
\begin{align}\label{eq-eigenspaces-gr0}
\gr_0^{(1,0)} (\mfV) & := \left(\mfV_0^{(1,0)} + {}^\C \mfV_1 \right)/ {}^\C \mfV_1 \, , & 
\gr_0^{(0,1)} (\mfV) & := \left(\mfV_0^{(0,1)} + {}^\C \mfV_1 \right)/ {}^\C \mfV_1 \, ,
\end{align}
respectively, of the complexification of $\gr_0( \mfV) = \mfV^0/\mfV^1$. These can be identified with the $+\ii$ and $-\ii$ eigenspaces of a complex structure on the screenspace $\gr_0( \mfV)$, that is an equivalence class of complex structures $[ J \ind{_a^b}]$ on $\gr_0(\mfV)$ where 
\begin{align*}
  J \ind{_a^b} \, , \hat{J} \ind{_a^b} \in  [J \ind{_a^b}] & & \Leftrightarrow & & \hat{J} \ind{_a^b} & = J \ind{_a^b} + \alpha \ind{_a} k \ind{^b} - k \ind{_a} \alpha \ind{^b} \, , & & \mbox{for some $\alpha_a \in \mfV^0$.}
\end{align*}
Similarly, one can also define a Hermitian $2$-form on $\gr_0(\mfV)$ as an equivalence class of Hermitian $2$-forms  $[\omega \ind{_{ab}}]$ on $\gr_0(\mfV)$.

\subsubsection{Odd dimensions ($\epsilon=1$)}
The odd-dimensional description of the Robinson structure $\mfN$ is almost identical to the even-dimensional case, except that now the totally null complex $m$-plane $\mfN$ of the Robinson structure is strictly contained in its $(m+1)$-dimensional orthogonal complement
\begin{align*}
	\mfN^\perp & := \{ V^a \in {}^\C \mfV : V^a W_a =0 \, , \mbox{for all $W^a \in \mfN$} \} \, ,
\end{align*}
and $\mfN^\perp/\mfN$ is one-dimensional. In the splitting \eqref{eq-sim-grading}, we can then distinguish a one-dimensional subspace
\begin{align*}
 \mfV_0^{(0,0)} & := {}^\C \mfV_0 \cap \mfN^\perp \cap \bar{\mfN}^\perp
\end{align*}
of ${}^\C \mfV_0$, which can be seen to be the complexification of a real vector subspace $\mfU$ of $\mfV_0$, on which $g_{ab}$ (and thus $h_{ab}$) is non-degenerate. We thus have a splitting of the complexification of $\mfV_0$
\begin{align}\label{eq-splitting-V0-odd}
 {}^\C \mfV_0 = \mfV_0^{(1,0)} \oplus \mfV_0^{(0,1)} \oplus \mfV_0^{(0,0)} \, , 
\end{align}
where $\mfV_0^{(1,0)}$ and $\mfV_0^{(0,1)}$ are defined by \eqref{eq+-eigenspaces} as in the even-dimensional case. 
Choosing a unit vector $u^a$ in $\mfU$, we can define a positive definite symmetric bilinear form
\begin{align*}
 H \ind{_{ab}} & :=
 h \ind{_{ab}} - u \ind{_a} u \ind{_b} \, ,
\end{align*}
on $\mfV_0 \cap \mfU^\perp$. Then each of $\mfV_0^{(1,0)}$ and $\mfV_0^{(0,1)}$ are totally null $(m-1)$-planes corresponding to the $+\ii$ and $-\ii$-eigenspaces of a $H_{ab}$-compatible complex structure $J \ind{_a^b}$ on $\mfV_0 \cap \mfU^\perp$, with corresponding hermitian $2$-form $\omega_{ab} := J \ind{_a^c} H \ind{_{cb}}$. The vector space $\mfU$ is the kernel of $J \ind{_a^b}$. Thus, the odd-dimensional case differs from the even-dimensional case only by an additional one-dimensional vector subspace.

\begin{rem}
In odd dimensions, the endomorphism $J \ind{_a^b}$ on $\mfV_0$ could be called an $h_{ab}$-compatible CR structure on $\mfV_0$, but it is usually referred to as a \emph{contact Riemannian structure}. In this article, we shall referred to $J \ind{_a^b}$ as a Hermitian structure regardless of the dimension with the understanding that it applies to $\mfV_0 \cap \mfU^\perp$.
\end{rem}

Again, this complex structure defines an equivalence class of complex structures on $\gr_0(\mfV)$ with $\pm\ii$-eigenspaces $\gr_0^{(1,0)} (\mfV)$ and $\gr_0^{(1,0)} (\mfV)$ as defined by \eqref{eq-eigenspaces-gr0}, and kernel
\begin{align*}
\gr_0^{(0,0)} (\mfV) & := \left(\mfV_0^{(0,0)} + {}^\C \mfV_1 \right)/ {}^\C \mfV_1 \, .
\end{align*}
The $\simalg(n-2)$-module $\gr_0 (\mfV)$ is equipped with an equivalence class of unit vectors  $[ u^a ]$ where
\begin{align*}
u^a \, , \hat{u} ^a \in  [ u^a ] & & \Leftrightarrow & & \hat{u}^a & = u^a  + \lambda k^a \, , & & \mbox{for some $\lambda \in \R$.}
\end{align*}

\begin{nota}
For future use, we define the \emph{real} vector spaces
\begin{align*}
 \breve{\mfV}_0^{0,0} & := \dbl \mfV_0^{(1,0)} \dbr \, , & \mfV_0^{0,0} & := \left( \breve{\mfV}_0^{0,0} + \mfV_1 \right) / \mfV_1 \, , \\
  \breve{\mfV}_0^{0,1} & := [ \mfV_0^{(0,0)} ] \, , & \mfV_0^{0,1} & := \left( \breve{\mfV}_0^{0,1} + \mfV_1 \right) / \mfV_1 \, ,
\end{align*}
where the `bracket' notation of \cite{Salamon1989} denotes the real span of the bracketed complex vector space, i.e.
\begin{align*}
\mfV_0^{(1,0)} & = \C \otimes_\R \dbl \mfV_0^{(1,0)} \dbr \, , & \mfV_0^{(0,0)} & = \C \otimes_\R [ \mfV_0^{(0,0)} ] \, .
\end{align*}
Here, we have been careful to distinguish $\breve{\mfV}_0^{0,0}$ and $\mfV_0^{0,0}$ as vector subspaces of $\mfV_0$ and $\mfV^0/\mfV^1$ respectively.
\end{nota}

\subsubsection{The stabiliser of a Robinson structure}
Assume as before $n=2m+\epsilon$ where $\epsilon \in \{0,1\}$. Henceforth, we shall combine the even- and odd-dimensional cases, the distinction between which will be made by the use of $\epsilon \in \{0,1\}$. Thus, the splittings \eqref{eq-splitting-V0-even} and \eqref{eq-splitting-V0-odd} will be merged by writing
\begin{align*}
 {}^\C \mfV_0 = \mfV_0^{(1,0)} \oplus \mfV_0^{(0,1)} \oplus \epsilon \, \mfV_0^{(0,0)} \, .
\end{align*}
The stabiliser of $\mfN$ in $\so(n-1,1)$ also stabilises its complex conjugate and thus their intersection. Hence, it must be contained in $\simalg(n-2)$. For clarity, we shall fix a splitting \eqref{eq-sim-grading} adapted to the null line $\mfK$. Then, using our previous notation, the complexifications ${}^\C \g_i$ of $\g_i$ admit direct sum decompositions
\begin{align*}
 {}^\C \g_{\pm1} & =  \g_{\pm1}^{(1,0)} \oplus \g_{\pm1}^{(0,1)} \oplus \epsilon \, \g_{\pm1}^{(0,0)} \, , &  {}^\C \g_0 & = {}^\C \mfz_0 \oplus \g_0^{(2,0)} \oplus \g_0^{(0,2)} \oplus \g_0^{(1,1)} \oplus \epsilon \left( \g_0^{(1,0)} \oplus \g_0^{(0,1)} \right) \, ,
\end{align*}
where
\begin{align*}
\begin{aligned}
 \g_{\pm1}^{(0,1)} & \cong \mfV_{\pm1} \otimes \mfV_0^{(0,1)}  \, , &
 \g_{\pm1}^{(1,0)} & \cong \mfV_{\pm1} \otimes \mfV_0^{(1,0)}  \, , \\
 \g_0^{(2,0)}  & \cong \wedge^2 \mfV_0^{(1,0)} \, , &
 \g_0^{(0,2)}  & \cong \wedge^2 \mfV_0^{(0,1)} \, , &
 \g_0^{(1,1)}  & \cong \mfV_0^{(1,0)} \otimes \mfV_0^{(0,1)} \, ,
\end{aligned}
 \end{align*}
and in addition, when $\epsilon=1$,
\begin{align*}
 \g_{\pm1}^{(0,0)} & \cong \mfV_{\pm1} \otimes \mfV_0^{(0,0)}  \, , &
 \g_0^{(1,0)}  & \cong \mfV_0^{(1,0)} \otimes \mfV_0^{(0,0)} \, .
\end{align*} 
Here, we have made use of the standard identifications $\so(n,\C) \cong \wedge^2 ( {}^\C \mfV )$ and $\so(n-1,1) \cong \wedge^2 \mfV$.

Further, the complex structure $J \ind{_a^b}$ on $\mfV_0$ is contained in $\g_0^{(1,1)}$ and spans a $1$-dimensional invariant subspace, which we shall denote $\g_0^\omega$. The complement of $\g_0^\omega$ in $\g_0^{(1,1)}$ will be denoted by $\g_0^{(1,1)_\circ}$, i.e.
\begin{align*}
 \g_0^{(1,1)} & \cong \g_0^\omega \oplus \g_0^{(1,1)_\circ} \, .
\end{align*}

\paragraph{Real representations}
Since we shall essentially be interested in real representations, we again draw from the notation of \cite{Salamon1989} and define
\begin{align*}
\C \otimes_\R  \dbl \g_i^{(j,k)} \dbr & := \g_i^{(j,k)} & k \neq j  \, , \\
\C \otimes_\R  [ \g_0^{(i,i)} ] & := \g_0^{(i,i)}  \, .
\end{align*}
Putting things together, we see that the Lie algebra $\so(n-1,1)$ admits the direct sum decomposition
\begin{align*}
 \g_{\pm1} & = \dbl \g_{\pm1}^{(1,0)} \dbr \oplus \epsilon \, [ \g_{\pm1}^{(0,0)} ] \, , &  
 \g_0 & = \mfz_0 \oplus \left( [ \g_0^\omega ] \oplus [ \g_0^{(1,1)_\circ} ] \oplus \dbl \g_0^{(2,0)} \dbr \oplus \epsilon \, \dbl \g_0^{(1,0)} \dbr \right) \, .
\end{align*}
By standard results of Hermitian geometry, we have $[ \g_0^{(1,1)} ] \cong \uu(m-1)$, $[\g_0^\omega] \cong \R$ and $[ \g_0^{(1,1)_\circ} ] \cong \su(m-1)$, where $\uu(m-1)$ and $\su(m-1)$ are the Lie algebras of the unitary group $\U(m-1)$ and the special unitary group $\SU(m-1)$ respectively. Clearly, $( \mfz_0 \oplus [ \g_0^{(1,1)} ] ) \oplus \g_1$ preserves $\mfV_1 + \mfV_0^{(1,0)}$ and $\mfV_1 \oplus \mfV_0^{(0,0)}$. Thus,

\begin{lem}
The stabilizer of a Robinson structure in $\so(n-1,1)$ is the Lie subalgebra
\begin{align}\label{eq-cu}
 \simalg(m-1,\C) & := \cu(m-1) ) \oplus \R^{n-2} \, , & \mbox{where} & & \cu(m-1) & := \R \oplus \uu (m-1) \, ,
\end{align}
of the Lie algebra $\simalg(n-2)$.
\end{lem}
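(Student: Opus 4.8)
The plan is to determine the stabiliser $\mfS$ of $\mfN$ in $\so(n-1,1)$ by first confining it to the parabolic $\simalg(n-2)$ and then peeling off its nilpotent radical from its reductive part. Since the real structure on ${}^\C\mfV$ is fixed, every $A \in \mfS$ is real, and hence maps the complex conjugate $\bar{\mfN}$ to itself as well; thus $\mfS$ preserves $\mfN \cap \bar{\mfN} = {}^\C\mfK$, and a fortiori the null line $\mfK$. This places $\mfS$ inside $\simalg(n-2) = \g_0 \oplus \g_1$ of Section \ref{sec-sim-alg}, and we may fix a splitting \eqref{eq-sim-grading} adapted to $\mfK$.

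Next I would record that $\mfN = {}^\C\mfV_1 \oplus \mfV_0^{(1,0)}$: indeed $\mfN \subseteq {}^\C\mfV^0 = {}^\C\mfV_0 \oplus {}^\C\mfV_1$, while ${}^\C\mfV_1 = {}^\C\mfK \subseteq \mfN$ and $\mfN \cap {}^\C\mfV_0 = \mfV_0^{(1,0)}$ by \eqref{eq+-eigenspaces}, so these two pieces already exhaust $\mfN$ by a dimension count; similarly $\bar{\mfN} = {}^\C\mfV_1 \oplus \mfV_0^{(0,1)}$. Since the action of $\g_1$ on $\mfV$ raises the grading by one, it sends $\mfV^0$ into $\mfV^1 = \mfK$ and annihilates $\mfK$, so $\g_1 \cdot \mfN \subseteq {}^\C\mfV_1 \subseteq \mfN$ and likewise $\g_1 \cdot \bar{\mfN} \subseteq \bar{\mfN}$; hence $\g_1 \subseteq \mfS$. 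From $\g_1 \subseteq \mfS \subseteq \g_0 \oplus \g_1$ it follows that $\mfS = (\mfS \cap \g_0) \oplus \g_1$, so everything reduces to identifying $\mfS \cap \g_0$.

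Finally, take $A_0 \in \mfS \cap \g_0$. Being in $\g_0$ it preserves the grading, in particular ${}^\C\mfV_0$; being in $\mfS$ it preserves $\mfN$ and $\bar{\mfN}$, and, being skew with respect to $g$, also $\mfN^\perp$ and $\bar{\mfN}^\perp$. Intersecting these with ${}^\C\mfV_0$ shows that $A_0$ preserves each of $\mfV_0^{(1,0)}$, $\mfV_0^{(0,1)}$ and, when $\epsilon=1$, $\mfV_0^{(0,0)} = {}^\C\mfV_0 \cap \mfN^\perp \cap \bar{\mfN}^\perp$ in the decomposition \eqref{eq-splitting-V0}; these are exactly the eigenspaces of the Hermitian structure $J \ind{_a^b}$ on $\mfV_0$, on each of which $J \ind{_a^b}$ is scalar, so $A_0$ commutes with $J \ind{_a^b}$. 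Hence the $\so_0$-component of $A_0$ lies in the unitary subalgebra $[\g_0^{(1,1)}] \cong \uu(m-1)$ of $\so_0 \cong \so(n-2)$ -- the one genuinely structural input, a standard fact of Hermitian linear algebra (cf.\ \cite{Salamon1989}) -- while its $\mfz_0$-component is unconstrained, the grading element acting trivially on ${}^\C\mfV_0$. Thus $\mfS \cap \g_0 \subseteq \mfz_0 \oplus [\g_0^{(1,1)}]$; the reverse inclusion is precisely the observation recorded just before the statement, that $(\mfz_0 \oplus [\g_0^{(1,1)}]) \oplus \g_1$ preserves $\mfV_1 + \mfV_0^{(1,0)} = \mfN$. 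Assembling the pieces, $\mfS = (\mfz_0 \oplus [\g_0^{(1,1)}]) \oplus \g_1 = \cu(m-1) \oplus \R^{n-2} = \simalg(m-1,\C)$, as claimed. I expect the only point requiring genuine care to be the odd-dimensional bookkeeping of the extra summand $\mfV_0^{(0,0)}$, together with the caveat that one must argue with honestly real subspaces: for instance $\dbl\g_0^{(2,0)}\dbr \subseteq \g_0$ does not preserve $\mfN$ even though the complex summand $\g_0^{(2,0)}$ annihilates it, which is why the clean route is to follow invariant subspaces of ${}^\C\mfV_0$ rather than testing $\g_0$-summands one at a time.
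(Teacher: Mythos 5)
Your proof is correct and follows essentially the same route as the paper: confine the stabiliser to $\simalg(n-2)=\g_0\oplus\g_1$, observe that the nilradical $\g_1$ automatically preserves $\mfN$, and identify the reductive part as the centraliser of $J\ind{_a^b}$, i.e. $\mfz_0\oplus[\g_0^{(1,1)}]\cong\cu(m-1)$. The only difference is one of completeness: the paper merely records that $(\mfz_0\oplus[\g_0^{(1,1)}])\oplus\g_1$ preserves $\mfN$, whereas you also supply the converse inclusion (via the preserved eigenspaces of $J$ in ${}^\C\mfV_0$, including the odd-dimensional kernel $\mfV_0^{(0,0)}$) and correctly flag the pitfall that real spans such as $\dbl\g_0^{(2,0)}\dbr$ fail to preserve $\mfN$ even though their complex summands annihilate it.
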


\paragraph{Associated graded module}
The associated graded $\simalg(n-2)$-module $\gr(\g)$ of the filtration \eqref{eq-filtration-sim} is also a $\simalg(m-1,\C)$-module, and each of the irreducible $\simalg(n-2)$-modules $\g_i^j$ decomposes further into irreducible $\simalg(m-1,\C)$-modules, i.e. for each $i,j$,
\begin{align*}
\gr_i^j (\g) & = \bigoplus_{k} \g_i^{j,k} \, ,
\end{align*}
where each $\g_i^{j,k}$ is linearly isomorphic to  a $\cu(m-1)$-module $\breve{\g}_i^{j,k}$ given by
\begin{align*}
\breve{\g}_{\pm1}^{0,0} & := \dbl \g_{\pm1}^{(1,0)} \dbr \, , &
\breve{\g}_{\pm1}^{0,1} &  := [ \g_{\pm1}^{(0,0)} ] \, , \\
 \breve{\g}_{\pm1}^{0,0} & := \dbl \g_{\pm1}^{(1,0)} \dbr \, , &  
 \breve{\g}_0^{0,0} & := [ \g_0^\omega ] \, , &
 \breve{\g}_0^{0,1} & := [ \g_0^{(1,1)_\circ} ] \, , &
 \breve{\g}_0^{0,2} & := \dbl \g_0^{(2,0)} \dbr \, .
\end{align*}

Since the modules $\breve{\g}_i^{j,k}$ are not invariant under $\g_1$, we can encode the action of the nilpotent part $\g_1$ of $\g$ on $\breve{\g}_i^{j,k}$ by means of a $\simalg(m-1,\C)$-invariant graph, which refines the graph \eqref{eq-Pendiag-sim}
\begin{align}
\begin{aligned}\label{eq-Penrose-diag-g}
 \xymatrix@R=.1em{ & \g_0^{1,2} \ar[dddr] & \\
        & & \\
        & \g_0^{1,1} \ar[dr] & \\
        \g_1^{0,0} \ar[dddddr] \ar[dddr] \ar[dr] \ar[ur] \ar[uuur]& & \g_{-1}^{0,0} \\
        & \g_0^{1,0} \ar[ur] & \\
        \color{gray} \g_1^{0,1} \ar@{.>}[dddr] \ar@{.>}[dr]  & & \color{gray} \g_{-1}^{0,1} \\
        & \color{gray} \g_0^{1,3} \ar@{.>}[ur] \ar[uuur] & \\
        & & \\
        & \g_0^{0,0} \ar[uuuuur] \ar@{.>}[uuur] & 
}
\end{aligned}
\end{align}
where an arrow from $\g_i^{j,k}$ to $\g_{i-1}^{m,n}$ for some $i,j,k,m,n$ means that $\breve{\g}_i^{j,k} \subset \g_1 \cdot \breve{\g}_{i-1}^{m,n}$. Here, the modules occuring only in odd dimensions are in grey, and dotted arrows apply only to odd dimensions.\footnote{Alternatively, the irreducible $\simalg(m-1,\C)$-modules can be invariantly defined by means of the projection maps ${}^\g \Pi_i^{j,k}$ to be found in appendix \ref{sec-spinor-descript}, i.e. $\g_0^{j,k} := \{ \phi \in \g^0 : {}^\g \Pi_i^{m,n} (\phi) = 0 \, , m \neq j \, , n \neq k \} / \g^{i+1}$, and so on.}

\subsubsection{The space of Robinson structures}
Quite naturally, the stabiliser of $\mfN$ and $\mfN^\perp$ in the orthogonal group $\OO(n-1,1)$ is the group $\Sim(m-1,\C)$ of similarities of an $(m-1)$-dimensional Hermitian space. These are isomorphic to \cites{Kopczy'nski1992,Kopczy'nski1997}
\begin{align*}
\left( \R^* \times \U(m-1) \right) \ltimes \R^{n-2} \, , & & \mbox{when $n$ is even ($\epsilon=0$),} \\
\left( \R^* \times \U(m-1) \times \Z_2 \right) \ltimes \R^{n-2} \, , & & \mbox{when $n$ is odd ($\epsilon=1$),}
\end{align*}
with Lie algebras $\simalg(m-1,\C)$ as described above, and have two, respectively four, connected components when $n$ is even, respectively odd. Both $\OO(n-1,1)$ and $\SO(n-1,1)$ act transitively on the space of all totally null complex $m$-planes of ${}^\C \mfV$ of real index $1$. In particular,

\begin{prop}\label{prop-twistor}
The space $\Gr_m^1 ({}^\C \mfV,g)$ of all totally null complex $m$-planes of real index $1$ in ${}^\C \mfV$ is isomorphic to the $(m(m-1)+ \epsilon (2m-1))$-dimensional homogeneous space $\OO(n-1,1)/\Sim(m-1,\C)$. In even dimensions $(\epsilon=0)$, this space splits into two connected components, the space $\Gr_m^{+,1} ({}^\C \mfV,g)$ of all self-dual Robinson structures in ${}^\C \mfV$, and the space $\Gr_m^{-,1} ({}^\C \mfV,g)$ of all anti-self-dual Robinson structures in ${}^\C \mfV$, each isomorphic to $\SO(n-1,1)/\Sim(m-1,\C)$.
\end{prop}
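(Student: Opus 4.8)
The plan is to establish the homogeneous-space description by exhibiting a transitive group action and identifying the stabiliser, which by the lemma just proved is $\Sim(m-1,\C)$. First I would invoke the fact, attributed to Kopczy\'{n}ski--Trautman in the cited references, that $\OO(n-1,1)$ acts transitively on $\Gr_m^1({}^\C\mfV,g)$: given two totally null complex $m$-planes $\mfN_1$, $\mfN_2$ of real index $1$, one may choose null bases adapted to the splittings \eqref{eq-splitting-V0}, i.e. pick generators $k_i^a$ of the real null lines $\mfK_i = \dbl\mfN_i\cap\bar{\mfN}_i\dbr$, complementary null vectors $\ell_i^a$, Witt bases of the totally null spaces $\mfV_0^{(1,0)}$ inside each screenspace, and, when $\epsilon=1$, unit vectors $u_i^a$ spanning $\mfU_i$; since all these data are determined up to the structure group, the linear map sending one adapted basis to the other is an orthogonal transformation carrying $\mfN_1$ to $\mfN_2$. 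The same argument with an orientation constraint shows $\SO(n-1,1)$ acts transitively on each of the self-dual and anti-self-dual families, once one fixes which duality type one works with.

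Next I would compute the dimension of the orbit as $\dim\OO(n-1,1) - \dim\Sim(m-1,\C)$. We have $\dim\OO(n-1,1) = \binom{n}{2} = \frac{(2m+\epsilon)(2m+\epsilon-1)}{2}$, and from \eqref{eq-cu} the Lie algebra $\simalg(m-1,\C) = \cu(m-1)\oplus\R^{n-2}$ has dimension $(m-1)^2 + 1 + (n-2) = (m-1)^2 + 2m + \epsilon - 1$. Subtracting and simplifying should give exactly $m(m-1) + \epsilon(2m-1)$; this is a routine polynomial identity, cross-checked against the claim in Proposition \ref{prop-twistor} that each null line lifts to a fibre of dimension $(m-1)(m-2(1-\epsilon))$ over the $(n-2)$-sphere of null lines. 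I would then observe that the orbit is all of $\Gr_m^1$ by the transitivity established in the previous step, giving the diffeomorphism $\Gr_m^1({}^\C\mfV,g) \cong \OO(n-1,1)/\Sim(m-1,\C)$.

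For the connectedness statement in even dimensions, I would argue that the duality type (self-dual versus anti-self-dual, as recalled in the Lemma on $*\omega = \pm(-\ii)^{(m-1)(m-3)}\omega$) is a discrete invariant locally constant on $\Gr_m^1$, so the space has at least two components; conversely, the stabiliser $\Sim(m-1,\C)$ has exactly two connected components when $\epsilon=0$ (the $\R^*$ factor), so $\OO(n-1,1)/\Sim(m-1,\C)$ has exactly twice as many components as $\OO(n-1,1)$, namely four; intersecting with the transitivity of the identity component $\SO_0(n-1,1)$ — or rather using that $\SO(n-1,1)$ acts transitively on each duality family — pins down each family as a single homogeneous space $\SO(n-1,1)/\Sim(m-1,\C)$, and one checks this quotient is connected because $\Sim(m-1,\C)\cap\SO(n-1,1)$ meets both components of $\SO(n-1,1)$. (In the odd case $\epsilon=1$ the extra $\Z_2$ factor in the stabiliser accounts for the four components of the stabiliser group but the space $\Gr_m^1$ itself remains connected, since reversing the role of $\mfN^\perp/\mfN$ does not give a new invariant of the $m$-plane alone.)

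The main obstacle I expect is not the dimension count but pinning down the exact component structure: one must be careful that transitivity of $\OO(n-1,1)$ together with a two-component stabiliser does not by itself force two orbit-components, and that the self-duality invariant genuinely separates them — this requires the relative orientation lemma quoted above (self-dual $\mfN$ has self-dual or anti-self-dual conjugate according to the parity of $m$), and a clean statement that the connected group $\SO_0(n-1,1)$ already acts transitively within a fixed duality type. I would handle this by explicitly constructing, within $\SO_0(n-1,1)$, the transformations realising transitivity (using the connectedness of the relevant Stiefel-type manifolds of adapted null frames), so that the only discrete data left is the duality type in even dimensions.
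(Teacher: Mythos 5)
Your proposal follows essentially the same route as the paper, which does little more than assert the transitivity of $\OO(n-1,1)$ and $\SO(n-1,1)$ on the set of totally null $m$-planes of real index $1$ (citing Kopczy\'{n}ski--Trautman) and read off the quotient from the stabiliser identified in the preceding lemma; your Witt-basis transitivity argument and the dimension count $\dim\OO(n-1,1)-\dim\simalg(m-1,\C)=m(m-1)+\epsilon(2m-1)$ are both correct. One sentence in your component analysis is wrong, however: quotienting by a stabiliser with \emph{more} connected components produces \emph{fewer} components, not more, since $\pi_0(G/H)\cong \pi_0(G)/\mathrm{im}\bigl(\pi_0(H)\to\pi_0(G)\bigr)$; so $\OO(n-1,1)/\Sim(m-1,\C)$ does not have ``twice as many components as $\OO(n-1,1)$, namely four''. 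The correct count is: $\OO(n-1,1)$ has four components; the stabiliser of a fixed self-dual $\mfN$ is necessarily contained in $\SO(n-1,1)$ (an orientation-reversing orthogonal map conjugates the Hodge star and hence swaps the two duality types) and it meets both components of $\SO(n-1,1)$ (e.g.\ the element acting as $k\mapsto -k$, $\ell\mapsto-\ell$ and the identity on the screenspace, which is the $-1\in\R^*$ of the stabiliser), so the quotient has $4/2=2$ components and each is the connected space $\SO(n-1,1)/\Sim(m-1,\C)$ --- which is exactly the conclusion you reach anyway via your final, correct, criterion. I would simply delete the faulty intermediate sentence and keep the last step.
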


\begin{rem}
When $n=2m$ and $m$ is even, points in $\Gr_m^{+,1} ({}^\C \mfV,g)$ and points in $\Gr_m^{-,1} ({}^\C \mfV,g)$ can be canonically identified by sending a self-dual Robinson structure to its complex conjugate, which is necessarily anti-self-dual.
\end{rem}
Clearly, the group $\Sim(m-1,\C)$ is a subgroup of $\Sim(n-2)$. We shall consider the quotient of the latter by the former in the next section.

\subsubsection{From null lines to Robinson structures}
In four dimensions, the correspondence between null lines and Robinson structures is one-to-one. In higher dimensions, this is no longer case. Instead, while a fixed Robinson structure $\mfN$ determines a real null line $\mfK$ as the real span of its intersection $\mfN \cap \bar{\mfN}$, a given null line $\mfK$ yields a family of totally null complex $m$-planes, those that intersect $\mfK$.

More specifically, there is a projection $\pi : \Gr_m^1 ({}^\C \mfV,g) \rightarrow \Gr_1 (\mfV,g)$, which sends a totally null complex $m$-plane $\mfN$ of real index $1$ to the real span of $\mfN \cap \bar{\mfN}$. The inverse image of a point $\mfK$ is then
$\pi^{-1} (\mfK) = \{ \mfN \in \Gr_m^1 (\mfV,g) : \dim (\mfN \cap \mfK ) > 0 \}$, which must be isomorphic to the quotient $\Sim(n-2)/\Sim(m-1,\C)$, or equivalently $\OO(n-2)/\U(m-1)$. In even dimensions, this space consists of the disjoint union of two connected homogeneous spaces, each isomorphic to $\SO(2m-2)/\U(m-1)$. The real dimensions of these spaces can readily be computed to be $(m-1)(m-2)$ and $m(m-1)$ in even and odd dimensions respectively.\footnote{This can also be seen at the Lie algebra level by considering the quotient $\simalg(n-2)$ by $\simalg(m-1,\C)$, or equivalently the complement of $\cu(m-1)$ (see \eqref{eq-cu}) in $\co(n-2)$, which is isomorphic to $\dbl \g_0^{(2,0)} \dbr \oplus \epsilon \dbl \g_0^{(1,0)} \dbr$.}

In summary,
\begin{prop}\label{prop-null2Rob}
Let $(\mfV,g)$ be $n$-dimensional Minkowski space with $n=2m+\epsilon$, $\epsilon \in \{0,1\}$. Then every null line in $\mfV$ corresponds to a family of Robinson structures in ${}^\C \mfV$ parametrised by points of the homogeneous space $\OO(n-2)/\U(m-1)$ of dimensions $(m-1)(m+2(\epsilon-1))$. Further, when $n$ is even, this family splits into two disjoint connected families of self-dual and anti-self-dual Robinson structures, each isomorphic to $\SO(2m-2)/\U(m-1)$.
\end{prop}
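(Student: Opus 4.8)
The plan is to identify, for a fixed null line $\mfK\subset\mfV$, the fibre over $\mfK$ of the projection $\pi\colon\Gr_m^1({}^\C\mfV,g)\to\Gr_1(\mfV,g)$, $\mfN\mapsto\dbl\mfN\cap\bar{\mfN}\dbr$, as the homogeneous space $\Sim(n-2)/\Sim(m-1,\C)$, and then to evaluate this quotient and count its connected components. The ingredients are in place: Proposition~\ref{prop-twistor} identifies $\Gr_m^1({}^\C\mfV,g)$ with $\OO(n-1,1)/\Sim(m-1,\C)$, Section~\ref{sec-sim-alg} identifies $\Gr_1(\mfV,g)$ with $\OO(n-1,1)/\Sim(n-2)$, and Section~\ref{sec-rob-alg} exhibits $\Sim(m-1,\C)$ as a closed subgroup of $\Sim(n-2)$.

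First I would note that $\pi$ is $\OO(n-1,1)$-equivariant, since $\OO(n-1,1)$ preserves the complexified metric and commutes with complex conjugation, so $g(\mfN\cap\bar{\mfN})=g\mfN\cap\overline{g\mfN}$. Because $\OO(n-1,1)$ acts transitively on the nonempty set $\Gr_m^1({}^\C\mfV,g)$ (Proposition~\ref{prop-twistor}) and on $\Gr_1(\mfV,g)$ (Section~\ref{sec-sim-alg}), the map $\pi$ is surjective and $\OO(n-1,1)$ permutes its fibres transitively. Hence $\Sim(n-2)=\Stab_{\OO(n-1,1)}(\mfK)$ acts transitively on $\pi^{-1}(\mfK)$: if $g\mfN'=\mfN''$ with $\mfN',\mfN''\in\pi^{-1}(\mfK)$, equivariance forces $g\mfK=\mfK$, so $g\in\Sim(n-2)$. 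The isotropy at any $\mfN\in\pi^{-1}(\mfK)$ is $\Stab_{\OO(n-1,1)}(\mfN)=\Sim(m-1,\C)$ (the condition $g\mfK=\mfK$ being automatic once $g\mfN=\mfN$), whence $\pi^{-1}(\mfK)\cong\Sim(n-2)/\Sim(m-1,\C)$.

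Next I would compute the quotient. Since $\Sim(n-2)\cong(\R^*\times\OO(n-2))\ltimes\R^{n-2}$ and $\Sim(m-1,\C)\cong(\R^*\times\U(m-1))\ltimes\R^{n-2}$ share the normal translation subgroup $\R^{n-2}$ and the dilation factor $\R^*$, these cancel and leave $\Sim(n-2)/\Sim(m-1,\C)\cong\OO(n-2)/\U(m-1)$; when $\epsilon=1$ the relevant subgroup is really $\U(m-1)$ enlarged by the order-two reflection in the line $\mfU$ of \eqref{eq-def-extraV0-odd}, but this extra $\Z_2$ affects neither the dimension nor the number of components. The dimension is $\binom{n-2}{2}-(m-1)^2$, which equals $(m-1)(m-2)$ for $n=2m$ and $(m-1)m$ for $n=2m+1$, i.e.\ $(m-1)(m+2(\epsilon-1))$ in both cases.

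Finally, for the even-dimensional splitting: in its real form $\U(m-1)$ lies in $\SO(2m-2)$ (a complex-linear isometry has real determinant $1$), so $\OO(2m-2)/\U(m-1)$ has exactly two connected components, each diffeomorphic to $\SO(2m-2)/\U(m-1)$. The self-duality type of a Robinson structure is a locally constant function on $\Gr_m^1({}^\C\mfV,g)$ (its two values carving out the connected components $\Gr_m^{+,1}({}^\C\mfV,g)$ and $\Gr_m^{-,1}({}^\C\mfV,g)$ of Proposition~\ref{prop-twistor}), hence constant on each component of $\pi^{-1}(\mfK)$. Both types occur over $\mfK$, since $\Sim(n-2)=\Stab_{\OO(n-1,1)}(\mfK)$ contains orientation-reversing elements (reflections in the screenspace) preserving $\pi^{-1}(\mfK)$ and interchanging the two duality types. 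Therefore the two components of $\pi^{-1}(\mfK)$ are exactly the self-dual and anti-self-dual families. I expect this last identification of the components with the duality types to be the only genuinely delicate point; the equivariance, the orbit argument, and the dimension count are routine once Proposition~\ref{prop-twistor} and the structures of $\Sim(n-2)$ and $\Sim(m-1,\C)$ are invoked.
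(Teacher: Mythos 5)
Your proof is correct and follows essentially the same route as the paper: identify the fibre $\pi^{-1}(\mfK)$ of the projection with $\Sim(n-2)/\Sim(m-1,\C)\cong\OO(n-2)/\U(m-1)$ via equivariance and orbit--stabiliser, compute its dimension, and read off the two duality components in even dimensions; you merely make explicit the transitivity and isotropy arguments that the paper leaves implicit, and your identification of the two components with the self-dual and anti-self-dual families via an orientation-reversing screen-space reflection correctly settles the one point the paper asserts without comment. Your odd-dimensional dimension count $(m-1)m$ agrees with the proposition's formula $(m-1)(m+2(\epsilon-1))$ (the $m(m+1)$ appearing in the paragraph preceding the proposition is a typo in the paper), and your remark about the extra $\Z_2$ in the odd-dimensional stabiliser is a legitimate subtlety that the paper itself glosses over.
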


\section{Classifications of curvature tensors}\label{sec-curvature}
The algebraic setting of section \ref{sec-algebra} extends naturally to tensor products of $\mfV$, the standard representation of $\so(n-1,1)$. We shall now be concerned with the invariant classification of curvature tensors, more particularly, we classify the irreducible $\so(n-1,1)$-modules
\begin{align}
 \mfF & := \{ \Phi_{abc} \in \otimes^2 \mfV : \Phi \ind{_{ab}} = \Phi \ind{_{(ab)}} \, , \Phi \ind{^a_a} = 0 \} \, , \label{eq-rep-F} \\
 \mfA & := \{ A_{abc} \in \otimes^3 \mfV : A \ind{_{abc}} = A \ind{_{a[bc]}} \, , A \ind{_{[abc]}} = 0 \, , A \ind{^a_{ab}} = 0 \} \, , \label{eq-rep-A} \\
 \mfC & := \{ C_{abcd} \in \otimes^4 \mfV : C \ind{_{abcd}} = C \ind{_{[ab][cd]}} \, , C \ind{_{[abc]d}} = 0 \, , C \ind{_{abc}^b} = 0 \} \, , \label{eq-rep-C}
\end{align}
of tracefree Ricci tensors, Cotton-York tensors, and of Weyl tensors respectively. We recall the dimensions of these modules in the following table
\vspace{-6.5mm}
\begin{center}
\begin{displaymath}
{\renewcommand{\arraystretch}{1.5}
\begin{array}{||c|c|g||}
\hline
 \mfV & 2m & 2m+1 \\
\hline
 \g \cong \wedge^2 \mfV & (2m-1)m & (2m+1)m \\
\hline
 \mfF & (2m-1)(m+1) & m(2m+3) \\
\hline
 \mfA & \frac{8}{3}m(m+1)(m-1) & \frac{1}{3}(2m-1)(2m+1)(2m+3) \\
\hline
 \mfC & \frac{1}{3}m(m+1)(2m+1)(2m-3) & \frac{1}{3}(m-1)(m+1)(2m+1)(2m+3) \\
 \hline
\end{array}}
\end{displaymath}
\end{center}
where, for convenience, we distinguish the even- and odd-dimensional cases by shading the latter in gray.

The $\simalg(n-2)$-invariant classifications are given first, and are then refined in terms of $\simalg(m-1,\C)$-modules. Needless to say that these classifications are also invariant under the Lie groups $\Sim(n-2)$ and $\Sim(m-1,\C)$. In the tables, we have abbreviated $\so(n-1,1)$-modules, $\simalg(n-2)$-modules, $\co(n-2)$-modules, $\simalg(m-1,\C)$-modules and $\cu(m-1)$-modules by $\so(n-1,1)$-mod, $\simalg$-mod, $\co$-mod, $\simalg_\C$-mod and $\cu$-mod respectively. Again, we treat both the even- and odd-dimensional cases at once: the rows of the additional modules occurring in odd dimensions only are shaded in gray. The symbol $\circledcirc$ denotes the Cartan product, which corresponds to the tracefree symmetric product for $\co(n-2)$-modules, and to the symmetric product or tracefree tensor product for $\cu(m-1)$-modules.

\subsection{$\simalg(n-2)$-invariant classifications}\label{sec-sim-class}
As in section \ref{sec-sim-alg}, we single out a null line $\mfK$ on $n$-dimensional Minkowski space $(\mfV,g)$, so that $\mfV$ admits a filtration \eqref{eq-sim-filtration} of $\simalg(n-2)$-modules. Then this filtration induces $\simalg(n-2)$-invariant filtrations on any tensor product of $\mfV$. This applies in particular to $\mfF$, $\mfA$ and $\mfC$. Each of the graded $\simalg(n-2)$-modules associated to these filtrations splits into a direct sum of irreducible $\simalg(n-2)$-modules. The effect of the action of the nilpotent part of $\simalg(n-2)$ on any associated $\co(n-2)$-invariant decompositions is encoded in the form of a diagram. We also discuss the relation with the classification of \cite{Coley2004} based on the null alignment formalism generalisating the Petrov classification of the Weyl tensor in section \ref{rem-PW-types}.

Before we start, it is worth mentioning that the irreducible $\co(n-2)$-modules occuring in the classifications of  $\mfF$, $\mfA$ and $\mfC$ can all be expressed as the Cartan product of the $\co(n-2)$-modules of $\mfV$ and $\g$, which we presently recall:
\vspace{-6.5mm}
\begin{center}
\begin{displaymath}
{\renewcommand{\arraystretch}{1.5}
\begin{array}{||c|c|c|g||}
\hline
\text{$\simalg$-mod} & \text{$\co$-mod} & \text{Dimension ($n=2m$)} & \text{Dimension ($n=2m+1$)}\\
\hline
\mfV_{\pm1}^0 & \mfV_{\pm1} & 1 & 1\\
\hline
\mfV_0^0 & \mfV_0 & 2m-2 & 2m-1 \\
\hline
\end{array}}
\end{displaymath}
\end{center}
\begin{center}
\begin{displaymath}
{\renewcommand{\arraystretch}{1.5}
\begin{array}{||c|c|c|g||}
\hline
\text{$\simalg$-mod} & \text{$\co$-mod} & \text{Dimension ($n=2m$)} & \text{Dimension ($n=2m+1$)}\\
\hline
\g_{\pm1}^0 & \mfV_{\pm1} & 2m-2 & 2m-1 \\
\hline
 \g_0^0 & \mfz_0 & 1 & 1 \\
 \g_0^1 & \so_0 & (m-1)(2m-3) & (m-1)(2m-1) \\
\hline
\end{array}}
\end{displaymath}
\end{center}
\begin{center}
\begin{minipage}[b]{0.45\linewidth}\centering
\begin{displaymath}
{\renewcommand{\arraystretch}{1.5}
\begin{array}{||c|c|c||}
\hline
\text{$\simalg$-mod} & \text{$\co$-mod} & \text{Dimension ($n=6$)} \\
\hline
\g_{\pm1}^0 & \mfV_{\pm1} & 4 \\
\hline
\end{array}}
\end{displaymath}
\end{minipage}
\begin{minipage}[b]{0.45\linewidth}\centering
\begin{displaymath}
{\renewcommand{\arraystretch}{1.5}
\begin{array}{||c|c|c||}
\hline
\text{$\simalg$-mod} & \text{$\co$-mod} & \text{Dimension ($n=6$)} \\
\hline
 \g_0^0 & \mfz_0 & 1 \\
 \g_0^{\pm1} & \so^\pm_0 & 3 \\
\hline
\end{array}}
\end{displaymath}
\end{minipage}
\end{center}

\subsubsection{The tracefree Ricci tensor}
\begin{prop}\label{prop-Ricci-sim}
The filtration \eqref{eq-sim-filtration} on $\mfV$ induces a filtration
\begin{align}\label{eq-filtration-F}
 \{ 0 \} =: \mfF^3 \subset \mfF^2 \subset \mfF^1 \subset \mfF^0 \subset \mfF^{-1} \subset \mfF^{-2} := \mfF \, ,
\end{align}
of $\simalg(n-2)$-modules on the space $\mfF$ defined by \eqref{eq-rep-F}.

Further, the associated graded $\simalg(n-2)$-module $\gr(\mfF)=\bigoplus_{i=-2}^2 \gr_i(\mfF)$ where $\gr_i(\mfF):= \mfF^i/\mfF^{i+1}$ splits into a direct sum
\begin{align*}
 \gr_{\pm2} (\mfF) & = \mfF_{\pm2}^0 \, , & \gr_{\pm1} (\mfF) & = \mfF_{\pm1}^0 \, , & \gr_0 (\mfF) & = \mfF_0^0 \oplus \mfF_0^1 \, ,
\end{align*}
of irreducible $\simalg(n-2)$-modules, where
\vspace{-6.5mm}
\begin{center}
\begin{displaymath}
{\renewcommand{\arraystretch}{1.5}
\begin{array}{||c|c|c|g||}
\hline
\text{$\simalg$-mod} & \text{$\co$-mod} & \text{Dimension ($n=2m$)} & \text{Dimension ($n=2m+1$)} \\
\hline
 \mfF_{\pm2}^0 & \mfV_{\pm1} \circledcirc \mfV_{\pm1} & 1 & 1 \\
\hline
 \mfF_{\pm1}^0 & \mfV_{\pm1} \circledcirc \mfV_0 & 2m-2 & 2m-1 \\
\hline
 \mfF_0^0 & \mfV_{-1} \circledcirc \mfV_1 & 1 & 1 \\
 \mfF_0^1 & \mfV_0 \circledcirc \mfV_0 & m(2m-3)  & (m-1)(2m+1) \\
 \hline
\end{array}}
\end{displaymath}
\end{center}

Finally, the $\simalg(n-2)$-module $\gr(\mfF)$ can be expressed by means of a $\simalg(n-2)$-invariant graph
\begin{equation*}
 \xymatrix@R=.1em{
	& & \mfF_0^1 \ar[dr] & &  \\
\mfF_2^0 \ar[r] & \mfF_1^0 \ar[dr] \ar[ur] & & \mfF_{-1}^0 \ar[r] & \mfF_{-2}^0 \\
	  & & \mfF_0^0  \ar[ur] & & }
\end{equation*}
where an arrow from $\mfF_i^j$ to $\mfF_{i-1}^k$ for some $i,j,k$ implies that $\breve{\mfF}_i^j \subset \g_1 \cdot \breve{\mfF}_{i-1}^k$ for any choice of irreducible $\co(n-2)$-modules  $\breve{\mfF}_i^j$ and $\breve{\mfF}_{i-1}^k$ linearly isomorphic to $\mfF_i^j$ and $\mfF_{i-1}^k$ respectively.
\end{prop}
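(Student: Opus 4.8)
The plan is to exploit the general machinery of parabolic-type filtrations already set up for $\g = \so(n-1,1)$ and transfer it to the tensor product module $\otimes^2 \mfV$, then descend to the irreducible submodule $\mfF$. First I would recall that a null line $\mfK$ equips $\mfV$ with the filtration $\mfV^2 \subset \mfV^1 \subset \mfV^0 \subset \mfV^{-1} = \mfV$ from \eqref{eq-sim-filtration}, together with the grading $\mfV = \mfV_1 \oplus \mfV_0 \oplus \mfV_{-1}$ from \eqref{eq-sim-grading} induced by a choice of $\ell^a$. Taking tensor products, $\otimes^2 \mfV$ inherits a $\simalg(n-2)$-invariant filtration indexed by $i \in \{2,1,0,-1,-2\}$, with $(\otimes^2\mfV)^i = \sum_{p+q=i} \mfV^p \otimes \mfV^q$, and a grading $\otimes^2\mfV = \bigoplus_{i=-2}^{2} (\otimes^2\mfV)_i$ with $(\otimes^2\mfV)_i = \bigoplus_{p+q=i} \mfV_p \otimes \mfV_q$; the point that makes this $\simalg(n-2)$-invariant (not just $\co(n-2)$-invariant) is precisely that $\mfV^j$ is $\simalg(n-2)$-invariant while the splitting $\mfV_j$ is only $\co(n-2)$-invariant, exactly as in the discussion around \eqref{eq-Pendiag-sim}.

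Next I would restrict to $\mfF \subset \odot^2 \mfV$, the tracefree symmetric part. Since $\mfF$ is a $\g$-submodule, $\mfF^i := \mfF \cap (\otimes^2\mfV)^i$ defines a $\simalg(n-2)$-invariant filtration, which one checks is exhaustive and separated with the four steps claimed in \eqref{eq-filtration-F}; the top piece $\mfF^1$ consists of tracefree symmetric tensors valued in $\mfV^1 \odot \mfV^0$ (i.e. $k_{(a} v_{b)}$ with $v^a \in \mfK^\perp$, including $k_a k_b$ as $\mfF^2$... wait, in fact $\mfF^2 = \{0\}$ here because $k_ak_b$ sits in filtration degree $2$ but a single rank-one null tensor is not traceless-excluded — rather $\mfF^2=0$ simply because the degree-$2$ graded piece $\mfF_2^0$ is the span of $k_ak_b$ and lives in $\gr_2$, so the filtration label convention is shifted). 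I would then compute the associated graded $\gr_i(\mfF) = \mfF^i/\mfF^{i+1}$ by taking the corresponding graded pieces of $\odot^2\mfV$ and removing the trace: using $\mfV = \mfV_1 \oplus \mfV_0 \oplus \mfV_{-1}$ with $g$ pairing $\mfV_1$ with $\mfV_{-1}$ and restricting to $h_{ab}$ on $\mfV_0$, one gets $\gr_2(\mfF) = \mfV_1\odot\mfV_1 \cong \R$ (the module $\mfF_2^0$), $\gr_1(\mfF) = \mfV_1 \otimes \mfV_0 \cong \R^{n-2}$ ($\mfF_1^0$), and in degree zero $\odot^2\mfV_0 \oplus (\mfV_1\odot\mfV_{-1})$ minus one trace, giving $\mfF_0^1$ (the tracefree symmetric part of $\odot^2\mfV_0$, i.e. $\so(n-2)$-harmonic rank-two) plus $\mfF_0^0$ (a one-dimensional piece, $h_{ab}$-trace direction combined with $k_{(a}\ell_{b)}$), together with the conjugate pieces in degrees $-1,-2$; all of these are listed in Table \ref{table-F-sim} which I would simply cite. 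Irreducibility of each $\mfF_i^j$ as a $\simalg(n-2)$-module follows because $\g_1$ acts trivially on the graded pieces and the $\co(n-2)$-action is already irreducible on each, by the representation theory recalled from \cite{vCap2009}.

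Finally, for the Penrose diagram I would compute, for each pair of adjacent graded pieces, whether the nilradical action $\g_1 \cdot (-): \gr_{i}(\mfF) \to \gr_{i-1}(\mfF)$ hits the relevant $\co(n-2)$-submodule $\breve{\mfF}_{i-1}^k$. Concretely, $\g_1 \cong \mfV_1 \otimes \mfV_0$ acts on a tensor by the adjoint action of $\so(n-1,1)$; applying an element $\xi \in \g_1$ to $k_ak_b \in \mfF_2^0$ lands in $\mfV_1 \odot \mfV_0 = \mfF_1^0$, giving the arrow $\mfF_2^0 \to \mfF_1^0$; applying $\xi$ to a generic $k_{(a}v_{b)}$ with $v^a \in \mfV_0$ produces both a $\mfV_0\odot\mfV_0$ component and a $\mfV_1\odot\mfV_{-1}$ component, which after projecting to the tracefree part and to the trace-direction realises both arrows $\mfF_1^0 \to \mfF_0^1$ and $\mfF_1^0\to\mfF_0^0$; and symmetrically from degree $0$ down to degrees $-1,-2$ one gets $\mfF_0^1 \to \mfF_{-1}^0$, $\mfF_0^0\to\mfF_{-1}^0$, $\mfF_{-1}^0\to\mfF_{-2}^0$ — matching the displayed graph exactly. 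The main obstacle, and the only genuinely non-formal point, is the bookkeeping of the trace condition: one must verify that imposing $\Phi^a{}_a = 0$ does not accidentally kill one of the two degree-zero summands or merge them, i.e. that $\gr_0(\mfF)$ really is $\mfF_0^0 \oplus \mfF_0^1$ with both nonzero and with the arrows landing as claimed; this is a short but careful linear-algebra check using $g = S + h$ from \eqref{eq-h} to separate the $k_{(a}\ell_{b)}$-direction from the $\mfV_0$-trace. Everything else is a mechanical consequence of the parabolic filtration formalism of \cite{vCap2009} applied to $\otimes^2\mfV$.
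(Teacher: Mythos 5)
Your overall route --- induce the filtration on $\otimes^2\mfV$ from \eqref{eq-sim-filtration}, intersect with the tracefree symmetric part, read off the graded pieces from the $\co(n-2)$-splitting $\mfV=\mfV_1\oplus\mfV_0\oplus\mfV_{-1}$, and then compute the nilpotent action to obtain the arrows --- is exactly the argument the paper relies on (it states the proposition without proof, deferring to the parabolic formalism of \cite{vCap2009}, Table \ref{table-F-sim} and the explicit representatives of Appendix \ref{ref-spinor-descript-rep}), and your identification of the graded pieces, their dimensions and the trace bookkeeping in degree zero are all correct.

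Two points deserve cleaning up. First, your parenthetical attempt to reconcile $\mfF^2=\{0\}$ with $\gr_2(\mfF)=\mfF_2^0\neq 0$ is incoherent as written: if $\gr_i(\mfF)=\mfF^i/\mfF^{i+1}$ and $\mfF^2=\{0\}$ then $\gr_{\pm2}(\mfF)$ would vanish. The actual resolution is that the displayed filtration \eqref{eq-filtration-F} is mis-indexed in the statement; by analogy with \eqref{eq-filtration-A} and \eqref{eq-filtration-C} it should read $\{0\}=:\mfF^3\subset\mfF^2\subset\mfF^1\subset\mfF^0\subset\mfF^{-1}\subset\mfF^{-2}:=\mfF$, with $\mfF^2=\langle k_ak_b\rangle$ and $\mfF^1=\mfV^1\odot\mfV^0$. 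You should say this outright rather than gesture at a ``shifted label convention.'' Second, you have the direction of the nilpotent action reversed: $\g_1\cong\mfV_1\otimes\mfV_0$ \emph{raises} homogeneity (it annihilates $k_ak_b$; it is $\g_{-1}$ that sends $k_ak_b$ into $\mfV_1\odot\mfV_0$), and the paper's convention is that an arrow from $\mfF_i^j$ to $\mfF_{i-1}^k$ records $\breve{\mfF}_i^j\subset\g_1\cdot\breve{\mfF}_{i-1}^k$; what you compute is the $\g_{-1}$-action of Remark \ref{rem-proj-sim}. For this particular module the graph is invariant under the $k\leftrightarrow\ell$ duality, so your final diagram is nonetheless correct, but the slip would produce wrong arrows in a less symmetric situation.
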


\subsubsection{The Cotton-York tensor}

\begin{prop}\label{prop-CY-sim}
The filtration \eqref{eq-sim-filtration} on $\mfV$ induces a filtration
\begin{align}\label{eq-filtration-A}
 \{ 0 \} =: \mfA^3 \subset \mfA^2 \subset \mfA^1 \subset \mfA^0 \subset \mfA^{-1} \subset \mfA^{-2} := \mfA \, ,
\end{align}
of $\simalg(n-2)$-modules on the space $\mfA$ defined by \eqref{eq-rep-A}

Further, when $n\neq6$, the associated graded $\prb$-module $\gr(\mfA)=\bigoplus_{i=-2}^2 \gr_i(\mfA)$ where $\gr_i(\mfA):= \mfA^i/\mfA^{i+1}$ splits into a direct sum
\begin{align*}
 \gr_{\pm2} (\mfA) & = \mfA_{\pm2}^0 \, , & \gr_{\pm1} (\mfA) & = \mfA_{\pm1}^0 \oplus \mfA_{\pm1}^1 \oplus \mfA_{\pm1}^2 \, , & \gr_{\pm1} (\mfA) & = \mfA_0^0 \oplus \mfA_0^1 \oplus \mfA_0^2  \, .
\end{align*}
of irreducible $\simalg(n-2)$-modules, where
\vspace{-6.5mm}
\begin{center}
\begin{displaymath}
{\renewcommand{\arraystretch}{1.5}
\begin{array}{||c|c|c|g||}
\hline
\text{$\simalg$-mod} & \text{$\co$-mod} & \text{Dimension ($n=2m > 6$)} & \text{Dimension ($n=2m+1$)} \\
\hline
 \mfA_{\pm2}^0 & \mfV_{\pm1} \circledcirc \g_{\pm1} & 2m-2 & 2m-1 \\
\hline
 \mfA_{\pm1}^0 & \mfV_{\pm1} \circledcirc \mfz_0 & 1 & 1  \\
 \mfA_{\pm1}^1 & \mfV_{\pm1} \circledcirc \so_0 & (m-1)(2m-3) & (m-1)(2m-1)  \\
 \mfA_{\pm1}^2 & \mfV_0 \circledcirc \g_{\pm1} & m(2m-3) & (m-1)(2m+1)  \\
\hline
 \mfA_0^0 & \mfV_0 \circledcirc \mfz_0 & 2m-2  & 2m-1  \\
 \mfA_0^1 & \mfV_{\mp1} \circledcirc \g_{\pm1} & 2m-2 & 2m-1 \\
 \mfA_0^2 & \mfV_0 \circledcirc \so_0 & \frac{8}{3}m(m-1)(m-2)  & \frac{1}{3}(2m-3)(2m-1)(2m+1)  \\
 \hline
\end{array}}
\end{displaymath}
\end{center}
with the proviso that when $n\leq4$, $\mfA_{\pm1}^2$ do not occur.

Finally, when $n \neq 6$, the $\simalg(n-2)$-module $\gr(\mfA)$ can be expressed by means of a $\simalg(n-2)$-invariant graph
\begin{equation*}
 \xymatrix@R=1em{
& \mfA_1^1 \oplus \mfA_1^2 \ar[r] \ar[ddr] & \mfA_0^2 \ar[r] & \mfA_{-1}^1 \oplus \mfA_{-1}^2 \ar[dr]  & \\
\mfA_2^0 \ar[ur] \ar[dr] & & & & \mfA_{-2}^0 \\
& \mfA_1^0 \ar[r] & \mfA_0^0 \oplus \mfA_0^1 \ar[r] \ar[uur] & \mfA_{-1}^0 \ar[ur] & }
\end{equation*}
where an arrow from $\mfA_i^j$ to $\mfA_{i-1}^k$ for some $i,j,k$ implies that $\breve{\mfA}_i^j \subset \g_1 \cdot \breve{\mfA}_{i-1}^k$ for any choice of irreducible $\co(n-2)$-modules  $\breve{\mfA}_i^j$ and $\breve{\mfA}_{i-1}^k$ isomorphic to $\mfA_i^j$ and $\mfA_{i-1}^k$ respectively.
\end{prop}

\begin{prop}\label{prop-CY-sim6}
Assume $n=6$. The filtration \eqref{eq-sim-filtration} on $\mfV$ induces a filtration \eqref{eq-filtration-A} of $\simalg(4)$-modules on the space $\mfA$ defined by \eqref{eq-rep-A}.

Further, the associated graded $\prb$-module $\gr(\mfA)=\bigoplus_{i=-2}^2 \gr_i(\mfA)$ where $\gr_i(\mfA):= \mfA^i/\mfA^{i+1}$ splits into a direct sum
\begin{align*}
 \gr_{\pm2} (\mfA) & = \mfA_{\pm2}^0 \, , & \gr_{\pm1} (\mfA) & = \mfA_{\pm1}^0 \oplus \mfA_{\pm1}^{1,+} \oplus \mfA_{\pm1}^{1,-} \oplus \mfA_{\pm1}^2 \, , & \gr_{\pm1} (\mfA) & = \mfA_0^0 \oplus \mfA_0^1 \oplus \mfA_0^{2,+} \oplus \mfA_0^{2,-}   \, .
\end{align*}
of irreducible $\simalg(4)$-modules, where
\vspace{-6.5mm}
\begin{center}
\begin{minipage}[b]{0.45\linewidth}\centering
\begin{displaymath}
{\renewcommand{\arraystretch}{1.5}
\begin{array}{||c|c|c||}
\hline
\text{$\simalg$-mod} & \text{$\co$-mod} & \text{Dimension} \\
\hline
 \mfA_{\pm2}^0 & \mfV_{\pm1} \circledcirc \g_{\pm1} & 4 \\
\hline
 \mfA_{\pm1}^0 & \mfV_{\pm1} \circledcirc \mfz_0 & 1 \\
 \mfA_{\pm1}^{1,+} & \mfV_{\pm1} \circledcirc \so_0^+ & 3 \\
  \mfA_{\pm1}^{1,-} & \mfV_{\pm1} \circledcirc \so_0^- & 3 \\
 \mfA_{\pm1}^2 & \mfV_0 \circledcirc \g_{\pm1} & 9 \\
 \hline
\end{array}}
\end{displaymath}
\end{minipage}
\begin{minipage}[b]{0.45\linewidth}\centering
\begin{displaymath}
{\renewcommand{\arraystretch}{1.5}
\begin{array}{||c|c|c||}
\hline
\text{$\simalg$-mod} & \text{$\co$-mod} & \text{Dimension} \\
\hline
 \mfA_0^0 & \mfV_0 \circledcirc \mfz_0 & 4 \\
 \mfA_0^1 & \mfV_{\mp1} \circledcirc \g_{\pm1} & 4 \\
 \mfA_0^{2,+} & \mfV_0 \circledcirc \so_0^- & 8 \\
 \mfA_0^{2,-} & \mfV_0 \circledcirc \so_0^+ & 8 \\
 \hline
\end{array}}
\end{displaymath}
\end{minipage}
\end{center}

Finally, the $\simalg(4)$-module $\gr(\mfA)$ can be expressed by means of a $\simalg(4)$-invariant graph
\begin{align*}
\xy
(-60,0)*+{\mfA_2^0}="s33",
(-30,20)*+{\mfA_1^2}="s30",
(-30,5)*+{\mfA_1^{1,-}}="s27",
(-30,-5)*+{\mfA_1^{1,+}}="s26",
(-30,-20)*+{\mfA_1^0}="s23",
(0,15)*+{\mfA_0^{2,-}}="s22",
(0,5)*+{\mfA_0^{2,+}}="s15",
(0,-15)*+{\mfA_0^0 \oplus \mfA_0^1}="s12",
(30,20)*+{\mfA_{-1}^2}="s9",
(30,5)*+{\mfA_{-1}^{1,-}}="s6",
(30,-5)*+{\mfA_{-1}^{1,+}}="s5",
(30,-20)*+{\mfA_{-1}^0}="s2",
(60,0)*+{\mfA_{-2}^0}="s1",
"s1"; "s2" ; **@{-} ?>*\dir{>}; "s1"; "s5" ; **@{-} ?>*\dir{>}; "s1"; "s6" ; **@{-} ?>*\dir{>}; "s1"; "s9" ; **@{-} ?>*\dir{>};
"s2"; "s12" ; **@{-} ?>*\dir{>};
"s5"; "s12" ; **@{-} ?>*\dir{>}; "s5"; "s15" ; **@{-} ?>*\dir{>};
"s6"; "s22" ; **@{-} ?>*\dir{>}; "s6"; "s12" ; **@{-} ?>*\dir{>}; 
"s9"; "s12" ; **@{-} ?>*\dir{>}; "s9"; "s15" ; **@{-} ?>*\dir{>}; "s9"; "s22" ; **@{-} ?>*\dir{>};
"s12"; "s23" ; **@{-} ?>*\dir{>}; "s12"; "s26" ; **@{-} ?>*\dir{>}; "s12"; "s27" ; **@{-} ?>*\dir{>}; "s12"; "s30" ; **@{-} ?>*\dir{>}; 
"s15"; "s30" ; **@{-} ?>*\dir{>}; 
"s22"; "s27" ; **@{-} ?>*\dir{>}; "s22"; "s30" ; **@{-} ?>*\dir{>};
"s23"; "s33" ; **@{-} ?>*\dir{>};
"s26"; "s33" ; **@{-} ?>*\dir{>};
"s15"; "s26" ; **@{-} ?>*\dir{>};
"s27"; "s33" ; **@{-} ?>*\dir{>};
"s30"; "s33" ; **@{-} ?>*\dir{>}
\endxy  
\end{align*}
where an arrow from $\mfA_i^j$ to $\mfA_{i-1}^k$ for some $i,j,k$ implies that $\breve{\mfA}_i^j \subset \g_1 \cdot \breve{\mfA}_{i-1}^k$ for any choice of irreducible $\co(n-2)$-modules  $\breve{\mfA}_i^j$ and $\breve{\mfA}_{i-1}^k$ isomorphic to $\mfA_i^j$ and $\mfA_{i-1}^k$ respectively.
\end{prop}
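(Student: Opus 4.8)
The plan is to run the argument of Proposition~\ref{prop-CY-sim} down to the level of $\co(n-2)$-modules, and then use the one feature special to $n=6$: that $\so(4)\cong\su(2)\oplus\su(2)$, so a $\co(4)$-module built from $\wedge^2\R^4$ is reducible. First, the filtration \eqref{eq-filtration-A} is obtained just as in Proposition~\ref{prop-CY-sim}: the grading element $E$ of \eqref{eq-grading-sim} acts on $\mfA\subset\mfV\otimes\wedge^2\mfV$ with integer eigenvalues, and since $\mfV_{\pm1}$ is one-dimensional and $A_{abc}$ is skew in $bc$, no allocation of the three slots gives eigenvalue $\pm3$; hence the eigenvalues lie in $\{-2,-1,0,1,2\}$, and for a chosen splitting \eqref{eq-sim-grading} one puts $\mfA^i:=\bigoplus_{j\ge i}\gr_j(\mfA)$. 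Independence of the splitting and $\simalg(4)$-invariance hold because $\g^0=\g_0\oplus\g_1=\simalg(4)$ is non-negatively graded.

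Second, I would decompose each graded piece $\gr_i(\mfA)$, a $\g_0=\co(4)$-module, into irreducibles. Resolving slot by slot with $\mfV=\mfV_1\oplus\mfV_0\oplus\mfV_{-1}$ and imposing $A_{[abc]}=0$ and $A^a{}_{ab}=0$, exactly as for Proposition~\ref{prop-CY-sim}, produces $\co(n-2)$-modules built out of the vector module $\mfV_0=\R^{n-2}$, its tracefree symmetric square, $\wedge^2\mfV_0$, the tracefree hook module, and trivial summands. For $n-2=4$ the only reducibilities are $\wedge^2\R^4=\wedge^2_+\oplus\wedge^2_-$ and the induced split of the tracefree hook module into its two chiral halves, while $\R^4$, $\mathrm{Sym}^2_0\R^4$ and the trivial module remain irreducible. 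Matching against the list of Proposition~\ref{prop-CY-sim}, the modules carrying a copy of $\wedge^2\R^{n-2}$ are precisely $\mfA_{\pm1}^1$ (isomorphic to $\wedge^2\mfV_0$) and $\mfA_0^2$ (isomorphic to the hook module of $\so(n-2)$); these split as $\mfA_{\pm1}^{1,+}\oplus\mfA_{\pm1}^{1,-}$ and $\mfA_0^{2,+}\oplus\mfA_0^{2,-}$, and no other summand of $\gr(\mfA)$ splits, which yields the stated decompositions of $\gr_{\pm2}(\mfA)$, $\gr_{\pm1}(\mfA)$ and $\gr_0(\mfA)$; irreducibility and dimensions then go into Table~\ref{table-A-sim6}. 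One point of care: at weight $0$ the vector-type isotypic component is two-dimensional ($\mfA_0^0$ and $\mfA_0^1$ are both isomorphic to $\R^4$), so, as in the generic case where they are grouped together, a choice is needed to single out $\mfA_0^0$ and $\mfA_0^1$ individually.

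Third, to draw the Penrose diagram I would compute the action of the nilpotent radical $\g_1\cong\mfV_0$ on the $\co(4)$-modules $\breve{\mfA}_i^j$. Using the basis decomposition of $A_{abc}$ from appendix~\ref{sec-spinor-descript} --- writing $A$ in terms of the components obtained by saturating slots with $k^a$, $\ell^a$ and screen vectors --- one applies a generator $\phi_{ab}=2k_{[a}v_{b]}$ of $\g_1$ and reads off, for each pair of adjacent weights, which component of $A$ in $\gr_i$ is produced from which component in $\gr_{i-1}$. For the summands not involving $\wedge^2\R^4$ this simply reproduces the arrows of Proposition~\ref{prop-CY-sim}. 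The new bookkeeping concerns the chirality labels: since $v\otimes w\mapsto v\wedge w$ from $\mfV_0\otimes\mfV_0$ into $\wedge^2\R^4$ has nonzero components in both $\wedge^2_+$ and $\wedge^2_-$, the action of $\g_1$ on a vector-type or trivial-type summand feeds into \emph{both} chiral summands it can reach, and conversely $\g_1$ applied to $\mfA_{\pm1}^{1,\pm}$ or $\mfA_0^{2,\pm}$ produces a definite mixture of the lower pieces that must be resolved explicitly. Carrying this out for the three pairs of adjacent weights, and tracking the two-dimensional multiplicity $\mfA_0^0\oplus\mfA_0^1$ at weight $0$, gives all the arrows in the statement.

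The main obstacle is this last step: because of the self-dual/anti-self-dual splitting one can no longer argue that $\g_1$ acts ``diagonally'' between matching chiral summands, and deciding precisely which of $\mfA_0^{2,+}$, $\mfA_0^{2,-}$ lies in $\g_1\cdot\breve{\mfA}_{\pm1}^{1,\pm}$ and in $\g_1\cdot(\mfA_0^0\oplus\mfA_0^1)$ requires the explicit $\su(2)\oplus\su(2)$-equivariant projectors rather than a dimension count. Once those are in hand the remaining checks are routine, and verifying $\sum_i\dim\gr_i(\mfA)=\dim\mfA$ provides a useful consistency check on the classification.
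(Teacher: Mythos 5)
Your strategy is the same one the paper relies on (it states this proposition without a written proof, leaving the content in Table \ref{table-A-sim6} and appendix \ref{sec-spinor-descript}): induce the filtration from the grading element, decompose each $\gr_i(\mfA)$ into $\co(4)$-irreducibles, and record the $\g_1$-action as a graph. Your module-level analysis is correct and complete: the eigenvalue range $\{-2,\dots,2\}$, the $\simalg(4)$-invariance of the filtration via the non-negative grading of $\g^0$, and the identification of exactly which summands split in $n=6$ all check out. Indeed $\mfA_{\pm1}^1\cong\wedge^2\R^4=(1,0)\oplus(0,1)$ and $\mfA_0^2$ is the $16$-dimensional hook module $(3/2,1/2)\oplus(1/2,3/2)$ of $\so(4)$, each half of dimension $8$, while $\R^4\cong(1/2,1/2)$, $\mathrm{Sym}^2_0\R^4\cong(1,1)$ and the trivial module stay irreducible; the dimension count $8+32+24=64=\dim\mfA$ confirms nothing is missed.

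The gap is the one you flag yourself: you describe how one \emph{would} compute the arrows but do not compute them, and for the $n=6$ case that computation is precisely the new content of the proposition relative to Proposition \ref{prop-CY-sim}. The qualitative outcome is not guessable from your setup: the graph in the statement shows that the chirality labels propagate "diagonally" along chains such as $\mfA_{-1}^{1,\pm}\to\mfA_0^{2,\pm}\to\mfA_1^{1,\pm}$, that $\mfA_0^{2,\pm}$ does \emph{not} feed into $\mfA_1^0$, and that $\mfA_1^2$ receives arrows from both chiral halves $\mfA_0^{2,+}$ and $\mfA_0^{2,-}$ as well as from $\mfA_0^0\oplus\mfA_0^1$ — none of which follows from a dimension or isotypy argument, exactly because $\mfA_{\pm1}^{1,+}$ and $\mfA_{\pm1}^{1,-}$ (resp.\ $\mfA_0^{2,+}$ and $\mfA_0^{2,-}$) are isotypic as abstract modules of equal dimension. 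To close the proof you need to apply a generator $\phi_{ab}=2k_{[a}v_{b]}$ of $\g_1$ to the explicit representatives of $\breve{\mfA}_{i-1}^k$ listed in appendix \ref{ref-spinor-descript-rep}, project the result with the self-dual/anti-self-dual projectors $\frac{1}{2}(\delta\pm *)$ on the $\wedge^2\mfV_0$-factor, and verify case by case which projections are nonzero; your write-up asserts this is "routine" but the asserted graph is not established until that is done.
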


\subsubsection{The Weyl tensor}

\begin{prop}\label{prop-Weyl-sim}
The filtration \eqref{eq-sim-filtration} on $\mfV$ induces a filtration
\begin{align}\label{eq-filtration-C}
 \{ 0 \} =: \mfC^3 \subset \mfC^2 \subset \mfC^1 \subset \mfC^0 \subset \mfC^{-1} \subset \mfC^{-2} := \mfC \, .
 \end{align}
of $\simalg(n-2)$-modules on the space $\mfC$ defined by \eqref{eq-rep-C}.

Further, when $n \neq 6$, the associated graded $\simalg(n-2)$-module $\gr(\mfC)=\bigoplus_{i=-2}^2 \gr_i(\mfC)$ where $\gr_i(\mfC):= \mfC^i/\mfC^{i+1}$ splits into a direct sum
\begin{align*}
 \gr_{\pm2} (\mfC) & = \mfC_{\pm2}^0 \, , & \gr_{\pm1} (\mfC) & = \mfC_{\pm1}^0 \oplus \mfC_{\pm1}^1 \, , & \gr_{\pm1} (\mfC) & = \mfC_0^0 \oplus \mfC_0^1 \oplus \mfC_0^2 \oplus \mfC_0^3 \, .
\end{align*}
of irreducible $\simalg(n-2)$-modules, where
\vspace{-6.5mm}
\begin{center}
\begin{displaymath}
{\renewcommand{\arraystretch}{1.5}
\begin{array}{||c|c|c|g||}
\hline
\text{$\simalg$-mod} & \text{$\co$-mod} & \text{Dimension ($n=2m \neq 6$)} & \text{Dimension ($n=2m+1$)} \\
\hline
 \mfC_{\pm2}^0 & \g_{\pm1} \circledcirc \g_{\pm1} & m(2m-3) & (m-1)(2m+1) \\
\hline
 \mfC_{\pm1}^0 & \g_{\pm1} \circledcirc \mfz_0 & 2m-2 & 2m-1  \\
 \mfC_{\pm1}^1 & \g_{\pm1} \circledcirc \so_0 & \frac{8}{3}m(m-1)(m-2) & \frac{1}{3}(2m-1)(2m+1)(2m-3)  \\
\hline
 \mfC_0^0 & \mfz_0 \circledcirc \mfz_0 & 1 & 1 \\
 \mfC_0^1 & \mfz_0 \circledcirc \so_0 & (m-1)(2m-3)  & (m-1)(2m-1)  \\
 \mfC_0^2 & \g_1 \circledcirc \g_{-1} & m (2m-3) &  (m-1)(2m+1) \\
 \mfC_0^3 & \so_0 \circledcirc \so_0 & \frac{1}{3}m(m-1)(2m-1)(2m-5) & \frac{1}{3}m(m-2)(2m-1)(2m+1)  \\
 \hline
\end{array}}
\end{displaymath}
\end{center}
with the proviso that when $n = 4$, $\mfC_0^2$ does not occur.

Finally, when $n \neq 6$, the $\simalg(n-2)$-module $\gr(\mfA)$ can be expressed by means of a $\simalg(n-2)$-invariant graph
\begin{equation*}
 \xymatrix@R=1em{
	&	& \mfC_0^3 \ar[ddr] & & \\
       & 	& & &  \\
	& \mfC_1^1 \ar[uur] \ar[r] \ar[ddr]  & \mfC_0^2 \ar[r] \ar[ddr] & \mfC_{-1}^1 \ar[dr] &  \\
\mfC_2^0 \ar[ur] \ar[dr] & & & & \mfC_{-2}^0 \\
	  & \mfC_1^0 \ar[ddr] \ar[r] \ar[uur] & \mfC_0^1  \ar[uur] \ar[r] & \mfC_{-1}^0 \ar[ur] & \\
       & & & &  \\
&	& \mfC_0^0  \ar[uur] & & }
\end{equation*}
where an arrow from $\mfC_i^j$ to $\mfC_{i-1}^k$ for some $i,j,k$ implies that $\breve{\mfC}_i^j \subset \g_1 \cdot \breve{\mfC}_{i-1}^k$ for any choice of irreducible $\co(n-2)$-modules  $\breve{\mfC}_i^j$ and $\breve{\mfC}_{i-1}^k$ isomorphic to $\mfC_i^j$ and $\mfC_{i-1}^k$ respectively.
\end{prop}

\begin{prop}\label{prop-Weyl-sim6}
Assume $n=6$. The filtration \eqref{eq-sim-filtration} on $\mfV$ induces a filtration \eqref{eq-filtration-C} of $\simalg(4)$-modules on the space $\mfC$ defined by \eqref{eq-rep-C}.

Further, the associated graded $\simalg(4)$-module $\gr(\mfC)=\bigoplus_{i=-2}^2 \gr_i(\mfC)$ where $\gr_i(\mfC):= \mfC^i/\mfC^{i+1}$ splits into a direct sum
\begin{align*}
 \gr_{\pm2} (\mfC) & = \mfC_{\pm2}^0 \, , & \gr_{\pm1} (\mfC) & = \mfC_{\pm1}^0 \oplus \mfC_{\pm1}^{1,+} \oplus \mfC_{\pm1}^{1,-} \, , & \gr_{\pm1} (\mfC) & = \mfC_0^0 \oplus \mfC_0^{1,+} \oplus \mfC_0^{1,-} \oplus \mfC_0^2 \oplus \mfC_0^{3,+} \oplus \mfC_0^{3,-}  \, .
\end{align*}
of irreducible $\simalg(4)$-modules, where
\vspace{-6.5mm}
\begin{center}
\begin{minipage}[b]{0.45\linewidth}\centering
\begin{displaymath}
{\renewcommand{\arraystretch}{1.5}
\begin{array}{||c|c|c||}
\hline
\text{$\simalg$-mod} & \text{$\g_0$-mod} & \text{Dimension} \\
\hline
 \mfC_{\pm2}^0 & \g_{\pm1} \circledcirc \g_{\pm1} & 9 \\
\hline
 \mfC_{\pm1}^0 & \g_{\pm1} \circledcirc \mfz_0 & 4 \\
 \mfC_{\pm1}^{1,+} & \g_{\pm1} \circledcirc \so_0^+ & 8\\
 \mfC_{\pm1}^{1,-} & \g_{\pm1} \circledcirc \so_0^- & 8\\
 \hline
\end{array}}
\end{displaymath}
\end{minipage}
\begin{minipage}[b]{0.45\linewidth}\centering
\begin{displaymath}
{\renewcommand{\arraystretch}{1.5}
\begin{array}{||c|c|c||}
\hline
\text{$\simalg$-mod} & \text{$\g_0$-mod} & \text{Dimension} \\
\hline
 \mfC_0^0 & \mfz_0 \circledcirc \mfz_0 & 1 \\
 \mfC_0^{1,\pm} & \mfz_0 \circledcirc \so_0^\pm & 3 \\
 \mfC_0^2 & \g_1 \circledcirc \g_{-1} & 9 \\
 \mfC_0^{3,\pm} & \so_0^\pm \circledcirc \so_0^\pm & 5 \\
 \hline
\end{array}}
\end{displaymath}
\end{minipage}
\end{center}

Finally, the $\simalg(4)$-module $\gr(\mfA)$ can be expressed by means of a $\simalg(4)$-invariant graph
\begin{align*}
\xy
(-60,0)*+{\mfC_2^0}="s13",
(-30,20)*+{\mfC_1^{1,+}}="s12",
(-30,10)*+{\mfC_1^{1,-}}="s11",
(-30,-15)*+{\mfC_1^0}="s10",
(0,35)*+{\mfC_0^{3,+}}="s9",
(0,25)*+{\mfC_0^{3,-}}="s8",
(0,10)*+{\mfC_0^2}="s7",
(0,-10)*+{\mfC_0^{1,+}}="s6",
(0,-20)*+{\mfC_0^{1,-}}="s5",
(0,-35)*+{\mfC_0^0}="s4",
(30,20)*+{\mfC_{-1}^{1,+}}="s3",
(30,10)*+{\mfC_{-1}^{1,-}}="s2",
(30,-15)*+{\mfC_{-1}^0}="s1",
(60,0)*+{\mfC_{-2}^0}="s0",
"s0"; "s1" ; **@{-} ?>*\dir{>};
"s0"; "s2" ; **@{-} ?>*\dir{>};
"s0"; "s3" ; **@{-} ?>*\dir{>};
"s1"; "s6" ; **@{-} ?>*\dir{>};
"s1"; "s5" ; **@{-} ?>*\dir{>};
"s1"; "s4" ; **@{-} ?>*\dir{>};
"s1"; "s7" ; **@{-} ?>*\dir{>};
"s2"; "s8" ; **@{-} ?>*\dir{>};
"s2"; "s5" ; **@{-} ?>*\dir{>};
"s2"; "s7" ; **@{-} ?>*\dir{>};
"s3"; "s6" ; **@{-} ?>*\dir{>};
"s3"; "s9" ; **@{-} ?>*\dir{>};
"s3"; "s7" ; **@{-} ?>*\dir{>};
"s4"; "s10" ; **@{-} ?>*\dir{>};
"s5"; "s11" ; **@{-} ?>*\dir{>};
"s6"; "s12" ; **@{-} ?>*\dir{>};
"s7"; "s10" ; **@{-} ?>*\dir{>};
"s7"; "s11" ; **@{-} ?>*\dir{>};
"s7"; "s12" ; **@{-} ?>*\dir{>};
"s8"; "s11" ; **@{-} ?>*\dir{>};
"s9"; "s12" ; **@{-} ?>*\dir{>};
"s5"; "s10" ; **@{-} ?>*\dir{>};
"s6"; "s10" ; **@{-} ?>*\dir{>};
"s7"; "s10" ; **@{-} ?>*\dir{>};
"s10"; "s13" ; **@{-} ?>*\dir{>};
"s11"; "s13" ; **@{-} ?>*\dir{>};
"s12"; "s13" ; **@{-} ?>*\dir{>};
\endxy  
\end{align*}
where an arrow from $\mfC_i^j$ to $\mfC_{i-1}^k$ for some $i,j,k$ implies that $\breve{\mfC}_i^j \subset \g_1 \cdot \breve{\mfC}_{i-1}^k$ for any choice of irreducible $\co(n-2)$-modules  $\breve{\mfC}_i^j$ and $\breve{\mfC}_{i-1}^k$ linearly isomorphic to $\mfC_i^j$ and $\mfC_{i-1}^k$ respectively.
\end{prop}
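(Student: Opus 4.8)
The plan is to run the argument of Proposition~\ref{prop-Weyl-sim} essentially verbatim; the only genuinely new feature in dimension $n=6$ is that the screenspace $\mfV_0$ is four-dimensional, so $\co(n-2)=\co(4)=\so(4)\oplus\R$ is no longer simple. Concretely, the Hodge star on $\wedge^2\mfV_0$ squares to the identity and splits $\wedge^2\mfV_0=\wedge^2_+\mfV_0\oplus\wedge^2_-\mfV_0$ into two $3$-dimensional subspaces, so that several of the $\simalg(4)$-irreducible constituents of $\gr(\mfC)$ refine into a self-dual and an anti-self-dual piece.

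First I would establish the filtration. Fixing the null line $\mfK$ and a grading \eqref{eq-sim-grading} adapted to it, with grading element $E_{ab}\in\g_0\subset\simalg(4)$, the element $E_{ab}$ acts diagonalisably on $\mfC\subset\otimes^4\mfV$; since $\mfV_1$ is one-dimensional, $\wedge^2\mfV_1=0$, so a tensor with Weyl symmetries cannot carry two $E$-weight-$1$ indices within a single skew pair, and hence the $E$-eigenvalues on $\mfC$ all lie in $\{-2,-1,0,1,2\}$. Taking $\mfC^i$ to be the sum of the eigenspaces of $E$-weight $\geq i$ gives the filtration \eqref{eq-filtration-C}; it does not depend on the choice of the line $\mfV_{-1}$ complementary to $\mfV^0$, and it is $\simalg(4)$-invariant since $\g_1\cdot\mfC^i\subset\mfC^{i+1}$ and $\g_0\cdot\mfC^i\subset\mfC^i$.

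Next I would analyse the associated graded module. Using the splitting, one identifies $\gr_i(\mfC)$ with the $E$-weight-$i$ eigenspace $\breve{\mfC}_i\subset\mfC$, a $\co(4)$-submodule, which I would describe through the usual null-frame (boost-weight) components of a Weyl tensor: $C_{0i0j}$ at weight $2$; $C_{010i}$ together with the tracefree part of $C_{0ijk}$ at weight $1$; $C_{0101}$, $C_{01ij}$, $C_{(0|i|1)j}$ and the screen-Weyl tensor $C_{ijkl}$ at weight $0$; and their mirrors at weights $-1$ and $-2$. Decomposing each $\breve{\mfC}_i$ into $\co(4)$-irreducibles, the symmetric tracefree tensors $\mfC_{\pm2}^0$, the vectors $\mfC_{\pm1}^0$, the scalar $\mfC_0^0$ and the symmetric tracefree tensor $\mfC_0^2$ stay irreducible under $\SO(4)$, whereas the $2$-form part $\mfC_0^1\cong\wedge^2\mfV_0$, the screen-Weyl part $\mfC_0^3$ and the hook part $\mfC_{\pm1}^1$ each split into a self-dual and an anti-self-dual summand; encoding $\so(4)$-representations as ordered pairs of $\su(2)$-spins, $\mfC_0^{1,\pm}$ are of types $(1,0)$ and $(0,1)$, $\mfC_0^{3,\pm}$ of types $(2,0)$ and $(0,2)$, and $\mfC_{\pm1}^{1,\pm}$ of types $(\tfrac32,\tfrac12)$ and $(\tfrac12,\tfrac32)$, and a dimension count against $\dim\mfC=84$ confirms that no constituent is missing. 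Matching with Table~\ref{table-C-sim} fixes the labelling.

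Finally I would determine the graph. The nilpotent part $\g_1\cong\mfV_1\otimes\mfV_0\cong\mfV_0$ acts on $\gr(\mfC)$ as a $\co(4)$-equivariant operator raising the $E$-weight by one, and for each pair $(\breve{\mfC}_{i-1}^k,\breve{\mfC}_i^j)$ one records an arrow precisely when the composite $\breve{\mfC}_{i-1}^k\hookrightarrow\mfV_0\otimes\breve{\mfC}_{i-1}^k\xrightarrow{\ \g_1\ }\breve{\mfC}_i\twoheadrightarrow\breve{\mfC}_i^j$ is non-zero; by Schur's lemma this is controlled by the $\SU(2)\times\SU(2)$ branching of $(\tfrac12,\tfrac12)\otimes\breve{\mfC}_{i-1}^k$, up to checking that the relevant isotypic component of $\g_1$ does not accidentally vanish. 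The coarse shape of the graph is that of Proposition~\ref{prop-Weyl-sim} with each split node replaced by its two halves, and the new content is the chirality selection rule: $\g_1$ links a chiral node ($\mfC_0^{1,\pm}$, $\mfC_0^{3,\pm}$, $\mfC_{\pm1}^{1,\pm}$) only to chiral nodes of the same handedness --- together with the non-chiral nodes $\mfC_0^0$, $\mfC_0^2$, $\mfC_{\pm1}^0$ and $\mfC_{\pm2}^0$ --- while any non-chiral node adjacent to a split node is linked to both of its halves. This is the step I expect to be the main obstacle: pinning down, for each split node, which handed half is reached by $\g_1$ from a given neighbour (for instance, that $\mfC_1^{1,+}$ maps into $\mfC_0^{3,+}$ but not $\mfC_0^{3,-}$, that both $\mfC_0^{1,+}$ and $\mfC_0^{1,-}$ are reached from $\mfC_{-1}^0$, and that $\mfC_0^2$ maps onto both $\mfC_1^{1,+}$ and $\mfC_1^{1,-}$). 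This is a finite but somewhat delicate $\SU(2)\times\SU(2)$ computation, most transparently carried out in the index/spinor formalism of the appendices using the screenspace volume form; the residual ambiguities are fixed by the irreducibility of $\mfC$ as an $\so(5,1)$-module, which forces the graph to be connected --- that $\g_1$ reaches every node from $\gr_{-2}(\mfC)$ and $\g_{-1}$ reaches every node from $\gr_2(\mfC)$ --- since otherwise one would obtain a proper $\so(5,1)$-submodule of $\mfC$. Once this is settled, the $\simalg(4)$-invariance of the graph is automatic, being assembled from $\co(4)$-equivariant maps compatible with the filtration.
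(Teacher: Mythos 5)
Your proposal is correct and follows exactly the route the paper's framework intends (the paper itself prints no proof of this proposition, relying on the parabolic filtration/grading machinery of Section~\ref{sec-algebra}, the splitting $\so(4)\cong\su^+(2)\oplus\su^-(2)$, and the explicit representatives of Appendix~\ref{sec-spinor-descript}): your weight bound, the $84$-dimensional count against Table~\ref{table-C-sim6}, and the chirality selection rule for the arrows all match the stated graph. The one step to keep honest is that connectivity/irreducibility only forces arrows to exist somewhere, so the individual non-vanishing checks via the $\SU(2)\times\SU(2)$ branching (e.g.\ $(\tfrac12,\tfrac12)\otimes(\tfrac32,\tfrac12)\not\supset(0,2)$, excluding $\mfC_0^{3,-}$ from the neighbours of $\mfC_1^{1,+}$) do have to be carried out, as you indicate.
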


\subsubsection{Projections}\label{sec-proj-sim}
Let $\mathfrak{D}$ denote any of $\mfF$, $\mfA$ or $\mfC$ (or any irreducible $\so(n-1,1)$-module for that matter), and $\{ \mfD^i \}$ the $\simalg(n-2)$-invariant filtration induced by a null line $\mfK$, and $\{ \gr_i(\mfD)\}$ its associated graded module with irreducibles $\mfD_i^j$. Given a choice of splitting, irreducible $\co(n-2)$-modules will be denoted $\breve{\mfD}_i^j$ as before.

In explicit computations, it is often easier to characterise elements of $\mfD$ that do not lie in a particular $\simalg(n-2)$-submodule. For this purpose, we introduce
the natural projections
\begin{align}\label{eq-proj-sim}
{}^\mfD \Pi_i & : \mfD \rightarrow \mfD /\mfD^{i+1} \, , &
{}^\mfD \breve{\Pi}_i^j & : \mfD \rightarrow \breve{\mfD}_i^j \, .
\end{align}
Clearly, the kernel of ${}^\mfD \Pi_i$ is the $\simalg(n-2)$-submodule $\mfD^{i+1}$. However, the kernel of ${}^\mfD \breve{\Pi}_i^j$ is not $\simalg(n-2)$-invariant, but depends on a choice of a splitting $\mfD_i$ and $\breve{\mfD}_i^j$ since it consists of $\mfD^{i+1}$, $\mfD_{i-1}$ and $\breve{\mfD}_i^k$ for all $k \neq j$. Since we are essentially concerned with $\simalg(n-2)$-invariant conditions, for any $D \in \mfD$, we shall write
\begin{align}\label{eq-proj-sim2}
{}^\mfD \Pi_i^j (D) & = 0 \, , & & \Longleftrightarrow & {}^\mfD \breve{\Pi}_i^j (D) & = 0 \, , & \mbox{for any splitting \eqref{eq-sim-grading}.}
\end{align}

\begin{rem}\label{rem-proj-sim}
One way of verifying the RHS of \eqref{eq-proj-sim2} is to trace down the arrows in the $\simalg(n-2)$-invariant graphs above starting from the module $\mfD_i^j$, in effect computing the action of $\g_{-1}$ on a given $\breve{\mfD}_i^j$, thereby produces a subgraph.\footnote{This subgraph is however not $\simalg(n-2)$-invariant, but invariant under $\g_{-1} \oplus \g_0$.} If, for a homogeneity $k,i$, say, the $\co(n-2)$-modules $\{ \breve{\mfD}_k^\ell \}$ lying in this subgraph are of distinct dimensions, then ${}^\mfD \breve{\Pi}_k^\ell (D) = 0$. However, for isotypic $\co(n-2)$-modules $\{\breve{\mfD}_k^\ell\}$ (i.e. of the same dimensions), one cannot in general expect ${}^\mfD \breve{\Pi}_k^\ell (D) = 0$, but algebraic conditions among $\{ {}^\mfD  \breve{\Pi}_k^\ell (D) \}$.
\end{rem}

\begin{rem}\label{rem-proj-sim-exp}
The maps ${}^\mfD \Pi_i^j$ occurring in \eqref{eq-proj-sim2} are expressed explicitly in appendix \ref{ref-spinor-descript-proj}, where they are interpreted as generalisations of the Bel-Debever criteria of \cite{Ortaggio2009b}.
\end{rem}

\subsubsection{Relation to the Petrov types in the null alignment formalism}
\label{rem-PW-types}
To make contact with reference \cite{Coley2004}, we recast a number of their definitions in our language:
\begin{itemize}
\item Let $\mfK$ be a null line in $(\mfV,g)$, i.e. an element of $\Gr_1 (\mfV,g)$ so that $\mfK$ induces a $\so(n-1,1)_\mfK$-invariant filtration \eqref{eq-filtration-C} on $\mfC$ where $\so(n-1,1)_\mfK \cong \simalg(n-2)$ is the stabiliser of $\mfK$. Then $\mfK$ is a \emph{Weyl aligned null direction (WAND)} of a Weyl tensor $C \ind{_{abcd}}$ if $C \ind{_{abcd}} \in \mfC^{-1}$.
\item A Weyl tensor $C_{abcd} \in \mfC$ is said to be (at least) of \emph{Petrov type I, II, III, or N,} respectively, if there exists a WAND $\mfK \in \Gr_1 (\mfV,g)$ with induced $\simalg(n-2)$-invariant filtration \eqref{eq-filtration-C} on $\mfC$, such that $C \ind{_{abcd}} \in \mfC^{-1}, \mfC^0, \mfC^1$, or $\mfC^2$, respectively.
\end{itemize}
In the null alignment formalism, one generally insists on choosing a null line with respect to which the Weyl tensor degenerates `most'. Thus, once such a null line has been fixed, the null alignment classification coincides with the present $\simalg(n-2)$-invariant classification, at least in broad terms.\footnote{Our approach emphasises the $\simalg(n-2)$-invariance of the classification of the Weyl tensor. For this reason, we have deliberately excluded from the discussion the Petrov subtypes $\mathrm{II_i}$, D and $\mathrm{III_i}$ defined in \cite{Coley2004}. Such subtypes break the $\simalg(n-2)$-invariance, but it is straightforward to relate these to our approach by fixing a splitting of the associated graded module $\gr(\mfC)$, in which case the $\simalg(n-2)$-invariant classification reduces to a $\co(n-2)$-invariant classification.} We refer the reader to the literature for details. We summarise the comparison between the terminology and notation of \cite{Coley2004}, and ours in the following table:
\begin{center}
{\renewcommand{\arraystretch}{1.1}
\renewcommand{\tabcolsep}{0.2cm}
\begin{tabular}{|c|c|c|c|}
\hline
Petrov types & $\Sim(n-2)$ condition & Petrov subtypes & $\Sim(n-2)$ condition \\
\hline
\hline
G & --  & -- & -- \\
\hline
\multirow{2}{*}{I} & \multirow{2}{*}{${}^\mfC \Pi_{-2}(C) =0$}
 & I(a) & ${}^\mfC \Pi_{-1}^0(C) = 0$  \\
  &  & I(b) & ${}^\mfC \Pi_{-1}^1(C) = 0$  \\
   \hline
\multirow{4}{*}{II} & \multirow{4}{*}{${}^\mfC \Pi_{-1}(C) =0$} 
  & II(a) & ${}^\mfC \Pi_0^0(C) = 0$ \\
  & & II(d) & ${}^\mfC \Pi_0^1(C) = 0$  \\
  & & II(b) & ${}^\mfC \Pi_0^2(C) = 0$  \\
  & & II(c) & ${}^\mfC \Pi_0^3(C) = 0$  \\
\hline
\multirow{2}{*}{III} & \multirow{2}{*}{${}^\mfC \Pi_0(C) =0$}
   & III(a) & ${}^\mfC \Pi_1^0(C) = 0$  \\
   & & III(b) & ${}^\mfC \Pi_1^1(C) = 0$  \\
  \hline
 N & ${}^\mfC \Pi_1 (C) =0$ & -- & -- \\
 \hline
 O & ${}^\mfC \Pi_2 (C) =0$ & -- & -- \\
\hline
\end{tabular}}
\end{center}
Here the maps ${}^\mfC \Pi_i^j$ are defined in section \ref{sec-proj-sim}. We must emphasise that the above subtypes are only the buildling blocks of other types given in \cites{Coley2004,Ortaggio2009b}. Thus, for instance, the Petrov subtype type II(abc) is characerised by the Weyl tensor belonging to the module $\mfC^0$ and satisfying ${}^\mfC \Pi_0^0 (C) = {}^\mfC \Pi_0^2 (C) = {}^\mfC \Pi_0^3 (C) = 0$, and so on. We finally note that the $\simalg(n-2)$-submodule characterised by ${}^\mfC \Pi_1^0 (C) = 0$ (since it implies ${}^\mfC \Pi_0^i (C) = 0$ for $i=0,1,2$ and ${}^\mfC \Pi_{-1} (C) = 0$) corresponds to the Petrov type denoted II'(abd) in \cite{Ortaggio2013}.

All together, when $n>5$, we count $28$ distinct $\Sim(n-2)$-submodules of $\mfC$. In terms of the Petrov types, these define
\begin{itemize}
\item each of types G, N and O;
\item $5$ subtypes of type I;
\item $17$ of type II;
\item $3$ of type III.
\end{itemize}
When $n=5$, we count $19$ distinct $\Sim(n-2)$-submodules of $\mfC$, $8$ of which being subtypes of type II.

\subsection{$\simalg(m-1,\C)$-invariant classifications}\label{sec-rob-class}
We now introduce a Robinson structure $\mfN$ on $(\mfV,g)$ with associated null line $\mfK$, i.e. $\mfN \cap \bar{\mfN} = {}^\C \mfK$. Assume $n=2m+\epsilon$ with $\epsilon \in \{ 0,1\}$. The null line $\mfK$ gives rise to the $\simalg(n-2)$-classifications of $\mfF$, $\mfA$ and $\mfC$ of the previous section, which we shall henceforth assume. There is a further splitting of the associated graded $\simalg(n-2)$-modules $\gr(\mfF)$, $\gr(\mfA)$ and $\gr(\mfC)$ into irreducible $\simalg(m-1,\C)$-modules, where $\simalg(m-1,\C)$ is the stabiliser of $\mfN$. In effect, this splitting is the result of the branching rule arising from the inclusion $\uu(m-1) \subset \so(m-1)$ of the semi-simple part $\g_0$ of $\simalg(n-2)$. The classifications again include $\simalg(m-1,\C)$-invariant graphs, reflecting the nilpotent action of $\simalg(m-1,\C)$. Unlike in the $\simalg(n-2)$ case, one does not need to distinguish between the cases $n=6$ and $n\neq6$ -- see appendix \ref{sec-low-dim}.

\begin{nota}
The even-dimensional and odd-dimensional cases differ only by the existence of additional irreducible $\simalg(m-1,\C)$-modules in the latter. The distinction between these two cases will be made by the appropriate value of $\epsilon$ and the use of gray fonts and dotted arrows in the $\simalg(m-1,\C)$-invariant graphs.
\end{nota}

As in the $\simalg(n-2)$ case, the irreducible $\cu(n-2)$-modules occuring in the classifications of  $\mfF$, $\mfA$ and $\mfC$ can all be expressed as the Cartan product of the $\cu(n-2)$-modules of $\mfV$ and $\g$, which we recall below:
\vspace{-6.5mm}
\begin{center}
\begin{minipage}[b]{0.45\linewidth}\centering
\begin{displaymath}
{\renewcommand{\arraystretch}{1.5}
\begin{array}{||c|c|c||}
\hline
\text{$\simalg_\C$-mod} & \text{$\cu$-mod} & \text{Dimension}  \\
 \hline
 \mfV_{\pm1}^0 & \mfV_{\pm1} & 1 \\
 \hline
\end{array}}
\end{displaymath}
\end{minipage}
\begin{minipage}[b]{0.45\linewidth}\centering
\begin{displaymath}
{\renewcommand{\arraystretch}{1.5}
\begin{array}{||c|c|c||}
\hline
\text{$\simalg_\C$-mod} & \text{$\cu$-mod} & \text{Dimension}  \\
 \hline
 \mfV_0^{0,0} & \dbl \mfV_0^{(1,0)} \dbr & 2m-2 \\
 \rowcolor{Gray} \mfV_0^{1,0} & [ \mfV_0^{(1,0)} ] & 1 \\
 \hline
\end{array}}
\end{displaymath}
\end{minipage}
\end{center}
\begin{center}
\begin{minipage}[b]{0.45\linewidth}\centering
\begin{displaymath}
{\renewcommand{\arraystretch}{1.5}
\begin{array}{||c|c|c||}
\hline
\text{$\simalg_\C$-mod} & \text{$\cu$-mod} & \text{Dimension}  \\
\hline
 \g_{\pm1}^{0,0} & \dbl \g_{\pm1}^{(1,0)} \dbr & 2m-2 \\
 \rowcolor{Gray}  \g_{\pm1}^{0,1} & [ \g_{\pm1}^{(0,0)} ] & 1 \\
\hline
\end{array}}
\end{displaymath}
\end{minipage}
\begin{minipage}[b]{0.45\linewidth}\centering
\begin{displaymath}
{\renewcommand{\arraystretch}{1.5}
\begin{array}{||c|c|c||}
\hline
\text{$\simalg_\C$-mod} & \text{$\cu$-mod} & \text{Dimension}  \\
\hline
 \g_0^0 & \mfz_0 & 1 \\
 \hline
 \g_0^{1,0} & [ \g_0^\omega ] & 1 \\
 \g_0^{1,1} & \dbl \g_0^{(2,0)} \dbr & (m-1)(m-2) \\
 \g_0^{1,2} & [ \g_0^{(1,1)_\circ} ] & m(m-2)\\
 \rowcolor{Gray} \g_0^{1,3} & \dbl \g_0^{(1,0)} \dbr & 2m-2 \\
 \hline
\end{array}}
\end{displaymath}
\end{minipage}
\end{center}

\subsubsection{The tracefree Ricci tensor}
\begin{prop}\label{prop-Ricci-rob}
The associated graded $\simalg(n-2)$-module $\gr(\mfF)$ of the filtration \eqref{eq-filtration-F} on the space $\mfF$ defined by \eqref{eq-rep-F} splits into a direct sum
\begin{align*}
 \mfF_{\pm2}^0 & = \mfF_{\pm2}^{0,0} \, , &
 \mfF_{\pm1}^0 & = \mfF_{\pm1}^{0,0} \oplus \epsilon \, \mfF_{\pm1}^{0,1} \, , \\
 \mfF_0^0 & = \mfF_0^{0,0} \, , & 
 \mfF_0^1 & = \left( \mfF_0^{1,0} \oplus \mfF_0^{1,1} \right) \oplus \epsilon \left( \mfF_0^{1,2} \oplus \mfF_0^{1,3} \right) \, .
\end{align*}
of irreducible $\simalg(m-1,\C)$-modules, where
\vspace{-6.5mm}
\begin{center}
\begin{minipage}[b]{0.45\linewidth}\centering
\begin{displaymath}
{\renewcommand{\arraystretch}{1.5}
\begin{array}{||c|c|c||}
\hline
\text{$\simalg_\C$-mod} & \text{$\cu$-mod} & \text{Dimension}  \\
\hline
 \mfF_{\pm2}^{0,0} & \mfV_{\pm1} \circledcirc \mfV_{\pm1} & 1 \\
\hline
\hline
 \mfF_{\pm1}^{0,0} & \mfV_{\pm1} \circledcirc \dbl\mfV_0^{(1,0)}\dbr & 2m-2 \\
 \rowcolor{Gray}  \mfF_{\pm1}^{0,1} & \mfV_{\pm1} \circledcirc \dbl\mfV_0^{(0,0)}\dbr & 1 \\
\hline
\end{array}}
\end{displaymath}
\end{minipage}
\begin{minipage}[b]{0.45\linewidth}\centering
\begin{displaymath}
{\renewcommand{\arraystretch}{1.5}
\begin{array}{||c|c|c||}
\hline
\text{$\simalg_\C$-mod} & \text{$\cu$-mod} & \text{Dimension}  \\
\hline
 \mfF_0^0 & \mfV_{-1} \circledcirc \mfV_1 & 1 \\
 \hline
 \mfF_0^{1,0} & [ \mfV_0^{(1,0)} \circledcirc \mfV_0^{(0,1)} ] & m(m-2) \\
 \mfF_0^{1,1} & \dbl \mfV_0^{(1,0)} \circledcirc \mfV_0^{(1,0)} \dbr & m(m-1)\\
 \rowcolor{Gray}  \mfF_0^{1,2} & [ \mfV_0^{(0,0)} \circledcirc \mfV_0^{(0,0)} ] & 1 \\
 \rowcolor{Gray}  \mfF_0^{1,3} & \dbl \mfV_0^{(0,0)} \circledcirc \mfV_0^{(1,0)} \dbr & 2m-2 \\
 \hline
\end{array}}
\end{displaymath}
\end{minipage}
\end{center}

Further, the $\simalg(m-1,\C)$-module $\gr(\mfF)$ can be expressed in terms of the $\simalg(m-1,\C)$-invariant graph
\begin{align*}
\xymatrix@R=1em{
& & \mfF_0^{1,1} \ar[dddr] & & \\
& & & & \\
& & \mfF_0^{1,0} \ar[dr] & & \\
& \mfF_1^{0,0} \ar[dddddr] \ar[dr] \ar[ur] \ar[uuur] & & \mfF_{-1}^{0,0} \ar[dr] & \\
\mfF_2^{0,0} \ar[ur] \ar@{.>}[dr] & & \color{gray} \mfF_0^{1,3} \ar[ur] \ar@{.>}[dr] & & \mfF_{-2}^{0,0} \\
& \color{gray} \mfF_1^{0,1} \ar@{.>}[dddr] \ar@{.>}[dr] \ar@{.>}[ur] & & \color{gray} \mfF_{-1}^{0,1} \ar@{.>}[ur] & \\
& & \color{gray} \mfF_0^{1,2} \ar@{.>}[ur] & & \\
& & & & \\
& & \mfF_0^{0,0} \ar@{.>}[uuur] \ar[uuuuur]& & 
}
\end{align*}
where an arrow from $\mfF_i^{j,k}$ to $\mfF_{i-1}^{p,q}$ for some $i,j,k,p,q$ implies that $\breve{\mfF}_i^{j,k} \subset \g_1 \cdot \breve{\mfF}_{i-1}^{p,q}$ for any choice of irreducible $\cu(m-1)$-modules  $\breve{\mfF}_i^{j,k}$ and $\breve{\mfF}_{i-1}^{p,q}$ linearly isomorphic to $\mfF_i^{j,k}$ and $\mfF_{i-1}^{p,q}$ respectively.
\end{prop}

\subsubsection{The Cotton-York tensor}
\begin{prop}\label{prop-CY-rob}
The associated graded $\simalg(n-2)$-module $\gr(\mfA)$ of the filtration \eqref{eq-filtration-A} on the space $\mfA$ defined by \eqref{eq-rep-A} splits into a direct sum
\begin{align*}
 \mfA_{\pm2}^0 & = \mfA_{\pm2}^{0,0} \oplus \epsilon \, \mfA_{\pm2}^{0,1} \, , \, , \\
 \mfA_{\pm1}^0 & = \mfA_{\pm1}^{0,0} \, , \qquad 
 \mfA_{\pm1}^1 = \left( \mfA_{\pm1}^{1,0} \oplus \mfA_{\pm1}^{1,1} \oplus \mfA_{\pm1}^{1,2} \right) \oplus \epsilon \,  \mfA_{\pm1}^{1,3} \, , \qquad
 \mfA_{\pm1}^2 = \left( \mfA_{\pm1}^{2,0} \oplus \mfA_{\pm1}^{2,1} \right) \oplus  \epsilon \left( \mfA_{\pm1}^{2,2}\oplus \mfA_{\pm1}^{2,3} \right) \, , \\
 \mfA_0^0 & = \mfA_0^{0,0} \oplus \epsilon \, \mfA_0^{0,0} \, , \qquad
 \mfA_0^1 = \mfA_0^{1,0} \oplus \epsilon \, \mfA_0^{1,1} \, , \\
 \mfA_0^2 & = \left( \mfA_0^{2,0} \oplus \mfA_0^{2,1} \oplus \mfA_0^{2,2} \oplus \mfA_0^{2,3} \right) \oplus \epsilon \left( \mfA_0^{2,4} \oplus \mfA_0^{2,5} \oplus \mfA_0^{2,6} \oplus \mfA_0^{2,7} \oplus \mfA_0^{2,8} \oplus \mfA_0^{2,9} \right) \, .
\end{align*}
of irreducible $\simalg(m-1,\C)$-modules, where
\vspace{-6.5mm}
\begin{center}
\begin{minipage}[b]{0.45\linewidth}\centering
\begin{displaymath}
{\renewcommand{\arraystretch}{1.5}
\begin{array}{||c|c|c||}
\hline
\text{$\simalg_\C$-} & \text{$\cu$-mod} & \text{Dimension}  \\
\hline
 \mfA_{\pm2}^{0,0} & \dbl \mfV_{\pm1} \circledcirc \g_{\pm1}^{(1,0)} \dbr & 2m-2 \\
 \rowcolor{Gray} \mfA_{\pm2}^{0,1} & \dbl \mfV_{\pm1} \circledcirc \g_{\pm1}^{(0,0)} \dbr & 1 \\
\hline
\hline
 \mfA_{\pm1}^0 & \mfV_{\pm1} \circledcirc \mfz_0 & 1 \\
 \mfA_{\pm1}^{1,0} & \dbl \mfV_{\pm1} \circledcirc \g_0^{(2,0)} \dbr & (m-1)(m-2)  \\
 \mfA_{\pm1}^{1,1} & [ \mfV_{\pm1} \circledcirc \g_0^{(1,1)_\circ} ] & m(m-2)  \\
 \mfA_{\pm1}^{1,2} & [ \mfV_{\pm1} \circledcirc \g_0^\omega ] & 1  \\
 \rowcolor{Gray} \mfA_{\pm1}^{1,3} & \dbl \mfV_{\pm1} \circledcirc \g_0^{(1,0)} \dbr & 2m-2  \\
 \hline
 \mfA_{\pm1}^{2,0} & \dbl \mfV_0^{(1,0)} \circledcirc \g_{\pm1}^{(1,0)} \dbr & m(m-1)\\
 \mfA_{\pm1}^{2,1} & \dbl \mfV_0^{(1,0)} \circledcirc \g_{\pm1}^{(0,1)} \dbr & m(m-2) \\
 \rowcolor{Gray} \mfA_{\pm1}^{2,2} & \dbl \mfV_0^{(0,0)} \circledcirc \g_{\pm1}^{(0,0)} \dbr & 1 \\
 \rowcolor{Gray} \mfA_{\pm1}^{2,3} & \dbl \mfV_0^{(0,0)} \circledcirc \g_{\pm1}^{(1,0)} \dbr & 2m-2 \\
 \hline
\end{array}}
\end{displaymath}
\end{minipage}
\begin{minipage}[b]{0.45\linewidth}\centering
\begin{displaymath}
{\renewcommand{\arraystretch}{1.5}
\begin{array}{||c|c|c||}
\hline
\text{$\simalg_\C$-} & \text{$\cu$-mod} & \text{Dimension}  \\
\hline
 \mfA_0^{0,0} & \dbl \mfV_0^{(1,0)} \dbr \circledcirc \mfz_0 & 2m-2  \\
 \rowcolor{Gray} \mfA_0^{0,1} & \mfV_0^{(0,0)} \circledcirc \mfz_0 & 1  \\
 \hline
 \mfA_0^{1,0} & \mfV_{\mp1} \circledcirc \dbl \g_{\pm1}^{(1,0)} \dbr & 2m-2 \\
 \rowcolor{Gray} \mfA_0^{1,1} & \mfV_{\mp1} \circledcirc \g_{\pm1}^{(0,0)} & 1 \\
 \hline
 \mfA_0^{2,0} & \dbl \mfV_0^{(1,0)} \circledcirc \g_0^\omega \dbr & 2m-2 \\
 \mfA_0^{2,1} & \dbl \mfV_0^{(1,0)} \circledcirc \g_0^{(2,0)} \dbr & \frac{2}{3}m(m-1)(m-2) \\ 
 \mfA_0^{2,2} & \dbl \mfV_0^{(0,1)} \circledcirc \g_0^{(2,0)} \dbr & m(m-1)(m-3) \\
 \mfA_0^{2,3} & \dbl \mfV_0^{(1,0)} \circledcirc \g_0^{(1,1)_\circ} \dbr & (m+1)(m-1)(m-2) \\
 \rowcolor{Gray} \mfA_0^{2,4} & \dbl \mfV_0^{(0,0)} \circledcirc \g_0^\omega \dbr & 1 \\
 \rowcolor{Gray} \mfA_0^{2,5} & \dbl \mfV_0^{(0,0)} \circledcirc \g_0^{(1,0)} \dbr & 2m-2 \\
 \rowcolor{Gray} \mfA_0^{2,6} & \dbl \mfV_0^{(0,0)} \circledcirc \g_0^{(2,0)} \dbr & (m-1)(m-2) \\
 \rowcolor{Gray} \mfA_0^{2,7} & \dbl \mfV_0^{(0,0)} \circledcirc \g_0^{(1,1)_\circ} \dbr & m(m-2) \\
 \rowcolor{Gray} \mfA_0^{2,8} & \dbl \mfV_0^{(0,1)} \circledcirc \g_0^{(1,0)} \dbr & m(m-2) \\
 \rowcolor{Gray} \mfA_0^{2,9} & \dbl \mfV_0^{(1,0)} \circledcirc \g_0^{(1,0)} \dbr & m(m-1) \\
 \hline
\end{array}}
\end{displaymath}
\end{minipage}
\end{center}

Further, the $\simalg(m-1,\C)$-module $\gr(\mfA)$ can be expressed in terms of the $\simalg(m-1,\C)$-invariant graph \ref{diagram-Penrose-A-rob} where an arrow\footnote{Arrow heads have been omitted for clarity.} from $\mfA_i^{j,k}$ to $\mfA_{i-1}^{p,q}$ for some $i,j,k,p,q$ implies that $\breve{\mfA}_i^{j,k} \subset \g_1 \cdot \breve{\mfA}_{i-1}^{p,q}$ for any choice of irreducible $\cu(m-1)$-modules  $\breve{\mfA}_i^{j,k}$ and $\breve{\mfA}_{i-1}^{p,q}$ linearly isomorphic to $\mfA_i^{j,k}$ and $\mfA_{i-1}^{p,q}$ respectively.
\end{prop}

\begin{table}
\begin{align*}
\xy
(-80,20)*+{\mfA_2^{0,0}}="s33",
(-80,-20)*+{\color{gray} \mfA_2^{0,1}}="s32",
(-50,40)*+{\mfA_1^{2,1}}="s31",
(-50,30)*+{\mfA_1^{2,0}}="s30",
(-50,20)*+{\color{gray} \mfA_1^{2,3}}="s29",
(-50,10)*+{\color{gray} \mfA_1^{2,2}}="s28",
(-50,0)*+{\mfA_1^{1,1}}="s27",
(-50,-10)*+{\mfA_1^{1,0}}="s26",
(-50,-20)*+{\color{gray} \mfA_1^{1,3}}="s25",
(-50,-30)*+{\mfA_1^{1,2}}="s24",
(-50,-40)*+{\mfA_1^{0,0}}="s23",
(0,82.5)*+{\mfA_0^{2,3}}="s22",
(0,67.5)*+{\mfA_0^{2,2}}="s21",
(0,52.5)*+{\mfA_0^{2,1}}="s20",
(0,37.5)*+{\color{gray} \mfA_0^{2,9}}="s19",
(0,22.5)*+{\color{gray} \mfA_0^{2,8}}="s18",
(0,7.5)*+{\color{gray} \mfA_0^{2,7}}="s17",
(0,-7.55)*+{\color{gray} \mfA_0^{2,6}}="s16",
(0,-22.5)*+{\mfA_0^{2,0}}="s15",
(0,-37.5)*+{\color{gray} \mfA_0^{2,5}}="s14",
(0,-52.5)*+{\color{gray} \mfA_0^{2,4}}="s13",
(0,-67.5)*+{\mfA_0^{0,0} \oplus \mfA_0^{1,0}}="s12",
(0,-82.5)*+{\color{gray} \mfA_0^{0,1}\oplus\mfA_0^{1,1}}="s11",
(50,40)*+{\mfA_{-1}^{2,1}}="s10",
(50,30)*+{\mfA_{-1}^{2,0}}="s9",
(50,20)*+{\color{gray} \mfA_{-1}^{2,3}}="s8",
(50,10)*+{\color{gray} \mfA_{-1}^{2,2}}="s7",
(50,0)*+{\mfA_{-1}^{1,1}}="s6",
(50,-10)*+{\mfA_{-1}^{1,0}}="s5",
(50,-20)*+{\color{gray} \mfA_{-1}^{1,3}}="s4",
(50,-30)*+{\mfA_{-1}^{1,2}}="s3",
(50,-40)*+{\mfA_{-1}^{0,0}}="s2",
(80,20)*+{\mfA_{-2}^{0,0}}="s1",
(80,-20)*+{\color{gray} \mfA_{-2}^{0,1}}="s0",
"s0"; "s2" ; **@{.} ; "s0"; "s4" ; **@{.} ; "s0"; "s7" ; **@{.} ; "s0"; "s8" ; **@{.} ; 
"s1"; "s2" ; **@{-} ; "s1"; "s5" ; **@{-} ; "s1"; "s6" ; **@{-} ; "s1"; "s3" ; **@{-} ; "s1"; "s4" ; **@{.} ; "s1"; "s9" ; **@{-} ; "s1"; "s10" ; **@{-} ; "s1"; "s8" ; **@{.} ; 
"s2"; "s12" ; **@{-} ; "s2"; "s11" ; **@{.} ;
"s3"; "s12" ; **@{-} ; "s3"; "s13" ; **@{.} ; "s3"; "s15" ; **@{-} ; 
"s4"; "s11" ; **@{.} ; "s4"; "s12" ; **@{.} ; "s4"; "s14" ; **@{.} ; "s4"; "s18" ; **@{.} ; "s4"; "s19" ; **@{.} ; 
"s5"; "s20" ; **@{-} ; "s5"; "s21" ; **@{-} ; "s5"; "s16" ; **@{.} ; "s5"; "s12" ; **@{-} ; 
"s6"; "s22" ; **@{-} ; "s6"; "s17" ; **@{.} ; "s6"; "s12" ; **@{-} ; 
"s7"; "s11" ; **@{.} ; "s7"; "s14" ; **@{.} ;
"s8"; "s11" ; **@{.} ; "s8"; "s12" ; **@{.} ; "s8"; "s13" ; **@{.} ; "s8"; "s14" ; **@{.} ; "s8"; "s16" ; **@{.} ; "s8"; "s17" ; **@{.} ; "s8"; "s18" ; **@{.} ; "s8"; "s19" ; **@{.} ;
"s9"; "s12" ; **@{-} ; "s9"; "s15" ; **@{-} ; "s9"; "s22" ; **@{-} ; "s9"; "s20" ; **@{-} ; "s9"; "s19" ; **@{.} ;
"s10"; "s12" ; **@{-} ; "s10"; "s15" ; **@{-} ; "s10"; "s21" ; **@{-} ; "s10"; "s22" ; **@{-} ; "s10"; "s18" ; **@{.} ;
"s11"; "s28" ; **@{.} ; "s11"; "s29" ; **@{.} ; "s11"; "s23" ; **@{.} ; "s11"; "s25" ; **@{.} ; 
"s12"; "s31" ; **@{-} ; "s12"; "s29" ; **@{.} ; "s12"; "s23" ; **@{-} ; "s12"; "s25" ; **@{.} ; "s12"; "s26" ; **@{-} ; "s12"; "s27" ; **@{-} ; "s12"; "s24" ; **@{-} ; "s12"; "s30" ; **@{-} ; 
"s13"; "s24" ; **@{.} ; "s13"; "s29" ; **@{.} ; 
"s14"; "s25" ; **@{.} ; "s14"; "s28" ; **@{.} ; "s14"; "s29" ; **@{.} ; 
"s15"; "s24" ; **@{-} ; "s15"; "s30" ; **@{-} ; "s15"; "s31" ; **@{-} ; 
"s16"; "s26" ; **@{.} ; "s16"; "s29" ; **@{.} ;
"s17"; "s27" ; **@{.} ; "s17"; "s29" ; **@{.} ;
"s18"; "s25" ; **@{.} ; "s18"; "s31" ; **@{.} ; "s18"; "s29" ; **@{.} ; 
"s19"; "s25" ; **@{.} ; "s19"; "s30" ; **@{.} ; "s19"; "s29" ; **@{.} ; 
"s20"; "s26" ; **@{-} ; "s20"; "s30" ; **@{-} ; 
"s21"; "s26" ; **@{-} ; "s21"; "s31" ; **@{-} ; 
"s22"; "s27" ; **@{-} ; "s22"; "s30" ; **@{-} ; "s22"; "s31" ; **@{-} ; 
"s23"; "s32" ; **@{.} ; "s23"; "s33" ; **@{-} ;
"s24"; "s33" ; **@{-} ;
"s25"; "s32" ; **@{.} ; "s25"; "s33" ; **@{.} ;
"s26"; "s33" ; **@{-} ;
"s27"; "s33" ; **@{-} ;
"s28"; "s32" ; **@{.} ;
"s29"; "s32" ; **@{.} ;
"s29"; "s33" ; **@{.} ;
"s30"; "s33" ; **@{-} ;
"s31"; "s33" ; **@{-} 
\endxy  
\end{align*}
\caption{$\simalg(m-1,\C)$-invariant graph for $\mfA$} \label{diagram-Penrose-A-rob}
\end{table}

\subsubsection{The Weyl tensor}
\begin{prop}\label{prop-Weyl-rob}
The associated graded $\simalg(n-2)$-module $\gr(\mfC)$ of the filtration \eqref{eq-filtration-C} on the space $\mfC$ defined by \eqref{eq-rep-C} splits into a direct sum
\begin{align*}
 \mfC_{\pm2}^0 & = \left( \mfC_{\pm2}^{0,0} \oplus \mfC_{\pm2}^{0,1} \right) \oplus \epsilon \left( \mfC_{\pm2}^{0,2} \oplus \mfC_{\pm2}^{0,3} \right) \, , \\
 \mfC_{\pm1}^0 & = \mfC_{\pm1}^{0,0} \oplus \epsilon \, \mfC_{\pm1}^{0,1} \, , \\
 \mfC_{\pm1}^1 & = \left( \mfC_{\pm1}^{1,0} \oplus \mfC_{\pm1}^{1,1} \oplus \mfC_{\pm1}^{1,2} \oplus \mfC_{\pm1}^{1,3} \right) \oplus \epsilon \left( \mfC_{\pm1}^{1,4} \oplus \mfC_{\pm1}^{1,5} \oplus \mfC_{\pm1}^{1,6} \oplus \mfC_{\pm1}^{1,7} \oplus \mfC_{\pm1}^{1,8} \oplus \mfC_{\pm1}^{1,9} \right) \, , \\
 \mfC_0^0 & = \mfC_0^0 \, , \\
 \mfC_0^1 & = \left( \mfC_0^{1,0} \oplus \mfC_0^{1,1} \oplus \mfC_0^{1,2} \right) \oplus \epsilon \, \mfC_0^{1,3} \, , \\
 \mfC_0^2 & = \left( \mfC_0^{2,0} \oplus \mfC_0^{2,1} \right) \oplus \epsilon \left( \mfC_0^{2,2} \oplus \mfC_0^{2,3} \right) \, , \\
 \mfC_0^3 & = \left( \mfC_0^{3,0} \oplus \mfC_0^{3,1} \oplus \mfC_0^{3,2} \oplus \mfC_0^{3,3} \oplus \mfC_0^{3,4} \oplus \mfC_0^{3,5} \oplus \mfC_0^{3,6} \oplus \mfC_0^{3,7} \right) \oplus \epsilon \left( \mfC_0^{3,8} \oplus \mfC_0^{3,9} \oplus \mfC_0^{3,10} \oplus \mfC_0^{3,11} \oplus \mfC_0^{3,12} \right) \, .
\end{align*}
of irreducible $\simalg(m-1,\C)$-modules, where
\vspace{-6.5mm}
\begin{center}
\begin{minipage}[b]{0.45\linewidth}\centering
\begin{displaymath}
{\renewcommand{\arraystretch}{1.5}
\begin{array}{||c|c|c||}
\hline
\text{$\simalg_\C$-} & \text{$\cu$-mod} & \text{Dimension} \\
\hline
  \mfC_{\pm2}^{0,0} & [ \g_{\pm1}^{(1,0)} \circledcirc \g_{\pm1}^{(0,1)} ] & m(m-2) \\
 \mfC_{\pm2}^{0,1} & \dbl \g_{\pm1}^{(1,0)} \circledcirc \g_{\pm1}^{(1,0)} \dbr & m(m-1) \\
 \rowcolor{Gray} \mfC_{\pm2}^{0,2} & [ \g_{\pm1}^{(0,0)} \circledcirc \g_{\pm1}^{(0,0)} ] & 1 \\
 \rowcolor{Gray} \mfC_{\pm2}^{0,3} & \dbl \g_{\pm1}^{(1,0)} \circledcirc \g_{\pm1}^{(0,0)} \dbr & 2m-2 \\
\hline \hline
 \mfC_{\pm1}^{0,0} & \dbl \g_{\pm1}^{(1,0)} \dbr \circledcirc \mfz_0 & 2m-2 \\
 \rowcolor{Gray} \mfC_{\pm1}^{0,1} & [ \g_{\pm1}^{(0,0)} ] \circledcirc \mfz_0  & 1\\
\hline
 \mfC_{\pm1}^{1,0} & \dbl \g_{\pm1}^{(1,0)} \circledcirc \g_0^{\omega} \dbr & 2m-2 \\
 \mfC_{\pm1}^{1,1} & \dbl \g_{\pm1}^{(1,0)} \circledcirc \g_0^{(2,0)} \dbr & {\scriptstyle \frac{2}{3}m(m-1)(m-2) }\\
 \mfC_{\pm1}^{1,2} & \dbl \g_{\pm1}^{(1,0)} \circledcirc \g_0^{(0,2)} \dbr & {\scriptstyle m(m-1)(m-3)} \\
 \mfC_{\pm1}^{1,3} & \dbl \g_{\pm1}^{(1,0)} \circledcirc \g_0^{(1,1)_\circ} \dbr & {\scriptstyle (m+1)(m-1)(m-2)} \\
 \rowcolor{Gray} \mfC_{\pm1}^{1,4} & [ \g_{\pm1}^{(0,0)} \circledcirc \g_0^{\omega} ] & 1 \\
 \rowcolor{Gray} \mfC_{\pm1}^{1,5} & \dbl \g_{\pm1}^{(0,0)} \circledcirc \g_0^{(1,0)} \dbr & 2m-2 \\
 \rowcolor{Gray} \mfC_{\pm1}^{1,6} & \dbl \g_{\pm1}^{(0,0)} \circledcirc \g_0^{(2,0)} \dbr & (m-1)(m-2) \\
 \rowcolor{Gray} \mfC_{\pm1}^{1,7} & \dbl \g_{\pm1}^{(0,0)} \circledcirc \g_0^{(1,1)_\circ} \dbr & m(m-2) \\
 \rowcolor{Gray} \mfC_{\pm1}^{1,8} & \dbl \g_{\pm1}^{(1,0)} \circledcirc \g_0^{(0,1)} \dbr & m(m-2) \\
 \rowcolor{Gray} \mfC_{\pm1}^{1,9} & \dbl \g_{\pm1}^{(1,0)} \circledcirc \g_0^{(1,0)} \dbr & m(m-1) \\
\hline
\end{array}}
\end{displaymath}
\end{minipage}
\begin{minipage}[b]{0.45\linewidth}\centering
\begin{displaymath}
{\renewcommand{\arraystretch}{1.5}
\begin{array}{||c|c|c||}
\hline
\text{$\simalg_\C$-} & \text{$\cu$-mod} & \text{Dimension} \\
\hline
\hline
 \mfC_0^{0,0}  & \mfz_0 \circledcirc \mfz_0 & 1 \\
\hline
 \mfC_0^{1,0} & \mfz_0 \circledcirc [ \g_0^\omega ] & 1 \\
 \mfC_0^{1,1} & \mfz_0 \circledcirc \dbl \g_0^{(2,0)} \dbr & (m-1)(m-2) \\
 \mfC_0^{1,2} & \mfz_0 \circledcirc [ \g_0^{(1,1)_\circ} ] & m(m-2) \\
 \rowcolor{Gray} \mfC_0^{1,3} & \mfz_0 \circledcirc \dbl \g_0^{(1,0)} \dbr & 2m-2 \\
\hline
 \mfC_0^{2,0} & [ \g_1^{(1,0)} \circledcirc \g_{-1}^{(0,1)} ] & m(m-2) \\
 \mfC_0^{2,1} & \dbl \g_1^{(1,0)} \circledcirc \g_{-1}^{(1,0)} \dbr & m(m-1) \\
 \rowcolor{Gray} \mfC_0^{2,2} & [ \g_1^{(0,0)} \circledcirc \g_{-1}^{(0,0)} ] & 1 \\
 \rowcolor{Gray} \mfC_0^{2,3} & \dbl \g_{\pm1}^{(1,0)} \circledcirc \g_{\mp1}^{(0,0)} \dbr & 2m-2 \\
\hline
 \mfC_0^{3,0}& [ \g_0^\omega \circledcirc \g_0^\omega ] & 1 \\
 \mfC_0^{3,1} & \dbl \g_0^{(2,0)} \circledcirc \g_0^\omega \dbr & (m-1)(m-2) \\
 \mfC_0^{3,2} & [ \g_0^{(1,1)_\circ}  \circledcirc \g_0^\omega ] & m(m-2) \\
 \mfC_0^{3,3} & \dbl \g_0^{(2,0)} \circledcirc \g_0^{(2,0)} \dbr & {\scriptstyle \frac{1}{6}m(m-1)^2(m-2)} \\
 \mfC_0^{3,4} & [ \g_0^{(2,0)} \circledcirc \g_0^{(0,2)} ] & {\scriptstyle \frac{1}{4}m(m-1)^2(m-4)} \\
 \mfC_0^{3,5} & [ \g_0^{(1,1)_\circ} \circledcirc \g_0^{(1,1)_\circ} ] & {\scriptstyle \frac{1}{4}(m+2)(m-1)^2(m-2)} \\
 \mfC_0^{3,6} & \dbl \g_0^{(2,0)} \circledcirc \g_0^{(1,1)_\circ} \dbr & {\scriptstyle \frac{2}{3}(m+1)(m-1)^2(m-3)} \\
 \rowcolor{Gray} \mfC_0^{3,7} & \dbl \g_0^{(1,0)} \circledcirc \g_0^\omega \dbr & 2m-2 \\
 \rowcolor{Gray} \mfC_0^{3,8} & [ \g_0^{(1,0)} \circledcirc \g_0^{(0,1)} ] & m(m-2) \\
 \rowcolor{Gray} \mfC_0^{3,9} & \dbl \g_0^{(1,0)} \circledcirc \g_0^{(1,0)} \dbr & m(m-1) \\
 \rowcolor{Gray} \mfC_0^{3,10} & \dbl \g_0^{(2,0)} \circledcirc \g_0^{(1,0)} \dbr & {\scriptstyle \frac{2}{3}m(m-1)(m-2)} \\
 \rowcolor{Gray} \mfC_0^{3,11} & \dbl \g_0^{(2,0)} \circledcirc \g_0^{(0,1)} \dbr & {\scriptstyle m(m-1)(m-3) }\\
 \rowcolor{Gray} \mfC_0^{3,12} & \dbl \g_0^{(1,0)} \circledcirc \g_0^{(1,1)_\circ} \dbr & {\scriptstyle (m+1)(m-1)(m-2)} \\
 \hline
\end{array}}
\end{displaymath}
\end{minipage}
\end{center}
with the proviso that when $n\leq4$, $\mfC_{\pm1}^{1,0}$ do not occur, and when $n \leq 5$, $\mfC_0^{3,2}$ does not occur.
 
Further, the $\simalg(m-1,\C)$-module $\gr(\mfC)$ can be expressed in terms of the $\simalg(m-1,\C)$-invariant graphs \ref{diagram-Penrose-C-rob} \& \ref{diagram-Penrose-C-rob_contd} where an arrow\footnote{Arrow heads have been omitted for clarity.} from $\mfC_i^{j,k}$ to $\mfC_{i-1}^{p,q}$ for some $i,j,k,p,q$ implies that $\breve{\mfC}_i^{j,k} \subset \g_1 \cdot \breve{\mfC}_{i-1}^{p,q}$ for any choice of irreducible $\cu(m-1)$-modules  $\breve{\mfC}_i^{j,k}$ and $\breve{\mfC}_{i-1}^{p,q}$ linearly isomorphic to $\mfC_i^{j,k}$ and $\mfC_{i-1}^{p,q}$ respectively.
\end{prop}

\begin{table}
\begin{align*}
\xy
(-160,30)*+{\mfC_2^{0,1}}="s51",
(-160,10)*+{\mfC_2^{0,0}}="s50",
(-160,-10)*+{\color{gray} \mfC_2^{0,3}}="s53",
(-160,-30)*+{\color{gray} \mfC_2^{0,2}}="s52",
(-100,82.5)*+{\mfC_1^{1,3}}="s43",
(-100,67.5)*+{\mfC_1^{1,2}}="s42",
(-100,52.5)*+{\mfC_1^{1,1}}="s41",
(-100,37.5)*+{\color{gray} \mfC_1^{1,9}}="s49",
(-100,22.5)*+{\color{gray} \mfC_1^{1,8}}="s48",
(-100,7.5)*+{\color{gray} \mfC_1^{1,7}}="s47",
(-100,-7.5)*+{\color{gray} \mfC_1^{1,6}}="s46",
(-100,-22.5)*+{\color{gray} \mfC_1^{1,5}}="s45",
(-100,-37.5)*+{\mfC_1^{1,0}}="s40",
(-100,-52.5)*+{\color{gray} \mfC_1^{1,4}}="s44",
(-100,-67.5)*+{\mfC_1^{0,0}}="s38",
(-100,-82.5)*+{\color{gray} \mfC_1^{0,1}}="s39",
(0,110)*+{\mfC_0^{3,6}}="s31",
(0,100)*+{\mfC_0^{3,5}}="s30",
(0,90)*+{\mfC_0^{3,4}}="s29",
(0,80)*+{\mfC_0^{3,3}}="s28",
(0,70)*+{\color{gray} \mfC_0^{3,12}}="s37",
(0,60)*+{\color{gray} \mfC_0^{3,11}}="s36",
(0,50)*+{\color{gray} \mfC_0^{3,10}}="s35",
(0,40)*+{\color{gray} \mfC_0^{3,9}}="s34",
(0,30)*+{\color{gray} \mfC_0^{3,8}}="s33",
(0,20)*+{\mfC_0^{3,2}}="s27",
(0,10)*+{\mfC_0^{3,1}}="s26",
(0,0)*+{\color{gray} \mfC_0^{3,7}}="s32",
(0,-10)*+{\mfC_0^{3,0}}="s25",
(0,-20)*+{\mfC_0^{2,1}}="s22",
(0,-30)*+{\mfC_0^{2,0}}="s21",
(0,-40)*+{\color{gray} \mfC_0^{2,3}}="s24",
(0,-50)*+{\color{gray} \mfC_0^{2,2}}="s23",
(0,-60)*+{\mfC_0^{1,2}}="s19",
(0,-70)*+{\mfC_0^{1,1}}="s18",
(0,-80)*+{\color{gray} \mfC_0^{1,3}}="s20",
(0,-90)*+{\mfC_0^{1,0}}="s17",
(0,-100)*+{\mfC_0^{0,0}}="s16",
"s16"; "s38" ; **@{-} ; "s16"; "s39" ; **@{.} ;
"s17"; "s38" ; **@{-} ; "s17"; "s40" ; **@{-} ; "s17"; "s44" ; **@{.} ;
"s18"; "s38" ; **@{-} ; "s18"; "s41" ; **@{-} ; "s18"; "s42" ; **@{-} ; "s18"; "s46" ; **@{.} ;
"s19"; "s38" ; **@{-} ; "s19"; "s43" ; **@{-} ; "s19"; "s47" ; **@{.} ;
"s20"; "s38" ; **@{.} ; "s20"; "s39" ; **@{.} ; "s20"; "s45" ; **@{.} ; "s20"; "s48" ; **@{.} ; "s20"; "s49" ; **@{.} ;
"s21"; "s38" ; **@{-} ; "s21"; "s40" ; **@{-} ; "s21"; "s42" ; **@{-} ; "s21"; "s43" ; **@{-} ; "s21"; "s48" ; **@{.} ;
"s22"; "s38" ; **@{-} ; "s22"; "s40" ; **@{-} ; "s22"; "s41" ; **@{-} ; "s22"; "s43" ; **@{-} ; "s22"; "s49" ; **@{.} ;
"s23"; "s39" ; **@{.} ; "s23"; "s45" ; **@{.} ;
"s24"; "s38" ; **@{.} ; "s24"; "s48" ; **@{.} ; "s24"; "s49" ; **@{.} ;
"s24"; "s39" ; **@{.} ; "s24"; "s44" ; **@{.} ; "s24"; "s45" ; **@{.} ; "s24"; "s46" ; **@{.} ; "s24"; "s47" ; **@{.} ;
"s25"; "s40" ; **@{-} ;
"s27"; "s40" ; **@{-} ; "s27"; "s43" ; **@{-} ;
"s26"; "s40" ; **@{-} ; "s26"; "s41" ; **@{-} ; "s26"; "s42" ; **@{-} ;
"s28"; "s41" ; **@{-} ;
"s29"; "s42" ; **@{-} ;
"s30"; "s43" ; **@{-} ;
"s31"; "s41" ; **@{-} ; "s31"; "s42" ; **@{-} ; "s31"; "s43" ; **@{-} ;
"s32"; "s40" ; **@{.} ; "s32"; "s44" ; **@{.} ; "s32"; "s48" ; **@{.} ; "s32"; "s49" ; **@{.} ;
"s33"; "s45" ; **@{.} ; "s33"; "s48" ; **@{.} ;
"s34"; "s45" ; **@{.} ; "s34"; "s49" ; **@{.} ;
"s35"; "s41" ; **@{.} ; "s35"; "s46" ; **@{.} ; "s35"; "s49" ; **@{.} ;
"s36"; "s42" ; **@{.} ; "s36"; "s46" ; **@{.} ; "s36"; "s48" ; **@{.} ;
"s37"; "s43" ; **@{.} ; "s37"; "s47" ; **@{.} ; "s37"; "s48" ; **@{.} ; "s37"; "s49" ; **@{.} ;
"s38"; "s50" ; **@{-} ; "s38"; "s51" ; **@{-} ; "s38"; "s53" ; **@{.} ;
"s39"; "s52" ; **@{.} ; "s39"; "s53" ; **@{.} ;
"s40"; "s50" ; **@{-} ; "s40"; "s51" ; **@{-} ;
"s41"; "s51" ; **@{-} ;
"s42"; "s50" ; **@{-} ;
"s43"; "s50" ; **@{-} ; "s43"; "s51" ; **@{-} ;
"s44"; "s53" ; **@{.} ;
"s45"; "s52" ; **@{.} ; "s45"; "s53" ; **@{.} ;
"s46"; "s53" ; **@{.} ;
"s47"; "s53" ; **@{.} ;
"s48"; "s50" ; **@{.} ; "s48"; "s53" ; **@{.} ;
"s49"; "s51" ; **@{.} ; "s49"; "s53" ; **@{.} ;
\endxy  
\end{align*}
\caption{$\simalg(m-1,\C)$-invariant graph for $\mfC$} \label{diagram-Penrose-C-rob}
\end{table}

\begin{table}
\begin{align*}
\xy
(0,110)*+{\mfC_0^{3,6}}="s31",
(0,100)*+{\mfC_0^{3,5}}="s30",
(0,90)*+{\mfC_0^{3,4}}="s29",
(0,80)*+{\mfC_0^{3,3}}="s28",
(0,70)*+{\color{gray} \mfC_0^{3,12}}="s37",
(0,60)*+{\color{gray} \mfC_0^{3,11}}="s36",
(0,50)*+{\color{gray} \mfC_0^{3,10}}="s35",
(0,40)*+{\color{gray} \mfC_0^{3,9}}="s34",
(0,30)*+{\color{gray} \mfC_0^{3,8}}="s33",
(0,20)*+{\mfC_0^{3,2}}="s27",
(0,10)*+{\mfC_0^{3,1}}="s26",
(0,0)*+{\color{gray} \mfC_0^{3,7}}="s32",
(0,-10)*+{\mfC_0^{3,0}}="s25",
(0,-20)*+{\mfC_0^{2,1}}="s22",
(0,-30)*+{\mfC_0^{2,0}}="s21",
(0,-40)*+{\color{gray} \mfC_0^{2,3}}="s24",
(0,-50)*+{\color{gray} \mfC_0^{2,2}}="s23",
(0,-60)*+{\mfC_0^{1,2}}="s19",
(0,-70)*+{\mfC_0^{1,1}}="s18",
(0,-80)*+{\color{gray} \mfC_0^{1,3}}="s20",
(0,-90)*+{\mfC_0^{1,0}}="s17",
(0,-100)*+{\mfC_0^{0,0}}="s16",
(100,82.5)*+{\mfC_{-1}^{1,3}}="s9",
(100,67.5)*+{\mfC_{-1}^{1,2}}="s8",
(100,52.5)*+{\mfC_{-1}^{1,1}}="s7",
(100,37.5)*+{\color{gray} \mfC_{-1}^{1,9}}="s15",
(100,22.5)*+{\color{gray} \mfC_{-1}^{1,8}}="s14",
(100,7.5)*+{\color{gray} \mfC_{-1}^{1,7}}="s13",
(100,-7.5)*+{\color{gray} \mfC_{-1}^{1,6}}="s12",
(100,-22.5)*+{\color{gray} \mfC_{-1}^{1,5}}="s11",
(100,-37.5)*+{\mfC_{-1}^{1,0}}="s6",
(100,-52.5)*+{\color{gray} \mfC_{-1}^{1,4}}="s10",
(100,-67.5)*+{\mfC_{-1}^{0,0}}="s4",
(100,-82.5)*+{\color{gray} \mfC_{-1}^{0,1}}="s5",
(160,30)*+{\mfC_{-2}^{0,1}}="s1",
(160,10)*+{\mfC_{-2}^{0,0}}="s0";
(160,-10)*+{\color{gray} \mfC_{-2}^{0,3}}="s3",
(160,-30)*+{\color{gray} \mfC_{-2}^{0,2}}="s2",
"s0"; "s4" ; **@{-} ; "s0"; "s6" ; **@{-} ; "s0"; "s8" ; **@{-} ; "s0"; "s9" ; **@{-} ; "s0"; "s14" ; **@{.} ;
"s1"; "s4" ; **@{-} ; "s1"; "s6" ; **@{-} ; "s1"; "s7" ; **@{-} ; "s1"; "s9" ; **@{-} ; "s1"; "s15" ; **@{.} ;
"s2"; "s5" ; **@{.} ; "s2"; "s11" ; **@{.} ;
"s3"; "s4" ; **@{.} ; "s3"; "s5" ; **@{.} ; "s3"; "s10" ; **@{.} ; "s3"; "s11" ; **@{.} ; "s3"; "s12" ; **@{.} ; "s3"; "s13" ; **@{.} ; "s3"; "s14" ; **@{.} ; "s3"; "s15" ; **@{.} ;
"s4"; "s16" ; **@{-} ; "s4"; "s17" ; **@{-} ; "s4"; "s18" ; **@{-} ; "s4"; "s19" ; **@{-} ; "s4"; "s21" ; **@{-} ; "s4"; "s22" ; **@{-} ; "s4"; "s20" ; **@{-} ; "s4"; "s24" ; **@{.} ;
"s5"; "s16" ; **@{.} ; "s5"; "s20" ; **@{.} ; "s5"; "s23" ; **@{.} ; "s5"; "s24" ; **@{.} ;  
"s6"; "s17" ; **@{-} ; "s6"; "s21" ; **@{-} ; "s6"; "s22" ; **@{-} ; "s6"; "s25" ; **@{-} ; "s6"; "s27" ; **@{-} ; "s6"; "s26" ; **@{-} ; "s6"; "s32" ; **@{.} ;
"s7"; "s18" ; **@{-} ; "s7"; "s22" ; **@{-} ; "s7"; "s26" ; **@{-} ; "s7"; "s28" ; **@{-} ; "s7"; "s31" ; **@{-} ; "s7"; "s35" ; **@{.} ;
"s8"; "s18" ; **@{-} ; "s8"; "s21" ; **@{-} ; "s8"; "s26" ; **@{-} ; "s8"; "s29" ; **@{-} ; "s8"; "s31" ; **@{-} ; "s8"; "s36" ; **@{.} ;
"s9"; "s19" ; **@{-} ; "s9"; "s21" ; **@{-} ; "s9"; "s22" ; **@{-} ; "s9"; "s27" ; **@{-} ; "s9"; "s30" ; **@{-} ; "s9"; "s31" ; **@{-} ; "s9"; "s37" ; **@{.} ;
"s10"; "s17" ; **@{.} ; "s10"; "s24" ; **@{.} ; "s10"; "s32" ; **@{.} ;
"s11"; "s20" ; **@{.} ; "s11"; "s23" ; **@{.} ; "s11"; "s24" ; **@{.} ; "s11"; "s33" ; **@{.} ; "s11"; "s34" ; **@{.} ;
"s12"; "s18" ; **@{.} ; "s12"; "s24" ; **@{.} ; "s12"; "s35" ; **@{.} ; "s12"; "s36" ; **@{.} ;
"s13"; "s19" ; **@{.} ; "s13"; "s24" ; **@{.} ; "s13"; "s37" ; **@{.} ;
"s14"; "s20" ; **@{.} ; "s14"; "s21" ; **@{.} ; "s14"; "s24" ; **@{.} ; "s14"; "s32" ; **@{.} ; "s14"; "s33" ; **@{.} ; "s14"; "s36" ; **@{.} ; "s14"; "s37" ; **@{.} ;
"s15"; "s20" ; **@{.} ; "s15"; "s22" ; **@{.} ; "s15"; "s24" ; **@{.} ; "s15"; "s32" ; **@{.} ; "s15"; "s34" ; **@{.} ; "s15"; "s35" ; **@{.} ; "s15"; "s37" ; **@{.} ;
\endxy  
\end{align*}
\caption{$\simalg(m-1,\C)$-invariant graph for $\mfC$ (continued)} \label{diagram-Penrose-C-rob_contd}
\end{table}

\subsubsection{Projections}\label{sec-proj-rob}
Continuing on from section \ref{sec-proj-sim} with $\mathfrak{D}=$ $\mfF$, $\mfA$ or $\mfC$, we denote by $\mfD_i^{j,k}$ the irreducible $\simalg(m-1,\C)$-modules of $\gr(\mfD)$, and by $\breve{\mfD}_i^{j,k}$ the corresponding $\cu(m-1)$-modules for a given splitting. We can then introduce the projections
\begin{align*}
{}^\mfD \breve{\Pi}_i^{j,k} & : \mfD \rightarrow \breve{\mfD}_i^{j,k} \subset \breve{\mfD}_i^j  \, , 
\end{align*}
for each $i,j,k$. This time, the kernel of ${}^\mfD \breve{\Pi}_i^{j,k}$ is not $\simalg(m-1,\C)$-invariant, as it depends on a choice of a splitting $\breve{\mfD}_i^{j,k}$. Since we are essentially concerned with $\simalg(m-1,\C)$-invariant conditions, for any $D \in \mfD$, we shall write
\begin{align}\label{eq-proj-rob2}
{}^\mfD \Pi_i^{j,k} (D) & = 0 \, , & & \Longleftrightarrow & {}^\mfD \breve{\Pi}_i^{j,k} (D) & = 0 \, , & \mbox{for any splitting \eqref{eq-sim-grading}.}
\end{align}

To verify the RHS of \eqref{eq-proj-rob2}, one can go through the same reasoning as Remark \ref{rem-proj-sim}. Regarding Remark \ref{rem-proj-sim-exp}, it must be said that explicit formulae for the maps ${}^\mfD \Pi_i^{j,k}$ occurring in \eqref{eq-proj-sim2} are far more involved than in the $\simalg(n-2)$ case -- see appendix \ref{ref-spinor-descript-proj}.
\newpage
\section{Geometric applications}\label{sec-geomexa}
We shall presently apply the algebraic machinery of sections \ref{sec-algebra} and \ref{sec-curvature} to Lorentzian geometry. Throughout this section,$(\mcM, g)$ will denote an oriented Lorentzian manifold of dimension $n$.  The Levi-Civita will be denoted by $\nabla_a$ with Riemann curvature tensor given by
\begin{align*}
R \ind{_{abd}^c} V \ind{^d} & := 2 \, \nabla \ind{_{[a}} \nabla \ind{_{b]}} V \ind{^c} \, ,
\end{align*}
for any vector field $V^a$. The Riemann tensor decomposes into $\SO(n-1,1)$-irreducibles. When $n>3$, the case we shall mostly be concerned with, this decomposition is given by
\begin{align}\label{eq-Riem-decomp}
R \ind{_{abcd}} & = C \ind{_{abcd}} +\frac{4}{n-2} \Phi \ind{_{[c|[a}}g \ind{_{b]|d]}} + \frac{2}{n(n-1)} R g \ind{_{a[c}} g \ind{_{d]b}} \, ,
\end{align}
where $C \ind{_{abcd}}$ is the Weyl tensor, $\Phi \ind{_{ab}}$ the tracefree part of the Ricci tensor $R \ind{_{ab}} := R \ind{_{acb}^c}$, and $R := R \ind{_a^a}$ the Ricci scalar. There is no Weyl tensor when $n \leq 3$.

The vector bundles on $\mcM$ of interest for us will be the bundles of irreducible curvature tensors. From a representational point of view, these can be defined $\mcD := \mcF \mcM \times_{\SO(n-1,1)} \mfD$, where $\mcF \mcM$ is the frame bundle, and $\mfD$ is an $\SO(n-1,1)$-irreducible representation -- in our case, $\mfD =$ $\mfF$, $\mfA$ or $\mfC$ as defined by \eqref{eq-rep-F}, \eqref{eq-rep-A} and \eqref{eq-rep-C} respectively.

We first briefly review spacetimes endowed with a distinguished null line distribution. Many geometric properties of such manifolds have been investigated, especially within the framework of the null alignment formalism, see \cite{Ortaggio2013} for a recent survey. We then examine the bundle generalisation of Robinson structures.

\subsection{Null line distributions}
Let $\mcK \subset \Tgt \mcM$ be a null line distribution, so that the structure group of the frame bundle $\mcF \mcM$ is reduced to $\Sim(n-2)$. This induces $\Sim(n-2)$-invariant subbundles of vector bundles constructed from the $\simalg(n-2)$-modules presented in section \ref{sec-sim-class} in the usual way. This applies in particular to the bundles of irreducible curvature tensors. These subbundles will simply be given by $\mcD^i = \mcF \mcM \times_{\Sim(n-2)} \mfD^i$, etc... in the obvious way (here $\mfD=$ $\mfF$, $\mfA$ or $\mfC$). We shall therefore draw from the notation of the previous sections and the appendices. In particular, we shall characterise the curvature tensors as elements of $\simalg(n-2)$-invariant subspaces in terms of the kernels of the maps ${}^\mfD \Pi_i^j$ defined by \eqref{eq-proj-sim} and \eqref{eq-proj-sim2} of section \ref{sec-proj-sim}.

Further, since many of the examples we shall consider are equipped with more than one distinguished null line distributions, it will be convenient to consider the bundle $\Gr_1(\Tgt \mcM,g)$ of all unoriented null line distributions on $(\mcM,g)$. This is the $S^{n-2}$-bundle whose fiber over a point $p$ of $\mcM$ is the null Grassmannian $\Gr_1(\Tgt_p \mcM,g)$. We shall refer to a null line distribution $\mcK$ specifically in the maps ${}^\mfD \Pi_i^j$ by writing ${}^\mfD _\mcK \Pi_i^j$.

A natural arena for the study of the geometric properties of $\mcK$ and its orthogonal complement $\mcK^\perp$ is provided by the \emph{intrinsic torsion} of the $\Sim(n-2)$-structure induced by $\mcK$. In broad terms, the intrinsic torsion associated to $\mcK$ splits into classes, which can be identified with $\simalg(n-2)$-invariant decompositions of $\nabla_a k_b \pmod{\alpha_a k_b}$. In general relativity, the resulting classification gives rise to the well-known characterisation of the congruence generated by $\mcK$ in terms of geometric optics: geodesy, shear, twist, dilation and parallelism. The latter is the strongest $\Sim(n-2)$-invariant differential condition on $\mcK$ that one can impose. Equivalently, any generator $k^a$ of $\mcK$ is recurrent with respect to $\nabla_a$, i.e.
$\left( \nabla \ind{_a} k \ind{^{[b}} \right) k \ind{^{c]}} = 0$. The holonomy of $\nabla_a$ is then contained in a subgroup of $\Sim(n-2)$. Local normal forms for such manifolds have been notably given in \cite{Galaev2010}. Here, we recall the integrability condition for the existence of such a vector field, which can be found, for instance, in \cites{Ortaggio2009b,Ortaggio2013a} and references therein. 

\begin{rem}
In the following statements and the rest of the paper, special features of low dimensions can be obtained by excluding those $\simalg(n-2)$-modules that do not occur there -- see section \ref{sec-curvature} and appendix \ref{sec-low-dim} for details.
\end{rem}

\begin{prop}\label{prop-rec-null-vec}
 Suppose that $(\mcM,g)$ admits a parallel null line distribution $\mcK$, and let $k^a$ be any section of $\mcK$ so that $(\nabla_a k^{[b}) k^{c]} = 0$. Then
 \begin{align*}
  R \ind{_{ab\lb{c}}^e} k \ind{_{\rb{d}}} k \ind{_e} & = 0 \, , \\
  {}^\mfC _\mcK \Pi_0^1 (C) & = 0 \, , \\
  {}^\mfF _\mcK \Pi_{-1}^0 (\Phi) & = 0 \, , & \mbox{i.e.} & & \Phi \ind{_{\lb{a}}^e} k \ind{_{\rb{b}}} k \ind{_e} & = 0 \, .
 \end{align*}
Further,
\begin{align*}
 {}^\mfF _\mcK \Pi_0^1 (\Phi) & = 0 & \Longleftrightarrow & & {}^\mfC _\mcK \Pi_0^2 (C) & = 0 \, ,
\end{align*}
and if any two of the following conditions
\begin{align*}
{}^\mfC _\mcK \Pi_0^0 (C) & = 0 \, , &  {}^\mfF _\mcK \Pi_0^0 (\Phi) & = 0 \, , & R & = 0 \, , 
\end{align*}
hold, the remaining one holds too.
\end{prop}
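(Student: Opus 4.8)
The plan is to derive the whole statement from one curvature identity. The hypothesis $(\nabla_a k^{[b})k^{c]}=0$ is equivalent to $\nabla_a k_b = \alpha_a k_b$ for some $1$-form $\alpha_a$ (equivalently, the holonomy of $\nabla$ reduces to a subgroup of $\Sim(n-2)$). Substituting this into the Ricci identity $2\nabla_{[a}\nabla_{b]}k_c = -R_{abc}{}^d k_d$ and noting that the resulting $\alpha_{[a}\alpha_{b]}$-term cancels gives the master identity
\begin{align*}
 R_{abcd}\,k^d = -(\dd\alpha)_{ab}\,k_c\,, \qquad (\dd\alpha)_{ab}:=2\nabla_{[a}\alpha_{b]}\,.
\end{align*}
The first displayed equation is then immediate: contracting the free index with $k$ and antisymmetrising over the pair $[cd]$ turns both sides into a multiple of $k_{[c}k_{d]}=0$; no use of the decomposition \eqref{eq-Riem-decomp} is needed for this line.

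Next I would contract the master identity with $k^a$. The block and pair symmetries of $R_{abcd}$ force $R_{abcd}k^ak^d$ to be symmetric in $bc$, whereas its value is $\bigl(-(\dd\alpha)_{ab}k^a\bigr)k_c$; hence $(\dd\alpha)_{ab}k^a\propto k_b$. Tracing the master identity over $\{a,c\}$ with $g^{ac}$ identifies $R_{bd}k^d$ with $(\dd\alpha)_{bc}k^c$ up to sign, so $R_{bd}k^d\propto k_b$ and therefore $\Phi_{ab}k^b\propto k_a$, which is precisely ${}^\mfF_\mcK\Pi_{-1}^0(\Phi)=0$, i.e. $\Phi_{[a}{}^{e}k_{b]}k_e=0$. (The same contraction traced once more over $bc$ also gives $\Phi_{ab}k^ak^b=0$.) Now substitute the master identity into \eqref{eq-Riem-decomp}, solve for $C_{abcd}k^d$, and use $\Phi_{ab}k^b=\nu k_a$ and $(\dd\alpha)_{ab}k^a\propto k_b$ just obtained; one gets
\begin{align*}
 C_{abcd}k^d = -(\dd\alpha)_{ab}k_c - \tfrac{1}{n-2}\bigl(\Phi_{ca}k_b-\Phi_{cb}k_a\bigr) + (\text{terms built from }g_{ab},\,k_a,\,\nu,\,R)\,.
\end{align*}
Contracting the free indices against a null vector $\ell^a$ with $\ell\cdot k=1$ and against screen frame vectors in the slots $b,c$ shows that the resulting boost-weight-zero Weyl components are symmetric under exchange of the two screen indices; by the first Bianchi identity this is equivalent to the vanishing of the screen $2$-form part of the boost-weight-zero Weyl tensor, i.e. ${}^\mfC_\mcK\Pi_0^1(C)=0$ in the Bel--Debever description of appendix \ref{ref-spinor-descript-proj}.

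The two refined statements come from reading more off the same formula for $C_{abcd}k^d$. Its symmetric-tracefree screen part equals $-\tfrac1{n-2}$ times the symmetric-tracefree screen part of $\Phi_{ab}$; under the isomorphism of the $\co(n-2)$-modules underlying $\mfC_0^2$ and $\mfF_0^1$ this reads ${}^\mfC_\mcK\Pi_0^2(C)=-\tfrac1{n-2}\,{}^\mfF_\mcK\Pi_0^1(\Phi)$, which is the asserted equivalence. Its screen trace, after using $\Phi^{a}{}_{a}=0$ to express the screen-trace of $\Phi$ through $\nu$, gives a linear relation ${}^\mfC_\mcK\Pi_0^0(C)=c_1\,{}^\mfF_\mcK\Pi_0^0(\Phi)+c_2\,R$ with $c_1=-\tfrac{n-4}{n-2}$, $c_2=-\tfrac{n-2}{n(n-1)}$, both nonzero for $n\geq5$; the vanishing of any two of ${}^\mfC_\mcK\Pi_0^0(C)$, ${}^\mfF_\mcK\Pi_0^0(\Phi)$, $R$ then forces the third. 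The low-dimensional exceptions, notably $n=4$, I would handle directly using the reduced `Penrose diagrams' of appendix \ref{sec-low-dim}, where some of the modules above coincide or are absent.

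I expect the main obstacle to be not the curvature algebra, which is routine once the master identity is in hand, but the translation between the frame computations (contractions with $k$, a dual null $\ell$, and a screen frame) and the invariant projections ${}^\mfD_\mcK\Pi_i^j$: one must appeal to the explicit formulae and the module tables of appendices \ref{ref-spinor-descript-proj} and \ref{sec-rep} to confirm that the ``screen $2$-form part'' of $C$ is exactly $\mfC_0^1$, the ``symmetric-tracefree screen part'' is exactly $\mfC_0^2$ with $\mfC_0^2\cong\mfF_0^1$, and that the scalar invariants ${}^\mfC\Pi_0^0$ and ${}^\mfF\Pi_0^0$ are the ones entering the last relation, as well as to pin down the behaviour in the excluded low dimensions.
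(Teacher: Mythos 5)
Your proof is correct and follows essentially the same route as the paper's (sketched) argument: the ``master identity'' $R\ind{_{abc}^d}k\ind{_d}\propto(\dd\alpha)\ind{_{ab}}k\ind{_c}$ is exactly what the paper means by differentiating the recurrent vector field twice and commuting derivatives, and the subsequent contractions combined with the decomposition \eqref{eq-Riem-decomp} reproduce, up to normalisation of the projection maps, the two displayed relations in the paper's proof. Your remark about the coefficient $\tfrac{n-4}{n-2}$ degenerating at $n=4$ is a fair observation that applies equally to the paper's own formula.
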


\begin{proof}[Sketch]
 It is a matter of differentiating the recurrent vector field twice and commuting the covariant derivatives. Using the decomposition \eqref{eq-Riem-decomp} then leads to ${}^\mfC _\mcK \Pi_0^1 (C) = 0$, and in addition,
\begin{align*}
{}^\mfC _\mcK \Pi_0^0 (C) \ind{_{ab}} & = \frac{n-4}{n-2} k \ind{_{(a}} \Phi \ind{_{b)c}} k \ind{^c} + \frac{n-2}{n(n-1)} R k \ind{_a} k \ind{_b} \, , & 
{}^\mfC _\mcK \Pi_0^2 (C) & = - \frac{4}{n-2} {}^\mfF _\mcK \Pi_0^1 (\Phi) \, ,
\end{align*}
which completes the proof.
\end{proof}

\vspace{2.5mm}

More restrictive yet are the Lorentzian manifolds, known as \emph{pp-waves}, that admit a \emph{parallel null vector field}. Note that this is no longer a $\Sim(n-2)$-invariant condition. Paralleling Proposition \ref{prop-rec-null-vec}, we have

\begin{prop}\label{prop-par-null-vec}
Let $k^a$ be a parallel null vector field on $(\mcM,g)$, i.e. $\nabla_a k^b = 0$. Then
 \begin{align*}
  R \ind{_{abc}^d} k \ind{_d} & = 0 \, , &
  \Phi \ind{_{ab}} k \ind{^b} & = - \frac{1}{n} R k \ind{_a} \, &
  {}^\mfC _\mcK \Pi_0^1 (C) & = 0 \, .
\end{align*}
Further,
\begin{align*}
 {}^\mfF _\mcK \Pi_0^0 (\Phi) & = 0 & \Longleftrightarrow & & R & = 0 & \Longleftrightarrow & & {}^\mfC _\mcK \Pi_0^0 (C) & = 0 \, , \\
  {}^\mfF _\mcK \Pi_0^1 (\Phi) & = 0 & \Longleftrightarrow & & {}^\mfC _\mcK \Pi_0^2 (C) & = 0 \, , \\
  {}^\mfF _\mcK \Pi_1^0 (\Phi) & = 0 & \Longleftrightarrow & & {}^\mfC _\mcK \Pi_1^0 (C) & = 0 \, .
\end{align*}
\end{prop}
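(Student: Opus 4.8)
The plan is to exploit the fact that a parallel null vector field $k^a$ is a very rigid condition: since $\nabla_a k^b = 0$, every curvature contraction with $k$ collapses. First I would compute $R \ind{_{abc}^d} k \ind{_d}$ directly from the definition of the Riemann tensor: $R \ind{_{abc}^d} k \ind{_d} = 2 \nabla \ind{_{[a}} \nabla \ind{_{b]}} k \ind{_c} = 0$, since $\nabla_b k_c = 0$. Contracting on the appropriate pair of indices gives $\Phi \ind{_{ab}} k \ind{^b}$ in terms of $R$ and $k_a$: from \eqref{eq-Riem-decomp}, contract $R \ind{_{abc}^d} k \ind{_d} = 0$ over $a$ and $c$ (or rather feed $k^d$ into the decomposed Riemann tensor and contract) to isolate the Ricci contraction; the Weyl term drops out because $C \ind{_{abc}^d} k \ind{_d}$ still appears, so I would instead contract $R \ind{_{ab}} k^b$ directly using $R \ind{_{ab}} = R \ind{_{acb}^c}$ and $R \ind{_{acb}^d} k \ind{_d} = 0$, which gives $R \ind{_{ab}} k^b = 0$, hence $\Phi \ind{_{ab}} k^b = R \ind{_{ab}} k^b - \frac{1}{n} R g \ind{_{ab}} k^b = - \frac{1}{n} R k \ind{_a}$. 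This is the second displayed identity.

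Next, to get ${}^\mfC _\mcK \Pi_0^1 (C) = 0$ I would argue as in Proposition \ref{prop-rec-null-vec}: a parallel null vector field is in particular a recurrent (indeed parallel) null line distribution, so Proposition \ref{prop-rec-null-vec} applies verbatim and yields $R \ind{_{ab\lb{c}}^e} k \ind{_{\rb{d}}} k \ind{_e} = 0$ and ${}^\mfC _\mcK \Pi_0^1 (C) = 0$, together with the formula ${}^\mfC _\mcK \Pi_0^0 (C) \ind{_{ab}} = \frac{n-4}{n-2} k \ind{_{(a}} \Phi \ind{_{b)c}} k \ind{^c} + \frac{n-2}{n(n-1)} R k \ind{_a} k \ind{_b}$ and ${}^\mfC _\mcK \Pi_0^2 (C) = - \frac{4}{n-2} {}^\mfF _\mcK \Pi_0^1 (\Phi)$ from its proof. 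The second of these immediately gives ${}^\mfF _\mcK \Pi_0^1 (\Phi) = 0 \Leftrightarrow {}^\mfC _\mcK \Pi_0^2 (C) = 0$. For the chain ${}^\mfF _\mcK \Pi_0^0 (\Phi) = 0 \Leftrightarrow R = 0 \Leftrightarrow {}^\mfC _\mcK \Pi_0^0 (C) = 0$, I would substitute $\Phi \ind{_{bc}} k^c = -\frac{1}{n} R k_b$ into the $\Pi_0^0$ formula above: the first term becomes $-\frac{n-4}{n(n-2)} R k_a k_b$, so ${}^\mfC _\mcK \Pi_0^0 (C) \ind{_{ab}}$ is a (nonzero) scalar multiple of $R k_a k_b$, giving the equivalence with $R = 0$; and ${}^\mfF _\mcK \Pi_0^0 (\Phi)$, which by the tables of appendix \ref{sec-rep} is essentially the component $\Phi \ind{_{ab}} k^a k^b$ up to the trace term, equals $-\frac{1}{n} R (k_a k^a) = \dots$ — more carefully, $\Pi_0^0(\Phi)$ picks out the $k_a k_b$-component of $\Phi$ along $\mfV_1 \otimes \mfV_1$, which from $\Phi \ind{_{ab}} k^b = -\frac{1}{n} R k_a$ is $-\frac{1}{n} R$ times the relevant projector, hence vanishes iff $R = 0$. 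This closes the triangle.

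Finally, for ${}^\mfF _\mcK \Pi_1^0 (\Phi) = 0 \Leftrightarrow {}^\mfC _\mcK \Pi_1^0 (C) = 0$, I would go one filtration degree deeper. The identity $R \ind{_{abc}^d} k \ind{_d} = 0$ says the Riemann tensor lies in $\mcC^{-1} \cap (\text{something})$; feeding only one $k$ into \eqref{eq-Riem-decomp} and projecting onto the grading-$(+1)$ part (i.e. the component of $R_{abcd}$ that survives after one contraction with $k$ but probing the $\mfV_1$-direction in the remaining free index pattern), the Ricci-scalar term drops because $R g \ind{_{a[c}} g \ind{_{d]b}} k^d$ is pure $k$-trace, the $\Phi$-term contributes exactly ${}^\mfF _\mcK \Pi_1^0(\Phi)$ (the $\mfV_1 \otimes \mfV_0$-component of $\Phi$, i.e. $\Phi \ind{_{[a}^e} k \ind{_{b]}}$ transverse to $k$), and the Weyl term contributes ${}^\mfC _\mcK \Pi_1^0(C)$; since the total vanishes, the two projections are proportional with nonzero constant, giving the equivalence. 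The main obstacle I expect is purely bookkeeping: matching the abstract projection operators ${}^\mfD _\mcK \Pi_i^j$ to the concrete tensorial contractions with $k^a$ (and the exact normalisation constants), which requires reading off the module identifications from Tables \ref{table-F-sim} and \ref{table-C-sim} and the explicit Bel–Debever-type formulae of appendix \ref{ref-spinor-descript-proj}; the differential-geometric content is elementary and identical to Proposition \ref{prop-rec-null-vec}.
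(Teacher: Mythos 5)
Your proposal is correct and follows essentially the same route as the paper: the paper's proof likewise observes that a parallel null vector is a special recurrent one, recycles Proposition \ref{prop-rec-null-vec} for ${}^\mfC _\mcK \Pi_0^1(C)=0$ and the $\Pi_0^0$, $\Pi_0^2$ identities, and then records the two extra relations ${}^\mfC _\mcK \Pi_0^0 (C) \ind{_{ab}} = \tfrac{1}{(n-1)(n-2)} R k_a k_b$ and ${}^\mfC _\mcK \Pi_1^0 (C) \ind{_{abc}} = \tfrac{2}{n-2}\, {}^\mfF _\mcK \Pi_1^0 (\Phi)_{abc} - \tfrac{2}{n(n-1)(n-2)} R k_{[a} g_{b]c}$ obtained exactly as you describe. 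The only loose point is your claim that the Ricci-scalar term ``drops'': it does not vanish as a tensor (the paper retains it), but being built solely from $k_a$ and $g_{ab}$ it is $\co(n-2)$-invariant and so has no component in the nontrivial irreducible $\breve{\mfC}_1^0 \cong \mfV_0$, which is why it does not obstruct the equivalence ${}^\mfF _\mcK \Pi_1^0(\Phi)=0 \Leftrightarrow {}^\mfC _\mcK \Pi_1^0(C)=0$ --- precisely the projection bookkeeping you defer to the appendix.
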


\begin{proof}[Sketch]
Clearly, a parallel null vector field is a special type of recurrent null vector field. We can recycle the proof of Proposition \ref{prop-rec-null-vec}, and we find in addition
\begin{align*}
{}^\mfC _\mcK \Pi_0^0 (C) \ind{_{ab}} & = \frac{1}{(n-1)(n-2)} R k \ind{_a} k \ind{_b} \, , & 
{}^\mfC _\mcK \Pi_1^0 (C) \ind{_{abc}} & = \frac{2}{n-2} {}^\mfF _\mcK \Pi_1^0 (\Phi)_{abc} - \frac{2}{n(n-1)(n-2)} R k \ind{_{[a}} g_{b]c} \, ,
\end{align*}
and the result follows.
\end{proof}

\vspace{2.5mm}

\subsection{Almost Robinson structures}
\subsubsection{Basic definitions and properties}
We shall presently translate the algebraic setting of section \ref{sec-rob-alg} into the language of bundles. As we shall occasionally consider pseudo-Riemannian manifolds of arbitrary signature, we introduce the following definition.

\begin{defn}
Let $(\mcM,g)$ be a pseudo-Riemannian manifold of dimension $2m+\epsilon$ where $\epsilon \in \{0.1\}$. An \emph{almost null structure} is a totally null complex $m$-plane distribution on $\mcM$.
\end{defn}

Henceforth, unless otherwise stated, $(\mcM,g)$ will denote an $n$-dimensional smooth Lorentzian manifold, with $n=2m+\epsilon$, $\epsilon \in \{ 0,1\}$.

\begin{defn}
An \emph{almost Robinson structure} on $(\mcM,g)$ is an almost null structure of real index $1$. We say that $(\mcM,g)$ is an \emph{almost Robinson manifold} if it is equipped with an almost Robinson structure.
\end{defn}
In other words, an almost Robinson structure is a smooth assignment of a Robinson structure $\mcN_p$ on the tangent space $\Tgt_p \mcM$ at every point $p$ of $\mcM$. It defines a real null line distribution $\mcK$ on $\mcM$, i.e. ${}^\C \mcK_p := \mcN_p \cap \bar{\mcN}_p$ for all $p$ in $\mcM$ -- here, ${}^\C \mcK_p$ denotes the complexification of $\mcK_p$. Each fiber of the screenbundle $\mcK^\perp/\mcK$ is equipped with a Hermitian structure. An almost Robinson manifold is characterised by a reduction of the structure group of the frame bundle to $\Sim(m-1,\C)$ stabilising $\mcN$, the subgroup of the stabiliser $\Sim(n-2)$ of $\mcK$. We can then apply the $\simalg(m-1,\C)$-invariant decompositions of the various tensor representations of $\so(n-1,1)$ given in sections \ref{sec-algebra} and \ref{sec-curvature} to the curved setting, and in particular, in Propositions \ref{prop-Ricci-rob}, \ref{prop-CY-rob} and \ref{prop-Weyl-rob}. Again, to characterise curvature tensors as elements of some irreducible $\simalg(m-1,\C)$-invariant subspace, we shall refer to sections \ref{sec-proj-sim} and \ref{sec-proj-rob} and make use of the maps ${}^\mfD _\mcK \Pi_i$, ${}^\mfD _\mcK \Pi_i^j$ and ${}^\mfD _\mcN \Pi_i^{j,k}$ defined by \eqref{eq-proj-sim}, \eqref{eq-proj-sim2} and \eqref{eq-proj-rob2} respectively. Here, we have specified which almost Robinson structure $\mcN$ and its associated null line distribution $\mcK$ the maps refer to.

To make the description somewhat more tractable, it is more convenient to choose a splitting of ${}^\C \Tgt \mcM$ adapted to $\mcN$, and work with representations of the reductive part $\cu(m-1)$ of $\simalg(m-1,\C)$. To this end, we introduce a null line distribution $\mcL$ dual to $\mcK$, so that the almost Robinson structure gives rise to an almost Hermitian structure $J \ind{_a^b}$ on the distribution orthogonal to both $\mcK$ and $\mcL$, and ${}^\C \Tgt \mcM$ splits as
\begin{align}
\begin{aligned}\label{eq-splitting-tangent}
{}^\C \Tgt \mcM & = {}^\C \mcK \oplus \Tgt^{(1,0)} \oplus \Tgt^{(0,1)} \oplus \epsilon \Tgt^{(0,0)} \oplus {}^\C \mcL \, , \\
\mcN & = {}^\C \mcK \oplus \Tgt^{(1,0)} \, , &
\mcN^\perp & = {}^\C \mcK \oplus \Tgt^{(1,0)} \oplus \epsilon \Tgt^{(0,0)} \, ,
\end{aligned}
\end{align}
where $\Tgt^{(1,0)}$ and $\Tgt^{(0,1)}$ are the $\ii$- and $-\ii$-eigenbundle of $J \ind{_a^b}$ respectively, and when $n$ is odd, $\Tgt^{(0,0)}$ is its one-dimensional kernel. This splitting is the curved version of \eqref{eq-splitting-tangent}, and evidently carries through to the bundles of curvature tensors.

\begin{rem}
Unless otherwise specified, we shall not in general distinguish the even- and odd-dimensional cases. When applying any of the following statements to even dimensions, the reader should exclude those $\simalg(m-1,\C)$-modules occuring in odd dimensions only. The same applies to low dimensions, where a number of $\simalg(m-1,\C)$-modules do not occur. See section \ref{sec-curvature} and appendix \ref{sec-low-dim} for details.
\end{rem}

\paragraph{The bundle of almost Robinson structures}
For convenience, we can define the bundle $\Gr_m^1 ({}^\C \Tgt\mcM,g)$ of all almost Robinson structures on $(\mcM,g)$: the fiber over a point $p$ of $\mcM$ is the null Grassmannian $\Gr_m^1 ({}^\C \Tgt_p \mcM,g)$, which has dimension $m(m-1)+ \epsilon(2m-1)$. When $n=2m$, the bundle $\Gr_m^1({}^\C \Tgt\mcM, g)$ consists of the disjoint union of the bundle $\Gr_m^{+,1}({}^\C \Tgt\mcM, g)$ of all almost self-dual Robinson structures, and the bundle $\Gr_m^{-,1}({}^\C \Tgt\mcM, g)$ of all almost anti-self-dual Robinson structures. As usual, when $m$ is even, sections of $\Gr_m^{+,1} ({}^\C \Tgt\mcM, g)$ can be identified with those of $\Gr_m^{-,1} ({}^\C \Tgt\mcM, g)$ by complex conjugation.

\paragraph{Robinson structures}
As in the $\Sim(n-2)$ case above, almost Hermitian geometry \cite{Gray1980}, and complex Riemannian geometry \cites{Taghavi-Chabert2012a,Taghavi-Chabert2013}, the natural framework to study the differential properties of an almost Robinson structure is by classifying its intrinsic torsion. This is beyond the scope of the present article, and will be treated elsewhere. In this article, we shall nonetheless be concerned with integrable Robinson structures, which feature in a number of solutions of Einstein's field equations.

\begin{defn}\label{def-Robinson-manifold}
An almost null structure $\mcN$ on a pseudo-Riemannian manifold $(\mcM,g)$ is said to be
\begin{itemize}
 \item \emph{integrable} if $[\Gamma(\mcN),\Gamma(\mcN)] \subset \Gamma(\mcN)$,
 \item \emph{totally geodetic} if for all $X^a , Y^b \in \Gamma (\mcN)$, $X^a \nabla_a Y^b \in \Gamma (\mcN)$,
 \item \emph{co-integrable} if $[\Gamma(\mcN^\perp),\Gamma(\mcN^\perp)] \subset \Gamma(\mcN^\perp)$,
 \item \emph{totally co-geodetic} if for all $X^a , Y^b \in \Gamma (\mcN^\perp)$, $X^a \nabla_a Y^b \in \Gamma (\mcN^\perp)$.
\end{itemize}
When $(\mcM,g)$ is Lorentzian, the same definitions apply to almost Robinson structures. A (locally) integrable almost Robinson structure is called a \emph{(local) Robinson structure}, and a Lorentzian manifold equipped with a Robinson structure is called a \emph{Robinson manifold}.
\end{defn}

\begin{rem}
The definition of integrable almost Robinson structure was first introduced in \cite{Nurowski2002} in even dimensions, and then extended to the odd-dimensional case in \cite{Taghavi-Chabert2011}, where it is also referred to as an \emph{optical structure}. The above definitions differ from the one given in \cite{Taghavi-Chabert2011}, which did not reflect the subtleties involved in odd dimensions. In particular, what is referred to as an integrable almost Robinson structure or integrable almost optical structure in \cite{Taghavi-Chabert2011} is an almost Robinson structure that is both integrable and co-integrable according to Definition \ref{def-Robinson-manifold}.
\end{rem}

\begin{rem}
It is shown in \cite{Taghavi-Chabert2013} that, for any almost null structure $\mcN$, and thus for any almost Robinson structure,
\begin{itemize}
\item if $\mcN$ is totally geodetic, then it is integrable;
\item if $\mcN$ is both integrable and co-integrable, it is also totally geodetic;
\item if $\mcN$ is totally co-geodetic, then it is both integrable and co-integrable.
\end{itemize}
In even dimensions, since $\mcN = \mcN^\perp$, all definitions reduce to $\mcN$ being integrable.
\end{rem}

A shearfree congruence of null geodesics is equivalent to a Robinson structure in four dimensions, but this is not so in higher dimensions \cite{Trautman2002a}. In higher dimensions, we still wish to retain the geodesy property of the congruence of null curves generated by $\mcK$ associated to a Robinson structure $\mcN$. By the remark above, in even dimensions, this will be automatically satisfied by virtue of the integrability of $\mcN$. However, this property must be added in odd dimensions, and quite naturally follows from a totally geodetic Robinson structure. In fact, as we shall see in section \ref{sec-alg-sp-st}, it appears that \emph{co-integrable} Robinson structures play a bigger r\^{o}le in higher odd dimensions. Let us recall the properties enjoyed by a Robinson manifold.

\begin{prop}
 Let  $\mcN$ be a Robinson structure on $(\mcM,g)$ with associated real null line distribution $\mcK$, i.e. $\mcN \cap \bar{\mcN} = {}^\C \mcK$. Then $\mcK$ is tangent to a congruence of null curves whose leaf space is a CR manifold. Further, if $\mcN$ is totally geodetic, then this congruence is geodetic.
\end{prop}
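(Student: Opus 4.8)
The plan is to deduce the geodesy of $\mcK$ from a Koszul-formula computation that uses both involutivity hypotheses, and then to build the CR structure on the leaf space of the resulting congruence by pushing forward the involutive distributions $\mcN$, $\bar{\mcN}$ and $\mcN^\perp$, each of which contains $\mcK$. For the first step, fix a real local section $k$ of $\mcK$, so that $k \in \Gamma(\mcN) \cap \Gamma(\bar{\mcN}) \subseteq \Gamma(\mcN^\perp)$ (the last inclusion because $\mcN$ is totally null), and recall that $g(\mcN,\mcN^\perp)=0$ and that $\mcN$, $\bar{\mcN}$, $\mcN^\perp$ are all in involution. For arbitrary $Z \in \Gamma(\mcN^\perp)$ the Koszul identity reads
\begin{align*}
 2\, g(\nabla_k k, Z) = 2\, k \cdot g(k,Z) - Z \cdot g(k,k) - 2\, g([k,Z],k) \, ,
\end{align*}
and every term on the right vanishes: $g(k,Z)=0$ since $k\in\Gamma(\mcN)$ and $Z\in\Gamma(\mcN^\perp)$; $g(k,k)=0$ since $k$ is null; and $[k,Z]\in\Gamma(\mcN^\perp)$ since $k,Z\in\Gamma(\mcN^\perp)$ and $\mcN^\perp$ is involutive, whence $g([k,Z],k)=0$ once more by $g(\mcN^\perp,\mcN)=0$. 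Hence $\nabla_k k$ is orthogonal to $\mcN^\perp$, i.e. $\nabla_k k \in \Gamma((\mcN^\perp)^\perp) = \Gamma(\mcN)$; complex-conjugating, also $\nabla_k k\in\Gamma(\bar{\mcN})$, and since $\nabla_k k$ is real it lies in $\Gamma(\mcN\cap\bar{\mcN}) = \Gamma({}^\C\mcK)$, i.e. in $\Gamma(\mcK)$. Thus $\nabla_k k \propto k$, so the integral curves of $\mcK$ are null geodesics up to reparametrisation, and since $\mcK$ is a line field they automatically foliate $\mcM$ and form a congruence. (Running the same computation with arbitrary $X,Y\in\Gamma(\mcN)$ in place of $k$ in fact shows that $\mcN$ is autoparallel, $\nabla_X Y\in\Gamma(\mcN)$.)

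Next, work locally and let $\varpi : \mcM \to Q$ be the projection onto the $(n-1)$-dimensional leaf space $Q$ of the congruence. The elementary but crucial observation is that an involutive distribution $\mcD$ on $\mcM$ with $\mcK \subseteq \mcD$ is $\varpi$-projectable: for $X\in\Gamma(\mcK)\subseteq\Gamma(\mcD)$ and $Y\in\Gamma(\mcD)$ one has $[X,Y]\in\Gamma(\mcD)$, so $\mcD$ descends to an involutive distribution on $Q$. Applying this to $\mcN$ and its conjugate $\bar{\mcN}$ (involutive by conjugating the hypothesis), each of which contains ${}^\C\mcK = \mcN\cap\bar{\mcN}$, yields complex subbundles $H^{1,0} := \varpi_*\mcN$ and $\varpi_*\bar{\mcN} = \overline{H^{1,0}}$ of ${}^\C\Tgt Q$ of complex rank $m-1$, the pushforward intertwining conjugation because $\varpi_*$ is real; similarly $\mcN^\perp$ descends, with $\varpi_*\mcN\subseteq\varpi_*\mcN^\perp$.

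It remains to check the CR axioms for $H^{1,0}$. Formal integrability $[\Gamma(H^{1,0}),\Gamma(H^{1,0})]\subseteq\Gamma(H^{1,0})$ is inherited directly from the involutivity of $\mcN$, while $H^{1,0}\cap\overline{H^{1,0}}=0$ follows from $\mcN\cap\bar{\mcN}={}^\C\mcK$ (a section of the intersection lifts to a section $v$ of $\mcN$ which, modulo ${}^\C\mcK\subseteq\bar{\mcN}$, lies in $\bar{\mcN}$, hence in $\mcN\cap\bar{\mcN}={}^\C\mcK$, so its class vanishes). Therefore $H^{1,0}$ defines a CR structure on $Q$, of CR dimension $m-1$ and CR codimension $\epsilon+1$ — of hypersurface type when $n$ is even, and carrying in addition the descended real line $\varpi_*\mcN^\perp/\varpi_*\mcN$ when $n$ is odd. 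The genuine subtlety in all of this — and the step I expect to be the crux — is the descent, i.e. verifying that $\mcN$, $\bar{\mcN}$ and $\mcN^\perp$ really do define well-behaved bundles over the (local) leaf space; what makes it go through is precisely that these distributions are involutive and contain $\mcK$, which is also why one must invoke the involutivity of $\mcN^\perp$, and not merely that of $\mcN$, to secure full geodesy in odd dimensions.
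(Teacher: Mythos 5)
Your proof is correct, and since the paper states this proposition without proof (citing Nurowski 2002), there is nothing in the text to contrast it with; your argument — Koszul formula plus involutivity of $\mcN^\perp$ to get $\nabla_k k\perp\mcN^\perp$, hence $\nabla_k k\in\Gamma(\mcN\cap\bar{\mcN})=\Gamma({}^\C\mcK)$ by reality, followed by descent of the involutive distributions containing $\mcK$ to the local leaf space — is essentially the standard one from the cited reference. Your closing observation is also on point: in odd dimensions involutivity of $\mcN$ alone only yields $\nabla_k k\in\Gamma(\mcN^\perp\cap\overline{\mcN^\perp})$, which has an extra spacelike direction, so the involutivity of $\mcN^\perp$ is genuinely needed for geodesy.
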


\begin{proof}[Sketch]
For the \emph{almost} Robinson structure $\mcN$ to descend to an almost CR-structure on the leaf space of $\mcK$, one must require that $[ \bm{k}, \bm{V} ] \in \Gamma (\mcN)$ for any $\bm{k} \in \Gamma(\mcK)$, and all $\bm{V} \in \Gamma (\mcN)$. This is clearly satisfied when $\mcN$ is integrable. The integrability of the almost CR-structure then follows from that of $\mcN$. In even dimensions, this is proved in \cite{Nurowski2002} where the geodesy property of $\mcK$ follows automatically from the integrability of $\mcN$. In odd dimensions, this geodesy property must be an additional requirement.
\end{proof}

\vspace{2.5mm}

Since in dimensions two and three, an almost Robinson structure is equivalent to a real null line distribution, one obtains the following
\begin{lem}\label{lem-Rob-in23}
Suppose $(\mcM,g)$ is a two- or three-dimensional Lorentzian manifold. Then an integrable and co-integrable almost Robinson structure on $(\mcM,g)$ is equivalent to the existence a congruence of null geodesics. Further, there always exists a local Robinson structure, and when $\mcM$ is two-dimensional, there is a unique almost Robinson structure, which is always integrable.
\end{lem}

Using the results of \cites{Taghavi-Chabert2011,Taghavi-Chabert2012,Taghavi-Chabert2012a,Taghavi-Chabert2013} on almost null structures, the curvature condition for the existence of a totally geodetic Robinson structure $\mcN$ is given by
\begin{align}\label{eq-int-cond-Rob}
C_{abcd} X^a Y^b Z^c W^d & = 0 \, , & \mbox{for all $X^a,Y^a,Z^a \in \Gamma (\mcN)$, $W^a \in \Gamma (\mcN^\perp)$.}
\end{align}
This also applies to co-integrable almost Robinson structure \cites{Taghavi-Chabert2011,Taghavi-Chabert2012}. Taking the real span of \eqref{eq-int-cond-Rob} yields the following proposition.
\begin{prop}\label{prop-int-cond-Robinson}
Let $\mcN$ be a totally geodetic or co-integrable Robinson structure on $(\mcM,g)$. Then the Weyl tensor (locally) satisfies
\begin{subequations}\label{eq-int-condition}
\begin{align}
{}^\mfC _\mcN \Pi_{-2}^{0,i} (C) & = 0 \, , & \qquad \mbox{for $i=1,3$} \\
{}^\mfC _\mcN \Pi_{-1}^{1,i} (C) & = 0 \, , & \qquad \mbox{for $i=1,6,9$} \\
{}^\mfC _\mcN \Pi_0^{3,3} (C) & = 0 \, , & \qquad \mbox{for $i=3,10$} 
\end{align}
\end{subequations}
\end{prop}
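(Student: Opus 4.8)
The plan is to translate the integrability condition \eqref{eq-int-cond-Rob}, namely $C_{abcd}X^aY^bZ^cW^d = 0$ for all $X^a, Y^a \in \Gamma(\mcN^\perp)$ and $Z^a, W^a \in \Gamma(\mcN)$, into a statement about which irreducible $\simalg(m-1,\C)$-components of the Weyl tensor are forced to vanish. First I would fix a point $p \in \mcM$ and a splitting \eqref{eq-splitting-tangent} adapted to $\mcN_p$, so that $\mcN = {}^\C\mcK \oplus \Tgt^{(1,0)}$ and $\mcN^\perp = {}^\C\mcK \oplus \Tgt^{(1,0)} \oplus \epsilon\Tgt^{(0,0)}$. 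The condition \eqref{eq-int-cond-Rob} then says that $C_{abcd}$ has no component in the $\cu(m-1)$-submodule of $\mfC$ obtained by symmetrising over the allowed index types: $a,b$ running over $\{{}^\C\mcK, \Tgt^{(1,0)}, \epsilon\Tgt^{(0,0)}\}$ and $c,d$ over $\{{}^\C\mcK, \Tgt^{(1,0)}\}$. Since $C_{abcd}$ takes values in a real representation, I then take the real span of this complex condition, as the statement of Proposition \ref{prop-int-cond-Robinson} indicates.

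The core of the argument is bookkeeping with the weight (grading-element eigenvalue) and the $\uu(m-1)$-type of each index slot. The ${}^\C\mcK$ slot has grading weight $+1$, the $\Tgt^{(1,0)}$ and $\epsilon\Tgt^{(0,0)}$ slots weight $0$, the $\Tgt^{(0,1)}$ slot weight $0$, and ${}^\C\mcL$ weight $-1$; the holomorphic type is carried by $\Tgt^{(1,0)}$ versus $\Tgt^{(0,1)}$. A component $C_{abcd}$ with all four indices drawn from $\mcN^\perp$ on the first pair and $\mcN$ on the second pair can have total grading weight ranging from $+2$ (all indices in ${}^\C\mcK$; but Weyl tracelessness and the algebraic Bianchi identity kill the purely-$\mcK$ part, leaving weight $+2$ pieces with one $\mcK$ in each skew pair and one $\Tgt^{(1,0)}$ or $\Tgt^{(0,0)}$) down through $+1$ to $0$. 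Matching each such combination against Table \ref{table-C-rob} and the Penrose diagrams \ref{diagram-Penrose-C-rob}--\ref{diagram-Penrose-C-rob_contd} identifies exactly the modules $\mfC_{-2}^{0,1}$ and $\mfC_{-2}^{0,3}$ at weight $+2$, the modules $\mfC_{-1}^{1,1}$, $\mfC_{-1}^{1,6}$, $\mfC_{-1}^{1,9}$ at weight $+1$, and the modules $\mfC_0^{3,3}$ and (odd-dimensional) $\mfC_0^{3,10}$ at weight $0$ — these being precisely the summands whose defining tensor realisations are supported on index types lying in $\mcN^\perp \times \mcN^\perp \times \mcN \times \mcN$ (with the $(1,0)$/$(0,1)$ decomposition pinning down the holomorphic type, and the trace and first-Bianchi constraints removing the would-be extra pieces, e.g. at weight $+2$ the module coming from $\wedge^2\Tgt^{(0,1)}$-type slots, or the $\mfz_0$-valued piece).

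The step I expect to be the main obstacle is the clean matching of the ``all-allowed-index'' components against the list of irreducibles in Table \ref{table-C-rob}: several of the $\mfC_i^{j,k}$ carry the same grading weight and the same underlying $\co(n-2)$-isotype, and one must carefully use the $\U(m-1)$-decomposition (i.e. which of $\Tgt^{(1,0)}$, $\Tgt^{(0,1)}$, $\Tgt^{(0,0)}$ the free indices sit in, and how many of each) together with the Weyl trace and Bianchi symmetries to separate them and confirm that the vanishing is exactly on $i=1,3$ in weight $+2$, $i=1,6,9$ in weight $+1$, and $i=3,10$ in weight $0$, with no additional component forced to vanish and none of the listed ones spared. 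Once this dictionary is set up, the proof is immediate: \eqref{eq-int-cond-Rob} holds pointwise by hypothesis, it is manifestly the statement that the components of $C$ along precisely these $\cu(m-1)$-submodules vanish, the conditions are $\simalg(m-1,\C)$-invariant (independent of the chosen splitting \eqref{eq-sim-grading}, by \eqref{eq-proj-rob2}), and taking real spans converts the complex vanishing into the stated real vanishing of ${}^\mfC _\mcN \Pi_{-2}^{0,i}(C)$, ${}^\mfC _\mcN \Pi_{-1}^{1,i}(C)$ and ${}^\mfC _\mcN \Pi_0^{3,i}(C)$.
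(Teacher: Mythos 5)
Your proposal is correct and follows essentially the same route as the paper: fix a splitting \eqref{eq-splitting-tangent} adapted to $\mcN$, decompose the integrability condition \eqref{eq-int-cond-Rob} according to the index types ${}^\C\mcK$, $\Tgt^{(1,0)}$, $\Tgt^{(0,0)}$, and match each resulting contraction against the irreducible $\cu(m-1)$-modules of Table \ref{table-C-rob} via the representatives of appendix \ref{ref-spinor-descript-rep}. The "main obstacle" you flag (separating isotypic summands at each homogeneity) is exactly what the paper's proof resolves by listing the explicit equivalences $C_{abcd}k^aX^bk^cW^d=0 \Leftrightarrow {}^\mfC_\mcN\Pi_{-2}^{0,1}(C)=0$, etc.
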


\begin{proof}
We fix a splitting \eqref{eq-splitting-tangent} adapted to $\mcN$. Then with reference to appendix \ref{ref-spinor-descript-rep}, we have
\begin{align*}
C_{abcd} k^a X^b k^c W^d & = 0 & \Leftrightarrow & & {}^\mfC _\mcN \Pi_{-2}^{0,1} (C) & = 0 \, , \\
C_{abcd} k^a X^b k^c u^d & = 0 & \Leftrightarrow & & {}^\mfC _\mcN \Pi_{-2}^{0,3} (C) & = 0 \, , \\
C_{abcd} k^a X^b Y^c Z^d & = 0 & \Leftrightarrow & & {}^\mfC _\mcN \Pi_{-1}^{1,1} (C) & = 0 \, , \\
C_{abcd} k^a X^b Y^c u^d & = 0 & \Leftrightarrow & & {}^\mfC _\mcN \Pi_{-1}^{1,6} (C) = {}^\mfC _\mcN \Pi_{-1}^{1,9} (C) & = 0 \, , \\
C_{abcd} X^a Y^b Z^c W^d & = 0 & \Leftrightarrow & & {}^\mfC _\mcN \Pi_0^{3,3} (C) & = 0\, , \\
C_{abcd} X^a Y^b Z^c u^d & = 0 & \Leftrightarrow & & {}^\mfC _\mcN \Pi_0^{3,10} (C) & = 0\, ,
\end{align*}
for all local sections $k^a$ of $\mcK$, $X^a, Y^a, Z^a, W^a$ of $\Tgt^{(1,0)}$ and $u^a$ of $\Tgt^{(0,0)}$, hence the result.
\end{proof}

\vspace{2.5mm}

\begin{defn}\label{def-aligned-Rob}
We shall say that an almost Robinson structure $\mcN$ on a Lorentzian manifold is \emph{aligned} with the Weyl tensor if the Weyl tensor tensor satisfies \eqref{eq-int-cond-Rob}, or equivalently, \eqref{eq-int-condition} with respect to $\mcN$.
\end{defn}

\begin{rem}
We note that except for $n=4$, the integrability conditions \eqref{eq-int-condition} for the existence of a local Robinson structure does \emph{not} imply the Petrov type I condition with respect to its associated null line distribution $\mcK$ (or any section $k^a$ thereof):
\begin{align}\label{eq-Petrov-Weyl-I}
 {}^\mfC _\mcK \Pi_{-2}^0 (C) & = 0 \, , & \mbox{i.e.} & & k \ind{_{\lb{a}}} C \ind{_{\rb{b} e f \lb{c}}} k \ind{_{\rb{d}}} k \ind{^e} k \ind{^f} & = 0 \, ,
\end{align}
which is also the defining property for a null direction to be a \emph{Weyl aligned null direction (WAND)}.
\end{rem}

As shown in \cite{Taghavi-Chabert2011}, an example of a vacuum spacetime of Petrov type I that admits co-integrable Robinson structures, for each of which condition \eqref{eq-int-condition} is satisfied, is provided by the black ring solution of \cite{Emparan2002}. The following example is an explicit example of a Lorentzian manifold that does not admit any WAND (i.e. it is of Petrov type G), but that nevertheless admits almost Robinson structures aligned with the Weyl tensor.
\begin{exa}[The five-dimensional static Kaluza-Klein bubble]\label{exa-static-KK5}
The static Kaluza-Klein bubble is the direct product of a Euclidean Schwarzschild black hole and a one-dimensional timelike manifold. Its metric has thus the form
\begin{align*}
\bm{g} & = - \dd t^2 + \left(1-\frac{M}{r}\right)^{-1} \dd r^2 + \left(1-\frac{M}{r}\right) \dd z^2 + r^2 \dd \Omega^2 \, ,
\end{align*}
where $\dd \Omega^2$ is the $2$-sphere metric. Both the self-dual and anti-self-dual parts of the Weyl tensor of the four-dimensional Euclidean Schwarzschild metric are algebraically special. Thus, by the Goldberg-Sachs theorem, it locally admits two distinct Hermitian structures of opposite orientations.

Define the $1$-forms
\begin{align*}
\bm{\kappa} & := - \dd t + \left(1-\frac{M}{r} \right)^{-\frac{1}{2}} \dd r \, , &
\bm{\lambda} & :=  \dd t + \left(1-\frac{M}{r}\right)^{-\frac{1}{2}} \dd r \, , &
\bm{\nu} & := \left(1-\frac{M}{r} \right)^{\frac{1}{2}} \dd z \, ,
\end{align*}
and let $\bm{\mu}$ be a $(1,0)$-form of the Hermitian structure on $S^2$. Then, it is straightforward to check that the two sets of $1$-forms $\{\bm{\kappa},\bm{\mu},\bm{\nu}\}$ and $\{\bm{\lambda},\bm{\mu},\bm{\nu}\}$ define two distinct almost Robinson structures that are both integrable and co-integrable.

Further, since the almost Robinson structures are integrable and co-integrable, the Weyl tensor satisfies the condition \eqref{eq-int-condition} (or equivalently \eqref{eq-int-cond-Rob}). However, it is not algebraically `special' in the sense of \eqref{eq-alg-special}. For otherwise, it would be of Petrov type II (or D) in the null alignment formalism of \cites{Coley2004}. But it is shown in \cite{Godazgar2009} that this spacetime is algebraically general, i.e. of Petrov type G, i.e. there is no null direction with respect to which the Weyl tensor satisfies the weakest algebraic condition \eqref{eq-Petrov-Weyl-I} in this scheme. 

In addition, the two sets of $1$-forms $\{\bm{\kappa}',\bm{\mu}',\bm{\nu}'\}$ and $\{\bm{\lambda}',\bm{\mu}',\bm{\nu}'\}$ where
\begin{align*}
\bm{\kappa}' & := - \dd t + \left(1-\frac{M}{r} \right)^{\frac{1}{2}} \dd z \, , &
\bm{\lambda}' & := \dd t + \left(1-\frac{M}{r}\right)^{\frac{1}{2}} \dd z \, , &
\bm{\nu}' & := \left(1-\frac{M}{r} \right)^{-\frac{1}{2}} \dd r \, ,
\end{align*}
define two distinct almost Robinson structures. Since $\dd \bm{\kappa}' = \frac{M}{4r^2} \left( 1 - \frac{M}{r} \right)^{-\frac{1}{2}} \bm{\nu}' \wedge \left( \bm{\kappa}' + \bm{\lambda}' \right)$, these are clearly non-integrable. Computing the Weyl tensor shows that \eqref{eq-int-cond-Rob} is nonetheless satisfied with respect to any of these.
\end{exa}

\paragraph{The subbundle of almost Robinson structures incident on a null line distribution}
Recall that $(\mcM,g)$ is naturally equipped with the bundles $\Gr_1 (\Tgt \mcM,g)$ and $\Gr_m^1 ({}^\C \Tgt\mcM,g)$ of unoriented null line distributions and almost Robinson structures respectively, the latter splitting into a self-dual component $\Gr_m^{+,1} ({}^\C \Tgt\mcM,g)$ and an anti-self-dual component $\Gr_m^{-,1} ({}^\C \Tgt\mcM,g)$  when $n=2m$.

On an even-dimensional pseudo-Riemannian manifold of any signature, it is a standard result (see \cite{Trautman2002} and references therein), which follows directly from the integrability condition \eqref{eq-int-cond-Rob}, that if at a point $p$, each of the self-dual totally null $m$-planes is tangent to an integrable self-dual totally null $m$-plane distribution, then the Weyl tensor must vanish at $p$ when $m>2$, and its self-dual part when $m=2$. Imposing specific reality conditions on these distributions leads to different interpretation. For instance, in Euclidean signature, a hyper-K\"{a}hler manifold admits a two-sphere of global self-dual Hermitian structures, and must therefore be conformally half-flat in four dimensions. On the other hand, a four-dimensional Lorentzian manifold that admits a two-sphere of local Robinson structures must be locally conformally flat, and similarly in higher dimensions.

Instead, one can pick out a set of co-integrable, or more simply totally geodetic, Robinson structures. A natural way to go about this is to fix a real null line distribution $\mcK$, i.e. a section of $\Gr_1 (\Tgt \mcM,g)$. Then at every point $p$ of $\mcM$, the inverse image of the projection $\pi : \Gr_m^1 ({}^\C \Tgt_p \mcM,g) \rightarrow \Gr_1 (\Tgt_p \mcM,g)$ is simply $\pi^{-1} (\mcK_p) = \{ \mcN_p \in \Gr_m^1 ({}^\C \Tgt_p \mcM,g) : \mcN_p \cap \bar{\mcN}_p = {}^\C \mcK_p \}$, i.e. $\mcK_p$ determines a family of Robinson structures in $\Gr_m^1 ({}^\C \Tgt_p \mcM,g)$, precisely, those intersecting $\mcK_p$. By Proposition \ref{prop-null2Rob}, this family is parametrised by points in the homogeneous space $\OO(n-2)/\U(m-1)$ of dimension $(m-1)(m+2(\epsilon-1))$. When $n=2m$, this family splits into a self-dual family and an anti-self-dual family parametrised by the points of $\SO(n-2)/\U(m-1)$.

Taking the union of all $\pi^{-1} (\mcK_p)$ over all points $p$ forms the \emph{bundle of all almost Robinson structures incident on $\mcK$}, and which we shall denote $\Gr_m^1 ({}^\C \Tgt\mcM , g)_\mcK$. In even dimensions, we shall have an additional two subbundles, the bundle $\Gr_m^{+,1} ({}^\C \Tgt\mcM , g)_\mcK$ of all almost self-dual Robinson structures incident on $\mcK$, and the bundle $\Gr_m^{-,1} ({}^\C \Tgt\mcM , g)_\mcK$ of all almost anti-self-dual Robinson structures incident on $\mcK$.

\begin{prop}\label{prop-multi-Rob}
Let $(\mcM,g)$ be an $n$-dimensional Lorentzian manifold equipped with a null line distribution $\mcK$ with $n=2m+\epsilon$, $\epsilon \in \{0,1\}$. Then the Weyl tensor $C_{abcd}$ satisfies
\begin{align*}
{}^\mfC _\mcK \Pi_{-1}^1 (C) & = 0 \, , & \mbox{when $n \geq 5$,} \\
{}^\mfC _\mcK \Pi_0^3 (C) & = 0 \, , & \mbox{when $n > 5$,}
\end{align*}
at a point $p$ of $\mcM$ if and only if every element of $\Gr_m^1 ({}^\C \Tgt_p \mcM , g)_\mcK$ is aligned with the Weyl tensor at $p$, i.e. $C_{abcd}$ satisfies \eqref{eq-int-cond-Rob} (or equivalently \eqref{eq-int-condition}).

When $n=2m$, the Weyl tensor $C_{abcd}$ satisfies
\begin{align*}
{}^\mfC _\mcK \Pi_0^3 (C) & = 0 \, , & \mbox{when $m>3$,} \\
{}^\mfC _\mcK \Pi_0^{3,+} (C) & = 0 \, , & \mbox{when $m=3$,}
\end{align*}
at a point $p$ of $\mcM$ if and only if if every element of $\Gr_m^{+,1} ({}^\C \Tgt_p \mcM , g)_\mcK$ is aligned with the Weyl tensor at $p$ i.e. $C_{abcd}$ satisfies \eqref{eq-int-cond-Rob} (or equivalently \eqref{eq-int-condition}).
\end{prop}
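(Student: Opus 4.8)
The statement is essentially a fibre-wise linear-algebra fact: fix a point $p$ and a null line $\mfK \subset \mfV := \Tgt_p\mcM$; one must show that $C_{abcd}$ satisfies the alignment condition \eqref{eq-int-cond-Rob} for \emph{every} Robinson structure $\mfN$ incident on $\mfK$ if and only if certain $\simalg(n-2)$-invariant projections of $C$ vanish. So I would first reduce the ``for all $\mfN$'' condition to a condition on the graded module $\gr(\mfC)$. By Proposition~\ref{prop-int-cond-Robinson} (and the list of equivalences in its proof), for a fixed $\mfN$ the alignment condition \eqref{eq-int-cond-Rob} is equivalent to the vanishing of the $\simalg(m-1,\C)$-projections ${}^\mfC_\mcN \Pi_{-2}^{0,i}(C)$, ${}^\mfC_\mcN \Pi_{-1}^{1,i}(C)$ and ${}^\mfC_\mcN \Pi_0^{3,i}(C)$ for the relevant indices $i$. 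These are pieces of the $\simalg(n-2)$-modules $\mfC_{-2}^0$, $\mfC_{-1}^1$ and $\mfC_0^3$ respectively (read off from the splittings in Proposition~\ref{prop-Weyl-rob}). The claim is then: the union over all $\mfN \in \pi^{-1}(\mfK)$ of the subspaces cut out by these projections equals the subspace cut out by ${}^\mfC_\mcK \Pi_{-1}^1(C) = {}^\mfC_\mcK \Pi_0^3(C) = 0$.

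\textbf{Key steps.} First, I would note that since $\Sim(n-2)$ acts transitively on $\pi^{-1}(\mfK)$ (the space $\OO(n-2)/\U(m-1)$ of Robinson structures incident on $\mfK$, by Proposition~\ref{prop-null2Rob}), the intersection over all $\mfN$ of the alignment-subspaces of $\mfC$ is a $\Sim(n-2)$-invariant subspace, hence a direct sum of some of the $\simalg(n-2)$-irreducibles $\mfC_i^j$. So I only need to determine \emph{which} irreducible $\mfC_i^j$ survive in all of the $\mfN$-conditions. Second, I would work in a fixed splitting \eqref{eq-splitting-tangent} and translate ``$C$ aligned with $\mfN$'' into the vanishing of components $C(k, X, k, W)$, $C(k, X, Y, Z)$, $C(X, Y, Z, W)$ (and the $u$-contracted versions in odd dimensions) where $X,Y,Z,W$ range over $\Tgt^{(1,0)}$ — exactly the equivalences stated inside the proof of Proposition~\ref{prop-int-cond-Robinson}. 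Now I vary $\mfN$ with $\mfK$ held fixed: this amounts to letting the maximal isotropic subspace $\Tgt^{(1,0)} \subset \mfV_0 \otimes \C$ range over all compatible complex structures on the screenspace $h_{ab}$. The condition $C(k, X, k, W) = 0$ for \emph{all} such isotropic $X, W$ in every $\Tgt^{(1,0)}$ forces $C(k, X, k, W) = 0$ for \emph{all} $X, W \in \mfV_0 \otimes \C$ (any vector in ${}^\C\mfV_0$ lies in some maximal isotropic subspace, and $(1,0)$-parts of two such recover a spanning set) — which is precisely the $\simalg(n-2)$-statement ${}^\mfC_\mcK \Pi_{-1}^1(C) = 0$ (the $\mfC_{-1}^1$-component; note ${}^\mfC_\mcK \Pi_{-2}^0(C)$, i.e.\ the WAND condition, is \emph{not} forced, consistent with the Remark following Proposition~\ref{prop-int-cond-Robinson}). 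Similarly $C(X, Y, Z, W) = 0$ over all $\Tgt^{(1,0)}$ forces the totally trace-free, totally screenspace-valued part $\mfC_0^3$ to vanish, giving ${}^\mfC_\mcK \Pi_0^3(C) = 0$; and $C(k, X, Y, Z) = 0$ over all $\Tgt^{(1,0)}$ gives no new constraint beyond ${}^\mfC_\mcK \Pi_{-1}^1(C)=0$ (one checks $\mfC_{-1}^1$ already covers $C(k,X,Y,Z)$ after tracefree projection). Conversely, if ${}^\mfC_\mcK \Pi_{-1}^1(C) = {}^\mfC_\mcK \Pi_0^3(C) = 0$, then since each $\mfN$-component ${}^\mfC_\mcN \Pi_i^{j,k}(C)$ lies inside $\mfC_{-1}^1$ or $\mfC_0^3$ (or $\mfC_{-2}^0$ — but wait, the $i=1,3$ pieces of $\mfC_{-2}^0$: here I must be careful and check whether the $\mfN$-pieces ${}^\mfC_\mcN \Pi_{-2}^{0,1}$, ${}^\mfC_\mcN \Pi_{-2}^{0,3}$ sit in $\mfC_{-1}^1$ rather than $\mfC_{-2}^0$) it vanishes too.

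\textbf{Main obstacle.} The subtle point — and the one I expect to require genuine care — is the $n = 2m$ self-dual refinement. For $m > 3$ the argument above gives ${}^\mfC_\mcK \Pi_0^3(C) = 0$ from the self-dual family $\Gr_m^{+,1}$, but for $m = 3$ (i.e.\ $n = 6$), $\mfC_0^3$ splits into self-dual and anti-self-dual parts $\mfC_0^{3,\pm}$ (Proposition~\ref{prop-Weyl-sim6}), and running over only the \emph{self-dual} Robinson structures incident on $\mfK$ constrains only $\mfC_0^{3,+}$. The key lemma here is that in six dimensions a self-dual totally null $3$-plane $\mfN$ is ``visible'' to only the self-dual part of the Weyl tensor in the relevant component — which I would justify via the Hodge-duality properties of $\wedge^3 \mfN$ recalled in the even-dimensional subsection, together with the fact that contracting $C_{abcd}$ with elements spanning a self-dual $m$-plane picks out the self-dual Weyl curvature. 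I would also double-check the $m = 2$ boundary: there $\mfC_0^3$ is empty (four dimensions), so only the first condition ${}^\mfC_\mcK \Pi_{-1}^1(C) = 0$ survives, matching the hypothesis ``$n > 5$'' on the second line. The transitivity argument reduces everything to a finite check of which $\simalg(m-1,\C)$-irreducibles appear in which $\simalg(n-2)$-irreducible and whether varying the complex structure exhausts them; the branching data needed is exactly the content of Tables~\ref{table-C-sim} and \ref{table-C-rob} in appendix~\ref{sec-rep}, so the proof will mostly be a matter of citing those tables and the transitivity of the $\Sim(n-2)$-action, with the $n=6$ self-duality subtlety being the only place where a genuinely geometric (Hodge-theoretic) input is needed.
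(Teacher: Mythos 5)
Your overall strategy is the same as the paper's: fix the point, write the alignment condition \eqref{eq-int-cond-Rob} for a single $\mcN$ as the vanishing of $C_{abcd}k^aY^bk^cW^d$, $C_{abcd}k^aY^bZ^cW^d$ and $C_{abcd}X^aY^bZ^cW^d$, let $\mcN$ range over $\pi^{-1}(\mcK)$, and observe that the resulting condition is $\Sim(n-2)$-invariant (the paper does not even spell out the transitivity argument; your version of it is a welcome addition). The identification of the $n=6$ self-duality subtlety and of the vacuous cases in low dimensions also matches the paper.

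However, there is a concrete error in your middle step that, as written, breaks the converse direction of the equivalence. The condition $C_{abcd}k^aY^bk^cW^d=0$ for \emph{all} screenspace $Y,W$ is precisely ${}^\mfC_\mcK\Pi_{-2}^0(C)=0$, i.e.\ the WAND condition --- not ${}^\mfC_\mcK\Pi_{-1}^1(C)=0$ as you state (the latter is the tracefree part of the single-contraction component $C_{abcd}k^aY^bZ^cW^d$). Moreover, your parenthetical claim that the WAND condition ``is not forced'' is wrong in this setting: the Remark following Proposition~\ref{prop-int-cond-Robinson} concerns a \emph{single} aligned $\mcN$, whereas here alignment of \emph{every} $\mcN$ incident on $\mcK$ does force ${}^\mfC_\mcK\Pi_{-2}^0(C)=0$ (this is exactly the implication $\eqref{eq-multi-Rob-2}\Rightarrow{}^\mfC_\mcK\Pi_{-2}^0(C)=0$ in the paper: the symmetric bilinear form $B(Y,W)=C_{abcd}k^aY^bk^cW^d$ is tracefree, and vanishing of its $(2,0)$, $(0,2)$ and mixed parts for every compatible complex structure on the screen space kills it entirely). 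The reason ${}^\mfC_\mcK\Pi_{-2}^0(C)=0$ does not appear in the statement of the proposition is that the $\simalg(n-2)$-invariant condition ${}^\mfC_\mcK\Pi_{-1}^1(C)=0$ already implies it, via the arrow $\mfC_{-1}^1\to\mfC_{-2}^0$ in the graph of Proposition~\ref{prop-Weyl-sim} and Remark~\ref{rem-proj-sim} (and likewise ${}^\mfC_\mcK\Pi_0^3(C)=0$ implies ${}^\mfC_\mcK\Pi_{-1}^1(C)=0$, which is why only $\Pi_0^3$ survives in the self-dual half of the statement). This is also exactly what resolves the worry you leave hanging about the pieces ${}^\mfC_\mcN\Pi_{-2}^{0,1}$ and ${}^\mfC_\mcN\Pi_{-2}^{0,3}$ in the converse direction: they sit inside $\mfC_{-2}^0$, whose vanishing is a consequence of the hypotheses, so every $\mcN$ incident on $\mcK$ is indeed aligned. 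You need to correct the module identification and replace the ``not forced'' claim by this filtration argument for the proof to close.
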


\begin{proof}
Let $\mcN$ be an almost Robinson structure incident on $\mcK$ such that the Weyl tensor satisfies \eqref{eq-int-cond-Rob}. We then have
\begin{align}
C_{abcd} k^a Y^b k^c W^d & = 0 \, , \label{eq-multi-Rob-2} \\ 
C_{abcd} k^a Y^b Z^c W^d & = 0 \, , \label{eq-multi-Rob-1} \\
C_{abcd} X^a Y^b Z^c W^d & = 0 \, , \label{eq-multi-Rob0}
\end{align}
for all $k^a \in \Gamma ( \mcK)$, $X^a,Y^a \in \Gamma (\mcN^\perp)$, $Z^a, W^a \in \Gamma (\mcN)$, with $X^a,Y^a, Z^a,W^a \notin \Gamma (\mcK)$. We now let vary $\mcN$ while keeping $\mcK$ fixed, we then see that $\eqref{eq-multi-Rob-2} \Longrightarrow {}^\mfC _\mcK \Pi_{-2}^0 (C) = 0$, $\eqref{eq-multi-Rob-1} \Longrightarrow {}^\mfC _\mcK \Pi_{-1}^1 (C) = 0$ and $\eqref{eq-multi-Rob0} \Longrightarrow {}^\mfC _\mcK \Pi_0^3 (C) = 0$ -- this last condition is trivial in dimension five.

In the case, when $n=2m$ with $m>3$ odd and we restrict ourselves to almost self-dual Robinson structures, the above result remains unchanged. In the case $m=3$, however, conditions \eqref{eq-multi-Rob-1} and \eqref{eq-multi-Rob0} split into self-dual and anti-self-dual parts, so that restriction to almost self-dual Robinson structures yields the required condition ${}^\mfC _\mcK \Pi_0^{3,+} (C) = 0$ (which implies in particular ${}^\mfC _\mcK \Pi_{-1}^{1,+} (C) = 0$ and ${}^\mfC _\mcK \Pi_{-2}^0 (C) = 0$).
\end{proof}

Now applying Proposition \ref{prop-int-cond-Robinson} proves
\begin{cor}\label{cor-multi-int-Rob}
Let $(\mcM,g)$ be an $n$-dimensional Lorentzian manifold equipped with a null line distribution $\mcK$ with $n=2m+\epsilon$, $\epsilon \in \{0,1\}$. Suppose that 
at every point $p$ of a region $\mcU$ of $\mcM$ and for any $\mcN_p \in \Gr_m^1 ({}^\C \Tgt_p\mcM , g)_\mcK$, there exists a totally geodetic integrable almost Robinson structure tangent to $\mcN_p$. Then at every point of $\mcU$, the Weyl tensor $C_{abcd}$ satisfies
\begin{align*}
{}^\mfC _\mcK \Pi_{-1}^1 (C) & = 0 \, , & \mbox{when $n \geq 5$,} \\
{}^\mfC _\mcK \Pi_0^3 (C) & = 0 \, , & \mbox{when $n > 5$.}
\end{align*}

When $n=2m$, suppose that at every point $p$ of a region $\mcU$ of $\mcM$ and any $\mcN_p \in \Gr_m^{+,1} ({}^\C \Tgt_p\mcM , g)_\mcK$, there exists an integrable self-dual almost Robinson structure tangent to $\mcN_p$. Then the Weyl tensor $C_{abcd}$ satisfies
\begin{align*}
{}^\mfC _\mcK \Pi_0^3 (C) & = 0 \, , & \mbox{when $m>3$,} \\
{}^\mfC _\mcK \Pi_0^{3,+} (C) & = 0 \, , & \mbox{when $m=3$,}
\end{align*}
at every point of $\mcU$.
\end{cor}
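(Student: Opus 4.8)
The plan is to reduce the statement to the combination of Propositions~\ref{prop-int-cond-Robinson} and \ref{prop-multi-Rob}, the only substantive work being to keep track of what is a pointwise condition and what is a local one. First I would fix an arbitrary point $p$ of the region $\mcU$ and an arbitrary element $\mcN_p \in \Gr_m^1 ({}^\C \Tgt_p\mcM , g)_\mcK$ of the fibre of almost Robinson structures incident on $\mcK$ at $p$. By hypothesis there is an integrable almost Robinson structure $\mcN$ defined on some neighbourhood of $p$ with $\mcN|_p = \mcN_p$; by Definition~\ref{def-Robinson-manifold} its associated null line distribution, namely the real span of $\mcN \cap \bar{\mcN}$, coincides with $\mcK$ near $p$, which is consistent with $\mcN_p$ being incident on $\mcK_p$. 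Applying Proposition~\ref{prop-int-cond-Robinson} to $\mcN$ shows that on that neighbourhood the Weyl tensor satisfies \eqref{eq-int-cond-Rob}, equivalently \eqref{eq-int-condition}; evaluating at $p$, one concludes that $C_{abcd}$ is aligned with $\mcN_p$ at $p$ in the sense of Definition~\ref{def-aligned-Rob}.

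Since $\mcN_p$ was arbitrary in $\Gr_m^1 ({}^\C \Tgt_p\mcM , g)_\mcK$, this shows that at $p$ every almost Robinson structure incident on $\mcK$ is aligned with the Weyl tensor, which is exactly the hypothesis of Proposition~\ref{prop-multi-Rob}. Its conclusion then yields ${}^\mfC _\mcK \Pi_{-1}^1 (C) = 0$ when $n \geq 5$ and ${}^\mfC _\mcK \Pi_0^3 (C) = 0$ when $n > 5$, as equalities of tensors at $p$. As $p$ was an arbitrary point of $\mcU$, these identities hold throughout $\mcU$, proving the first part. For the even-dimensional statement I would run exactly the same argument but restrict attention to self-dual almost Robinson structures throughout: the hypothesis now provides, for each $\mcN_p \in \Gr_m^{+,1} ({}^\C \Tgt_p\mcM , g)_\mcK$, an integrable \emph{self-dual} almost Robinson structure tangent to it, so Proposition~\ref{prop-int-cond-Robinson} gives that $C_{abcd}$ is aligned with every member of $\Gr_m^{+,1} ({}^\C \Tgt_p\mcM , g)_\mcK$ at $p$, and the second half of Proposition~\ref{prop-multi-Rob} delivers ${}^\mfC _\mcK \Pi_0^3 (C) = 0$ when $m > 3$ and ${}^\mfC _\mcK \Pi_0^{3,+} (C) = 0$ when $m = 3$.

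The corollary is therefore essentially formal given the two preceding propositions, and I do not expect a genuine obstacle. The one point that needs care — and the closest thing to a subtlety — is the interplay between the local and pointwise statements: Proposition~\ref{prop-multi-Rob} is a claim at a single point $p$ about \emph{all} Robinson structures incident on $\mcK_p$, whereas the hypothesis only furnishes, for each such structure, an integrable extension defined on a possibly tiny and structure-dependent neighbourhood of $p$. The resolution is that \eqref{eq-int-cond-Rob} is an algebraic condition on the fibre $\mfC$ at $p$, so that once integrability has been invoked through Proposition~\ref{prop-int-cond-Robinson} on each such neighbourhood, the size of the neighbourhoods plays no role and no uniformity is required; one simply harvests the pointwise alignment condition at $p$ for every incident $\mcN_p$ and feeds it into Proposition~\ref{prop-multi-Rob}.
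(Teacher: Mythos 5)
Your argument is exactly the paper's: the author deduces the corollary by applying Proposition \ref{prop-int-cond-Robinson} to each integrable almost Robinson structure to obtain the pointwise alignment condition \eqref{eq-int-cond-Rob}, and then invokes Proposition \ref{prop-multi-Rob}. Your additional remarks on the local-versus-pointwise bookkeeping are correct but only make explicit what the paper leaves implicit.
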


\subsubsection{Spacetimes algebraically special along almost Robinson structures}\label{sec-alg-sp-st}
According to one interpretation, the Goldberg-Sachs theorem in four dimensions \cite{Goldberg2009} gives a relation between (local) Robinson structures and solutions to Einstein's field equations, whose Weyl tensor is \emph{algebraically special}, i.e. of Petrov type II or more degenerate. Among several equivalent approaches, this notion essentially rests on the classification of the multiplicites of the roots of the `Weyl spinor' \cites{Witten1959,Penrose1960}. 

The various notions of algebraic special Weyl tensors, equivalent in four dimensions, are no longer so in higher dimensions. In the case at hand, since we are interested in the geometric properties of an almost Robinson structure $\mcN$, a `special' algebraic degeneracy condition for the Weyl tensor should be defined with respect to $\mcN$.\footnote{Whether this can be made independent of such a choice is a different matter.} In \cites{Taghavi-Chabert2011,Taghavi-Chabert2012}, a simple and natural generalisation of the Petrov type II condition from four to higher dimensions was put forward:
\begin{defn}\label{defn-alg-sp}
A Weyl tensor $C_{abcd}$ on a $(2m+\epsilon)$-dimensional pseudo-Riemannian manifold $(\mcM,g)$, with $\epsilon \in \{ 0 ,1 \}$, is said to be \emph{(locally) algebraically special with respect to an almost null structure $\mcN$} if it satisfies
\begin{align}\label{eq-alg-special}
C_{abcd} X^a Y^b Z^c & = 0 \, , & & \mbox{for all $X^a\in \Gamma(\mcN^\perp)$ and all $Y^a ,Z^a \in \Gamma(\mcN)$,}
\end{align}
and we say that $\mcN$ is a \emph{repeated aligned almost null structure of the Weyl tensor}.

The same definitions apply to almost Robinson structures when $(\mcM,g)$ is Lorentzian.
\end{defn}
What \eqref{eq-alg-special} means for the present classification of the Weyl tensor is given by
\begin{prop}\label{prop-alg-sp}
Let $(\mcM,g)$ be a Lorentzian manifold equipped with an almost Robinson structure $\mcN$. Then condition \eqref{eq-alg-special} is equivalent to
\begin{subequations}\label{eq-GS-condition}
\begin{align}
{}^\mfC _\mcK \Pi_{-1} (C) & = 0 \, , \label{eq-GS-condition_sim} \\
{}^\mfC _\mcN \Pi_0^{1,i} (C) = {}^\mfC _\mcN \Pi_0^{2,i} (C) & = 0 \, , & \mbox{for $i=1,3$} \label{eq-GS-condition_rob1}\\
{}^\mfC _\mcN \Pi_0^{3,i} (C) & = 0 \, , & \mbox{for $i=1,3,6,7,9,10,11,12$} \label{eq-GS-condition_rob2} \\
{}^\mfC _\mcN \Pi_1^{1,i} (C) & = 0 \, , & \mbox{for $i=1,6,9$.} \label{eq-GS-condition_rob3}
\end{align}
\end{subequations}
\end{prop}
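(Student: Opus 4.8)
The plan is to translate the contraction condition \eqref{eq-alg-special} into the vanishing of certain $\simalg(m-1,\C)$-invariant projections by unwinding it on the adapted splitting \eqref{eq-splitting-tangent}. First I would fix a local splitting ${}^\C \Tgt \mcM = {}^\C \mcK \oplus \Tgt^{(1,0)} \oplus \Tgt^{(0,1)} \oplus \epsilon \Tgt^{(0,0)} \oplus {}^\C \mcL$ adapted to $\mcN$, with generator $k^a$ of $\mcK$, dual generator $\ell^a$ of $\mcL$, unit $u^a$ spanning $\Tgt^{(0,0)}$ when $\epsilon=1$, and a frame for $\Tgt^{(1,0)}$ and its conjugate for $\Tgt^{(0,1)}$. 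Then $\mcN^\perp = {}^\C\mcK \oplus \Tgt^{(1,0)} \oplus \epsilon\Tgt^{(0,0)}$, so the vectors $X^a,Y^a$ in \eqref{eq-alg-special} range over $k^a$, $\Tgt^{(1,0)}$ and (in odd dimensions) $u^a$, while $Z^a$ ranges over $k^a$ and $\Tgt^{(1,0)}$. Expanding the three-slot contraction $C_{abcd}X^aY^bZ^c = 0$ over all such choices produces a finite list of component equations, each of which, by the basis decomposition of $\mfC$ according to the $\simalg(m-1,\C)$-classification given in appendix \ref{sec-spinor-descript} (specifically the descriptions in Table \ref{table-C-rob}), is equivalent to the vanishing of one or a combination of the maps ${}^\mfC_\mcN \breve{\Pi}_i^{j,k}(C)$.

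Next I would organise these component equations by the homogeneity (grading weight) of the free fourth index $d$, which is what the subscript $i$ on ${}^\mfC_\mcN \Pi_i^{j,k}$ records. Contractions of the form $C_{abcd}k^a X^b k^c (\cdot)^d$ with $X^a \in \Tgt^{(1,0)}$ recover the full Petrov/Weyl type II condition ${}^\mfC_\mcK \Pi_{-1}(C) = 0$ of \eqref{eq-GS-condition_sim} — indeed \eqref{eq-alg-special} obviously implies \eqref{eq-int-cond-Rob}, hence the type II condition with respect to $\mcK$, as the $\Sim(n-2)$-coarsening of the statement; this is the content of the comparison already made for $\mcN$-aligned structures. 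The genuinely new content lies in the $i=0$ and $i=1$ pieces: contractions $C_{abcd}X^aY^bZ^c(\cdot)^d$ with $X,Y,Z \in \Tgt^{(1,0)}$ and the fourth slot ranging over $\mcL$, $\Tgt^{(1,0)}$, $\Tgt^{(0,1)}$, $\Tgt^{(0,0)}$ yield the grading-$0$ conditions \eqref{eq-GS-condition_rob1}–\eqref{eq-GS-condition_rob2}, and contractions with the fourth slot pushed to weight $+1$ (i.e. against $\mcK$, via the antisymmetry $C_{abc}{}^b=0$ forcing a re-expression) yield \eqref{eq-GS-condition_rob3}. I would match each component equation to its module by dimension count and weight, exactly as in Remark \ref{rem-proj-sim}, and then verify the converse: that the listed vanishing conditions, together with \eqref{eq-GS-condition_sim}, force every component of $C_{abcd}X^aY^bZ^c$ to vanish. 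The converse is where one must check that no stray irreducible component of $\mfC$ has been missed — this amounts to confirming that the modules in the list \eqref{eq-GS-condition_rob1}–\eqref{eq-GS-condition_rob3} are precisely the $\simalg(m-1,\C)$-irreducibles appearing in the $\g_{-1} \oplus \g_0 \oplus \g_1$ (weights $-1,0,1$) part of $\gr(\mfC)$ that are \emph{not} already annihilated by the purely grading-theoretic consequences of ${}^\mfC_\mcK \Pi_{-1}(C)=0$, read off from the Penrose diagrams \ref{diagram-Penrose-C-rob} and \ref{diagram-Penrose-C-rob_contd}.

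The main obstacle I anticipate is bookkeeping rather than conceptual: the Weyl module in the $\simalg(m-1,\C)$-refinement has a large number of irreducible summands (the $\mfC_0^{3,k}$ alone run $k=0,\dots,12$), and one must be careful about (i) the interplay with the first Bianchi identity $C_{[abc]d}=0$, which relates contractions that naively look independent, and (ii) the tracelessness $C_{abc}{}^b=0$, which is what converts certain weight-$1$-in-$d$ contractions into genuine constraints and is responsible for the appearance of \eqref{eq-GS-condition_rob3}. I would also need to be attentive to the parity of $m$: when $m=3$ ($n=6$) some modules split into self-dual/anti-self-dual halves, but since the statement is for general Lorentzian $(\mcM,g)$ and the indices $i$ in \eqref{eq-GS-condition} are written for the generic $\simalg(m-1,\C)$-labelling, I would simply remark (as in the standing convention of the section) that in $n=6$ the relevant modules are to be read via their self-dual/anti-self-dual decomposition, with no change to the substance of the argument. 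The bulk of the work is therefore the explicit index expansion of \eqref{eq-alg-special}, best relegated to the basis decompositions of appendix \ref{sec-spinor-descript}, with the proposition's proof recording only the matching of each scalar equation to its module label.
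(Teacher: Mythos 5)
Your overall strategy---fix the adapted splitting \eqref{eq-splitting-tangent}, expand \eqref{eq-alg-special} over all basis choices of $X^a,Y^a\in\Gamma(\mcN^\perp)$, $Z^a\in\Gamma(\mcN)$ with the fourth slot free, and match each resulting component equation to an irreducible $\cu(m-1)$-module via appendix \ref{sec-spinor-descript}---is exactly the route the paper takes, and your bookkeeping for \eqref{eq-GS-condition_rob1}--\eqref{eq-GS-condition_rob3} is the right kind. There is, however, a genuine error in your derivation of the first condition \eqref{eq-GS-condition_sim}. You assert that ``\eqref{eq-alg-special} obviously implies \eqref{eq-int-cond-Rob}, hence the type II condition with respect to $\mcK$''. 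The first implication is true, but the ``hence'' is false: the alignment condition \eqref{eq-int-cond-Rob} is a four-slot condition that does \emph{not} imply ${}^\mfC _\mcK \Pi_{-1}(C)=0$; the paper stresses (in the remark following Definition \ref{def-aligned-Rob}, and in Example \ref{exa-static-KK5}, which is of Weyl type G yet aligned) that alignment does not even force the type I condition \eqref{eq-Petrov-Weyl-I}. Likewise the single family of contractions $C_{abcd}k^aX^bk^c(\cdot)^d$ with $X^a\in\Gamma(\Tgt^{(1,0)})$ only annihilates a small piece of the negative-weight part of $\mfC$, not all of $\mfC/\mfC^0$.

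The correct mechanism---what the paper compresses into ``imposing reality conditions''---uses the free fourth index of \eqref{eq-alg-special} in an essential way. Condition \eqref{eq-alg-special} and its complex conjugate give $C_{abcd}A^aB^bZ^c=0$ (free $d$) for $A,B\in\Gamma(\mcN^\perp)$, $Z\in\Gamma(\mcN)$, and for $A,B\in\Gamma(\bar{\mcN}^\perp)$, $Z\in\Gamma(\bar{\mcN})$. Since $k^a$ lies in both $\mcN$ and $\bar{\mcN}$ while ${}^\C\mcK^\perp=\mcN^\perp+\bar{\mcN}^\perp$, the pair symmetry and pair antisymmetry of $C_{abcd}$ allow every mixed contraction, e.g. $C_{abcd}k^aX^b\bar{Y}^cZ^d$ or $C_{abcd}k^aX^bk^c\ell^d$, to be rewritten in one of these two forms; one concludes $C_{abcd}k^aX^bY^cZ^d=0$ for all real $X^a,Y^a,Z^a\in\Gamma(\mcK^\perp)$, which is precisely \eqref{eq-GS-condition_sim}. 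Without this step the equivalence fails at its very first line. Two smaller slips: the weight $+1$ conditions \eqref{eq-GS-condition_rob3} arise from contracting the free slot against $\mcL$ (e.g. $C_{abcd}X^aY^bZ^c\ell^d=0$), not against $\mcK$; and it is the index symmetries of $C_{abcd}$, not the trace condition $C\ind{_{abc}^b}=0$, that produce them.
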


\begin{proof}
For clarity, we work in a splitting \eqref{eq-splitting-tangent} adapted to $\mcN$. By imposing reality conditions, condition \eqref{eq-alg-special} implies that $C_{abcd} k^a X^b Y^c Z^d = 0$ for all $k^a \in \Gamma (\mcK)$ and $X^a,Y^a,Z^a \in \Gamma ( \mcK^\perp )$, which is equivalently to \eqref{eq-GS-condition_sim}. The remaining conditions of \eqref{eq-GS-condition} can be obtained from Proposition \eqref{prop-int-cond-Robinson} and the additional conditions
\begin{align*}
C_{abcd} X^a Y^b Z^c \bar{W}^d & = 0 & \Leftrightarrow & & {}^\mfC _\mcN \Pi_0^{3,1} (C) = {}^\mfC _\mcN \Pi_0^{3,6} (C) & = 0 \, , \\
C_{abcd} u^a X^b Y^c \bar{W}^d & = 0 & \Leftrightarrow & & {}^\mfC _\mcN \Pi_0^{3,7} (C) = {}^\mfC _\mcN \Pi_0^{3,11} (C) = {}^\mfC _\mcN \Pi_0^{3,12} (C) & = 0 \, , \\
C_{abcd} X^a u^b Y^c u^d & = 0 & \Leftrightarrow & & {}^\mfC _\mcN \Pi_0^{3,9} (C) & = 0 \, , \\
C_{abcd} k^a X^b Y^c \ell^d & = 0 & \Leftrightarrow & & {}^\mfC _\mcN \Pi_0^{1,1} (C) = {}^\mfC _\mcN \Pi_0^{2,1} (C) & = 0 \, , \\
C_{abcd} k^a u^b X^c \ell^d = C_{abcd} k^a \ell^b u^c X^d & = 0 & \Leftrightarrow & & {}^\mfC _\mcN \Pi_0^{1,3} (C) = {}^\mfC _\mcN \Pi_0^{2,3} (C) & = 0 \, , \\
C_{abcd} X^a Y^b Z^c \ell^d & = 0 & \Leftrightarrow & & {}^\mfC _\mcN \Pi_1^{1,1} (C) & = 0 \, , \\
C_{abcd} u^a X^b Y^c \ell^d & = 0 & \Leftrightarrow & & {}^\mfC _\mcN \Pi_1^{1,6} (C) = {}^\mfC _\mcN \Pi_1^{1,9} (C) & = 0 \, ,
\end{align*}
for all local sections $k^a \in \Gamma (\mcK)$, $\ell^a \in \Gamma (\mcL)$, $X^a,Y^a,Z^a$ of $\Tgt^{(1,0)}$, $\bar{W}^a$ of $\Tgt^{(0,1)}$ and $u^a$ of $\Tgt^{(0,0)}$.
\end{proof}

We can now restate the generalisation of the Goldberg-Sachs theorem given in \cite{Taghavi-Chabert2012}.
\begin{thm}[\cites{Taghavi-Chabert2011,Taghavi-Chabert2012}]\label{thm-GS}
Let $(\mcM,g)$ be an Einstein Lorentzian manifold equipped with an almost Robinson structure $\mcN$. Suppose that the Weyl tensor is (locally) algebraically special along $\mcN$. Assume further that the Weyl tensor is otherwise generic. Then $\mcN$ is integrable and co-integrable.
\end{thm}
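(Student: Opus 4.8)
The plan is to read the statement off the second Bianchi identity, exactly as in the four-dimensional proof, but now organised according to the $\simalg(m-1,\C)$-invariant decomposition of section~\ref{sec-rob-class}. Since $(\mcM,g)$ is Einstein, the tracefree Ricci tensor $\Phi_{ab}$ vanishes and the Ricci scalar $R$ is constant, so by \eqref{eq-Riem-decomp} the Cotton--York tensor vanishes and the full second Bianchi identity collapses to $\nabla_{[e}C_{ab]cd}=0$ (the boundary terms involving $\nabla_{[e}(R\,g_{a]\cdots})$ drop out because $\nabla g=0$ and $R$ is constant); in particular the contracted Bianchi identity gives $\nabla^a C_{abcd}=0$. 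This is a first-order \emph{linear} relation expressing $\nabla C$ purely in terms of $C$ itself. First I would fix a local frame adapted to the splitting \eqref{eq-splitting-tangent}: a generator $k^a$ of $\mcK$, a dual null generator $\ell^a$ of $\mcL$ with $\ell_a k^a=1$, a unitary frame of $\Tgt^{(1,0)}$, and, when $\epsilon=1$, a unit section $u^a$ of $\Tgt^{(0,0)}$. With respect to such a frame the components of $\nabla_a k_b$ modulo $\mcK$ decompose into the $\cu(m-1)$-irreducible pieces of the intrinsic torsion of the $\Sim(m-1,\C)$-structure; among these, the ``shear-type'' components $\tau$ that measure the failure of $\mcN$ and $\mcN^\perp$ to be in involution are precisely the ones we must show vanish, since by the Frobenius theorem their vanishing is equivalent to $[\Gamma(\mcN),\Gamma(\mcN)]\subset\Gamma(\mcN)$ and $[\Gamma(\mcN^\perp),\Gamma(\mcN^\perp)]\subset\Gamma(\mcN^\perp)$.

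The core computation is then to contract $\nabla_{[e}C_{ab]cd}=0$ with the frame vectors in all the ways which, by Proposition~\ref{prop-alg-sp}, annihilate the Weyl tensor under the algebraically special hypothesis \eqref{eq-alg-special}. Differentiating the vanishing relations \eqref{eq-GS-condition} and re-expressing the derivatives of $C$ through Bianchi produces a system of \emph{algebraic} equations of the schematic form $\tau\cdot C_{\mathrm{lead}}=0$, where $C_{\mathrm{lead}}$ stands for the ``leading'' Weyl components that survive the algebraically special condition, i.e. the images of $C$ under the projections onto the tops of the Penrose diagrams of Figures~\ref{diagram-Penrose-C-rob}--\ref{diagram-Penrose-C-rob_contd} that are \emph{not} forced to vanish by \eqref{eq-GS-condition}. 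The ``otherwise generic'' hypothesis is made precise as the requirement that the $\cu(m-1)$-equivariant contraction map $\tau\mapsto\tau\cdot C_{\mathrm{lead}}$ be injective on the space of shear-type torsion; granting this, the system forces $\tau=0$, hence integrability of $\mcN$. The even- and odd-dimensional cases run in parallel, the $\epsilon=1$ case simply carrying the additional $\Tgt^{(0,0)}$-legs and the corresponding grey modules appearing in \eqref{eq-GS-condition}; one does not even need to treat $n=6$ separately, by the remark opening section~\ref{sec-rob-class}.

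The main obstacle is twofold. First, the representation-theoretic bookkeeping: one must track which $\simalg(m-1,\C)$-component of $\nabla_{[e}C_{ab]cd}$ pairs with which component of $\tau$ and of $C_{\mathrm{lead}}$, and check that after using all the relations in \eqref{eq-GS-condition} together with the arrows of Propositions~\ref{prop-Weyl-rob} and of the auxiliary intrinsic-torsion classification, the resulting linear system genuinely has the clean homogeneous form $\tau\cdot C_{\mathrm{lead}}=0$ with no residual inhomogeneous terms surviving. Second, and more delicate, is pinning down the genericity condition itself: it has to be an open, $\Sim(m-1,\C)$-invariant non-degeneracy condition on $C_{\mathrm{lead}}$ which is actually satisfied in the intended examples, and one must verify that it makes the relevant contraction map injective rather than merely generically so. For both points I would lean directly on the explicit computations of \cites{Taghavi-Chabert2011,Taghavi-Chabert2012}, of which the present Theorem~\ref{thm-GS} is a repackaging in the language of sections~\ref{sec-algebra}--\ref{sec-curvature}, and indicate how the genericity assumption there translates into a condition on the modules $\mfC_i^{j,k}$ of Proposition~\ref{prop-Weyl-rob}.
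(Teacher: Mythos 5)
The paper offers no proof of Theorem~\ref{thm-GS}: it is imported verbatim from \cites{Taghavi-Chabert2011,Taghavi-Chabert2012}, so there is no internal argument to compare against, and your sketch correctly reproduces the strategy of those references --- the Einstein condition kills the Cotton--York tensor, the second Bianchi identity then expresses $\nabla C$ algebraically in terms of $C$, and contracting against a frame adapted to \eqref{eq-splitting-tangent} turns the derivatives of the conditions \eqref{eq-GS-condition} into a homogeneous system $\tau\cdot C_{\mathrm{lead}}=0$ on the involution-obstructing part of the intrinsic torsion, which genericity renders injective. Your reading is also consistent with the rest of the paper: Proposition~\ref{prop-alg-sp-NInt} (Schwarzschild, where $C_{\mathrm{lead}}$ is highly degenerate) shows the genericity hypothesis cannot be dropped, exactly as the injectivity mechanism demands. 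The one caveat is that the two points you flag as ``main obstacles'' --- checking that no inhomogeneous terms survive the bookkeeping, and exhibiting genericity as an explicit open $\Sim(m-1,\C)$-invariant condition on the surviving modules $\mfC_i^{j,k}$ --- are where essentially all the content of the theorem lives, and you defer both to the cited computations rather than carrying them out; as written the proposal is the right plan rather than a self-contained proof, which is the same status the theorem has in the paper itself.
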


In \cites{Taghavi-Chabert2011,Taghavi-Chabert2012}, further degeneracy conditions on the Weyl tensors are also considered, and the Einstein condition is replaced by weaker conditions on the Cotton-York tensor, as in the version of \cites{Kundt1962,Robinson1963}, which reflect the conformal invariance of the theorem.

\begin{rem}
In four dimensions, shearfree congruences of null geodesics (SCNG) are equivalent to Robinson structures. Generalising the former to higher dimensions provides an alternative natural definition of `algebraically special' in line with the null alignment formalism of \cite{Coley2004}:
the Weyl tensor is said to be algebraically special if it admits a null direction $k^a$ with respect to which it satisfies
\begin{align}\tag{\ref{eq-GS-condition_sim}}
{}^\mfC _\mcK \Pi_{-1} (C) & = 0 \, , & \mbox{i.e.} & & C \ind{_{a d e \lb{b}}} k \ind{_{\rb{c}}} k \ind{^d} k \ind{^e} = k \ind{_{\lb{a}}} C \ind{_{b \rb{c} f \lb{d}}} k \ind{_{\rb{e}}} k \ind{^f} & = 0  \, ,
\end{align}
i.e. it is of Petrov type II (or more degenerate) -- see  section \ref{rem-PW-types}. In particular, by Proposition \ref{prop-alg-sp} tells us that \eqref{eq-GS-condition} implies that $C_{abcd}$ satisfies \eqref{eq-GS-condition_sim}, i.e. it is algebraically special in the null alignment formalism. But except in dimension four, the converse is clearly not true.

Also, the algebraic condition \eqref{eq-GS-condition_sim} leads to an `optical matrix' treatment of the Goldberg-Sachs theorem in higher dimensions \cites{Durkee2009,Ortaggio2012a,Ortaggio2013a}, but does not appear to be a sufficient condition for the existence of a SCNG.
\end{rem}

We now give an analogue of Proposition \ref{prop-multi-Rob} in the context of algebraically special spacetimes.
\begin{prop}\label{prop-multi-special-Rob}
Let $(\mcM,g)$ be an $n$-dimensional Lorentzian manifold equipped with a null line distribution $\mcK$, with $n=2m+\epsilon$, $\epsilon \in \{0,1\}$. The Weyl tensor $C_{abcd}$ satisfies
\begin{align*}
{}^\mfC _\mcK \Pi_1^1 (C) & = 0 \, ,
\end{align*}
at a point $p$ of $\mcM$ if and only if the Weyl tensor is algebraically special along all elements of $\Gr_m^1 ({}^\C \Tgt_p \mcM , g)_\mcK$ at $p$, i.e. it satisfies \eqref{eq-alg-special}  (or equivalently \eqref{eq-GS-condition}) with respect to any element of $\Gr_m^1 ({}^\C \Tgt_p \mcM , g)_\mcK$.

When $n=2m$, the Weyl tensor satisfies
\begin{align*}
{}^\mfC _\mcK \Pi_1^1 (C) & = 0 \, , & \mbox{when $m>3$,} \\
{}^\mfC _\mcK \Pi_1^{1,+} (C) & = 0 \, , & \mbox{when $m=3$,}
\end{align*}
at a point $p$ of $\mcM$ if and only if the Weyl tensor is algebraically special along all elements of $\Gr_m^{+,1} ({}^\C \Tgt_p \mcM , g)_\mcK$ at $p$, i.e.
\end{prop}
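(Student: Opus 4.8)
The plan is to mirror the proof of Proposition \ref{prop-multi-Rob}, replacing the integrability condition \eqref{eq-int-cond-Rob} by the (stronger) algebraically special condition \eqref{eq-alg-special}, and tracking how varying $\mcN$ over the fibre $\Gr_m^1 ({}^\C \Tgt_p \mcM , g)_\mcK$ forces $\simalg(n-2)$-invariant vanishing. First I would fix a splitting \eqref{eq-splitting-tangent} adapted to a chosen $\mcN$ incident on $\mcK$, and unpack condition \eqref{eq-alg-special}, $C_{abcd} X^a Y^b Z^c = 0$ for $X^a, Y^a \in \Gamma(\mcN^\perp)$, $Z^a \in \Gamma(\mcN)$, into its components with legs along $\mcK$, $\Tgt^{(1,0)}$, $\epsilon\Tgt^{(0,0)}$ and $\mcL$. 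The key observation is that the fourth index $d$ of $C_{abcd} X^a Y^b Z^c{}_d$ is free, so contracting against $k^d$, against $\Tgt^{(1,0)}$-vectors $W^d$, against $u^d \in \Tgt^{(0,0)}$, and against $\ell^d \in \mcL$ exhausts all components; in particular taking the fourth leg along $\mcK$ reproduces exactly the $\Sim(n-2)$-invariant condition \eqref{eq-GS-condition_sim}, i.e. ${}^\mfC _\mcK \Pi_{-1}(C) = 0$, which holds for a \emph{single} $\mcN$ already.

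Next I would let $\mcN$ vary while keeping $\mcK$ fixed. As $\mcN$ ranges over $\Gr_m^1 ({}^\C \Tgt_p \mcM , g)_\mcK \cong \OO(n-2)/\U(m-1)$, the subspace $\Tgt^{(1,0)}_p \subset {}^\C \Tgt_p\mcM / {}^\C\mcK_p$ sweeps out all maximal totally null subspaces of the screenspace $\gr_0(\Tgt_p\mcM)$. Contracting the free fourth index of $C_{abcd}k^a Y^b Z^c{}_d$ with $\ell^d$ and demanding the resulting condition for all such $\mcN$ forces $C_{abcd} k^a Y^b Z^c \ell^d = 0$ for \emph{all} $Y^c, Z^b$ in the full screenspace; combined with the already-established \eqref{eq-GS-condition_sim} this is precisely ${}^\mfC _\mcK \Pi_1^1(C) = 0$ in the $\simalg(n-2)$-invariant graph of Proposition \ref{prop-Weyl-sim} (it lies `above' $\mfC_1^1$, and one checks using the graph that it implies all the lower modules ${}^\mfC _\mcK \Pi_0^i(C)$, ${}^\mfC _\mcK \Pi_{-1}^j(C)$, ${}^\mfC _\mcK \Pi_{-2}^0(C)$ vanish, consistent with \eqref{eq-GS-condition}). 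Conversely, if ${}^\mfC _\mcK \Pi_1^1(C) = 0$ then by Proposition \ref{prop-alg-sp} (reading the $\simalg(m-1,\C)$-refinement), for any $\mcN$ incident on $\mcK$ all the modules \eqref{eq-GS-condition_rob1}--\eqref{eq-GS-condition_rob3} sit inside the $\simalg(n-2)$-module killed by ${}^\mfC _\mcK \Pi_1^1$, so \eqref{eq-alg-special} holds along every such $\mcN$; this is where I would invoke the inclusion diagram relating the two Penrose graphs \eqref{eq-Pendiag-sim} and \eqref{eq-Penrose-diag-g} and the tables of appendix \ref{sec-rep} to verify the containment module-by-module.

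For the even-dimensional ($\epsilon=0$) self-dual refinement, I would repeat the argument restricting $\mcN$ to the self-dual family $\Gr_m^{+,1}({}^\C \Tgt_p\mcM, g)_\mcK \cong \SO(n-2)/\U(m-1)$. When $m > 3$ the self-dual maximal totally null $(m-1)$-planes in the screenspace still sweep out enough of $\gr_0$ that the same conclusion ${}^\mfC _\mcK \Pi_1^1(C)=0$ follows; the analysis is identical to the one in the proof of Proposition \ref{prop-multi-Rob}, using that for $m>3$ the module $\mfC_1^1$ does not split under the orientation. When $m=3$ the screenspace has dimension $4$, its $\Lambda^2$ splits into self-dual and anti-self-dual parts, and correspondingly $\mfC_1^1 = \mfC_1^{1,+}\oplus\mfC_1^{1,-}$; restricting to self-dual Robinson structures only probes the self-dual half, giving ${}^\mfC _\mcK \Pi_1^{1,+}(C)=0$, which (as in Proposition \ref{prop-multi-Rob}) implies ${}^\mfC _\mcK \Pi_0^{3,+}(C)=0$, ${}^\mfC _\mcK \Pi_{-1}^{1,+}(C)=0$ and ${}^\mfC _\mcK \Pi_{-2}^0(C)=0$. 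I expect the main obstacle to be bookkeeping rather than conceptual: verifying carefully which $\simalg(m-1,\C)$-modules appearing in Proposition \ref{prop-alg-sp} are contained in the single $\simalg(n-2)$-module cut out by ${}^\mfC _\mcK \Pi_1^1$, and checking that the variation over $\OO(n-2)/\U(m-1)$ (respectively $\SO(n-2)/\U(m-1)$) is `large enough' to promote the pointwise $\cu(m-1)$-conditions to the full $\co(n-2)$-condition — in low dimensions one must be careful that no exceptional isomorphisms cause a collapse, which is exactly why the $m=3$ case is singled out.
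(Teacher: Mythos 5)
Your overall strategy is the right one and matches the paper's: split the condition \eqref{eq-alg-special} according to how many of the three contracted legs lie along $\mcK$, let $\mcN$ range over the fibre $\OO(n-2)/\U(m-1)$, and read off which $\co(n-2)$-irreducibles of $\gr(\mfC)$ are forced to vanish; the converse direction and the $m=3$ self-dual splitting are also handled correctly. But the central step is wrong. The condition ${}^\mfC _\mcK \Pi_1^1 (C) = 0$ concerns the module $\mfC_1^1 \cong \g_{\pm1}\circledcirc\so_0$, whose components are detected by $C_{abcd}X^aY^bZ^c\ell^d$ with \emph{all three} of $X^a,Y^a,Z^a$ in the screen directions (none proportional to $k^a$); it is obtained precisely from the piece $C_{abcd}X^aY^bZ^c=0$ of \eqref{eq-alg-special} with $X^a,Y^a\in\Gamma(\mcN^\perp)$, $Z^a\in\Gamma(\mcN)$ all transverse to $\mcK$, as $\mcN$ varies. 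Your derivation only ever produces conditions of the form $C_{abcd}k^aY^bZ^cv^d=0$, i.e.\ with a $k^a$ in the first slot; these are homogeneity-$0$ and $-1$ conditions and can at best kill $\mfC_0^1\oplus\mfC_0^2$ and $\mfC_{-1}^0\oplus\mfC_{-1}^1\oplus\mfC_{-2}^0$. They never touch $\mfC_1^1$ (nor $\mfC_0^3$), so the main conclusion is never actually established.

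Two further slips compound this. First, the claim that varying $\mcN$ forces $C_{abcd}k^aY^bZ^c\ell^d=0$ for \emph{all} screen $Y^a,Z^a$ is false: one always has $Y^a\in\Gamma(\mcN^\perp)$ and $Z^a\in\Gamma(\mcN)$ for a common $\mcN$, hence $h_{ab}Y^aZ^b=0$, so the trace part of this bilinear form — the component in $\mfC_0^0$ — is never probed. This is as it must be, since Schwarzschild satisfies the hypothesis with ${}^\mfC _\mcK \Pi_0^0(C)\neq0$. Second, and relatedly, ${}^\mfC _\mcK \Pi_1^1(C)=0$ does \emph{not} imply ``all the lower modules ${}^\mfC _\mcK \Pi_0^i(C)$ vanish'': in the graph of Proposition \ref{prop-Weyl-sim} there is no arrow from $\mfC_1^1$ to $\mfC_0^0$, so only $i=1,2,3$ are implied. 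The fix is to include the third sub-case of the paper's decomposition, $C_{abcd}X^aY^bZ^c=0$ with $X^a,Y^a,Z^a\notin\Gamma(\mcK)$, contract the free index against $\ell^d$ (giving ${}^\mfC _\mcK \Pi_1^1(C)=0$, the trace part $\mfC_1^0$ again escaping because $h_{ab}Y^aZ^b=0$) and against a screen vector (giving ${}^\mfC _\mcK \Pi_0^3(C)=0$), and then observe that all the resulting vanishings are consequences of ${}^\mfC _\mcK \Pi_1^1(C)=0$ alone via the graph.
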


\begin{proof}
This is analogous to the proof of Proposition \ref{prop-multi-Rob}. Let $\mcN$ be an almost Robinson structure incident on $\mcK$. The algebraically special condition \eqref{eq-alg-special} can then be expressed as
\begin{align}
C_{abcd} k^a Y^b k^c & = 0 \, , \label{eq-multi-special-Rob-1} \\ 
C_{abcd} k^a Y^b Z^c & = 0 \, , \label{eq-multi-special-Rob0} \\
C_{abcd} X^a Y^b Z^c & = 0 \, , \label{eq-multi-special-Rob1}
\end{align}
for all $k^a \in \Gamma ( \mcK)$, $X^a,Y^a \in \Gamma (\mcN^\perp)$, $Z^a \in \Gamma (\mcN)$, with $X^a,Y^a, Z^a \notin \Gamma (\mcK)$. We now let vary $\mcN$ while keeping $\mcK$ fixed, we obtain $\eqref{eq-multi-special-Rob-1} \Longrightarrow {}^\mfC _\mcK \Pi_{-1}^0 (C) = {}^\mfC _\mcK \Pi_{-1}^1 (C) = 0$, $\eqref{eq-multi-special-Rob0} \Longrightarrow {}^\mfC _\mcK \Pi_0^1 (C) = {}^\mfC _\mcK \Pi_0^2 (C) = 0$, and $\eqref{eq-multi-special-Rob1} \Longrightarrow {}^\mfC _\mcK \Pi_1^1 (C) = {}^\mfC _\mcK \Pi_0^3 (C) = 0$.

Finally, this result remains unchanged if we restrict ourselves to almost self-dual Robinson structures when $n=2m>6$. When $n=6$, however, conditions \eqref{eq-multi-special-Rob-1} and \eqref{eq-multi-special-Rob0} split into self-dual and anti-self-dual parts, so that restriction to almost self-dual Robinson structures yields the required condition.
\end{proof}
\vspace{2.5mm}

\subsubsection{Examples of algebraically special spacetimes}
The existence of (local) integrable and co-integrable almost Robinson structures on higher-dimensional solutions to Einstein's field equations and their connection to the special algebraic condition \eqref{eq-alg-special} was first highlighted in \cites{Mason2010,Taghavi-Chabert2011}. Further instances of Robinson structures in higher dimensions were given in \cites{Ortaggio2012a,Ortaggio2013}, but their relation to the degeneracy of the Weyl tensor was not investigated there. We shall review these examples paying more attention to their curvature properties. We also examine the integrability conditions for the existence of a parallel Robinson structure and for the existence of a parallel pure spinor field of real index $1$.

\paragraph{Conformal Killing-Yano $2$-forms}
\begin{thm}[\cite{Mason2010}]\label{thm-CKY2form}
Let $(\mcM,g)$ be a $(2m+\epsilon)$-dimensional pseudo-Riemannian manifold, where $\epsilon \in \{ 0 , 1\}$, equipped with a conformal Killing-Yano $2$-form $\phi_{ab}$, i.e. a $2$-form $\phi \ind{_{ab}}$ that satisfies
\begin{align*}
\nabla \ind{_a} \phi \ind{_{bc}} & = \tau \ind{_{abc}} + 2 g \ind{_{a[b}} K \ind{_{c]}} \, ,
\end{align*}
where $\tau \ind{_{abc}} = \nabla_{[a} \phi_{bc]}$ and $1$-form $K_a = (2m+\epsilon-1) \nabla^c \phi_{ca}$. Assume that $\phi \ind{_a^b}$ has distinct eigenvalues. Let $\mcN$ be the totally null complex $m$-plane distribution associated to some (pure) eigenspinor of $\phi_{ab}$, and let $\mcN^\perp$ denote its orthogonal complement. Then the Weyl tensor satisfies \eqref{eq-alg-special} with respect to $\mcN$.

Suppose further that
\begin{align}\label{eq-tau}
\tau \ind{_{abc}} X^a Y^b Z^b & = 0 \, ,
\end{align}
for all $X^a, Y^b, Z^b \in \Gamma (\mcN^\perp)$. Then $\mcN$ and $\mcN^\perp$ are locally both integrable.
\end{thm}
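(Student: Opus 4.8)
The plan is to extract both assertions from the first- and second-order consequences of the conformal Killing--Yano (CKY) equation, organised around the eigenspace decomposition of $\phi \ind{_a^b}$. Since $\phi \ind{_a^b}$ is skew with distinct eigenvalues, locally ${}^\C \Tgt \mcM = \bigoplus_i V_i$, where $V_i$ is the eigenline bundle for a smooth eigenvalue $\lambda_i$ and the $\lambda_i$ occur in pairs $\lambda \leftrightarrow -\lambda$ with $\lambda \neq -\lambda$ (together with a single zero eigenline $V_0$ when $\epsilon = 1$). A short Clifford computation shows that the totally null $m$-plane $\mcN$ attached to an eigenspinor $\kappa$ of $\phi$ is $\phi$-invariant: if $v \ind{^a}$ annihilates $\kappa$ under Clifford multiplication and $\phi$ acts on $\kappa$ as a scalar, then so does $\phi \ind{_a^b} v \ind{^a}$. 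Hence, by distinctness, $\mcN = \bigoplus_{i \in S} V_i$ for a set $S$ meeting each $\pm$-pair exactly once, $\mcN^\perp = \mcN \oplus \epsilon \, V_0$, and the $g$-dual of $\mcN$ is $\bigoplus_{i \notin S} V_i$. The workhorse is the \emph{structure equation}: feeding a section of $V_j$ into $\nabla \ind{_a} \phi \ind{_{bc}} = \tau \ind{_{abc}} + 2 g \ind{_{a[b}} K \ind{_{c]}}$ and projecting onto $V_k$ ($k \neq j$) expresses the $V_k$-part of the covariant derivative as $-(\lambda_k - \lambda_j)^{-1}$ times the $V_k$-part of the right-hand side; when the two $V$-directions involved are $g$-orthogonal the term $2 g \ind{_{a[b}} K \ind{_{c]}}$ is proportional to the input vector, and so contributes nothing off its own eigenline, leaving only the totally antisymmetric $\tau$-term.

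I would dispatch the integrability claim first, as it drops straight out of the structure equation. By Definition \ref{def-Robinson-manifold} it suffices to establish $[\Gamma(\mcN^\perp), \Gamma(\mcN^\perp)] \subset \Gamma(\mcN^\perp)$ and $[\Gamma(\mcN), \Gamma(\mcN)] \subset \Gamma(\mcN)$, and both follow from the same computation. Take $X \ind{^a} \in \Gamma(V_i)$, $Y \ind{^b} \in \Gamma(V_j)$ with $V_i, V_j \subset \mcN^\perp$; since $\mcN^\perp$ is null away from $V_0$ and $V_0 \perp \mcN$, one has $g \ind{_{ab}} X \ind{^a} Y \ind{^b} = 0$, so, using that $\tau$ is a $3$-form, the structure equation gives for each $V_k \not\subset \mcN^\perp$
\begin{align*}
\bigl( [X,Y] \bigr)_{V_k} = \bigl( \nabla_X Y - \nabla_Y X \bigr)_{V_k} = - \left( \frac{1}{\lambda_k - \lambda_j} + \frac{1}{\lambda_k - \lambda_i} \right) \bigl( \tau \ind{_{abc}} X \ind{^a} Y \ind{^b} \bigr)_{V_k} \, .
\end{align*}
The $V_k$-part of the covector $\tau \ind{_{abc}} X \ind{^a} Y \ind{^b}$ is its pairing with sections of $\mcN^\perp$, which vanishes by hypothesis \eqref{eq-tau}; and the one remaining bracket $[\Gamma(V_0), \Gamma(V_0)]$ lies in $\Gamma(V_0)$ because $V_0$ is a line. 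The same argument with both vectors taken in $\mcN$ gives involutivity of $\mcN$. Hence $\mcN$ is a local Robinson structure.

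For the algebraically special assertion I would pass to the first prolongation of the CKY equation: differentiating $\nabla \ind{_a} \phi \ind{_{bc}} = \tau \ind{_{abc}} + 2 g \ind{_{a[b}} K \ind{_{c]}}$ once more and antisymmetrising, the Ricci identity turns the left side into a contraction of the Riemann tensor with $\phi$, while the right side becomes $\nabla \tau$ and $g \cdot \nabla K$ terms. I would then contract with $X \ind{^d} \in \mcN^\perp$, keep one slot free, and insert $Y \ind{^b} \in \mcN$ and a third $\mcN$-vector into the remaining slots, using $\phi$-invariance of $\mcN$ so that all $\phi$-images stay inside $\mcN$; decomposing the Riemann tensor via \eqref{eq-Riem-decomp}, every $g$ in the tracefree-Ricci and scalar parts pairs two of the mutually $g$-orthogonal vectors drawn from $\mcN$ and $\mcN^\perp$ and therefore collapses, leaving $C \ind{_{abcd}} X \ind{^a} Y \ind{^b} Z \ind{^c}$ (with $X, Y \in \mcN^\perp$, $Z \in \mcN$) equal to a combination of $\Phi \ind{_{ab}}$, $\nabla K$ and $\nabla \tau$ evaluated on $\mcN$. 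Since no Einstein condition is assumed these remainders need not vanish term by term, so the point is to eliminate them using the divergence identities for $\phi$ -- that $K \ind{_a}$ is proportional to $\nabla \ind{^c} \phi \ind{_{ca}}$ and that the divergence of $\tau$ is a Ricci contraction of $\phi$ -- which are precisely the combinations that appear; after this cancellation one is left with $C \ind{_{abcd}} X \ind{^a} Y \ind{^b} Z \ind{^c} = 0$, i.e. \eqref{eq-alg-special}. (Equivalently, and arguably more cleanly, the CKY equation forces the eigenspinor $\kappa$ to satisfy a first-order ``CKY spinor'' equation; applying $[\nabla \ind{_a}, \nabla \ind{_b}]$ to $\kappa$ and invoking the pure-spinor curvature analysis of \cites{Taghavi-Chabert2012a,Taghavi-Chabert2013}, the tracefree-Ricci and scalar parts of the curvature action on $\kappa$ are absorbed into lower-order terms, so the Weyl part Clifford-annihilates $\kappa$ along $\mcN$, which for a pure spinor is exactly \eqref{eq-alg-special}.) By Proposition \ref{prop-alg-sp}, \eqref{eq-alg-special} is then the family of vanishing projections \eqref{eq-GS-condition}.

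The main obstacle is this last cancellation: arranging that the tracefree-Ricci and scalar curvature contributions surviving the $\mcN^\perp$--$\mcN^\perp$--$\mcN$ contraction are annihilated by the $\nabla \tau$ and $\nabla K$ terms, without any Einstein hypothesis, requires careful bookkeeping of the Schouten terms together with the exact divergence identities for $\phi$. Distinctness of the eigenvalues enters twice -- it legitimises the eigenprojections and the denominators $\lambda_k - \lambda_j$, and it guarantees that $\mcN$ is a genuine direct sum of eigenlines -- whereas the integrability half is, by contrast, a one-line consequence of the structure equation once \eqref{eq-tau} is imposed.
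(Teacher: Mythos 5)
The paper itself offers no proof of this theorem --- it is imported verbatim from \cite{Mason2010} --- so your proposal has to stand on its own. The second assertion (involutivity of $\mcN$ and $\mcN^\perp$ given \eqref{eq-tau}) you do prove, and correctly: the eigenline decomposition of ${}^\C \Tgt \mcM$ under $\phi \ind{_a^b}$, the $\phi$-invariance of $\mcN$ via the Clifford commutator, the projection of $\nabla_X Y$ onto eigenlines outside $\mcN^\perp$ (resp. $\mcN$) using the CKY equation with $g(X,Y)=0$, and the observation that $\tau \ind{_{abc}} X^a Y^b$ is metric-dual to a section of $(\mcN^\perp)^\perp = \mcN$ under \eqref{eq-tau}, together close that half; this is also the mechanism of the cited reference.

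The first assertion, however, is not established. After commuting covariant derivatives you correctly find that the Riemann tensor contracted with eigenvectors equals a combination of $\nabla \tau$, $\nabla K$ and, via \eqref{eq-Riem-decomp}, surviving tracefree-Ricci terms of the form $\Phi \ind{_{ab}} Z^a X^b$ with $Z^a \in \Gamma(\mcN)$, $X^a \in \Gamma(\mcN^\perp)$, which do not vanish in the absence of an Einstein condition. The entire content of the claim is that these survivors cancel, and you assert the cancellation (``which are precisely the combinations that appear'') rather than demonstrate it; indeed you flag it yourself as ``the main obstacle''. What is missing is the first prolongation of the CKY equation --- the closed expressions for $\nabla \ind{_a} \tau \ind{_{bcd}}$ and $\nabla \ind{_a} K \ind{_b}$ as curvature contractions of $\phi \ind{_{ab}}$ (Semmelmann-type formulae) --- whose substitution converts the commuted identity into a purely \emph{algebraic} relation between the Weyl tensor and $\phi \ind{_{ab}}$; only then does contraction with eigenvectors and division by differences of the (distinct) eigenvalues yield $C \ind{_{abcd}} X^a Y^b Z^c = 0$. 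The parenthetical spinorial alternative has the same status: one must first show that an eigenspinor of a CKY $2$-form satisfies a first-order twistor-type equation before the curvature analysis of \cites{Taghavi-Chabert2012a,Taghavi-Chabert2013} applies, and that step is likewise only asserted. As written, the half of the theorem that does not require hypothesis \eqref{eq-tau} remains open.
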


As stated, the theorem applies to any pseudo-Riemannian manifolds, and while this article deals primarily with Lorentzian geometry, the following Riemannian example is worth mentioning as it sheds light on the r\^{o}le played by both the genericity of the eigenvalues of a CKY $2$-form and the additional condition \eqref{eq-tau}.

\begin{exa}[The Iwasawa manifold]\label{exa-Iwasawa}
The Iwasawa manifold is the quotient of the three-dimensional complex Heisenberg group by a discrete subgroup. The set of all invariant Hermitian structures on this six-dimensional real manifold is known to consist of the union of a point (its bi-invariant Hermitian structure) and a $2$-sphere (\cite{Ketsetzis2004} and references therein). It was also shown in \cite{Barberis2012} that it admits a \emph{Killing-Yano $2$-form}, i.e. a co-closed conformal Killing-Yano $2$-form, which is not closed. We will presently make the connection between these two geometric entities more explicit.

To describe the Iwasawa manifold, we introduce complex coordinates $\{z^1,z^2,z^3\}$ together with complex valued $1$-forms $\bm{\theta}^1 := \dd z^1$, $\bm{\theta}^2 := \dd z^2$ and $\bm{\theta}^3 := - \dd z^3 + z^1 \, \dd z^2$, so that the metric and the Killing-Yano $2$-form take the form
\begin{align*}
\bm{g} & = 2 \, \bm{\theta}^1 \odot \bar{\bm{\theta}}_1 + 2 \, \bm{\theta}^2 \odot \bar{\bm{\theta}}_2 + 2 \, \bm{\theta}^3 \odot \bar{\bm{\theta}}_3 \, , &
\bm{\phi} & = \ii \left( \bm{\theta}^1 \wedge \bar{\bm{\theta}}_1 + \bm{\theta}^2 \wedge \bar{\bm{\theta}}_2 + 3 \, \bm{\theta}^3 \wedge \bar{\bm{\theta}}_3 \right) \, ,
\end{align*}
respectively. As a spinor endomorphism, $\bm{\phi}$ has four complex conjugate pairs of eigenvalues, and each pair of projective \emph{pure} eigenspinors defines a conjugate pair of totally null complex $3$-plane distributions, i.e. an almost Hermitian structure. To be precise, $\bm{\phi}$ admits
\begin{itemize}
\item an eigenvalue $\frac{5\ii}{4}$ of multiplicity $1$ with associated distribution $\mcN^0$ annihilated by $\bm{\theta}^1 \wedge \bm{\theta}^2 \wedge \bm{\theta}^3$,
\item an eigenvalue $\frac{\ii}{4}$ of multiplicity $1$ with associated distribution $\mcN^{12}$ annihilated by $\bar{\bm{\theta}}_1 \wedge \bar{\bm{\theta}}_2 \wedge \bm{\theta}^3$,
\item an eigenvalue $3 \ii$ of multiplicity $2$ with an associated $2$-sphere of distributions $\mcN^{[a,b]}$ annihilated by $\left( a \, \bm{\theta}^1 + b \, \bm{\theta}^2 \right) \wedge \left( b \, \bar{\bm{\theta}}_1 - a \, \bar{\bm{\theta}}_2 \right) \wedge \bm{\theta}^3 $ where $[a,b] \in\CP^1$,
\end{itemize}
and similarly for their respective complex conjugates.

Since $\dd \bm{\theta}^1 = 0$, $\dd \bm{\theta}^2 = 0$ and $\dd \bm{\theta}^3 = \bm{\theta}^1 \wedge \bm{\theta}^2$,
we see at once that $\mcN^0$ and $\mcN^{[a,b]}$ for any $[a,b] \in\CP^1$ are integrable.\footnote{In fact, on inspection of their intrinsic torsion, they define \emph{Hermitian semi-K\"{a}hler} (also known as \emph{special Hermitian}) structures \cite{Gray1976}.} On the other hand, $\mcN^{12}$ is not integrable, but defines a \emph{quasi-K\"{a}hler} structure \cite{Gray1976}. Computing
\begin{align*}
\bm{\tau} & := \dd \bm{\phi} = 3 \ii \left( \bm{\theta}^1 \wedge \bm{\theta}^2 \wedge \bar{\bm{\theta}}_3 - \bar{\bm{\theta}}_1 \wedge \bar{\bm{\theta}}_2 \wedge \bm{\theta}^3 \right) \, ,
\end{align*}
we see that $\bm{\tau}$ is degenerate on $\mcN^0$ and $\mcN^{[a,b]}$ for any $[a,b] \in\CP^1$, but not on $\mcN^{12}$. Thus, as expected from the additional condition \eqref{eq-tau} of Theorem \ref{thm-CKY2form}, $\bm{\tau}$ is the obstruction to the integrability of $\mcN^{12}$.

As for the curvature, we first define
\begin{align*}
\bm{\mu} & = 2 \ii \left( \bm{\theta}^1 \wedge \bar{\bm{\theta}}_1 + \bm{\theta}^2 \wedge \bar{\bm{\theta}}_2 \right) \, , &
\bm{\nu} & = 2 \, \bm{\theta}^1 \odot \bar{\bm{\theta}}_1 + 2 \, \bm{\theta}^2 \odot \bar{\bm{\theta}}_2 \, , \\
\bm{\alpha} & = 2 \ii \, \bm{\theta}^3 \wedge \bar{\bm{\theta}}_3 \, , &
\bm{\beta} & = 2 \, \bm{\theta}^3 \odot \bar{\bm{\theta}}_3 \, .
\end{align*}
Then, in abstract index notation, the Weyl tensor, the tracefree Ricc tensor and the Ricci scalar are given by
\begin{align*}
C_{abcd} & = - \frac{1}{2} \left( \mu_{ab} \mu_{cd} - \mu_{a[c} \mu_{d]b} \right) + \frac{1}{2} \left( \mu_{ab} \alpha_{cd} + \alpha_{ab} \mu_{cd} -  2 \mu_{[a|[c} \alpha _{d]|b]} \right) + \frac{7}{10} \nu_{a[c} \nu_{d]b} + \frac{3}{5} \alpha_{ab} \alpha_{cd} - \frac{3}{5} \nu_{[a|[c} \beta_{d]|b]} \, ,\\
\Phi _{ab} & = \frac{2}{3} \nu_{ab} - \frac{4}{3} \beta_{ab} \, , \\
R & = 2 \, .
\end{align*}
In particular, the Weyl tensor is algebraically special with respect to any of $\mcN^0$, $\mcN^{12}$ and $\mcN^{[a,b]}$ for any $[a,b] \in\CP^1$.  The Iwasawa manifold is not Einstein, but the Cotton-York tensor is given by
\begin{align*}
\bm{A} & = \frac{2}{3} \left( \bar{\bm{\theta}}_3 \odot \left( \bm{\theta}^1 \wedge \bm{\theta}^2 \right) + \bm{\theta}^3 \odot \left( \bar{\bm{\theta}}_1 \wedge \bar{\bm{\theta}}_2 \right) \right) \, ,
\end{align*}
i.e. it is degenerate with respect to of $\mcN^0$ and $\mcN^{[a,b]}$ for any $[a,b] \in\CP^1$, but not with respect to $\mcN^{12}$. The Cotton-York tensor is therefore the obstruction to the integrability of $\mcN^{12}$ corroborating \cite{Taghavi-Chabert2011}.
\end{exa}

\begin{exa}[The Kerr-NUT-AdS metric]
The higher-dimensional Euclidean Kerr-NUT-AdS metric \cite{Chen2006} is a (partial) generalisation of the Plebanski-Demianski metric \cite{Plebanski1976}, and was shown to admit a conformal Killing-Yano tensor that satisfies the requirement of Theorem \ref{thm-CKY2form}  (in fact, with $\tau \ind{_{abc}}=0$) \cites{Kubizvn'ak2007,Kubizvn'ak2007a,Krtouvs2007,Krtouvs2008}. In even dimensions, it then follows that all $2^m$ projective eigenspinors of the CKY $2$-forms define Hermitian structures \cite{Mason2010}. These metrics also come in Lorentzian flavour, in which case they locally admit a discrete set of $2^m$ Robinson structures. Similar considerations apply in odd dimensions.
\end{exa}

Other metrics admitting a special CKY $2$-form as in Theorem \ref{thm-CKY2form} include those obtained by `switching off' the mass or NUT parameters, or cosmological constants of the Kerr-NUT-(A)dS metric, e.g. the Myers-Perry black hole \cites{Myers1986,Gibbons2005}. While one cannot apply Theorem \ref{thm-CKY2form} when the eigenvalues of the CKY $2$-form become degenerate, one can still deduce the existence of Robinson structures as a limiting case. This is precisely the case when all the rotation coefficients of the Kerr-NUT-(A)dS metric are set to zero, and in particular, for the Schwarzschild metric. which is discussed in Example \ref{exa-Schwarzschild}. 

\paragraph{Kerr-Schild metrics}
A remarkable property of the Kerr metric \cite{Gibbons2005} is that they can be put in \emph{Kerr-Schild form},  i.e. \emph{exact} first order perturbations of Minkowski space\footnote{Generalisation of this ansatz can also be obtained by considering non-flat background metrics.}, with a preferred null line distribution $\mcK$, as given by \eqref{eq-Kerr-Schild}. This makes the study of the differential properties of $\mcK$ particularly tractable, as already discussed in \cite{Ortaggio2009a}. There are further interesting curvature properties that these metrics enjoy when considering Robinson structures incident on $\mcK$.

\begin{prop}\label{prop-KS-Robinson}
Let $(\mcM,g)$ be a Lorentzian manifold equipped with a co-integrable Robinson structure $\mcN$. Let $\mcK$ be the associated null line distribution of $\mcN$, i.e. $\mcN \cap \bar{\mcN} = {}^\C \mcK$. Suppose that the metric on $\mcM$ is of the Kerr-Schild form
\begin{align}\label{eq-Kerr-Schild}
g \ind{_{ab}} & = \eta \ind{_{ab}} + H k \ind{_a} k \ind{_b} \, ,
\end{align}
where $\eta \ind{_{ab}}$ is a flat metric, $H$ is a smooth function, and $k^a$ is a section of $\mcK$. Then the Weyl tensor is algebraically special with respect to $\mcN$. In addition, the Ricci tensor and its tracefree part satisfy $R_{ab} X^a Y^b = 0$ and $\Phi_{ab} X^a Y^b = 0$ for all $X^a \in \Gamma(\mcN)$ and $Y^a \in \Gamma(\mcN^\perp)$, i.e. ${}^\mfF_\mcN \Pi_0^{1,1} (\Phi) = {}^\mfF_\mcN \Pi_0^{1,3} (\Phi) =0$.
\end{prop}

\begin{proof}
In \cite{Taghavi-Chabert2015}, it is shown that for a complex Riemannian manifold equipped with an almost null structure $\mcN$ and a metric of the form $g_{ab} = \eta_{ab} + H_{ab}$, where $\eta_{ab}$ is the flat complex Euclidean metric and $H_{ab}$ is a section of $\odot^2 \mcN$, if both $\mcN$ and $\mcN^\perp$ are integrable then the Weyl tensor satisfies \eqref{eq-alg-special}. This result can also be applied to (complexified) Lorentzian metric, in which case the additional structure forces $H_{ab}$ to be of the form $H k_a k_b$ for some function $H$ and real null vector field $k^a$. The condition on the Ricci tensor is given in the same reference.
\end{proof}
\vspace{2.5mm}

\begin{exa}[Myers-Perry black holes in Kerr-Schild form]\label{exa-Kerr-MP}
We introduce the standard flat coordinates $\{x ^a\}=\{t,x_i,y_i,\epsilon z\}$ on Minkowski space. The higher-dimensional Kerr-Schild ansatz \cites{Myers1986,Gibbons2005} describing a rotating black hole in higher dimensions admits a Kerr-Schild form \eqref{eq-Kerr-Schild} with
\begin{align*}
k \ind*{_a} \dd x ^a & :=  \dd t + \sum_{i=1} ^{m-1+\epsilon} \frac{r(x_i \dd x_i + y_i \dd y_i ) + a_i (x_i
   \dd y_i - y_i \dd x_i)}{r^2 +
   a_i^2} + (1-\epsilon) \frac{z \dd z}{r} \, , \\
U & := \frac{1}{r^{\epsilon+1}}\left(1-\sum_{i=1}^{m-1}\frac{a_i^2(x_i^2+y_i^2)}{(r^2+a_i^2)^2}\right) \prod_{j=1}^{n-1} (r^2+a_j^2) \, , & H & := \frac{2M}{U} \, .
\end{align*}
Here, the constants $a_i$ are the rotation coefficients and $M$ the mass of the black hole, and $r$ satisfies the constraint
\begin{align*}
\sum_{i=1}^{m-1}\frac{x_i^2+y_i^2}{r^2+a_i^2} + (\epsilon-1) \frac{z^2}{r^2} & = 1 \, .
\end{align*}
which arises as the determinant of the CKY $2$-form $\phi \ind{_a^b}$ for the metric. Solutions $r$ to this equation are the eigenvalues of $\phi \ind{_a^b}$. In fact, this metric admits a second Kerr-Schild ansatz with a null direction $\ell^a$, dual to $k^a$, obtained by sending $r$ to $-r$. The vector fields $k \ind{^a}$ and $\ell^a$ are two distinct real eigenvectors for the real eigenvalues $\pm r$ of $\phi_{ab}$ on $(\mcM,g)$. The remaining eigenvalues are purely imaginary with complex eigenvectors. Thus, by Theorem \ref{thm-CKY2form} one can associate a discrete set of $2^{m-1}$ local Robinson structures to each of $k ^a$ and $\ell^a$ \cites{Mason2010}. Finally, being a special case of the Kerr-NUT-(A)dS metric, the Weyl tensor is algebraically special with respect to any of the $2^m$ Robinson structures of the Kerr-Myers-Perry metric.
\end{exa}

\begin{exa}[Schwarzschild metric]\label{exa-Schwarzschild}
When all rotation coefficients $a_i$ are zero, the Kerr-Schild/Myers-Perry black hole of Example \ref{exa-Kerr-MP} degenerates to the Kerr-Schild form of the Schwarzschild black hole. Both the vector fields $k^a$ and $\ell^a$ become hypersurface-orthogonal, with
\begin{align*}
k _a \dd x ^a & :=  \dd t + \dd r \, , & \ell _a \dd x ^a & :=  \dd t - \dd r \, , & H & := \frac{2M}{r^{2n-3+\epsilon}} \, .
\end{align*}
We recast the flat metric of \eqref{eq-Kerr-Schild} into the form
\begin{align}\label{eq-Schwarzschild}
\eta_{ab} & = 2 \, k_{(a} \ell_{b)} + h_{ab} \, ,
\end{align}
where $h_{ab}$ is the round metric on the $(n-2)$-dimensional sphere $S^{n-2}$ (up to a factor $r^2$) -- the hypersurfaces orthogonal to both $k^a$ and $\ell^a$ are $(n-2)$-dimensional spheres $S^{n-2}$. 

The (co-integrable) Robinson structures of the Kerr metric persist as the parameters $a_i$ are set to zero in the Scharzschild metric \cite{Ortaggio2012a}. In fact, the rank of the CKY $2$-form of the Kerr-NUT-(A)dS metric will drop to $2$, which implies in particular that as a spinor endomorphism, the CKY $2$-form will have only a pair of eigenvalues $\pm r$, each of multiplicity $2^{m-1}$. This is not unlike Example \ref{exa-Iwasawa}, which admits infinitely many Hermitian structures. An eigenspinor for $\bm{\phi}$ need not be pure, but a pure eigenspinor will determine a (not necessarily integrable) almost Robinson structure incident on either $\mcK$ or $\mcL$. We shall presently, describe these pure eigenspinors in terms of their associated almost Robinson structures.

For our purpose, however, we can use the more standard form of the Schwarschild metric \eqref{eq-Schwarzschild}. For specificity, we assume $n=2m$. An arbitrary almost Robinson structure incident on $\mcK := \langle k^a \rangle$ must be spanned by $k^a$ and $m-1$ null complex vector fields $\{ \bm{\zeta}^A \}$, say, which define a local almost Hermitian structure on $S^{n-2}$. Since $k^a$ commutes with any of $\{\bm{\zeta}^A \}$, the question then boils down to seek \emph{integrable} almost Hermitian structures on $S^{n-2}$.

Except on $S^2$ there are no global Hermitian structures on $S^{n-2}$, but this is not so locally. We introduce the standard complex coordinates $\{ z^A , \bar{z}_A \}$ on an affine subset $\R^{n-2}$ of $S^{n-2}$ with flat metric $2 \, \dd z^A \odot \dd \bar{z}_A$. Then an arbitrary totally null complex $(m-1)$-plane distribution is spanned by vector fields of the form $\bm{\zeta}^A = \parderv{}{\bar{z}_A} + \phi ^{AB} \parderv{}{z^B}$ for some $\frac{1}{2}(m-1)(m-2)$ functions $\phi ^{AB}=\phi^{[AB]}$ on $\mcM$. For $\{ \bm{\zeta}^A \}$ to be integrable, these functions must satisfy $\bm{\zeta}^C \phi ^{AB} = 0$. The general analytic solution to this system of PDEs can be obtained from the prescription of a set of $\frac{1}{2}(m-1)(m-2)$ holomorphic functions in $\frac{1}{2}m(m-1)$ complex variables:\footnote{This defines an $(m-1)$-dimensional complex submanifold of \emph{twistor space} \cite{Hughston1988}.} this is the content of the Kerr theorem \cites{Cox1976,Penrose1986,Hughston1988}. Thus, there exist infinitely many local analytic Hermitian structures on $S^{n-2}$. Consequently, at every point $p$ of the Schwarzschild black hole $(\mcM,g)$, and for every element $\mcN_p$ of $\Gr_m^1({}^\C \Tgt_p \mcM , g)_\mcK$ and $\Gr_m^1({}^\C \Tgt_p \mcM , g )_\mcL$, we can find an integrable almost Robinson structure tangent to $\mcN_p$.

A similar argument applies in odd dimensions too: locally, on $S^{2m-1}$, there are infinitely many analytic metric-compatible CR structures whose orthogonal complements are integrable too. Their description is given in \cite{Taghavi-Chabert2015} where an odd-dimensional version of the Kerr theorem is also presented. In turn, these give rise to infinitely many Robinson  structures on the Schwarschild spacetime. 

All these (integrable or not) almost Robinson structures will subsist when considering the full Kerr-Schild metric. By Proposition \ref{prop-KS-Robinson}, locally the Weyl tensor of the Schwarzschild metric is \emph{algebraically special along any local integrable and co-integrable section of $\Gr_m^1 ({}^\C \Tgt\mcM,g)_\mcK$ and $\Gr_m^1 ({}^\C \Tgt\mcM,g)_\mcL$.} In particular, we can now apply Proposition \ref{prop-multi-special-Rob} to conclude that
\begin{align}\label{eq-Schwarzschild-alg-sp}
 {}^\mfC _\mcK \Pi_1^1 ( C ) & = 0 \, , & {}^\mfC _\mcL \Pi_1^1 ( C ) & = 0 \, ,
\end{align}
which corroborates already well-known results from the literature: the Schwarzschild metric is of type D(bcd) in the null alignment formalism \cites{Pravda2007b,Ortaggio2013} -- of course, one need not to have recourse to the above argument to work out the algebraic type of the Weyl tensor of the Schwarzschild metric, which follows directly from the fact that the Schwarschild metric is a warped metric conformal to a Lorentzian metric on $\mcM^{(1)} \times S^{n-2}$ where $\mcM^{(1)}$ is a two-dimensional Lorentzian manifold.

What is more, from Proposition \ref{prop-multi-special-Rob}, we discover that \emph{any} local, \emph{not necessarily integrable}, section of $\Gr_m^1 ({}^\C \Tgt\mcM,g)_\mcK$ and $\Gr_m^1 ({}^\C \Tgt\mcM,g)_\mcL$ will satisfy \eqref{eq-alg-special}. Thus the Schwarzschild metric admits infinitely many Robinson structures, and its Weyl tensor is algebraically special to infinitely many (not necessarily integrable) almost Robinson structures.
\end{exa}

In sum, the above example shows
\begin{prop}\label{prop-alg-sp-NInt}
On an Einstein Lorentzian manifold, not every almost Robinson structure with respect to which the Weyl tensor satisfies the algebraically special conditions \eqref{eq-alg-special} (i.e. \eqref{eq-GS-condition}) is integrable and co-integrable.
\end{prop}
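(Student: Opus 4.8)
The plan is to exhibit a single counterexample, and the natural candidate is the higher-dimensional Schwarzschild metric of Example \ref{exa-Schwarzschild}, which is Ricci-flat --- hence Einstein --- and carries the Kerr-Schild form \eqref{eq-Kerr-Schild} with a distinguished null line distribution $\mcK$. First I would recall that, in the form \eqref{eq-Schwarzschild}, the screenbundle $\mcK^\perp/\mcK$ is naturally identified with the tangent bundle of the round $(n-2)$-sphere $S^{n-2}$, and that any generator $k^a$ of $\mcK$ commutes with every screen direction; hence an almost Robinson structure incident on $\mcK$, i.e.\ an element of $\Gr_m^1({}^\C \Tgt\mcM,g)_\mcK$, is exactly a metric-compatible almost Hermitian structure on $S^{n-2}$ in even dimensions, or a metric-compatible almost CR structure in odd dimensions, and its integrability amounts to that of this almost Hermitian or CR structure.

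The essential input is then the Kerr theorem \cites{Cox1976,Penrose1986,Hughston1988} together with its odd-dimensional counterpart \cite{Doubrov}: a suitable choice of holomorphic data produces, through any prescribed totally null $(m-1)$-plane at a point $p$, a local analytic \emph{integrable} almost Hermitian, respectively CR, structure on $S^{n-2}$ defined near $p$. Thus at every point $p$ and for every $\mcN_p \in \Gr_m^1({}^\C \Tgt_p\mcM,g)_\mcK$ there is an integrable almost Robinson structure tangent to $\mcN_p$. Since the algebraically special condition \eqref{eq-alg-special} at the point $p$ depends only on the totally null $m$-plane $\mcN_p$ and its orthogonal complement there, applying Proposition \ref{prop-KS-Robinson} to the integrable structure through $\mcN_p$ shows that the Weyl tensor of the Schwarzschild metric is algebraically special with respect to \emph{every} element of $\Gr_m^1({}^\C \Tgt_p\mcM,g)_\mcK$ at $p$, and hence with respect to every almost Robinson structure incident on $\mcK$, \emph{whether or not it is integrable}; by Proposition \ref{prop-multi-special-Rob} this is equivalent to ${}^\mfC _\mcK \Pi_1^1(C) = 0$.

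It then remains to observe that non-integrable almost Robinson structures incident on $\mcK$ genuinely exist: in the complex coordinates $\{z^{\mbf{A}},\bar{z}_{\mbf{A}}\}$ of Example \ref{exa-Schwarzschild}, a distribution spanned by $k^a$ and vector fields $\bm{\zeta}^{\mbf{A}} = \parderv{}{\bar{z}_{\mbf{A}}} + \phi^{\mbf{AB}} \parderv{}{z^{\mbf{B}}}$ is integrable precisely when $\bm{\zeta}^{\mbf{C}} \phi^{\mbf{AB}} = 0$, and a generic choice of the skew functions $\phi^{\mbf{AB}} = \phi^{\mbf{[AB]}}$ violates this. Any such choice produces, on the Einstein manifold $(\mcM,g)$, an almost Robinson structure along which the Weyl tensor satisfies \eqref{eq-alg-special}, equivalently \eqref{eq-GS-condition}, yet which is not integrable, and this proves the proposition. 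The main point that has to be established --- everything else following routinely from Propositions \ref{prop-KS-Robinson} and \ref{prop-multi-special-Rob} --- is the Kerr-theorem abundance statement: that an \emph{arbitrary} totally null plane incident on $\mcK$ at a point is tangent to some local integrable almost Robinson structure. (The construction has no content in dimension four, where there is no freedom in the $\phi^{\mbf{AB}}$; this is consistent with four-dimensional Goldberg-Sachs, which forces an algebraically special Robinson structure on an Einstein spacetime to be integrable.)
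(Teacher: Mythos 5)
Your proposal is correct and follows essentially the same route as the paper: the paper's proof is precisely Example \ref{exa-Schwarzschild}, i.e.\ the Schwarzschild metric, where the Kerr theorem supplies an integrable almost Robinson structure through every element of $\Gr_m^1({}^\C \Tgt_p\mcM,g)_\mcK$, Proposition \ref{prop-KS-Robinson} makes each of these algebraically special, and Proposition \ref{prop-multi-special-Rob} then transfers the pointwise condition \eqref{eq-alg-special} to every (in particular every non-integrable) almost Robinson structure incident on $\mcK$. Your observation that the condition is pointwise-algebraic in $\mcN_p$, and your closing remark on the genericity of non-integrable choices of $\phi^{\mbf{AB}}$ and the four-dimensional exception, match the paper's reasoning.
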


\begin{rem}
An analogous statement in the null alignment formalism is that not every WAND is geodetic \cite{Godazgar2009}. In this case, any of the almost Robinson structures associated to a non-geodetic WAND is necessarily non-integrable.

Further examples of non-integrable totally null distributions on complex Riemannian or split signature manifolds, which satisfy \eqref{eq-alg-special}, were already pointed out in \cites{Taghavi-Chabert2012a,Taghavi-Chabert2013} and references therein.
\end{rem}

\paragraph{Robinson-Trautman}
The Schwarzschild metric belongs to the class of Robinson-Trautman metrics \cite{Robinson1961/1962} that have been generalised to higher dimensions in \cites{Podolsky2006a,Ortaggio2008}. These are spacetimes admiting an expanding, twistfree and shearfree congruence $\mcK$ of null geodesics. In other words, the orthogonal complement $\mcK^\perp$ of $\mcK$ is integrable, i.e. $[ \Gamma (\mcK^\perp) , \Gamma (\mcK^\perp) ] \subset \Gamma (\mcK^\perp)$, so that $\mcM$ is foliated by null hypersurfaces, and the conformal Riemannian metric on the screenspace $\mcK^\perp/\mcK$ is preserved along the flow of $\mcK$.

The local description of these metrics is given in \cite{Podolsky2006a}. There exist local coordinates $\{u,r,x^i \}$, where $u$ parametrises the field of hyperplanes tangent to $\mcK^\perp$, and $r$ is an affine parameter along a null geodesic of $\mcK$, so that $\bm{k} := \parderv{}{r}$ spans $\mcK$, and $\{ x^i \}$ are coordinates on the $(n-2)$-dimensional submanifolds $\mcH_u$ tangent to $\mcK^\perp$ and complementary to $\mcK$. Each of $\mcH_u$ is equipped with a metric $h_{ij} = h_{ij} (u;x)$. For simplicity, we assume that $(\mcM,g)$ is Ricci-flat. In this case, the Robinson-Trautman metric takes the local form
\begin{align}\label{eq-Robinson-Trautman}
\bm{g} & = 2 \, \dd u \odot \dd r + H \dd u \otimes \dd u + 2 \, r^2 h_{ij} \, \dd x^i \odot \dd x^j \, ,
\end{align}
where $H$ is a smooth function on $\mcM$, and $h_{ij} = h_{ij} (u;x)$ is a family of Einstein metrics on $\mcH_u$. Denote by $\mcL$ the null line distribution dual to $\mcK$, spanned by $\bm{\ell} := \parderv{}{u} - H \parderv{}{r}$. It turns out \cite{Podolsky2006a} that when $n>4$, the metric \eqref{eq-Robinson-Trautman} can only be of Petrov type D, with Weyl aligned null directions $\mcK$ and $\mcL$, or conformally flat. More specifically, in our notation, the Weyl curvature is given by
\begin{align}
\begin{aligned}\label{eq-curvature-RT}
 {}^\mfC _{\mcK} \Pi_0^0 (C) = {}^\mfC _{\mcL} \Pi_0^0 (C) & = \mu (u)  \, , \\
{}^\mfC _{\mcK} \Pi_0^1 (C) = {}^\mfC _{\mcK} \Pi_0^2 (C) = {}^\mfC _{\mcL} \Pi_0^1 (C) = {}^\mfC _{\mcL} \Pi_0^2 (C) & = 0 \, , \\
 {}^\mfC _{\mcK} \Pi_0^3 (C) = {}^\mfC _{\mcL} \Pi_0^3 (C) & =
 \begin{cases}
0 \, , & n = 5 \, ,  \\
  r^2 C_{ijk\ell} (u;x) \, , & n > 5 \, ,
\end{cases}
\end{aligned}
\end{align}
where $\mu (u)$ is some function, and for each $u$, $C_{ijk\ell} (u;x)$ is the Weyl tensor of the Einstein metric $h_{ij} (u;x)$ on $\mcH_u$. For the Schwarzschild metric, conditions \eqref{eq-curvature-RT} reduce further to \eqref{eq-Schwarzschild-alg-sp} since $C_{ijk\ell} (u;x) = 0$ for any $u$, and since this condition is always satisfied when $n=5$, the vacuum Robinson-Trautman metric coincides with the Schwarzschild metric then.

From conditions \eqref{eq-curvature-RT}, it is clear that any further properties of the conformal curvature of $(\mcM,g)$ will depend exclusively on the family of screenspace metrics $h_{ij} (u;x)$, and thus on any additional structure compatible with $h_{ij} (u;x)$. A natural candidate for such a structure is evidently an almost Hermitian structure.\footnote{It is noteworthy to mention \cite{Ortaggio2008} that the screenspace metrics of the non-vacuum Robinson-Trautman solutions must be \emph{almost K\"{a}hler}.} In even dimensions, each $C_{ijk\ell} (u;x)$ can be characterised in terms of the classification given in \cites{Tricerri1981,Falcitelli1994}. In odd dimensions, such a classification can be derived with no difficulty from the article \cite{Taghavi-Chabert2013}.

The case when $(\mcM,g)$ is a six-dimensional Robinson-Trautman spacetime is of particular interest: each manifold $\mcH_u$ on $\mcM$ is a four-dimensional Riemannian manifold, which means that one can apply the Petrov-Penrose classification of the Weyl tensor of $h_{ij} (u;x)$. The Weyl tensor associated to the screenspace Riemannian metric $h_{ij} (u;x)$ splits into a self-dual part $\Psi^+_{ijk\ell}$ and anti-self-dual part $\Psi^-_{ijk\ell}$, each of a particular Petrov type: $\Psi^+_{ijk\ell}$ is said to be of Petrov type D or algebraically special if there is an almost self-dual Hermitian structure $J \ind{_i^j}$ with respect to which $\Psi^+_{ijk\ell} X^i Y^j Z^k =0$ for any vector fields $X^i, Y^j, Z^k$ in the $\ii$-eigenbundle of $J \ind{_i^j}$, and similarly for $\Psi^-_{ijk\ell}$. Algebraically general and vanishing (anti)-self-dual Weyl tensors are called Petrov type G and O respectively -- see \cite{Gover2010} and references therein for details.

We can adopt our orientation convention to be such that
\begin{align*}
 {}^\mfC _{\mcK} \Pi_0^{3,\pm} (C) = {}^\mfC _{\mcL} \Pi_0^{3,\pm} (C) & =  r^2 \Psi^{\pm}_{ijk\ell} (u;x) \, .
\end{align*}
Now, suppose that $\mcN$ is an almost self-dual Robinson structure incident on $\mcK$. The following observations are easy to make:
\begin{itemize}
\item if ${}^\mfC _\mcN \Pi_0^{3,i}(C)=0$ for $i=1,3$ then $\Psi^+$ is of Petrov type D;
\item if ${}^\mfC _\mcN \Pi_0^{3,i}(C)=0$ for $i=0,1,3$ then $\Psi^+$ is of Petrov type O;
\item if ${}^\mfC _\mcN \Pi_0^{3,5}(C)=0$ then $\Psi^-$ is of Petrov type O.
\end{itemize}
For the converse, we note that generically $\Psi^+$ is of type G, which singles out a field self-dual $2$-plane, i.e. a principal almost Hermitian structure. This almost Hermitian structure can naturally be paired with any of the null line distributions $\mcK$ and $\mcL$. In particular, there always exists a pair of almost self-dual Robinson structures on $\mcM$, and the Weyl tensor must satisfy ${}^\mfC _\mcN \Pi_0^{3,3}(C)=0$. If $\Psi^+$ is of type D, respectively of type O, there always exists an almost Robinson structure $\mcN$ for which ${}^\mfC _\mcN \Pi_0^{3,i}(C)=0$ for $i=1,3$, respectively, ${}^\mfC _\mcN \Pi_0^{3,i}(C)=0$ for $i=0,1,3$. Finally, if $\Psi^-$ is of Petrov type O, we can find an almost Robinson structure such that ${}^\mfC _\mcN \Pi_0^{3,5}(C)=0$.

Since $h_{ij} (u;x)$ is Einstein, we can go a step further by applying the four-dimensional Riemannian Goldberg-Sachs theorem \cites{Przanowski1983,Nurowski1993,Apostolov1997,Gover2010}. If $h_{ij} (u;x)$ is algebraically special, then $\mcH_u$ admits a Hermitian structure, which lifts to a Robinson structure on $\mcM$. Conversely, suppose that $\mcN$ is a (locally) integrable almost Robinson structure on $\mcM$ incident on $\mcK$. Then $\mcN$ restricts to a Hermitian structure on each $\mcH_u$. In particular, by the Goldberg-Sachs theorem, the Weyl tensor is algebraically degenerate.

Summing up these observations, we have
\begin{prop}\label{prop-Robinson-Trautman-6}
Let $(\mcM,g)$ be a Ricci-flat six-dimensional Robinson-Trautman spacetime, i.e. a Ricci-flat Lorentzian manifold equipped with a null line distribution $\mcK$ that generates a twistfree, shearfree and expanding congruence of null geodesics. Let $\mcN$ be an almost Robinson structure incident on $\mcK$, i.e. $\mcN$ is a (local) section of $\Gr_m^1 ({}^\C \Tgt\mcM,g)_\mcK$. Then locally the Weyl tensor is algebraically special with respect to $\mcN$ if and only if $\mcN$ is integrable.
\end{prop}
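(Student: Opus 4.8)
The plan is to reduce the six-dimensional Robinson--Trautman situation to the four-dimensional Riemannian Goldberg--Sachs theorem applied to the family of screenspace Einstein metrics $h_{\mbf{ij}}(u;x)$, exploiting the explicit form of the Weyl tensor given in \eqref{eq-curvature-RT} and its self-dual/anti-self-dual refinement
\begin{align*}
{}^\mfC _{\mcK} \Pi_0^{3,\pm} (C) = {}^\mfC _{\mcL} \Pi_0^{3,\pm} (C) & =  r^2 \Psi^{\pm}_{\mbf{ijk\ell}} (u;x) \, .
\end{align*}
First I would observe that because $(\mcM,g)$ is Ricci-flat, all curvature components of $C$ with respect to $\mcK$ or $\mcL$ that could potentially obstruct the algebraically special condition \eqref{eq-alg-special} --- those listed in \eqref{eq-GS-condition} --- either vanish identically by \eqref{eq-curvature-RT} (the $\mfC_{\pm1}$ and $\mfC_0^{1,i}$, $\mfC_0^{2,i}$ components) or are determined solely by the four-dimensional screenspace Weyl tensor $C_{\mbf{ijk\ell}}(u;x)$ sitting inside ${}^\mfC _{\mcK}\Pi_0^3(C)$. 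Thus, by Proposition \ref{prop-alg-sp}, whether $C$ is algebraically special along an almost Robinson structure $\mcN$ incident on $\mcK$ is governed entirely by the vanishing of the appropriate $\simalg(m-1,\C)$-components of $C_{\mbf{ijk\ell}}(u;x)$ --- more precisely, since $\mcN$ incident on $\mcK$ restricts to an almost self-dual Hermitian structure $\bm{J}$ on each leaf $\mcH_u$, by the vanishing of $\Psi^+(X,Y,Z,\cdot)$ for $X,Y,Z$ in the $\ii$-eigenbundle of $\bm{J}$, i.e. $\Psi^+$ is of Petrov type D (or more degenerate) relative to $\bm{J}$.

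Next I would establish the two directions. For the ``integrable $\Rightarrow$ algebraically special'' direction: if $\mcN$ is locally integrable on $\mcM$, then since $\bm{k}=\partial/\partial r$ commutes with vector fields tangent to $\mcH_u$ (as in the Schwarzschild discussion of Example \ref{exa-Schwarzschild}) and $\mcN = {}^\C\mcK \oplus \Tgt^{(1,0)}$, the restriction of $\mcN$ to each $\mcH_u$ is an \emph{integrable} almost Hermitian structure, i.e. a Hermitian structure, on the four-dimensional Einstein manifold $(\mcH_u, h_{\mbf{ij}}(u;x))$. The four-dimensional Riemannian Goldberg--Sachs theorem \cites{Przanowski1983,Nurowski1993,Apostolov1997,Gover2010} then forces the corresponding self-dual Weyl tensor $\Psi^+$ to be algebraically special, hence ${}^\mfC _\mcN \Pi_0^{3,i}(C)=0$ for $i=1,3$; combining with the already-vanishing components from \eqref{eq-curvature-RT} and Proposition \ref{prop-alg-sp} yields \eqref{eq-alg-special}. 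For the converse ``algebraically special $\Rightarrow$ integrable'': if $C$ is algebraically special along $\mcN$, then by the reduction above $\Psi^+$ is of Petrov type D relative to $\bm{J}$ on each $\mcH_u$; the Riemannian Goldberg--Sachs theorem applied to the Einstein metric $h_{\mbf{ij}}(u;x)$ gives that $\bm{J}$ is (for each fixed $u$) an integrable Hermitian structure on $\mcH_u$, i.e. its $\ii$-eigenbundle is in involution within $\Tgt\mcH_u$. One then has to promote this leafwise integrability, together with the integrability of the foliation by the $\mcH_u$ (guaranteed by $\mcK^\perp$ being integrable, part of the Robinson--Trautman hypothesis) and the fact that $\bm{k}$ bracket-commutes into $\mcN$, to the full statement $[\Gamma(\mcN),\Gamma(\mcN)]\subset\Gamma(\mcN)$ and $[\Gamma(\mcN^\perp),\Gamma(\mcN^\perp)]\subset\Gamma(\mcN^\perp)$ of Definition \ref{def-Robinson-manifold}.

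The main obstacle I anticipate is precisely this last bookkeeping step in the converse: the Riemannian Goldberg--Sachs theorem delivers integrability of $\bm{J}$ on each slice $\mcH_u$ separately, but an almost Robinson structure on $\mcM$ also involves the $r$-direction $\bm{k}$ and, through $\mcN^\perp$, the transverse coordinate $u$, so one must check that no $\partial/\partial u$--derivatives of the structure functions $\phi^{\mbf{AB}}(u,r,x)$ spoil the involutivity --- i.e. that the Frobenius conditions in the ``mixed'' directions are automatically satisfied given the form \eqref{eq-Robinson-Trautman} of the metric and the fact that $\bm{k}$ is geodesic, twistfree and shearfree. I expect this to follow from writing $\mcN$ in an adapted null frame and checking that the only nontrivial Nijenhuis-type components are exactly the leafwise ones controlled by Goldberg--Sachs, but it requires care; the rest of the argument is essentially an application of \eqref{eq-curvature-RT}, Proposition \ref{prop-alg-sp}, and the classical four-dimensional result.
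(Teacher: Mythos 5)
Your proposal follows essentially the same route as the paper: reduce via \eqref{eq-curvature-RT} to the Petrov type of the screenspace self-dual Weyl tensor $\Psi^+$ relative to the induced almost Hermitian structure on each slice $\mcH_u$, and then invoke the four-dimensional Riemannian Goldberg--Sachs theorem for the Einstein metrics $h_{\mbf{ij}}(u;x)$ in both directions. The bookkeeping step you flag in the converse --- promoting leafwise integrability of $\bm{J}$ to involutivity of $\mcN$ and $\mcN^\perp$ on all of $\mcM$ --- is indeed the only delicate point, and the paper itself passes over it with the single phrase that the Hermitian structure ``lifts to a Robinson structure on $\mcM$''.
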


In a similar vein, one could extend this analysis to Kundt spacetimes and warped product of manifolds, which share some of the properties of the Robinson-Trautman metrics. There is however too little space for this purpose here.

\paragraph{Higher-dimensional Taub-NUT-(A)dS metrics}
It was pointed out in \cite{Ortaggio2013a} that in six dimensions these metrics admit local Robinson structures. Here, we extend this to higher dimensions and examine the algebraic properties of their Weyl tensor.

\begin{exa}\label{exa-Taub-NUT-(A)dS}
These metrics generalise the four-dimensional Taub-NUT-(A)dS of \cites{Newman1963,Taub2004a,Hawking1999a}, and are constructed as metrics on a spacetime whose boundary at infinity is a $\U(1)$-fibration over a $(2m-2)$-dimensional K\"{a}hler-Einstein manifold $(\mcB,H,J)$. In fact, they admit an odd-dimensional version by constructing a $\U(1)$-fibration over the first factor of the direct product $\mcB \times \mcY$ where $\mcY$ is an odd-dimensional Riemannian manifold. Throughout this example, we suspend the Einstein summation convention. We shall also specialise to the case where $\mcY$ is simply $\R$. To describe the construction, we introduce a (local) unitary basis $\{ \bm{\mu}^A ,\bar{\bm{\mu}}_A \}_{A=2,\ldots,m}$ on $\mcB$ adapted to the K\"{a}hler structure, i.e.
\begin{align*}
\dd \bm{\mu}^A & = \sum_{B=2}^m \bm{\mu}^B \wedge \bm{\alpha} \ind{_B^A} \, ,
\end{align*}
where $\bm{\alpha} \ind{_B^A}$ is the connection $1$-form for the K\"{a}hler metric on $\mcB$. In odd dimensions, we take $\{\bm{\mu}^0\}$ to span $\Tgt^* \mcY$ with $\dd \bm{\mu}^0 = 0$. The Taub-NUT-(A)dS metric of \cites{Awad2002,Mann2004,Awad2006} is then given by
\begin{align}\label{eq-TaubNUT}
\bm{g} & = F^{-1} \dd r^2 - F \, \left( \dd t + \bm{A} \right)^2 + 2 \, \sum_{A=2}^m (r^2 + N_A^2 ) \bm{\mu}^A \odot \bar{\bm{\mu}}_A \,  + \, \epsilon r^2 \bm{\mu}^0 \otimes \bm{\mu}^0 \, ,
\end{align}
where $t$ is the coordinate on the $\U(1)$-fiber, the constants $N_A$ are the NUT parameters, $F=F(r)$ is a smooth function of $r$. The $1$-form $\bm{A}$ can be thought of as a K\"{a}hler potential in the sense that it satisfies
\begin{align*}
\dd \bm{A} & = 2 \, \ii \sum_{C=2}^m N_C^2 \bm{\mu}^C \wedge \bar{\bm{\mu}}_C \, .
\end{align*}
In what follows, we shall make either of the following two assumptions:
\begin{itemize}
\item either we take $\mcB = \mcB^1 \times \ldots \times \mcB^m$ where each $\mcB^i$ is a $2$-dimensional Riemannian manifold of constant curvature, i.e. spheres, tori, or hyperboloids;
\item or, $\mcB$ is a more general K\"{a}hler manifold, but we impose $N_A = N_B$ for all $A,B$.
\end{itemize}
Define the $1$-forms
\begin{align*}
\bm{\theta}^1 & = \bm{\kappa} := 2^{-\frac{1}{2}} \left( F^{-\frac{1}{2}} \dd r + F^{\frac{1}{2}} \, \left( \dd t + \bm{A} \right) \right) \, ,
& \tilde{\bm{\theta}}_1 & = \bm{\lambda} := 2^{-\frac{1}{2}} \left( F^{-\frac{1}{2}} \dd r - F^{\frac{1}{2}} \, \left( \dd t + \bm{A} \right) \right) \, , \\
\bm{\theta}^A & := (r^2 + N_A^2 )^{\frac{1}{2}} \bm{\mu}^A \, , & \bm{\theta}^0 & := r \bm{\mu}^0 \, .
\end{align*}
Then the connection $1$-form for the metric \eqref{eq-TaubNUT} takes the form
\begin{align*}
\bm{\Gamma} \ind{_1^1} & = 2^{-\frac{3}{2}} F^{-\frac{1}{2}} F' \left( \bm{\kappa} - \bm{\lambda} \right) \, , \\
\bm{\Gamma} \ind{_A^1} & = 2^{-\frac{1}{2}} \frac{- r + \ii N_A} {r^2 + N_A^2} F^{\frac{1}{2}} \bar{\bm{\theta}}_A \, , & \bm{\Gamma} \ind{_A_1} & = - 2^{-\frac{1}{2}} \frac{r + \ii N_A}{r^2 + N_A^2} F^{\frac{1}{2}} \bar{\bm{\theta}}_A \, , & \bm{\Gamma} \ind{_1^0} & = 2^{-\frac{1}{2}} \frac{F^{\frac{1}{2}}}{r} \bm{\theta}^0 = \bm{\Gamma} \ind{^{10}} \\
\bm{\Gamma} \ind{_A^B} & = \bm{\alpha} \ind{_A^B} - 2^{-\frac{1}{2}} \ii \frac{N_A}{r^2+N_A^2} F^{\frac{1}{2}} \left( \bm{\kappa} - \bm{\lambda} \right) \delta \ind*{_A^B} \, ,
\end{align*}
from which we immediately see that $(\mcM,g)$ is endowed with $2^m$ local co-integrable Robinson structures annihilated by the set of $(m+\epsilon)$ $1$-forms
\begin{align*}
& \{ \bm{\kappa} \, , \bm{\theta}^A \, , \bar{\bm{\theta}}_B \, , \epsilon \bm{\theta}^0 \}_{A \neq B} \, , &
& \{ \bm{\lambda} \, , \bm{\theta}^A \, , \bar{\bm{\theta}}_B \, , \epsilon \bm{\theta}^0 \}_{A \neq B} \, ,
\end{align*}
Computing the curvature $2$-form yields
\begin{align*}
\bm{R} \ind{_1^1} & = - 2^{-1} F'' \bm{\kappa} \wedge \bm{\lambda} + \ii \sum_C \frac{N_C}{ ( F' (r^2+N_C^2) - 2 r F)}{(r^2+N_C^2)^2} \bm{\theta}^C \wedge \bar{\bm{\theta}}_C \, , \\
\bm{R} \ind{_A^1} & = \frac{- \left( F' r (r^2+N_A^2) + 2 F N_A^2 \right)  +  \ii N_A \left( F' (r^2+N_A^2) - 2 F r \right)}{2(r^2+N_A^2)^2} \bm{\kappa} \wedge \bar{\bm{\theta}}_A \, , \\
\bm{R} \ind{_1^0} & = \frac{F'}{2r} \bm{\lambda} \wedge \bm{\theta}^0 \, , \\
\bm{R} \ind{_A^B} & = \underline{\bm{R}} \ind{_A^B} +  \left( \ii \frac{N_A ( F' (r^2+N_A^2) - 2 r F)}{(r^2+N_A^2)^2} \bm{\kappa} \wedge \bm{\lambda} + 2 \frac{N_A F}{r^2+N_A^2} \sum_C \frac{N_C}{r^2+N_C^2} \bm{\theta}^C \wedge \bar{\bm{\theta}}_C \right) \delta \ind*{_A^B} \\
& \qquad \qquad \qquad \qquad \qquad \qquad + \frac{F (N_A N_B - r^2)}{(r^2+N_A^2)(r^2+N_B^2)} \bm{\theta}^B \wedge \bar{\bm{\theta}}_A \, , \\
\bm{R} \ind{_A^0} & = \frac{F}{r^2 + N_A^2} \bar{\bm{\theta}}_A \wedge \bm{\theta}^0 \, .
\end{align*}
Einstein metrics can be obtained for suitable choice of function $F=F(r)$ as given in \cites{Mann2004,Awad2002,Awad2006}, in which case it is immediate that the Weyl tensor satisfies \eqref{eq-alg-special}.
\end{exa}

\paragraph{Parallel Robinson structures}
The holonomy of the Levi-Civita of a spacetime admitting a parallel null line distribution is contained in $\Sim(n-2)$. When the holonomy of the Levi-Civita connection is reduced to a subgroup of $\Sim(m-1,\C)$, the manifold admits a parallel Robinson structure, i.e.
\begin{align}\label{eq-par-Robinson}
(\nabla_a X^b ) Y_b & = 0 \, , & & \mbox{for all $X^a \in \Gamma (\mcN), Y^a \in \Gamma (\mcN^\perp)$}
\end{align}
Equivalently, the manifold admits a recurrent pure spinor of real index $1$.\footnote{As explained in appendix \ref{sec-spinors}, a pure spinor of real index $1$ is equivalent to the existence of $\simalg(m-1,\C)$-invariant $3$-form $\varrho_{abc}$ and $2$-form $\mu_{ab}$. Recurrence of the spinor is equivalent to recurrence of these forms.} This is one of the cases considered in \cite{Galaev2013}.

We now examine the integrability condition for the existence of such a parallel Robinson structure.

\begin{prop}\label{prop-int-parallel_Rob}
Let $(\mcM,g)$ be a Lorentzian manifold equipped with a parallel Robinson structure $\mcN$ with associated null line distribution $\mcK$. Then the Weyl tensor satisfies conditions \eqref{eq-GS-condition} together with the additional conditions
\begin{subequations}
\begin{align}
{}^\mfC _\mcK \Pi_0^1 (C) & = 0 \, , \label{eq-trivial} \\
{}^\mfC _\mcN \Pi_0^{3,4} (C) & = 0 \, , \label{eq-easy-cond0} \\
{}^\mfC _\mcN \Pi_1^{1,2} (C) & = 0 \, . \label{eq-easy-cond1}
\end{align}
\end{subequations}
Further, the tracefree Ricci tensor satisfies
\begin{align}
{}^\mfF _\mcN \Pi_0^{1,i} (\Phi) & = 0 \, , & & \mbox{for $i=1,3$.} \label{eq-Rob-par-RicTF}
\end{align}

Finally:
\begin{enumerate}
\item \label{item-cond1} If any two of the following conditions
\begin{align*}
 {}^\mfC _\mcN \Pi^{0,0}_0 (C) & = 0 \, , & {}^\mfC _\mcN \Pi^{3,0}_0 (C) & = 0 \, , & {}^\mfF _\mcN \Pi^{0,0}_0  (\Phi) & = 0 \, , & R = 0 \, ,
\end{align*}
hold, then the remaining two hold too. In addition, in odd dimensions,
\begin{align*}
{}^\mfC _\mcN \Pi^{2,2}_0 (C) & = 0 & \Longleftrightarrow & & {}^\mfC _\mcN \Pi^{3,0}_0 (C) & = 0 & \Longleftrightarrow & & {}^\mfF _\mcN \Pi^{1,2}_0 (\Phi) & = 0 \, .
\end{align*}
\item \label{item-cond2} We have
\begin{align*}
 {}^\mfC _\mcN \Pi^{2,0}_0 (C) & = 0 & \Longleftrightarrow & & {}^\mfC _\mcN \Pi^{3,2}_0 (C) & = 0 & \Longleftrightarrow & & {}^\mfF _\mcN \Pi^{1,0}_0 (\Phi) & = 0 \, ,
\end{align*}
and in addition, in odd dimensions,
\begin{align*}
{}^\mfC _\mcN \Pi^{3,8}_0 (C) & = 0 & \Longleftrightarrow & & {}^\mfF _\mcN \Pi^{2,0}_0 (\Phi) & = 0 \, .
\end{align*}
\item \label{item-cond3}
If any two of the following conditions
\begin{align*}
 {}^\mfC _\mcN \Pi^{0,0}_1 (C) & = 0 \, , & {}^\mfC _\mcN \Pi^{1,0}_1 (C) & = 0 \, , & {}^\mfF _\mcN \Pi^{0,0}_1 (\Phi) & = 0 \, ,
\end{align*}
hold, then the remaining one holds too.
In addition, in odd dimensions,
\begin{align*}
{}^\mfC _\mcN \Pi^{1,0}_1 (C) & = 0 & \Longleftrightarrow & & {}^\mfC _\mcN \Pi^{1,5}_1 (C) & = 0 \, .
\end{align*}
\end{enumerate}
\end{prop}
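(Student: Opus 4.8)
The plan is to read off the curvature conditions from the holonomy reduction forced by a parallel Robinson structure, and then to decompose the resulting purely algebraic constraint on the curvature operator along the $\simalg(m-1,\C)$-module structure of Propositions~\ref{prop-Ricci-rob} and~\ref{prop-Weyl-rob}, in the spirit of the proofs of Propositions~\ref{prop-rec-null-vec} and~\ref{prop-par-null-vec}.

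\emph{Step 1 (holonomy reduction).} First I would note that \eqref{eq-par-Robinson} says exactly that $\mcN$, and hence $\mcN^\perp$, are $\nabla$-parallel distributions; since $\nabla$ preserves $g$ and the real structure on ${}^\C\Tgt\mcM$, the conjugate $\bar{\mcN}$ is parallel too, so $\mcK=\dbl\mcN\cap\bar{\mcN}\dbr$, its orthogonal complement $\mcK^\perp$, and the induced almost Hermitian structure $J\ind{_a^b}$ on $\mcK^\perp/\mcK$ are all parallel. Equivalently $\Hol(\nabla)\subseteq\Sim(m-1,\C)$, one of the cases of \cite{Galaev2013}. Consequently the curvature operator $R\ind{_{ab}^{cd}}$, for each fixed pair $ab$, lies in the subalgebra $\simalg(m-1,\C)\subset\so(n-1,1)\cong\wedge^2\mfV$ — this is the image of the Ricci identity applied to the $\simalg(m-1,\C)$-invariant forms $\varrho_{abc}$ and $\mu_{ab}$ (equivalently, to the recurrent $k^a$ and the parallel $J\ind{_a^b}$) — and by the pair symmetry $R\ind{_{abcd}}=R\ind{_{cdab}}$ the same holds in the first index pair.

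\emph{Step 2 (separating the new input).} Since $\mcK$ is in particular parallel, Proposition~\ref{prop-rec-null-vec} already gives $R\ind{_{ab\lb{c}}^e}k\ind{_{\rb{d}}}k\ind{_e}=0$, the condition \eqref{eq-trivial} (= ${}^\mfC_\mcK\Pi_0^1(C)=0$), ${}^\mfF_\mcK\Pi_{-1}^0(\Phi)=0$ and its conditional relations; refined along the $\cu(m-1)$-splittings of $\mfF_i^0$ and $\mfC_i^j$ these yield part of \eqref{eq-GS-condition} and the ``$\simalg(n-2)$-level'' part of item~\ref{item-cond1}. The new content comes from the finer reduction $\cu(m-1)\subset\co(n-2)=\g_0$: in the grading of \eqref{eq-cu} one has $\simalg(m-1,\C)=\g_1\oplus\mfz_0\oplus[\g_0^\omega]\oplus[\g_0^{(1,1)_\circ}]$, so the $\co(n-2)$-part of the curvature, constrained by Proposition~\ref{prop-rec-null-vec} only to lie in $\g_0$, must in fact lie in $\cu(m-1)$; equivalently its components in $\dbl\g_0^{(2,0)}\dbr$ and, in odd dimensions, in $\epsilon\dbl\g_0^{(1,0)}\dbr$, vanish.

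\emph{Step 3 (translation into modules).} Fixing a splitting \eqref{eq-splitting-tangent} adapted to $\mcN$, I would — exactly as in the proofs of Propositions~\ref{prop-int-cond-Robinson} and~\ref{prop-alg-sp} — contract $R\ind{_{abcd}}$ with frame vectors $k^a\in\mcK$, $\ell^a\in\mcL$, $X^a,Y^a,Z^a\in\Tgt^{(1,0)}$, $\bar{W}^a\in\Tgt^{(0,1)}$, $u^a\in\Tgt^{(0,0)}$ and record which contractions vanish because $R\ind{_{ab}^{cd}}$ and $R\ind{^{ab}_{cd}}$ lie in $\simalg(m-1,\C)$. Matching these against the projection maps of appendix~\ref{ref-spinor-descript-proj} and Tables~\ref{table-C-rob},~\ref{table-F-rob}, and then using the Riemann decomposition \eqref{eq-Riem-decomp}: the contractions that are a multiple of a single Weyl component give \eqref{eq-easy-cond0} and \eqref{eq-easy-cond1}; those that are a multiple of a single tracefree Ricci component give \eqref{eq-Rob-par-RicTF}; those that are a fixed linear combination of one Weyl and one tracefree Ricci component give the two-term equivalences of items~\ref{item-cond2},~\ref{item-cond3} and the odd-dimensional equivalences in item~\ref{item-cond1}; and the single contraction coupling a Weyl component, a tracefree Ricci component and the Ricci scalar — the analogue of the identities ${}^\mfC_\mcK\Pi_0^0(C)\ind{_{ab}}=\tfrac{1}{(n-1)(n-2)}R\,k\ind{_a}k\ind{_b}$ and ${}^\mfC_\mcK\Pi_1^0(C)\ind{_{abc}}=\tfrac{2}{n-2}\,{}^\mfF_\mcK\Pi_1^0(\Phi)\ind{_{abc}}-\tfrac{2}{n(n-1)(n-2)}R\,k\ind{_{[a}}g\ind{_{b]c}}$ from the proof of Proposition~\ref{prop-par-null-vec} — yields the rank-two relation among the four quantities of item~\ref{item-cond1}, so that any two vanishing forces the remaining two.

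\emph{Main obstacle.} The conceptual input in Step~1 is immediate; essentially all of the work is the bookkeeping of Step~3 — deciding, via the branching rule $\uu(m-1)\subset\so(n-2)$ together with the tables of appendix~\ref{sec-rep}, precisely which irreducible $\simalg(m-1,\C)$-module each frame contraction of $R\ind{_{abcd}}$ represents, and tracking the numerical coefficients from \eqref{eq-Riem-decomp} that turn the vanishing of Riemann components into the stated proportionalities and the rank-two relation — all while keeping separate the even- and odd-dimensional cases (the $\epsilon\dbl\g_0^{(1,0)}\dbr$ contribution and the modules $\mfF_0^{1,2}$, $\mfC_0^{2,2}$, $\mfC_0^{3,8}$, $\mfC_1^{1,5}$ appearing only when $\epsilon=1$).
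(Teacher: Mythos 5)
Your proposal is correct and follows essentially the same route as the paper: your Step 1 (curvature operator valued in $\simalg(m-1,\C)$, i.e.\ $R\ind{_{abcd}}X^cY^d=0$ for $X^a\in\Gamma(\mcN)$, $Y^a\in\Gamma(\mcN^\perp)$, plus pair symmetry) is exactly the paper's equation \eqref{eq-Riem-int_cond_rob_par} obtained from the Ricci identity, and your Steps 2--3 reproduce the paper's strategy of invoking Proposition \ref{prop-rec-null-vec} for \eqref{eq-trivial}, isolating the non-isotypic modules $\breve{\mfC}_0^{3,4}$, $\breve{\mfC}_1^{1,2}$ for the outright vanishings, and reading the remaining items off as linear relations (a rank-two relation in item \ref{item-cond1}) among components in isotypic $\cu(m-1)$-modules via \eqref{eq-Riem-decomp}.
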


\begin{proof}
 Taking another covariant derivative of \eqref{eq-par-Robinson} to compute the Riemann curvature tensor, we find
\begin{align}\label{eq-Riem-int_cond_rob_par}
0 & = R \ind{_{abcd}} X^c Y^d = C \ind{_{abcd}} X^c Y^d +\frac{2}{n-2} \left( X^c \Phi \ind{_{c[a}} Y \ind{_{b]}} - Y^c \Phi \ind{_{c[a}} X \ind{_{b]}} \right) + \frac{2}{n(n-1)} R X \ind{_{[a}} Y \ind{_{b]}} \, ,
\end{align}
for all $X^a \in \Gamma (\mcN)$, $Y^a \in \Gamma (\mcN^\perp)$. By the results of \cites{Taghavi-Chabert2012a,Taghavi-Chabert2013}, this implies $C \ind{_{abcd}} X^c Y^d Z^b = 0$ and $\Phi_{ab} X^a Y^b = 0$ for all $X^a, Z^b \in \Gamma (\mcN)$, $Y^a \in \Gamma (\mcN^\perp)$. 
The first of these conditions yields the `complex Goldberg-Sachs' conditions of Proposition \ref{prop-alg-sp}, while the second of these gives the additional algebraic constraints
\eqref{eq-Rob-par-RicTF}. With reference to the proof of Proposition \ref{prop-rec-null-vec}, we reobtain ${}^\mfC _\mcN \Pi_0^{2,i} ( C ) = 0$ for $i=1,3$, as follows from Proposition \ref{prop-alg-sp}.

On the other hand, since the associated real null distribution is also parallel, Proposition \ref{prop-rec-null-vec} applies, i.e. \eqref{eq-trivial} holds.

Introducing a splitting \eqref{eq-splitting-tangent} of the tangent bundle adapted to $\mcN$, we now consider the remaining conditions implied by \eqref{eq-Riem-int_cond_rob_par}, more precisely,
\begin{align}
\begin{aligned}\label{eq-Riem_par-Rob}
R_{abcd} X^a Y^b \bar{V}^c \bar{W}^d = R_{abcd} k^a Y^b \ell^c \bar{V}^d = R_{abcd} X^a Y^b \ell^c \bar{V}^d & = 0 \\
R_{abcd} X^a u^b \bar{V}^c u^d = R_{abcd} X^a u^b \bar{W}^c u^d = R_{abcd} k^a u^b \ell^c u^d  & = 0 \, , & \qquad \mbox{($\epsilon=1$ only),}
\end{aligned}
\end{align}
for all $k^a \in \Gamma(\mcK)$, $X^a,Y^a,V^a,W^a \in \Gamma(\Tgt^{(1,0)})$, $u^a \in\Gamma (\Tgt^{(0,0)})$ and $\ell^a \in \Gamma(\mcL)$.
These conditions impose algebraic relations between irreducible components of the Weyl tensor and the tracefree Ricci tensor, and the Ricci scalar, which lie in isotypic irreducible $\cu(m-1)$-modules. We proceed by dimensions.
Since the modules $\breve{\mfC}_0^{3,4}$ and $\breve{\mfC}_1^{1,2}$ of respective dimensions $\frac{1}{4}m(m-1)^2(m-4)$ and $m(m-1)(m-3)$ are not isotypic to any other, we can safely conclude \eqref{eq-easy-cond0} and \eqref{eq-easy-cond1}. Now, with reference to equations \eqref{eq-Riem_par-Rob} and appendix \ref{sec-spinor-rep}, we prove the additional conditions \ref{item-cond1}, \ref{item-cond2} and \ref{item-cond3} by noting:
\begin{enumerate}
\item first,
\begin{align*}
0 & = 6 \, \Psi^{0,0}_0 + \frac{n-4}{n-2} \Phi^{0,0}_0 + \frac{n-2}{n(n-1)} R \, , &
0 & = 6 \frac{n-2}{n-3} \Psi^{0,0}_0 - \frac{m}{2(m-\epsilon)-1} \Psi^{3,0}_0 + \frac{n}{n-2} \Phi^{0,0}_0 \, ,
\end{align*}
and additionally, in odd dimensions,
\begin{align*}
\frac{2(2m-1)}{(2m-2)(n-4)} \Phi^{1,2}_0 & = - \frac{2(2m-1)(n-2)}{(n-4)(2m-2)} \Psi^{2,2}_0 = \frac{2m}{2m-3} \Psi^{3,0}_0 \, ,
\end{align*}
where $\Psi_0^{0,0}$, $\Psi_0^{2,2}$, $\Psi_0^{3,0}$, $\Phi_0^{0,0}$ and $\Phi_0^{1,2}$ are the components for the $1$-dimensional modules $\breve{\mfC}_0^{0,0}$, $\breve{\mfC}_0^{2,2}$, $\breve{\mfC}_0^{3,0}$, $\breve{\mfF}_0^{0,0}$, $\breve{\mfF}_0^{1,2}$ respectively;
\item further,
\begin{align*}
0 & = (\Psi^{2,0}_0) \ind{_B^A} + \frac{2}{n-2} (\Phi^{1,0}_0) \ind{_B^A} \, , &
0 & = - \left(\frac{n-2}{n-4} \right) \left(\frac{2m-3}{2m-4} \right) (\Psi^{2,0}_0) \ind{_B^A} + \frac{2m-10}{2m-4} (\Psi^{3,2}_0) \ind{_B^A} \, , 
\end{align*}
and additionally, in odd dimensions,
\begin{align*}
0 & = \frac{2}{n-2} (\Psi^{2,0}_0) \ind{_B^A} + (\Psi^{3,8}_0) \ind{_B^A} \, ,
\end{align*}
where $(\Psi_0^{3,2}) \ind{_B^A}$, $(\Psi_0^{3,8}) \ind{_B^A}$, $(\Psi_0^{2,0}) \ind{_B^A}$ and $(\Phi_0^{1,0}) \ind{_B^A}$ are the components for $m(m-2)$-dimensional modules $\breve{\mfC}_0^{3,2}$, $\breve{\mfC}_0^{3,8}$, $\breve{\mfC}_0^{2,0}$ and $\breve{\mfF}_0^{1,0}$ respectively;
\item finally,
\begin{align*}
0 & = \frac{1}{n-3} (\Psi^{0,0}_1)^A + \frac{2m-6}{2(2m-4)} (\Psi^{1,0}_1)^A - \frac{1}{n-2} (\Phi^{0,0}_0)^A \, ,
\end{align*}
and additionally, in odd dimensions
\begin{align*}
0 & = \frac{2m-6}{2(2m-4)} (\Psi^{1,0}_1)^A - (\Psi^{1,5}_1)^A \, ,
\end{align*}
where $(\Psi_1^{0,0})^A$, $(\Psi_1^{1,0})^A$, $(\Psi_1^{1,5})^A$ and $(\Phi_1^{0,0})^A$ are the components for $(2m-2)$-dimensional modules $\breve{\mfC}_1^{0,0}$, $\breve{\mfC}_1^{1,0}$, $\breve{\mfC}_1^{1,5}$ and $\breve{\mfF}_1^{0,0}$ respectively.
\end{enumerate}
This ends the proof of Proposition \ref{prop-int-parallel_Rob}.
\end{proof}

\paragraph{Parallel pure spinor fields of real index $1$}
From its defining properties, a pure spinor field of real index $1$ gives rise to an almost Robinson structure $\mcN$. When such a spinor is parallel, one obtains a counterpart of Proposition \ref{prop-par-null-vec} in the context of almost Robinson geometry. For an account of metrics admitting a parallel pure spinor, see for instance \cites{Bryant2000,Leistner2002}.
\begin{prop}\label{prop-parallel-pure-spinor}
Let $(\mcM,g)$ be a Lorentzian manifold admitting a parallel pure spinor field $\bm{\xi}$ of real index $1$. Let $\mcN$ be the almost Robinson structure annihilating $\bm{\xi}$ with associated null line distribution $\mcK$. Then
\begin{subequations}
\begin{align}
{}^\mfC _\mcK \Pi_1^0 (C) & = 0 \, , & & \mbox{i.e. \quad $C \ind{_{abcd}} k \ind{^d} = 0$} \, , \label{eq-par-spin-sim} \\
{}^\mfC _\mcN \Pi_0^i (C) & = 0 \, , & & \mbox{for $i=0,1,2$,} \label{eq-par-spin-rob} \\
{}^\mfC _\mcN \Pi_0^{3,i} (C) & = 0 \, , & & \mbox{for all $i\neq5$,} \label{eq-par-spin-rob2} \\
{}^\mfC _\mcN \Pi_1^{1,i} (C) & = 0 \, , & & \mbox{for all $i\neq3,7,8$,} \label{eq-par-spin-rob3} \\
{}^\mfF _\mcK \Pi_1^0 (\Phi) & = 0 \, , & & \mbox{i.e. \quad $\Phi \ind{_{a[b}} k \ind{_{c]}} = 0$} \, , \label{eq-par-spin-sim2} \\
 R & = 0 \, . \label{eq-par-spin-sim3} 
\end{align}
\end{subequations}
\end{prop}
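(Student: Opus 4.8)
The plan is to exploit the fact that a parallel pure spinor of real index $1$ is, in particular, a special case of a parallel Robinson structure, so the results of Proposition \ref{prop-int-parallel_Rob} already furnish the `Goldberg-Sachs' conditions \eqref{eq-GS-condition} together with \eqref{eq-trivial}, \eqref{eq-easy-cond0}, \eqref{eq-easy-cond1} and \eqref{eq-Rob-par-RicTF}. What must be extracted in addition is the much stronger statement that the spinor, not merely the associated forms $\varrho_{abc}$ and $\mu_{ab}$, is parallel. First I would use the standard spinorial curvature identity: for a parallel spinor $\bm{\xi}$ one has $R_{abcd}\gamma^{cd}\bm{\xi}=0$, where $\gamma^{cd}$ denotes Clifford multiplication by the bivector. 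Decomposing $R_{abcd}$ via \eqref{eq-Riem-decomp} into Weyl, tracefree Ricci and scalar parts, and using the purity of $\bm{\xi}$ (so that $\gamma^{cd}\bm{\xi}$ vanishes whenever both indices are contracted into $\mcN$, and more generally the Clifford action is governed by the filtration of $\wedge^2\mfV$ acting on the spinor module), one converts $R_{abcd}\gamma^{cd}\bm{\xi}=0$ into a family of tensorial equations on $\mcM$.

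The next step is bookkeeping: translate $R_{abcd}\gamma^{cd}\bm{\xi}=0$ into conditions on the $\simalg(m-1,\C)$-components of $C$, $\Phi$ and $R$ using the dictionary of Appendix \ref{sec-spinors} and Appendix \ref{sec-spinor-descript}. Concretely, I would contract the identity with a dual spinor $\bm{\eta}$ from each of the irreducible spinor summands appearing in the decomposition of the spinor module under $\cu(m-1)$ (these are the usual $\wedge^k\mfV_0^{(0,1)}$-type pieces, twisted by powers of the canonical line). Each such contraction produces, via \eqref{eq-Riem-decomp}, a linear combination of one Weyl component, one tracefree-Ricci component and possibly the scalar $R$; setting it to zero either kills that Weyl component outright (when no Ricci or scalar term is available in the same $\cu(m-1)$-type) or, when isotypic components coincide, relates them. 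This is exactly the mechanism already used in the proof of Proposition \ref{prop-int-parallel_Rob}, so the same equations reappear, but now with the right-hand sides forced to vanish because $\bm{\xi}$ itself—and hence every spinor derived from it by Clifford multiplication—is parallel rather than merely recurrent. In particular the `grading $0$' part of the identity, together with the fact that a parallel pure spinor forces the associated null vector $k^a=\bm{\xi}^\dagger\gamma^a\bm{\xi}$ to be parallel, gives \eqref{eq-par-spin-sim}, \eqref{eq-par-spin-sim2} and \eqref{eq-par-spin-sim3} by invoking Proposition \ref{prop-par-null-vec}; the `grading $\geq 1$' parts give \eqref{eq-par-spin-rob}–\eqref{eq-par-spin-rob3}. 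The exceptional indices $i=5$ in \eqref{eq-par-spin-rob2} and $i=3,7,8$ in \eqref{eq-par-spin-rob3} are precisely the $\cu(m-1)$-types that do not occur in the image of the map $\phi_{ab}\mapsto\phi_{ab}\gamma^{ab}\bm{\xi}$ when $\bm{\xi}$ is pure—i.e. the types annihilated identically by Clifford multiplication on a pure spinor—so the curvature identity imposes no constraint there, which is why they are excluded.

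The main obstacle I anticipate is the representation-theoretic accounting: one has to identify exactly which irreducible $\cu(m-1)$-summands of $\mfC$, $\mfF$ and of the spinor module pair up under the Clifford contraction, and in the isotypic cases correctly determine the coefficients so as to conclude vanishing rather than merely a relation (cf. Remark \ref{rem-proj-sim}). In particular one must check that, for the components with indices $i=0,1,2,3,4,6,\dots$ in \eqref{eq-par-spin-rob2} and the admissible ones in \eqref{eq-par-spin-rob3}, either no Ricci/scalar partner exists (already guaranteed, for some of them, by \eqref{eq-Rob-par-RicTF} and \eqref{eq-par-spin-sim2}–\eqref{eq-par-spin-sim3}) or the $2\times 2$ linear system in (Weyl component, Ricci/scalar component) is non-degenerate. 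This is routine once the branching tables of Appendix \ref{sec-rep} are in hand, but it is the step where care is needed; the even/odd dimensional split ($\epsilon\in\{0,1\}$) adds the extra modules built from $\mfV_0^{(0,0)}$, which must be tracked separately exactly as in Proposition \ref{prop-int-parallel_Rob}.
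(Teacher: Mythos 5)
Your plan is correct and follows essentially the same route as the paper: invoke Proposition \ref{prop-int-parallel_Rob} for the parallel Robinson structure, use the parallel-spinor curvature identity $R_{abcd}\gamma^{cd}\bm{\xi}=0$ decomposed via \eqref{eq-Riem-decomp} to obtain $R=0$, the Ricci condition and (via Proposition \ref{prop-par-null-vec}) the condition $C_{abcd}k^d=0$, and then re-express the residual Weyl constraint $C_{abcd}\gamma^{cd}\bm{\xi}=0$ through the invariant forms $\varrho_{abc}$ and $\mu_{ab}$, which is exactly the paper's mechanism for why the surviving components are those with vanishing trace against $\omega_{ab}$ (your characterisation of the exceptional indices $i=5$ and $i=3,7,8$). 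The remaining work you flag — the isotypic bookkeeping in the $\cu(m-1)$-decomposition — is indeed where the paper's explicit computations live, but your identification of where each condition comes from matches the published argument.
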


\begin{proof}
For simplicity, we treat the case $n=2m+1$ only, the even-dimensional case being similar. We first note that since the pure spinor field is parallel, so must be its conjugate and its associated null vector $k^a$. In particular, Proposition \ref{prop-par-null-vec} holds. Using the notation of appendix \ref{sec-spinors}, we know (see \cites{Taghavi-Chabert2012a,Taghavi-Chabert2013} and references therein) that the existence of a parallel pure spinor $\xi^A$ implies $R = 0$, i.e. \eqref{eq-par-spin-sim3}, and $\Phi_{ab} \gamma \ind{^a_A^B} \xi^A = 0$. The latter condition leads to \eqref{eq-par-spin-sim2}, so that by Proposition \ref{prop-par-null-vec}, we have that \eqref{eq-par-spin-sim}.

Moreover, since $\mcN$ is parallel, Proposition \ref{prop-int-parallel_Rob} holds with our additional constraints \eqref{eq-par-spin-sim}, \eqref{eq-par-spin-sim2}, and \eqref{eq-par-spin-sim2}, i.e. $C_{abcd} X^c Y^d = 0$ for all $X^a \in \Gamma (\mcN)$ and $Y^a \in \Gamma (\mcN)$. The extra constraints can be found to be ${}^\mfC_\mcN \Pi_0^{3,i} (C) = 0$ for $i=0,2,8$ and ${}^\mfC_\mcN \Pi_1^{1,i} (C) = 0$ for $i=0,5$.

Finally, the additional integrability condition $C_{abcd} \gamma \ind{^{cd}_A^B} \xi^A = 0$ can be re-expressed as $C_{abcd} \varrho \ind{^{cd}_e} = 0$ and $C_{abcd} \mu \ind{^{cd}} = 0$ , where $\varrho_{abc} := 3 \, k_{[a} \omega_{bc]}$ and $\mu_{ab} = 2 k_{[a} u_{b]}$, $\omega_{ab}$ is any Hermitian structure and $u^a$ a unit vector on the screenspace of $\mcK$. Thus, those components of $C_{abcd}$ which have non-vanishing trace with respect to any choice $\omega_{ab}$ will vanish. The only additional condition on the Weyl tensor is then ${}^\mfC _\mcN \Pi_1^{1,4}(C) = 0$, which can happen only in odd dimensions. This completes the proof.
\end{proof}

\paragraph{Acknowledgements} I would like to thank Thomas Leistner for interesting discussions and for his hospitality during my stay at the University of Adelaide in November 2012. I am also grateful to Dimitri Alekseevsky and Anton Galaev for helpful discussions. Finally, I extend my thanks to Lionel Mason, whose ideas came to influence the content of this article.

This work was funded by a SoMoPro (South Moravian Programme) Fellowship: it has received a financial contribution from the European
Union within the Seventh Framework Programme (FP/2007-2013) under Grant Agreement No. 229603, and is also co-financed by the South Moravian Region.

The author has also benefited from an Eduard \v{C}ech Institute postdoctoral fellowship GPB201/12/G028, and a GA\v{C}R (Czech Science Foundation) postdoctoral grant GP14-27885P.

\appendix
\section{Pure spinors and Robinson structures}\label{sec-spinors}
The aim of this appendix is to give an explicit description of a Robinson structure building on the spinor calculus of references \cites{Taghavi-Chabert2012a,Taghavi-Chabert2013}. Spinor representations for a real inner product space $(\mfV,g)$ are complex vector spaces equipped with additional reality or quaternionic structures depending on the dimension of $\mfV$, the signature $g$, and the convention adopted. There is a wealth of literature on the details and subtleties involved in their description \cites{Cartan1981,Penrose1986,Budinich1989,Kopczy'nski1992,Kopczy'nski1997,Lawson1989}.

Our treatment will however be overwhelmingly dimension-independent, and, more in the spirit of Cartan's theory of spinors \cite{Cartan1981}, we shall exploit the \emph{geometric} properties of \emph{pure} spinors to construct our calculus. In fact, many of the algebraic properties of Clifford and spinor representations can be derived from such geometric considerations. In this way, our spinor calculus will merely boil down to a tensor calculus adapted to a given Robinson structure.  Alternative spinorial approaches to five-dimensional spacetimes can be found in \cites{De2005,Godazgar2010} and \cite{Garc'ia-ParradoG'omez-Lobo2009}.

Throughout, $(\mfV,g)$ will denote $n$-dimensional Minkowski space, and as before, the signature  of $g$ will be taken to be $(n-1,1)$. We shall work in the complexification $({}^\C \mfV , g)$ of $(\mfV,g)$, in which case the metric tensor will be assumed to be complex-valued. The spinor representations for $(\mfV,g)$ will then be obtained by additional algebraic restrictions. We first treat the even- and odd-dimensional cases separately for clarity. Our setup and notation will allow us to dispense with such a distinction later.

\subsection{Background algebra}
\subsubsection{Even dimensions}
Assume $n=2m$. The spinor representation $\mfS$ of $({}^\C \mfV, g)$ splits into a direct sum of two $2^{m-1}$-dimensional irreducible chiral spin spaces $\mfS^+$ and $\mfS^-$, the spaces of positive and negative spinors respectively. Elements of $\mfS^+$ and $\mfS^-$ will be adorned with upstairs primed and unprimed upper case Roman indices respectively, e.g. $\alpha^{A'} \in \mfS^+$ and $\beta^A \in \mfS^-$, and similarly for the dual spinor spaces $(\mfS^\pm)^*$ with downstairs indices. The relation between $({}^\C \mfV,g)$ and $\mfS$ is given by the Clifford algebra $\Cl({}^\C \mfV,g)$ of $(\mfV,g)$, which establishes an isomorphism between the exterior algebra $\wedge^\bullet \mfV$ and the space $\End(\mfS)$ of endomorphisms of $\mfS$. We introduce the van der Waerden $\gamma$-matrices $\gamma \ind{_a_A^{B'}}$ and $\gamma \ind{_a_{A'}^B}$ which satisfy\footnote{Our convention differs from \cites{Taghavi-Chabert2012a} by a sign.}
\begin{align*}
 \gamma \ind{_{(a}_{A'}^C} \gamma \ind{_{b)}_C^{B'}} & = g \ind{_{ab}} \delta \ind*{_{A'}^{B'}} \, , & \gamma \ind{_{(a}_A^{C'}} \gamma \ind{_{b)}_{C'}^B} & = g \ind{_{ab}} \delta \ind*{_A^B} \, .
\end{align*}
We emphasise that we shall use the van der Waerden $\gamma$-matrices in a \emph{purely abstract way}, i.e. in the sense that we shall think of $\gamma \ind{_a_{A'}^B}$ and $\gamma \ind{_a_A^{B'}}$ as injective maps from $\mfV$ to the space of homomorphisms from $\mfS^\mp$ to $\mfS^\pm$, or as a projection from $(\mfS^\mp)^*\otimes \mfS^\pm$ to $\mfV$. 

The real structure $\bar{}$ on ${}^\C \mfV$ preserving $\mfV$ induces a real structure, when $m=1,2 \pmod 4$, and a quaternionic structure when $m=0,3 \pmod 4$ on $\mfS$. Thus, the spinor representation $\mfS$ of $(\mfV,g)$ will be that of $({}^\C \mfV,g)$ together with this additional structure. Further, this complex conjugation interchanges the chirality of spinors when $m$ is even, and preserve them when $m$ is odd. One can then use the isomorphisms $\mfS^\pm \cong (\mfS^\pm)^*$ when $m$ is even, and $\mfS^\pm \cong (\mfS^\pm)^*$ when $m$ is odd, to define a complex conjugation $\bar{}: \mfS^{\pm} \rightarrow (\mfS^{\mp})^*$, i.e. $\bar{}:\xi \ind{^{A'}} \mapsto \bar{\xi} \ind{_A}$, and $\bar{}: \eta \ind{^A} \mapsto \bar{\eta} \ind{_{A'}}$.

\paragraph{The pure spinors associated to a Robinson structure}
By definition, a Robinson structure on $(\mfV,g)$ is a totally null complex $m$-plane $\mfN$ in $(\mfV,g)$, which, for definiteness, we shall assume to be self-dual. Then $\mfN$ is the annihilator of a \emph{pure} positive spinor $\xi \ind{^{A'}}$ in the sense that $\mfN = \ker \xi_a^A : {}^\C \mfV \rightarrow \mfS^-$,
where we have written $\xi_a^A := \xi \ind{^{B'}} \gamma \ind{_a_{B'}^A}$, and $\xi^{A'}$ satisfies the algebraic condition $\xi \ind{^a^A} \xi_a^B = 0$. Similarly, the conjugate $m$-plane $\bar{\mfN}$ annihilates the conjugate pure spinor $\bar{\xi} \ind{_A}$. We note that the chirality of $\bar{\xi} \ind{_A}$ is consistent with the orientation of $\bar{\mfN}$, i.e. a $\beta$-plane when $m$ is even, and an $\alpha$-plane when $m$ is odd.

Any element $V^a$ of $\mfN$ is of the form $V \ind{^a} = \xi \ind{^a^A} v \ind{_A}$
for some $v_A \in (\mfS^-)^*/\{\ker \xi_a^A : {}^\C \mfV \leftarrow (\mfS^-)^*\}$. The conjugate spinor $\bar{\xi}_A$ is the distinguished element of $(\mfS^-)^*$ for which the real null vector
\begin{align}\label{eq-real-null-vec-spinor}
 k^a & = \xi^{aB} \bar{\xi}_B \, ,
\end{align}
spans $\mfK$, the real span of the intersection of $\mfN$ and $\bar{\mfN}$.

\paragraph{A dual Robinson structure and its pure spinors}
We now introduce a Robinson structure $\mfN^*$ dual to $\mfN$. Here, the totally null $m$-plane $\mfN^*$, its complex conjugate $\bar{\mfN}^*$ and their real intersection $\mfL$ are dual to $\mfN$, $\bar{\mfN}$ and $\mfK$ respectively. The $m$-plane $\mfN^*$ annihilates a pure spinor $\bar{\eta}_{A'}$ dual to $\xi^{A'}$, i.e. $\mfN^* = \ker \bar{\eta} \ind{_a_A} : {}^\C \mfV \rightarrow (\mfS^-)^*$, where $\bar{\eta} \ind{_a_A} := \gamma \ind{_a_A^{B'}} \bar{\eta} \ind{_{B'}}$. Similarly, to $\bar{\mfN}^*$ corresponds a pure spinor $\eta^A$ dual to $\bar{\xi}_A$ and conjugate of $\bar{\eta}_{A'}$. We can choose to normalise these spinors as $\xi^{A'} \bar{\eta}_{A'} = \frac{1}{2}$ and $\bar{\xi}_A \eta^A = - \frac{1}{2}$. 
Any generator of $\mfL$ is a real multiple of 
\begin{align}\label{eq-real-null-vec-spinor-dual}
\ell^a := 2\,  \eta  \ind{^A} \bar{\eta}^a_A \, .
\end{align}
 
We can now set $\mfV_1 = \mfK$, $\mfV_{-1} = \mfL$ and $\mfV_0 = \mfK^\perp \cap \mfL^\perp$, and refer to the splitting \eqref{eq-sim-grading} of the filtration \eqref{eq-sim-filtration}. The spinor $I \ind*{_B^A} := \xi \ind{^a^A} \bar{\eta} \ind{_a_B} : \mfS^- \rightarrow  \mfS^-$ is the identity map on $\im \xi \ind{^a^A}$, or dually on $\im \bar{\eta} \ind{_a_B}$. Now, since $\mfV_0^{(1,0)} = \mfN \cap \bar{\mfN}^*$, we have $\xi \ind{^a^A} \eta \ind*{_a^{B'}} = 2 \, \xi^{B'} \eta^A$ so that $\bar{\xi}_B I \ind*{_A^B} = \bar{\xi}_A$ and $\eta^B I \ind*{_B^A} = \eta^A$. It follows that $\im \xi \ind{^a^A}$ and $\im \bar{\eta}  \ind*{^a_A}$ contain $\langle \bar{\xi}_A \rangle$ and $\langle \eta^A \rangle$ respectively. Thus, the map defined by $\iota \ind*{_B^A} := I \ind*{_B^A} - 2 \, \bar{\xi}_B \eta^A$ can be identified with the identity map on the $(m-1)$-dimensional subspaces
\begin{align*}
 \mfS_{\frac{1}{2}}^{(0,1)} & := \{ \im \xi \ind*{_a^A} : {}^\C \mfV \rightarrow \mfS^- \} \cap \{ \ker \bar{\xi} \ind{_A} : \C \leftarrow \mfS^- \} \, , \\
 \mfS_{-\frac{1}{2}}^{(1,0)} & := \{ \im \bar{\eta} \ind{_a_A} : {}^\C \mfV \rightarrow (\mfS^-)^* \} \cap \{ \ker \eta \ind{^A} : \C \leftarrow (\mfS^-)^*  \} \, .
\end{align*}
At this stage, we set $\mfS_{\frac{1}{2}} := \langle \xi^{A'} \rangle$ and $\bar{\mfS}_{\frac{1}{2}} := \langle \bar{\xi}_A \rangle$. This notation is consistent with the fact that $\mfV_1 :=  \mfK$ injects into the tensor product $\mfS_{\frac{1}{2}} \otimes \bar{\mfS}_{\frac{1}{2}}$. In fact, both $\xi^{A'}$ and $\bar{\xi}_A$ are eigenspinors of the grading element of the Lie algebra \eqref{eq-grading-sim} for the eigenvalue $\frac{1}{2}$. Putting things together, we have the identifications
\begin{align}\label{eq-vector10-spinor10-even}
\begin{aligned}
 \mfN & = {}^\C \mfV_1 \oplus \mfV_0^{(1,0)} \cong \mfS_{\frac{1}{2}} \otimes \left( \bar{\mfS}_{\frac{1}{2}} \oplus \mfS_{-\frac{1}{2}} ^{(1,0)} \right) \, ,  \\
 \mfN^* & = {}^\C \mfV_{-1} \oplus \mfV_0^{(0,1)} \cong \bar{\mfS}_{-\frac{1}{2}} \otimes \left( \mfS_{-\frac{1}{2}} \oplus \mfS_{\frac{1}{2}} ^{(0,1)} \right) \, ,
 \end{aligned}
\end{align}
so that any elements of $\mfV_0^{(1,0)}$ and $\mfV_0^{(0,1)}$ admit spinorial representatives in $\mfS_{\frac{1}{2}}^{(1,0)}$ and $\mfS_{-\frac{1}{2}}^{(0,1)}$ respectively.

The complex conjugation on ${}^\C \mfV_0$ interchanges $\mfV_0^{(1,0)}$ and $\mfV_0^{(0,1)}$. It must therefore extends to an involution on $\mfS$ which interchanges $\mfS_{\frac{1}{2}}^{(1,0)}$ and $\mfS_{-\frac{1}{2}}^{(0,1)}$. This new complex conjugation, which we shall denote $\check{}$, must depend on the complex conjugation $\bar{}$ on $\mfS$ and a choice of splitting.\footnote{Recall that $\mfV_0$ is equipped with a positive definite metric tensor $h \ind{_{ab}}$, and we can identify $\mfS^{(1,0)}_{\frac{1}{2}}$ and $\mfS^{(0,1)}_{-\frac{1}{2}}$ as subspaces of a chiral spinor representation for $(\mfV_0,h)$ and its dual respectively. The complex conjugation $\hat{}$ can then be identified with the canonical complex conjugation on the spinor representation of $(\mfV_0,h)$}

\subsubsection{Odd dimensions}
Now assume $n=2m+1$. The irreducible spinor representation $\mfS$ of $(\mfV,g)$ is a $2^m$-dimensional complex vector space. Elements of $\mfS$ will be adorned with upstairs upper case Roman indices, e.g. $\alpha^A \in \mfS$, and similarly for the dual spinor space $\mfS^*$ with downstairs indices. The relation between $({}^\C \mfV,g)$ and $\mfS$ is given by the Clifford algebra $\Cl({}^\C \mfV,g)$ of $(\mfV,g)$, which establishes an isomorphism between the exterior algebra $\wedge^\bullet \mfV$ and the space $\End(\mfS)$ of endomorphisms of $\mfS$. We introduce the van der Waerden $\gamma$-matrices $\gamma \ind{_a_A^B}$ which satisfy\footnote{Again, our convention differs from \cites{Taghavi-Chabert2013} by a sign.}
\begin{align*}
 \gamma \ind{_{(a}_A^C} \gamma \ind{_{b)}_C^B} & = g \ind{_{ab}} \delta \ind*{_A^B} \, ,
\end{align*}
As in the even-dimensional case, these $\gamma$-matrices will be thought of as abstract maps.

The real structure $\bar{}$ on ${}^\C \mfV$ preserving $\mfV$ induces a real structure, when $m=0,1 \pmod 4$, and a quaternionic structure, when $m=2,3 \pmod 4$, on $\mfS$. Using the isomorphism $\mfS \cong \mfS^*$, we can define a complex conjugation, denoted by $\bar{}$, that sends elements of $\mfS$ to elements of $\mfS^*$, i.e. $\bar{}:\xi \ind{^A} \mapsto \bar{\xi} \ind{_A}$.

\paragraph{The pure spinors associated to a Robinson structure}
We associate to a Robinson structure $\mfN$ a pure spinors $\xi^{A}$, which is annihilated by the complex null $m$-plane $\mcN$, i.e. $ \mfN = \ker \xi \ind*{_a^A} : {}^\C \mfV \rightarrow \mfS$, where $\xi \ind*{_a^A} := \xi \ind{^{B'}} \gamma \ind{_a_{B'}^A}$. Here, the pure spinor $\xi^A$ satisfies the algebraic constraint $\xi \ind{^a^A} \xi \ind*{_a^B} = \xi \ind{^A} \xi \ind{^B}$. Similarly, $\bar{\mfN}$ annihilates a pure spinor $\bar{\xi}_A$ conjugate to $\xi^A$. Moreover, the real index of $\mfN$ imposes an additional algebraic constraints on these pure spinors, i.e.
\begin{align*}
\xi^A \bar{\xi}_A & = 0 \, ,
\end{align*}
which is the algebraic condition for $\mfN$ and $\bar{\mfN}$ to intersect in ${}^\C \mfK$. The real span $\mfK$ of this intersection is in fact generator by the element
\begin{align}\label{eq-real-null-vec-spinor-odd}
 k^a & = \xi \ind{^a^B} \bar{\xi}_B \, ,
\end{align}
as in even dimensions.

More generally, any element $V^a$ of $\mfN^\perp$ is of the form $V \ind{^a} = \xi \ind{^a^A} v \ind{_A}$
for some $v_A \in (\mfS^-)^*/\{\ker \xi \ind*{_a^A} : {}^\C \mfV \leftarrow (\mfS^-)^*\}$. We then have that $V^a$ belongs to $\mfN$ if and only if $v_A \xi^A = 0$.

\paragraph{A dual Robinson structure and its pure spinor}
Again, we choose a Robinson structure $\mfN^*$, dual to $\mfN$, to which we associate a pure spinor $\bar{\eta}_A$ dual to $\xi^A$, i.e. $\mfN^* = \ker \bar{\eta} \ind{_a_A} : {}^\C \mfV \rightarrow \mfS^*$, where $\bar{\eta} \ind{_a_A} := \gamma \ind{_a_A^B} \bar{\eta} \ind{_B}$. Similarly, $\bar{\mfN}^*$ annilates the complex conjugate pure spinor $\eta^A$ dual to $\bar{\xi}_A$. The spinors $\bar{\eta}_A$ and $\eta^A$ are of real index $1$, and as such satisfy $\bar{\eta}_A \eta^A = 0$, and the real intersection of $\mfN^*$ and $\bar{\mfN}^*$ is spanned by the real element
\begin{align}\label{eq-real-null-vec-spinor-dual-odd}
\ell \ind{^a} & = 2 \, \eta \ind{^B} \bar{\eta} \ind{^a_B}  \, .
\end{align}
We choose the normalisation $\xi^A \bar{\eta}_A = - \frac{1}{2}$ and $\bar{\xi}_A \eta^A = \frac{1}{2}$. Referring to \cites{Taghavi-Chabert2013}, we know that since $\mfN$ and $\bar{\mfN}^*$ intersect in a totally null $(m-1)$-plane, $\xi^A$ and $\eta^B$ must satisfy $\xi \ind{^a^A} \eta \ind*{_a^B} = - \xi^A \eta^B + 2 \eta^A \xi^B$. We also know that $I \ind*{_B^A} := \xi \ind{^a^A} \bar{\eta} \ind{_a_B} + \bar{\eta}_B \xi^A : \mfS \rightarrow \mfS$ is the identity map on $\im \xi \ind*{_a^A} \cap \ker \bar{\eta}_A$, or dually, $\im \bar{\eta} \ind{_a_A} \cap \ker \xi^A$. We therefore have $\bar{\xi}_B I \ind*{_A^B} = \bar{\xi}_A$ and $\bar{\xi}_B \bar{I} \ind*{_A^B} = 0$, i.e. $\im \xi \ind{^a^A}$ and $\im \bar{\eta}  \ind*{^a_A}$ contain $\langle \bar{\xi}_A \rangle$ and $\langle \eta^A \rangle$ respectively. Hence the map $\iota \ind*{_B^A} := I \ind*{_B^A} - 2 \bar{\xi}_B \eta^A$ is the identity on the $(m-1)$-dimensional vector subspaces
\begin{align*}
 \mfS_{\frac{1}{2}}^{(0,1)} & := \{ \im \xi \ind*{_a^A} : {}^\C \mfV \rightarrow \mfS \} \cap \{ \ker \bar{\eta} \ind{_A} : \C \leftarrow \mfS \} \cap \{ \ker \bar{\xi} \ind{_A} : \C \leftarrow \mfS \} \, , \\
 \mfS_{-\frac{1}{2}}^{(1,0)} & := \{ \im \bar{\eta} \ind{_a_A} : {}^\C \mfV \rightarrow \mfS^* \} \cap \{ \ker \xi \ind{^A} : \C \leftarrow \mfS^* \} \cap \{ \ker \eta \ind{^A} : \C \leftarrow \mfS^* \} \, .
\end{align*}
As in the even-dimensional, one obtains isomorphisms \eqref{eq-vector10-spinor10-even}. In addition, we have
\begin{align}
\begin{aligned}\label{eq-vector10-spinor10-odd}
 \mfN^\perp & = {}^\C \mfV_1 \oplus \mfV_0^{(1,0)} \oplus \mfV_0^{(0,0)} \cong \mfS_{\frac{1}{2}} \otimes \left( \bar{\mfS}_{\frac{1}{2}} \oplus \mfS_{-\frac{1}{2}} ^{(1,0)} \oplus \mfS_{-\frac{1}{2}} \right) \, ,  \\
 (\mfN^\perp)^* & = {}^\C \mfV_{-1} \oplus \mfV_0^{(0,1)} \oplus \mfV_0^{(0,0)} \cong \bar{\mfS}_{-\frac{1}{2}} \otimes \left( \mfS_{-\frac{1}{2}} \oplus \mfS_{\frac{1}{2}} ^{(0,1)} \oplus \mfS_{\frac{1}{2}} \right) \, .
 \end{aligned}
\end{align}
where $\mfS_{\frac{1}{2}} := \langle \xi^A \rangle$ and $\bar{\mfS}_{\frac{1}{2}} := \langle \bar{\xi}_A \rangle$.
In particular, the vector space $[\mfV_0^{(0,0)}]$ is spanned by the real unit vector
\begin{align}\label{eq-spinor-odd-unit}
u^a & := \xi \ind{^a ^B} \bar{\eta}_B + \eta \ind{^a ^B} \bar{\xi}_B \, .
\end{align}

The complex conjugation on ${}^\C \mfV_0$ interchanges $\mfV_0^{(1,0)}$ and $\mfV_0^{(0,1)}$, and must therefore induce an involution\footnote{As in the even-dimensional case, we note that $\mfV_0$ is equipped with a positive definite metric tensor $h \ind{_{ab}}$, and we can identify $\mfS^{(1,0)}_{\frac{1}{2}}$ and $\mfS^{(0,1)}_{-\frac{1}{2}}$ as being subspaces of a chiral spinor representation for $(\mfV_0,h)$ and its dual respectively.} that interchanges $\mfS_{\frac{1}{2}}^{(1,0)}$ and $\mfS_{-\frac{1}{2}}^{(0,1)}$, we shall denote this involution by $\check{}$.

\subsubsection{Robinson $2$-forms and Robinson $3$-forms}\label{sec-Robinson-forms}
The pairing of the conjugate pair of pure spinors $\xi^{A'}$ (or $\xi^A$) and $\bar{\xi}_A$ associated to a Robinson structure $\mfN$ and its complex conjugate $\bar{\mfN}$ yields $\simalg(m-1,\C)$-invariant $k$-forms as is shown in \cite{Kopczy'nski1992}. In other words, the tensor product $\mfS_{\frac{1}{2}} \otimes \bar{\mfS}_{\frac{1}{2}}$ projects into $\wedge^k \mfV$ for some $k$. When $\epsilon=0$, $k$ has to be odd, but is otherwise unrestricted. As we have already seen with \eqref{eq-real-null-vec-spinor} and \eqref{eq-real-null-vec-spinor-odd}, when $k=1$, this pairing is simply the vector $k^a$ spanning the intersection $\mfK$ of $\mfN$ and $\bar{\mfN}$. When $k=2,3$, we obtain the real $3$-form and $2$-form
\begin{align*}
\varrho_{abc} & := \ii \, \xi \ind*{_{abc} ^B} \bar{\xi}_B \, , & \mbox{when $\epsilon = 0 ,1$,}  \\
   \mu_{ab} & := \xi \ind*{_{ab} ^B} \bar{\xi}_B \, , & \mbox{when $\epsilon = 1$ only.} 
\end{align*}
Here $\xi \ind*{_{abc} ^B} = \xi^{A'} \gamma \ind{_{abc}_{A'} ^B}$ when $\epsilon=0$ and $\xi \ind*{_{abc} ^B} = \xi^A \gamma \ind{_{abc}_A ^B}$ when $\epsilon=1$. Using the purity condition, it is straightforward to check that these forms satisfy
\begin{align*}
 \varrho \ind{_e ^{ab}} \varrho \ind{^e _{cd}} & = 4 \, k \ind{^{\lb{a}}} k \ind{_{\lb{c}}} g \ind*{^{\rb{b}}_{\rb{d}}} - \epsilon \mu \ind{^{ab}} \mu \ind{_{cd}} \, ,   & \mbox{when $\epsilon = 0 ,1$,}  \\
  \mu \ind{_{ac}} \mu \ind{_b^c} & = k_a k_b \, , & \mbox{when $\epsilon = 1$ only.} 
\end{align*}
These conditions are equivalent to writing
\begin{align*}
 \varrho \ind{_{abc}} & = 3 \, k \ind{_{[a}} \omega \ind{_{bc]}} \, , & \mbox{when $\epsilon = 0 ,1$,} \\
   \mu \ind{_{ab}} & = 2 \, k \ind{_{[a}} u \ind{_{b]}}  \, , & \mbox{when $\epsilon = 1$ only,}
\end{align*}
for some Hermitian $2$-form $\omega \ind{_{ab}}$ on $\mfV_0$, and when $\epsilon=1$, some unit vector $u^a$ orthogonal to $\omega \ind{_{ab}}$. Since $\omega \ind{_{ab}} := \ell^c \varrho \ind{_{abc}}$ and $u_a := \ell^b \mu{_{ba}}$ for some choice of $\ell_a$ dual to $k^a$ with $k^a \ell_a =1$, both $\omega_{ab}$ and $u^a$ depend on the choice of splitting of $\mfK^\perp/\mfK$. On the other hand, both $\varrho_{abc}$ and $\mu_{ab}$ are independent of such a choice.

\begin{rem}
In four dimensions, one has $k \ind{_a} = \frac{1}{3!} \varepsilon \ind{_a^{bcd}} \varrho \ind{_{bcd}}$, while in five dimensions, $\mu \ind{_{ab}} = \frac{1}{2} \varepsilon \ind{_{ab}^{cde}} \varrho \ind{_{cde}}$, which result in a number of simplifications in these dimensions -- see appendix \ref{sec-low-dim}.
\end{rem}

\subsection{Tensor-spinor translation}
As we have set up our notation and convention, primed spinor indices will not appear in even dimensions, so we can effectively combine the even- and odd-dimensional cases. This is due to the fact that our primary objects here are $\xi \ind{^a^A}$ and $\bar{\eta} \ind{^a_A}$, which we shall use to convert spinorial quantities into $\cu(m-1)$-invariant tensorial ones and vice and versa with reference to \eqref{eq-vector10-spinor10-even} and \eqref{eq-vector10-spinor10-odd}. In particular, we shall always use the real vectors $k^a$, and $\ell^a$, and in odd dimensions,  $u_a$ as defined by \eqref{eq-real-null-vec-spinor}, \eqref{eq-real-null-vec-spinor-odd}, \eqref{eq-real-null-vec-spinor-dual}, \eqref{eq-real-null-vec-spinor-dual-odd} and \eqref{eq-spinor-odd-unit}. The remaining isomorphisms $\mfV_0^{(1,0)} \cong \mfS_\frac{1}{2} \otimes \mfS_{-\frac{1}{2}}^{(1,0)}$ and $\mfV_0^{(0,1)} \cong \mfS_{-\frac{1}{2}} \otimes \mfS_{\frac{1}{2}}^{(0,1)}$ allow us to realise the embedding of $\uu(m-1)$-modules into $\so(n-2)$-modules. For this reason, unprimed spinor indices will always denote memberships to the standard complex representation $\mfS_{\frac{1}{2}}^{(1,0)}$ of $\uu(m-1)$ and its dual $\mfS_{-\frac{1}{2}}^{(0,1)}$. In general, if $\phi \ind{^{a_1 \ldots a_p b_1 \ldots b_q}}$ is in the $\cu(m-1)$-module $\mfV_0^{(q,p)} := \left( {\bigotimes}^p \mfV_0^{(0,1)} \right) \otimes \left( {\bigotimes}^q \mfV_0^{(1,0)} \right)$, then we can write
\begin{align*}
\phi \ind{^{a_1 \ldots a_p b_1 \ldots b_q}} = \xi \ind{^{a_1}^{A_1}} \ldots \xi \ind{^{a_p}^{A_p}} \bar{\eta} \ind*{^{b_1}_{B_1}} \ldots \bar{\eta} \ind*{^{b_q}_{B_q}} \phi \ind{_{A_1 \ldots A_p}^{B_1 \ldots B_q}} \, ,
\end{align*}
for some $\phi \ind{_{A_1 \ldots A_p}^{B_1 \ldots B_q}}$ in $\mfS_0^{(q,p)} := \left( {\bigotimes}^p \mfS_{-\frac{1}{2}}^{(0,1)} \right) \otimes \left( {\bigotimes}^q \mfS_{\frac{1}{2}}^{(1,0)} \right)$.

The complex conjugate of $\phi \ind{^{a_1 \ldots a_p b_1 \ldots b_q}}$ is then simply given by
\begin{align*}
\bar{\phi} \ind{^{a_1 \ldots a_p b_1 \ldots b_q}} & = \bar{\eta} \ind*{^{a_1}_{A_1}} \ldots \bar{\eta} \ind*{^{a_p}_{A_p}} \xi \ind{^{a_1}^{B_1}} \ldots \xi \ind{^{a_q}^{B_q}} \check{\phi} \ind{^{A_1 \ldots A_p}_{B_1 \ldots B_q}} \, ,
\end{align*}
where $\check{\phi} \ind{^{A_1 \ldots A_p}_{B_1 \ldots B_q}}$ belongs to $\mfS_0^{(q,p)}$.

An element of some irreducible $\uu(m-1)$-submodule of $\mfV_0^{(q,p)}$ will lie in the (invariant) kernel of some projections from $\mfV_0^{(q,p)}$ to this irreducible submodule. Of particular interest is the identity element $\iota \ind*{_A^B}$ on $\mfS_{\frac{1}{2}}^{(1,0)}$ which has images
\begin{align*}
\omega_{ab} & = \ii \left( \xi \ind{_{[a}^A} \bar{\eta} \ind{_{b]}_A} - \eta \ind{_{[a}^A} \bar{\xi} \ind{_{b]}_A} \right) \, , &
H_{ab} & = \xi \ind{_{(a}^A} \bar{\eta} \ind{_{b)}_A} - \eta \ind{_{(a}^A} \bar{\xi} \ind{_{b)}_A} - 2 \, k \ind{_{(a}} \ell \ind{_{b)}} - \epsilon u \ind{_a} u \ind{_b} \, ,
\end{align*}
in $\wedge^2 \mfV_0$ and $\odot^2 \mfV_0$ respectively. It is then not too difficult to show that a tensor lying in $[\mfV_0^{(0,1)} \otimes \mfV_0^{(1,0)}]$ is tracefree with respect to either $\omega_{ab}$ and $H_{ab}$ if and only if its spinor representative in $\mfS_{-\frac{1}{2}}^{(0,1)} \otimes \mfS_{\frac{1}{2}}^{(1,0)}$ is tracefree with respect to $\iota \ind*{_A^B}$, i.e. $\phi \ind{_A^B} \iota \ind*{_B^A} = 0$.

\begin{rem}
An alternative description of the above spinorial approach of a Robinson structure is to identify the spinor module $\mfS$ with the vector space $\bigwedge^\bullet \mfN$. Then using the decomposition $\mfN = {}^\C \mfV_1 \oplus \mfV_0^{(1,0)}$, we have
\begin{align*}
\mfS & \cong {}^\C \mfV_1 \otimes \left( \bigoplus_{k=0}^{m-1} \wedge^k \mfV_0^{(1,0)} \right) \oplus \left( \bigoplus_{k=0}^{m-1} \wedge^k \mfV_0^{(1,0)} \right) \, ,
\end{align*}
and the spinors $\xi^{A'}$ and $\bar{\xi}_{A}$ can be identified with generators of ${}^\C \mfV_1 \otimes \left( \wedge^{m-1} \mfV_0^{(1,0)} \right)$ and ${}^\C \mfV_1$ respectively.

Similarly, having fixed a dual $\mfN^*$, we can identify the spinors $\bar{\eta}_{A'}$ and $\eta^{A'}$ with ${}^\C \mfV_{-1} \otimes \left( \wedge^{m-1} \mfV_0^{(0,1)} \right)$ and ${}^\C \mfV_{-1}$ respectively, so that
\begin{align*}
\mfS_{\frac{1}{2}}^{(1,0)} & \cong {}^\C \mfV_1 \otimes \left( \wedge^{m-2} \mfV_0^{(1,0)} \right) \, , &
\mfS_{\frac{1}{2}}^{(0,1)} & \cong \mfV_0^{(1,0)} \, .
\end{align*}
This applies to both even and odd dimensions.
\end{rem}
\section{Tensorial and spinorial descriptions of irreducible $\simalg(n-2)$-modules and $\simalg(m-1,\C)$-modules}\label{sec-spinor-descript}
This appendix complements sections \ref{sec-algebra} and \ref{sec-curvature} by providing a number of explicit formulae to describe the various irreducible $\simalg(n-2)$- and $\co(n-2)$-modules arising from the choice of a null line, and $\simalg(m-1,\C)$- and $\cu(m-1)$-modules arising from a Robinson structure. Throughout, the notation will follow that of sections \ref{sec-algebra} and \ref{sec-curvature}.

\subsection{Projection maps}\label{ref-spinor-descript-proj}
\subsubsection{$\simalg(n-2)$-invariant projection maps}
Here, we fix a null line $\mfK$ of an $n$-dimensional Minkowski space $(\mfV,g)$, so that $\mfV$ admits the $\simalg(n-2)$-invariant filtration \eqref{eq-sim-filtration}. Now choose an element $k^a$ of $\mfK$. The $\simalg(n-2)$-invariant submodules of $\g$, $\mfF$, $\mfA$ and $\mfC$ can then be described in terms of the kernels of the maps defined below. These project into the modules $\g_i^j$, $\mfF_i^j$, $\mfA_i^j$ and $\mfC_i^j$ on restriction to the associated graded $\simalg(n-2)$-modules $\gr(\g)$, $\gr(\mfF)$, $\gr(\mfA)$ and $\gr(\mfC)$ respectively. In the case of the Weyl tensor, these maps can be used to extend the Bel-Debever criterion of \cite{Ortaggio2009b}.
\paragraph{The Lie algebra $\so(n-1,1)$}
For $\phi \ind{_{ab}} \in \g$, we define
\begin{align*}
 {}^\g \Pi_{-1}^0 ( \phi ) & := k^c \phi \ind{_{c \lb{a}}} k \ind{_{\rb{b}}} \, , &
 {}^\g \Pi_0^0 ( \phi ) & := \phi \ind{_a^b} k^a \, , &
 {}^\g \Pi_0^1 ( \phi ) & := \phi \ind{_{\lb{a}b}} k \ind{_{\rb{c}}} \, .
\end{align*}
and when $n=6$, ${}^\g \Pi_0^{1,\pm} (\phi) := k_{[d} \phi_{ef]} \pm \frac{1}{3!} k_a \phi_{bc} \varepsilon \ind{^{abc}_{def}}$, where $\varepsilon_{abcdef}$ is a normalised volume $6$-form.

\paragraph{The tracefree Ricci tensor}
For $\Phi \ind{_{ab}} \in \mfF$, we define
\begin{align*}
 {}^\mfF \Pi_{-2}^0 (\Phi) & := k \ind{^a} k \ind{^b} \Phi \ind{_{ab}} \, , &
 {}^\mfF \Pi_{-1}^0 (\Phi) & := k \ind{^c}  \Phi \ind{_{c[a}} k \ind{_{b]}} \, , \\
 {}^\mfF \Pi_0^1 (\Phi) & := k \ind{_{[a}}  \Phi \ind{_{b][c}} k \ind{_{d]}} + \frac{1}{n-2} \left( k \ind{_{[a}} g \ind{_{b][c}} \Phi \ind{_{d]e}} k \ind{^e} + k \ind{_{[c}} g \ind{_{d][a}} \Phi \ind{_{b]e}} k \ind{^e} \right) \, , &
 {}^\mfF \Pi_0^0 (\Phi) & := k \ind{^b} \Phi \ind{_{ab}}  \, , \\
 {}^\mfF \Pi_1^0 (\Phi) & := k \ind{_{[a}} \Phi \ind{_{b]c}}  \, .
\end{align*}

\paragraph{The Cotton-York tensor}
For $A \ind{_{abc}} \in \mfA$, we define
\begin{align*}
 {}^\mfA \Pi_{-2}^0 (A) & :=  k \ind{^c} k \ind{^d} A \ind{_{cd[a}} k \ind{_{b]}} \, , \\
 {}^\mfA \Pi_{-1}^0 (A) & := k \ind{^b} k \ind{^c} A \ind{_{bca}}  \, , \\
 {}^\mfA \Pi_{-1}^1 (A) & := k \ind{_{[a}} A \ind{_{b]e[c}} k \ind{_{d]}} k \ind{^e} - k \ind{_{[c}} A \ind{_{d]e[a}} k \ind{_{b]}} k \ind{^e} + \frac{1}{n-2} \left( k \ind{_{[a}} g \ind{_{b][c|}} k \ind{^e} k \ind{^f} A \ind{_{ef|d]}} - k \ind{_{[c}} g \ind{_{d][a|}} k \ind{^e} k \ind{^f} A \ind{_{ef|b]}} \right) \, , \\
 {}^\mfA \Pi_{-1}^2 (A) & :=  k \ind{_{[a}} A \ind{_{b]e[c}} k \ind{_{d]}} k \ind{^e} + k \ind{_{[c}} A \ind{_{d]e[a}} k \ind{_{b]}} k \ind{^e} + \frac{1}{n-2} \left( k \ind{_{[a}} g \ind{_{b][c|}} k \ind{^e} k \ind{^f} A \ind{_{ef|d]}} + k \ind{_{[c}} g \ind{_{d][a|}} k \ind{^e} k \ind{^f} A \ind{_{ef|b]}} \right) \, , \\
 {}^\mfA \Pi_0^0 (A) & := 2 k \ind{^d} A \ind{_{ad[b}} k \ind{_{c]}} - k \ind{_a} k \ind{^d} A \ind{_{dbc}} \, , \\
 {}^\mfA \Pi_0^1 (A) & := 2 k \ind{^d} A \ind{_{ad[b}} k \ind{_{c]}} + k \ind{_a} k \ind{^d} A \ind{_{dbc}}   \, , \\
 {}^\mfA \Pi_0^2 (A) & := k \ind{_{[a}} A \ind{_{b][cd}} k \ind{_{e]}} - \frac{1}{n-3} g \ind{_{[c|[a}} \left( 2 k \ind{^f} A \ind{_{b]f|d}} k \ind{_{e]}} - k \ind{_{b]}} A \ind{_{f|de]}} k \ind{^f} \right) - \frac{2}{(n-2)(n-3)} g \ind{_{[c|[a}} g \ind{_{b]|d}} k \ind{^f} k \ind{^g} A \ind{_{fg|e]}} \, , \\
 {}^\mfA \Pi_1^0 (A) & := A \ind{_{abc}} k \ind{^c} \, , \\
 {}^\mfA \Pi_1^1 (A) & := k \ind{_{[a}} A \ind{_{b]cd}} - k \ind{_{[c}} A \ind{_{d]ab}} + \frac{2}{n-2} \left( g \ind{_{[a|[c}} A \ind{_{d]|b]e}} k \ind{^e} - g \ind{_{[c|[a}} A \ind{_{b]|d]e}} k \ind{^e} \right) \, , \\
 {}^\mfA \Pi_1^2 (A) & := k \ind{_{[a}} A \ind{_{b]cd}} + k \ind{_{[c}} A \ind{_{d]ab}} + \frac{2}{n-2} \left( g \ind{_{[a|[c}} A \ind{_{d]|b]e}} k \ind{^e} + g \ind{_{[c|[a}} A \ind{_{b]|d]e}} k \ind{^e} \right) \, , 
\end{align*}

\paragraph{The Weyl tensor}
For $C \ind{_{abcd}} \in \mfC$, we define
\begin{align*}
 {}^\mfC \Pi_{-2}^0 (C) & := k \ind{_{\lb{a}}} C \ind{_{\rb{b} e f \lb{c}}} k \ind{_{\rb{d}}} k \ind{^e} k \ind{^f} \, , \\
 {}^\mfC \Pi_{-1}^0 (C) & := C \ind{_{a d e \lb{b}}} k \ind{_{\rb{c}}} k \ind{^d} k \ind{^e} \, , \\
 {}^\mfC \Pi_{-1}^1 (C) & := k \ind{_{\lb{a}}} C \ind{_{b \rb{c} f \lb{d}}} k \ind{_{\rb{e}}} k \ind{^f} + \frac{2}{n-3} g \ind{_{\lb{a} | \lb{d}}} C \ind{_{\rb{e} f g |b}} k \ind{_{\rb{c}}} k \ind{^f} k \ind{^g} \, , \\
 {}^\mfC \Pi_0^0 (C) & := C \ind{_{a c d b}} k \ind{^c} k \ind{^d} \, , \\
 {}^\mfC \Pi_0^1 (C) & := k \ind{_{\lb{a}}} C \ind{_{b \rb{c} d e}} k \ind{^e} \, , \\
 {}^\mfC \Pi_0^2 (C) & := C \ind{_{a b e \lb{c}}} k \ind{_{\rb{d}}} k \ind{^e} + C \ind{_{c d e \lb{a}}} k \ind{_{\rb{b}}} k \ind{^e} - \frac{4}{n-2} g \ind{_{\lb{c}| \lb{a}}} C \ind{_{\rb{b} e f | \rb{d}}} k \ind{^e} k \ind{^f} \, , \\
 {}^\mfC \Pi_0^3 (C) & := k \ind{_{\lb{a}}} C \ind{_{b \rb{c} \lb{d} e}} k \ind{_{\rb{f}}} - \frac{2}{n-4} \left( g \ind{_{\lb{a}|\lb{d}}} C \ind{_{e\rb{f}|g|b}} k \ind{_{\rb{c}}} k^g + g \ind{_{\lb{d}|\lb{a}}} C \ind{_{b\rb{c}|g|e}} k \ind{_{\rb{f}}} k^g \right) \\
& \qquad \qquad \qquad + \frac{4}{9(n-3)(n-4)} \left( g \ind{_{d \lb{a}}} C \ind{_{\rb{b} g h \lb{e}}} g \ind{_{\rb{f}c}} + g \ind{_{d \lb{b}}} C \ind{_{\rb{c} g h \lb{e}}} g \ind{_{\rb{f}a}} + g \ind{_{d \lb{c}}} C \ind{_{\rb{a} g h \lb{e}}} g \ind{_{\rb{f}b}} \right. \\
& \qquad \qquad \qquad \qquad + \left. g \ind{_{e \lb{a}}} C \ind{_{\rb{b} g h \lb{f}}} g \ind{_{\rb{d}c}} + g \ind{_{e \lb{b}}} C \ind{_{\rb{c} g h \lb{f}}} g \ind{_{\rb{d}a}} + g \ind{_{e \lb{c}}} C \ind{_{\rb{a} g h \lb{f}}} g \ind{_{\rb{d}b}} \right. \\
& \qquad \qquad \qquad \qquad \qquad + \left. g \ind{_{f \lb{a}}} C \ind{_{\rb{b} g h \lb{d}}} g \ind{_{\rb{e}c}} + g \ind{_{f \lb{b}}} C \ind{_{\rb{c} g h \lb{d}}} g \ind{_{\rb{e}a}} + g \ind{_{f \lb{c}}} C \ind{_{\rb{a} g h \lb{d}}} g \ind{_{\rb{e}b}} \right) k \ind{^g} k \ind{^h} \, , \\
 {}^\mfC \Pi_1^0 (C) & := C \ind{_{a b c d}} k \ind{^d} \, , \\
 {}^\mfC \Pi_1^1 (C) & := k \ind{_{\lb{a}}} C \ind{_{b \rb{c} d e}} + \frac{2}{n-3} g \ind{_{\lb{a} | \lb{d}}} C \ind{_{\rb{e}|f| b \rb{c}}} k \ind{^f} \, , \\
 {}^\mfC \Pi_2^0 (C) & := C \ind{_{a b c d}} \, .
\end{align*}

\subsubsection{$\simalg(m-1,\C)$-invariant projection maps}
We now fix a Robinson structure $\mfN$ on $(\mfV,g)$ with associated real null line $\mfK$. In principle, given an irreducible $\so(n-1,1)$-module $\mfR$, say, one could define its $\simalg(m-1,\C)$-submodules with respect to $\mfN$ as kernels of projection maps by means of certain algebraic operations with the $\simalg(m-1,\C)$-invariant tensors $k^a$, $\varrho \ind{_{abc}}$ and $\mu \ind{_{ab}}$ defined in appendix \ref{sec-Robinson-forms}. If these maps are `saturated with symmetries', they will correspond to some irreducible $\simalg(m-1,\C)$-modules of an associated graded module.
In practice, the fact that we are dealing with a $3$-form $\varrho_{abc}$ makes this approach rather cumbersome. To illustrate the point, we consider the $\simalg(m-1,\C)$-invariant graph \eqref{eq-Penrose-diag-g} of the Lie algebra $\g$. We define, for $\phi_{ab} \in \g$, 
\begin{align*}
 {}^\g \Pi_0^{1,1} ( \phi ) & := \varrho \ind{_{ab}^f} \phi \ind{_{f \lb{c}}} k \ind{_{\rb{d}}} k \ind{_e} + \varrho \ind{_{cd}^e} \phi \ind{_{e \lb{a}}} k \ind{_{\rb{b}}} k \ind{_e} + \frac{2}{n-3} \left( k \ind{_{\lb{c}}} g \ind{_{\rb{d} \lb{a}}} \varrho \ind{_{\rb{b}}^{fg}} \phi \ind{_{fg}} + k \ind{_{\lb{a}}} g \ind{_{\rb{b} \lb{c}}} \varrho \ind{_{\rb{d}}^{fg}} \phi \ind{_{fg}} \right) k \ind{_e} \\
& \qquad \qquad  \qquad \qquad  \qquad \qquad  + \epsilon \left( \varrho \ind{_{ab}^f} \phi \ind{_{fg}} \mu \ind{^g_{[c}} \mu \ind{_{d]e}} + \varrho \ind{_{cd}^f} \phi \ind{_{fg}} \mu \ind{^g_{[a}} \mu \ind{_{b]e}} \right) \, , \\
 {}^\g \Pi_0^{1,2} ( \phi ) & := \varrho \ind{_{ab}^e} \phi \ind{_{ef}} \varrho \ind{^f_{cd}} + 4 \, k \ind{_{\lb{a}}} \phi \ind{_{\rb{b} \lb{c}}} k \ind{_{\rb{d}}}  - 2 \, \epsilon \left( \, \mu \ind{_{ab}} \mu \ind{_{[c}^e} \phi \ind{_{d]e}} - \, \mu \ind{_{cd}} \mu \ind{_{[a}^e} \phi \ind{_{b]e}} \right) \, ,
\end{align*}
and in addition, in odd dimensions,
\begin{align*}
 {}^\g \Pi_{-1}^{0,0} ( \phi ) & := 4 k^c \phi \ind{_{c \lb{a}}} k \ind{_{\rb{b}}} + \mu \ind{_{ab}} \phi \ind{_{cd}} \mu \ind{^{cd}} \, , &
 {}^\g \Pi_{-1}^{0,1} ( \phi ) & := \mu \ind{^{ab}} \phi \ind{_{ab}} \, , \\
 {}^\g \Pi_1^{1,3} ( \phi ) & := k \ind{_{[a}} \phi \ind{_{b]d}} \mu \ind{^d_c} \, ,  &
 {}^\g \Pi_1^{0,1} ( \phi ) & := \phi \ind{_{ac}} \mu \ind{^c_b} \, , &
 {}^\g \Pi_1^{0,0} ( \phi ) & := \phi \ind{_{[ab}} \mu \ind{_{c]d}} \, .
\end{align*}

To give another example, one can give explicit expressions for the maps ${}^\mfC \Pi_i^{j,k}$ of Proposition \eqref{prop-int-cond-Robinson}, by using the spinorial expression for the integrability condition given in \cites{Hughston1988,Taghavi-Chabert2012a,Taghavi-Chabert2013}. When $\epsilon=0$, we can recast the integrability condition \eqref{eq-int-cond-Rob} as \cite{Taghavi-Chabert2012a}
\begin{align}\label{eq-int-spinor-even}
C \ind{_{abcd}} \xi \ind{^a^A} \xi \ind{^b^B} \xi \ind{^c^C} \xi \ind{^d^D} & = 0 \, ,
\end{align}
where $\xi^{A'}$ is the spinor field annihilating the Robinson structure -- we refer the reader to appendix \ref{sec-spinors} for the notation. Contracting each free spinor index with $\bar{\xi} \ind{_{ab}_A}$ in equation \eqref{eq-int-spinor-even}, and using
the identity
\begin{align*}
 \xi \ind{_a^C} \bar{\xi} \ind{_{bc}_C} & = - \ii \varrho \ind{_{abc}} - 2 g \ind{_{a[b}} k \ind{_{c]}} \, ,
\end{align*}
we find\footnote{Equation \eqref{eq-int-Robinson-even} can be compared with its Hermitian analogue given in \cites{Gray1976,Tricerri1981,Falcitelli1994}. If $J \ind{_a^b}$ is a metric-compatible complex structure, then its integrability condition is given by
\begin{align*}
J\ind{_e^a} J \ind{_f^b} J \ind{_g^c} J \ind{_h^d}  C \ind{_{a b c d}}
- J \ind{_e^a} J \ind{_f^b} C \ind{_{a b g h}} - C \ind{_{e f c d}} J \ind{_g^c} J \ind{_h^d} + C \ind{_{e f g h}} 
- 4 \, J \ind{_{\lb{e}}^a} C \ind{_{\rb{f} a c \lb{g}}} J \ind{_{\rb{h}}^c}  = 0 \, ,
\end{align*}
which can also be derived from the spinorial expression \eqref{eq-int-spinor-even} with suitable reality conditions.}
\begin{multline}\label{eq-int-Robinson-even}
{}^\mfC \Pi_0^{3,3} (C) := \rho \ind{_{ab}^i} \rho \ind{_{cd}^j} \rho \ind{_{ef}^k} \rho \ind{_{gh}^l}  C \ind{_{i j k l}}
+ 4 \, \rho \ind{_{ab}^i} \rho \ind{_{cd}^j} C \ind{_{i j \lb{e}| \lb{g}}} k \ind{_{\rb{h}}} k \ind{_{|\rb{f}}} + 4 \, k \ind{_{\lb{a}|}} k \ind{_{\lb{c}}} C \ind{_{\rb{d}|\rb{b} k l}} \rho \ind{_{ef}^k} \rho \ind{_{gh}^l} \\
+ 16 \, k \ind{_{\lb{a}|}} k \ind{_{\lb{c}}} C \ind{_{\rb{d}|\rb{b} \lb{e}| \lb{g}}} k \ind{_{\rb{h}}}  k \ind{_{|\rb{f}}} 
- 4 \, \rho \ind{_{ab}^j} k \ind{_{\lb{c}}} C \ind{_{\rb{d} j k \lb{e}}} k \ind{_{\rb{f}}} \rho \ind{_{gh}^k} + 4 \, \rho \ind{_{cd}^j} k \ind{_{\lb{a}}} C \ind{_{\rb{b} j k \lb{e}}} k \ind{_{\rb{f}}} \rho \ind{_{gh}^k} \\
+ 4 \, \rho \ind{_{ab}^j} k \ind{_{\lb{c}}} C \ind{_{\rb{d} j k \lb{g}}} k \ind{_{\rb{h}}} \rho \ind{_{ef}^k} - 4 \, \rho \ind{_{cd}^j} k \ind{_{\lb{a}}} C \ind{_{\rb{b} j k \lb{g}}} k \ind{_{\rb{h}}} \rho \ind{_{ef}^k} = 0 \, .
\end{multline}
The other cases are similar.

\subsection{Representatives}\label{ref-spinor-descript-rep}
\subsubsection{$\co(n-2)$-representatives}
Assume the existence of a null line $\mfK$ on $(\mfV,g)$, with generator $k^a$, together with the choice of a vector $\ell^a$ dual to $k^a$ with $k^a \ell_a=1$. In other words, $\mfV$ is equipped with a $\simalg(n-2)$-invariant filtration \eqref{eq-sim-filtration} and a $\co(n-2)$-invariant grading \eqref{eq-sim-grading} with $k^a \in \mfV_1$ and $\ell^a \in \mfV_{-1}$. The irreducible $\simalg(n-2)$-modules $\g_i^j$, $\mfF_i^j$, $\mfA_i^j$ and $\mfC_i^j$ are linearly isomorphic to irreducible $\co(n-2)$-modules $\breve{\g}_i^j$, $\breve{\mfF}_i^j$, $\breve{\mfA}_i^j$ and $\breve{\mfC}_i^j$ respectively. To describe elements of these modules, we introduce the $\co(n-2)$-invariant tensors $E_{ab} := -2 k _{[a} \ell _{b]}$, $S_{ab} := 2 k_{(a} \ell_{b)}$, and $h_{ab} := g_{ab} - S_{ab}$.
\paragraph{The Lie algebra $\so(n-1,1)$}
 Let $\phi \ind{_{ab}} \in \g$. Then
\begin{itemize}
 \item $\phi \ind{_{ab}} \in \breve{\g}_1^0$ if and only if $\phi \ind{_{ab}} = 2 \, k \ind{_{[a}} \phi \ind{_{b]}}$ for some $\phi \ind{_a}$ such that $\phi \ind{_a} k \ind{^a} = \phi \ind{_a} \ell \ind{^a} =0$;
 \item $\phi \ind{_{ab}} \in \breve{\g}_0^0$ if and only if $\phi \ind{_{ab}} = \phi \, E \ind{_{ab}}$ for some real $\phi$;
 \item $\phi \ind{_{ab}} \in \breve{\g}_0^1$ if and only if $\phi \ind{_{ab}} k \ind{^a} = \phi \ind{_{ab}} \ell \ind{^a} = 0$.
\end{itemize}
Elements of $\breve{\g}_{-1}^0$ can be obtained from $\breve{\g}_1^0$ by interchanging $k^a$ and $\ell^a$.

\paragraph{The tracefree Ricci tensor}
 Let $\Phi_{ab} \in \mfF$. Then
\begin{itemize}
 \item $\Phi_{ab} \in \breve{\mfF}_2^0$ if and only if $\Phi \ind{_{ab}} = \Phi \, k \ind{_a} k \ind{_b}$ for some real $\Phi$;
 \item $\Phi_{ab} \in \breve{\mfF}_1^0$ if and only if $\Phi \ind{_{ab}} = 2 \, k \ind{_{(a}} \Phi \ind{_{b)}}$ for some $\Phi \ind{_a} \in \mfV_0$;
 \item $\Phi_{ab} \in \breve{\mfF}_0^0$ if and only if $\Phi \ind{_{ab}} = \Phi \left( S \ind{_{ab}} - \frac{2}{n-2} h \ind{_{ab}} \right)$ for some real $\Phi$;
 \item $\Phi_{ab} \in \breve{\mfF}_0^1$ if and only if $\Phi \ind{_{ab}} k \ind{^a} = \Phi \ind{_{ab}} \ell \ind{^a}=0$;
\end{itemize}
Elements of $\breve{\mfF}_{-i}^j$ can be obtained from those of $\breve{\mfF}_i^j$ by interchanging $k^a$ and $\ell^a$.

\paragraph{The Cotton-York tensor}
 Let $A_{abc} \in \mfA$. Then
\begin{itemize}
  \item $A_{abc} \in \breve{\mfA}_2^0$ if and only if $A \ind{_{abc}} = 2 \, k \ind{_a} k \ind{_{[b}} A \ind{_{c]}}$ for some $A \ind{_c} \in \mfV_0$;
  \item $A_{abc} \in \breve{\mfA}_1^0$ if and only if $A \ind{_{abc}} = a \left( k \ind{_a} E \ind{_{bc}} - \frac{2}{n-2} h \ind{_{a[b}} k \ind{_{c]}} \right)$ for some real $a$;
  \item $A_{abc} \in \breve{\mfA}_1^1$ if and only if $A \ind{_{abc}} = k \ind{_a} A \ind{_{bc}} - A \ind{_{a[b}} k \ind{_{c]}}$ for some $A_{ab}=A_{[ab]} \in \breve{\g}_0^1$;
  \item $A_{abc} \in \breve{\mfA}_1^2$ if and only if $A \ind{_{abc}} = 2 \, A \ind{_{a[b}} k \ind{_{c]}} $ for some $A \ind{_{ab}}=A \ind{_{(ab)}} \in \breve{\mfF}_0^1$;
  \item $A_{abc} \in \breve{\mfA}_0^0$ if and only if $A \ind{_{abc}} = A \ind{_a} E \ind{_{bc}} - E \ind{_{a[b}} A \ind{_{c]}}$ for some $A \ind{_c} \in \mfV_0$;
  \item $A_{abc} \in \breve{\mfA}_0^1$ if and only if $A \ind{_{abc}} = S \ind{_{a[b}} A \ind{_{c]}} - \frac{2}{n-3} h \ind{_{a[b}} A \ind{_{c]}} $ for some $A \ind{_c} \in \mfV_0$;
  \item $A_{abc} \in \breve{\mfA}_0^2$ if and only if $A \ind{_{abc}} k \ind{^a} = A \ind{_{abc}} \ell \ind{^a} = 0$;
\end{itemize}
Elements of $\breve{\mfA}_{-i}^j$ can be obtained from those of $\breve{\mfA}_i^j$ by  interchanging $k^a$ and $\ell^a$.

\paragraph{The Weyl tensor}
 Let $C_{abcd} \in \mfC$. Then
\begin{itemize}
 \item $C_{abcd} \in \breve{\mfC}_0^0$ if and only if
\begin{align*}
 C \ind{_{abcd}} & = \Psi \left( 4 \, E \ind{_{ab}} E \ind{_{cd}} - 4 \, E \ind{_{\lb{a}|\lb{c}}} E \ind{_{\rb{d}|\rb{b}}} 
+ \frac{6}{n-2} \left( h \ind{_{\lb{a} | \lb{c}}} S \ind{_{\rb{d}|\rb{b}}} \right) - \frac{12}{(n-2)(n-3)} \left( h \ind{_{a \lb{c}}} h \ind{_{\rb{d}b}} \right) \right) \, ,
\end{align*}
for some real $\Psi$;
 \item $C_{abcd} \in \breve{\mfC}_0^1$ if and only if $C_{abcd} = 2 \, E \ind{_{ab}} \Psi \ind{_{cd}} + 2 \, E \ind{_{cd}} \Psi \ind{_{ab}} - 4 \, E \ind{_{\lb{a}|\lb{c}}} \Psi \ind{_{\rb{d}|\rb{b}}}$ for some $\Psi \ind{_{ab}}= \Psi \ind{_{[ab]}} \in \breve{\g}_0^1$;
 \item $C_{abcd} \in \breve{\mfC}_0^2$ if and only if $C_{abcd} = 2 \, S \ind{_{\lb{a}|[c}} \Psi_{d]|\rb{b}} - \frac{4}{n-4} \left( h \ind{_{\lb{a} | \lb{c}}} \Psi \ind{_{\rb{d}|\rb{b}}}\right)$ for some $\Psi \ind{_{ab}}= \Psi \ind{_{(ab)}} \in \breve{\mfF}_0^1$;
 \item $C_{abcd} \in \breve{\mfC}_0^3$ if and only if $C_{abcd}k^a=C_{abcd}\ell^a=0$;
 \item $C_{abcd} \in \breve{\mfC}_1^0$ if and only if
\begin{align*} 
 C_{abcd} & = 2 \, k \ind{_{\lb{a}}} \Psi_{\rb{b}} E_{cd} + 2 \, k \ind{_{\lb{c}}} \Psi_{\rb{d}} E_{ab} - \frac{4}{n-3} \left( h \ind{_{\lb{a}|\lb{c}}} \Psi \ind{_{\rb{d}}} k \ind{_{|\rb{b}}} + h \ind{_{\lb{c}|\lb{a}}} \Psi \ind{_{\rb{b}}} k \ind{_{|\rb{d}}} \right)
\end{align*}
for some $\Psi_{a} \in \mfV_0$ satisfying $\Psi_ak^a=\Psi_a\ell^a=0$;
 \item $C_{abcd} \in \breve{\mfC}_1^1$ if and only if $C_{abcd} = k \ind{_{\lb{a}}} \Psi_{\rb{b}cd} + k \ind{_{\lb{c}}} \Psi_{\rb{d}ab}$
for some $\Psi_{abc}=\Psi_{a[bc]} \in \breve{\mfA}_0^2$;
 \item $C_{abcd} \in \breve{\mfC}_2^0$ if and only if $C_{abcd} = k \ind{_{\lb{a}}} \Psi_{\rb{b}\lb{c}} k \ind{_{\rb{d}}}$ for some $\Psi \ind{_{ab}}= \Psi \ind{_{(ab)}} \in \breve{\mfF}_0^1$.
\end{itemize}
Elements of $\breve{\mfC}_{-i}^j$ can be obtained from those of $\breve{\mfC}_i^j$ by interchanging $k^a$ and $\ell^a$.

\subsubsection{$\cu(m-1)$-representatives}\label{sec-spinor-rep}
Here, we assume the existence of a Robinson structure $\mfN$ with associated real null line $\mfK$ on $(\mfV,g)$ and a dual Robinson structure $\mfN^*$ with associated real null line $\mfL$ dual to $\mfK$, as described in appendix \ref{sec-spinors}. This fixes linear isomorphisms from the irreducible $\simalg(m-1,\C)$-modules $\g_i^{j,k}$, $\mfF_i^{j,k}$, $\mfA_i^{j,k}$, and $\mfC_i^{j,k}$ to irreducible $\cu(m-1)$-modules $\breve{\g}_i^{j,k}$, $\breve{\mfF}_i^{j,k}$, $\breve{\mfA}_i^{j,k}$, and $\breve{\mfC}_i^{j,k}$. To describe elements of these $\cu(m-1)$-modules, one simply decompose the elements of the irreducible $\co(n-2)$-modules given in the previous section further using the spinor calculus of appendix \ref{sec-spinors}.

In fact, it is enough to consider the decomposition of the modules $\breve{\g}_0^1$, $\breve{\mfF}_0^1$, $\breve{\mfA}_0^2$ and $\breve{\mfC}_0^3$, which may be seen as the buildling blocks of the other irreducible $\cu(m-1)$-modules appearing in this article -- see the tables of section \ref{sec-curvature}.
More specifically, we have
\begin{align*}
\breve{\g}_{\pm1}^{0,i} & \cong \mfV_{\pm1} \otimes \breve{\mfV}_0^{0,i} \, , \\
\breve{\mfF}_{\pm1}^{0,i} & \cong \mfV_{\pm1} \otimes \breve{\mfV}_0^{0,i} \, , \\
\breve{\mfA}_{\pm2}^{0,i} & \cong \mfV_{\pm1} \otimes \mfV_{\pm1} \otimes \breve{\mfV}_0^{1,i} \, , &
\breve{\mfA}_{\pm1}^{1,i} & \cong \mfV_{\pm1} \otimes \breve{\g}_0^{1,i} \, , &
\breve{\mfA}_{\pm1}^{2,i} & \cong \mfV_{\pm1} \otimes \breve{\mfF}_0^{1,i} \, , \\
\breve{\mfC}_{\pm1}^{0,i} & \cong \mfV_{\pm1} \otimes \breve{\mfV}_0^{0,i} \, , &
\breve{\mfC}_{\pm1}^{1,i} & \cong \mfV_{\pm1} \otimes \breve{\mfA}_0^{2,i} \, , &
\breve{\mfC}_0^{1,i} & \cong \breve{\g}_0^{1,i} \, , &
\breve{\mfC}_0^{2,i} & \cong \breve{\mfF}_0^{1,i} \, .
\end{align*}
For instance, from Proposition \ref{prop-CY-rob}, we know that the $\cu(m-1)$-module $\breve{\mfA}_1^{1,1}$ is isomorphic to $\mfV_1 \otimes \breve{\g}_0^{1,2}$. Thus, an element $A \ind{_{abc}}$ of $\breve{\mfA}_1^{1,1}$ is simply given by the representative $A \ind{_{abc}} = k \ind{_a} A \ind{_{bc}} - A \ind{_{a[b}} k \ind{_{c]}}$, since $A \ind{_{abc}}$ belongs to $\breve{\mfA}_1^1$, where $A_{ab}=A_{[ab]}$ belongs to the irreducible $\cu(m-1)$-module  $\breve{\g}_0^{1,2}$.

We shall use the spinor notation introduced in appendix \ref{sec-spinors} to which the reader is referred. In particular, upstairs and downstairs upper case Roman indices will always refer to the $(m-1)$-dimensional subspaces $\mfS_{\frac{1}{2}} ^{(0,1)}$ and $\mfS_{-\frac{1}{2}} ^{(1,0)}$ of the spinor space respectively. We shall also use the short-hand $c.c.$ to refer to the complex conjugate of a (complex) tensor in the usual sense -- what this means `spinorially' is explained in appendix \ref{sec-spinors}. We shall use the $\uu(m-1)$-invariant tensors $J \ind{_a^b}$, $H_{ab}$, $\omega_{ab}=J \ind{_a^c} H_{cb}$ and $u_a$ that were defined in section \ref{sec-algebra}: $J \ind{_a^c} J \ind{_c^b} = - H _a^b$, $H_{ab} k^a = H_{ab} \ell^a = H_{ab} u^a = 0$, $u_a u^a = 1$. They can also be constructed from the choice of a pure spinor and its dual.

\paragraph{Spinorial representatives for $\breve{\g}_0^1$}
 Let $\phi_{ab} \in \breve{\g}_0^1$. Then
\begin{itemize}
 \item $\phi_{ab} \in \breve{\g}_0^{1,0}$ if and only if $\phi_{ab} = \phi \, \omega_{ab}$ for some real $\phi$;
 \item $\phi_{ab} \in \breve{\g}_0^{1,1}$ if and only if $\phi_{ab} = 2 \ii \, \xi \ind*{_a^B} \xi \ind*{_b^C} \phi \ind{_{BC}} + c.c.$
for some $\phi \ind{_{BC}} = \phi \ind{_{[BC]}}$;
 \item $\phi_{ab} \in \breve{\g}_0^{1,2}$ if and only if $\phi_{ab} = 2 \ii \, \xi \ind*{_{\lb{a}}^B} \bar{\eta} \ind{_{\rb{b} D}} \phi \ind{_B^D} + c.c.$ for some tracefree $\phi \ind{_B^D}$;
\item $\phi_{ab} \in \breve{\g}_0^{1,3}$ if and only if $\phi_{ab} = 2 u \ind{_{[a}} \phi \ind{_{b]}}$
for some $\phi \ind{_a} \in \mfV_0^{0,0}$;
\end{itemize}

\paragraph{Spinorial representatives for $\breve{\mfF}_0^1$}
Let $\Phi \ind{_{ab}} \in \breve{\mfF}_0^1$. Then
\begin{itemize}
 \item $\Phi_{ab} \in \breve{\mfF}_0^{1,0}$ if and only if $\Phi_{ab} = 2 \, \xi \ind*{_{(a}^A} \bar{\eta} \ind{_{b)}_B} \Phi \ind{_A^B} + c.c.$
for some tracefree $\Phi \ind{_A^B}$;
 \item $\Phi_{ab} \in \breve{\mfF}_0^{1,1}$ if and only if $\Phi_{ab} = \xi \ind*{_a^A} \xi \ind*{_b^B} \Phi \ind{_{A B}} + c.c.$ for some $\Phi \ind{_{AB}} = \Phi \ind{_{(AB)}}$;
 \item $\Phi_{ab} \in \breve{\mfF}_0^{1,2}$ if and only if $\Phi_{ab} = \Phi \left( u_a u_b - \frac{1}{2m-2} H_{ab} \right)$ for some real $\Phi$;
\item $\Phi_{ab} \in \breve{\mfF}_0^{1,3}$ if and only if $\Phi_{ab} = 2 \, u_{(a} \Phi_{b)}$ for some $\Phi_a \in \mfV_0^{0,0}$.
\end{itemize}

\paragraph{Spinorial representatives for $\breve{\mfA}_0^2$}
Let $A \ind{_{abc}} \in \breve{\mfA}_0^2$. Then
\begin{itemize}
 \item $A_{abc} \in \breve{\mfA}_0^{2,0}$ if and only if $A_{abc} = A_a \omega_{bc} - A_{[b} \omega_{c]a} + \frac{3}{2m-3} H_{a[b} J \ind{_{c]}^d} A_d$ for some $A_a \in \mfV_0^{0,0}$;

\item $A_{abc} \in \breve{\mfA}_0^{2,1}$ if and only if $A_{abc} = \xi \ind*{_a^A} \xi \ind*{_b^B} \xi \ind*{_b^C} A_{ABC} + c.c.$ for some $A_{ABC}=A_{A[BC]}$ such that $A_{[ABC]} = 0$;

\item $A_{abc} \in \breve{\mfA}_0^{2,2}$ if and only if $A_{abc} = \bar{\eta} \ind{_a_A} \xi \ind*{_b^B} \xi \ind*{_c^C}  A \ind{^A_{BC}} - \bar{\eta} \ind{_{[b}_A} \xi \ind*{_{c]}^B} \xi \ind*{_a^C}  A \ind{^A_{BC}} + c.c.$
for some tracefree $A \ind{^A_{BC}} = A \ind{^A_{[BC]}}$;

\item $A_{abc} \in \breve{\mfA}_0^{2,3}$ if and only if $A_{abc} = 2 \, \xi \ind*{_a^A} \xi \ind*{_{[b}^B} \bar{\eta} \ind{_{c]}_C}  A \ind{_{AB}^C} + c.c.$
for some tracefree $A \ind{_{AB}^C} = A \ind{_{(AB)}^C}$;

\item $A_{abc} \in \breve{\mfA}_0^{2,4}$ if and only if $A_{abc} = a \left( u_a \omega_{bc} - u_{[b} \omega_{c]a} \right)$ for some real $a$;

\item $A_{abc} \in \breve{\mfA}_0^{2,5}$ if and only if $A_{abc} = 2 \, u_a u_{[b} A_{c]} - \frac{2}{2m-3} H_{a[b} A_{c]}$ for some $A_a \in \mfV_0^{0,0}$;

\item $A_{abc} \in \breve{\mfA}_0^{2,6}, \, \breve{\mfA}_0^{2,7}$ if and only if $A_{abc} = u \ind{_a} A_{bc} - u \ind{_{[b}} A \ind{_{c]a}}$ for some $A_{ab} \in \breve{\g}_0^{1,1}, \, \breve{\g}_0^{1,2}$ respectively;

\item $A_{abc} \in \breve{\mfA}_0^{2,8}, \, \breve{\mfA}_0^{2,9}$ if and only if $A_{abc} = 2 \, A_{a[b} u \ind{_{c]}}$ for some $A_{ab} \in \breve{\mfF}_0^{1,0}, \, \breve{\mfF}_0^{1,1}$ respectively;

\end{itemize}

\paragraph{Spinorial representatives for $\breve{\mfC}_0^3$}
 Let $C_{abcd} \in \breve{\mfC}_0^3$. Then
\begin{itemize}
 \item $C_{abcd} \in \breve{\mfC}_0^{3,0}$ if and only if
$C \ind{_{abcd}} = \Psi \left( 2 \, \omega \ind{_{ab}} \omega \ind{_{cd}} - 2 \, \omega \ind{_{a\lb{c}}} \omega \ind{_{\rb{d}b}} 
- \frac{6}{2m-3} H \ind{_{a \lb{c}}} H \ind{_{\rb{d}b}} \right)$ for some real $\Psi$;

 \item $C_{abcd} \in \breve{\mfC}_0^{3,1},\breve{\mfC}_0^{3,2}$ if and only if
\begin{align*}
 C \ind{_{abcd}} & = \omega \ind{_{ab}} \Psi \ind{_{cd}} + \Psi \ind{_{ab}} \omega \ind{_{cd}} - 2 \, \omega \ind{_{\lb{a}|\lb{c}}} \Psi \ind{_{\rb{d}|\rb{b}}}
- \frac{6}{2m-4} \left( H \ind{_{\lb{a} | \lb{c}}} J \ind{_{\rb{d}}^e} \Psi \ind{_{|\rb{b}e}} + H \ind{_{\lb{c} | \lb{a}}} J \ind{_{\rb{b}}^e} \Psi \ind{_{|\rb{d}e}} \right)
\end{align*}
for some $\Psi \ind{_{cd}} = \Psi \ind{_{[cd]}} \in \breve{\g}_0^{1,1},  \breve{\g}_0^{1,2}$ respectively;

\item $C_{abcd} \in \breve{\mfC}_0^{3,3}$ if and only if $C_{abcd} = \xi \ind*{_a ^A} \xi \ind*{_b ^B} \xi \ind*{_c ^C} \xi \ind*{_d ^D} \Psi_{A B C D} + c.c.$ for some $\Psi _{ABCD} = \Psi _{[AB][CD]}$ satisfying $\Psi _{[ABC]D} = 0$;

\item $C_{abcd} \in \breve{\mfC}_0^{3,4}$ if and only if
\begin{align*}
  C_{abcd} = \xi \ind*{_a ^A} \xi \ind*{_b ^B} \bar{\eta} \ind{_c _C} \bar{\eta} \ind{_d _D} \Psi \ind{_{A B}^{C D}} + \xi \ind*{_c ^A} \xi \ind*{_d ^B} \bar{\eta} \ind{_a _C} \bar{\eta} \ind{_b _D} \Psi \ind{_{A B}^{C D}} - 2 \, \xi \ind*{_{\lb{a}|} ^A} \xi \ind*{_{\lb{c}} ^C} \bar{\eta} \ind{_{\rb{d} |} _D} \bar{\eta} \ind{_{\rb{b}} _B} \Psi \ind{_{A C}^{D B}} + c.c.
\end{align*}
for some tracefree $\Psi \ind{_{A C}^{D B}} = \Psi \ind{_{[A C]}^{[D B]}}$;

 \item $C_{abcd} \in \breve{\mfC}_0^{3,5}$ if and only if $C_{abcd} =  \xi \ind*{_{\lb{a}|} ^A} \xi \ind*{_{\lb{c}} ^C} \bar{\eta} \ind{_{\rb{d}|} _D} \bar{\eta} \ind{_{\rb{b}} _B} \Psi \ind{_{A C}^{D B}} + c.c.$ for some tracefree $\Psi \ind{_{A C}^{D B}} = \Psi \ind{_{(A C)}^{(D B)}}$;

\item $C_{abcd} \in \breve{\mfC}_0^{3,6}$ if and only if $C_{abcd} = \xi \ind*{_a ^A} \xi \ind*{_b ^B} \xi \ind*{_{\lb{c}} ^C} \bar{\eta} \ind{_{\rb{d}}_D} \Psi \ind{_{A B C}^D} + \xi \ind*{_c ^A} \xi \ind*{_d ^B} \xi \ind*{_{\lb{a}} ^C} \bar{\eta} \ind{_{\rb{b}}_D} \Psi \ind{_{A B C}^D} + c.c.$ for some tracefree $\Psi \ind{_{ABC}^D} = \Psi \ind{_{[AB]C}^D}$ satisfying $\Psi \ind{_{[ABC]}^D} = 0$;

\item $C_{abcd} \in \breve{\mfC}_0^{3,7}$ if and only if 
\begin{multline*}
 C_{abcd} = \omega \ind{_{ab}} \Psi \ind{_{\lb{c}}} u \ind{_{\rb{d}}} + \omega \ind{_{cd}} \Psi \ind{_{\lb{a}}} u \ind{_{\rb{b}}} - \omega \ind{_{\lb{a}|\lb{c}}} \Psi \ind{_{\rb{d}}} u \ind{_{|\rb{b}}} - \omega \ind{_{\lb{c}|\lb{a}}} \Psi \ind{_{\rb{b}}} u \ind{_{|\rb{d}}}  \\
+ \frac{3}{2m-3} \left( H \ind{_{\lb{a} | \lb{c}}} u \ind{_{\rb{d}}} J \ind{_{|\rb{b}}^e} \Psi \ind{_e}  + H \ind{_{\lb{c} | \lb{a}}} u \ind{_{\rb{b}}} J \ind{_{|\rb{d}}^e} \Psi \ind{_e}\right)
\end{multline*}
for some $\Psi \ind{_a} \in \breve{\mfV}_0^{0,0}$;

 \item $C_{abcd} \in \breve{\mfC}_0^{3,8},  \breve{\mfC}_0^{3,9}$ if and only if  $C_{abcd} = u \ind{_{\lb{a}}} \Psi \ind{_{\rb{b}\lb{c}}} u \ind{_{\rb{d}}} 
+ \frac{1}{2m-4} H \ind{_{\lb{a} | \lb{c}}} \Psi \ind{_{\rb{d}|\rb{b}}}$ for some $\Psi \ind{_{cd}} = \Psi \ind{_{(cd)}} \in \breve{\mfF}_0^{1,0}, \breve{\mfF}_0^{1,1}$ respectively;

\item $C_{abcd} \in \breve{\mfC}_0^{3,10}, \breve{\mfC}_0^{3,11}, \breve{\mfC}_0^{3,12}$ if and only if $C_{abcd} = u \ind{_{\lb{a}}} \Psi \ind{_{\rb{b}cd}} + u \ind{_{\lb{c}}} \Psi \ind{_{\rb{d}ab}}$ for some $\Psi \ind{_{abc}} = \Psi \ind{_{a[bc]}} \in \breve{\mfA}_0^{2,1}, \breve{\mfA}_0^{2,2}, \breve{\mfA}_0^{2,3}$ respectively.
\end{itemize}
\section{Low dimensions}\label{sec-low-dim}
There are a number of simplifications that can be made in the classification of curvature tensors in low dimensions, notably up to dimension six. In the following, we give a brief description of the special features of dimensions four, five and six.

\subsection{Four dimensions}
In four dimensions, a Robinson structure is equivalent to a choice of a null line $\mfK$, which is a reflection of the isomorphisms of Lie algebras $\simalg(2) \cong \simalg(1,\C)$ and $\so(2) \cong \uu(1)$. As a consequence, the $\simalg(1,\C)$-invariant classification of the Weyl tensor reduces to its $\simalg(2)$-classification. The corresponding $\simalg(m-1,\C)$-invariant graph then reads
\begin{align*}
 \xymatrix@R=.1em{
 & & \mfC_0^1 \ar[dr] & & \\
 \mfC_2^0 \ar[r] & \mfC_1^0 \ar[dr] \ar[ur] & & \mfC_{-1}^0 \ar[r] & \mfC_{-2}^0 \, . \\
 & & \mfC_0^0 \ar[ur] & & }
\end{align*}
This graph differs markedly from the usual Penrose flow diagram of the Petrov classification of the Weyl tensor. Similar remarks on the classification of the Cotton-York tensor apply.

\subsection{Five dimensions}
In five dimensions, $\mfV_0$ is three-dimensional, so we can identify the Hermitian $2$-form on $\mfV_0$ with the unit vector orthogonal to it. Thus, a Robinson structure can be determined by a choice of a null line $k^a$ and an equivalence class of spacelike subspaces of $\mfV^0/\mfV^1$. For clarity, the diagram below is the five-dimensional specialisation of diagram \ref{diagram-Penrose-C-rob}:
\begin{align*}
\xy
(-80,20)*+{\mfC_2^{0,1}}="s51",
(-80,0)*+{\mfC_2^{0,3}}="s53",
(-80,-20)*+{\mfC_2^{0,2}}="s52",
(-50,40)*+{\mfC_1^{1,9}}="s49",
(-50,20)*+{\mfC_1^{1,5}}="s45",
(-50,0)*+{\mfC_1^{1,4}}="s44",
(-50,-20)*+{\mfC_1^{0,0}}="s38",
(-50,-40)*+{\mfC_1^{0,1}}="s39",
(0,50)*+{\mfC_0^{2,1}}="s22",
(0,30)*+{\mfC_0^{2,3}}="s24",
(0,10)*+{\mfC_0^{2,2}}="s23",
(0,-10)*+{\mfC_0^{1,3}}="s20",
(0,-30)*+{\mfC_0^{1,0}}="s17",
(0,-50)*+{\mfC_0^{0,0}}="s16",
(50,40)*+{\mfC_{-1}^{1,9}}="s15",
(50,20)*+{\mfC_{-1}^{1,5}}="s11",
(50,0)*+{\mfC_{-1}^{1,4}}="s10",
(50,-20)*+{\mfC_{-1}^{0,0}}="s4",
(50,-40)*+{\mfC_{-1}^{0,1}}="s5",
(80,20)*+{\mfC_{-2}^{0,1}}="s1",
(80,0)*+{\mfC_{-2}^{0,3}}="s3",
(80,-20)*+{\mfC_{-2}^{0,2}}="s2",
"s1"; "s4" ; **@{-} ?>*\dir{>}; "s1"; "s15" ; **@{-} ?>*\dir{>};
"s2"; "s5" ; **@{-} ?>*\dir{>}; "s2"; "s11" ; **@{-} ?>*\dir{>};
"s3"; "s4" ; **@{-} ?>*\dir{>}; "s3"; "s5" ; **@{-} ?>*\dir{>}; "s3"; "s10" ; **@{-} ?>*\dir{>}; "s3"; "s11" ; **@{-} ?>*\dir{>}; "s3"; "s15" ; **@{-} ?>*\dir{>};
"s4"; "s16" ; **@{-} ?>*\dir{>}; "s4"; "s17" ; **@{-} ?>*\dir{>}; "s4"; "s22" ; **@{-} ?>*\dir{>}; "s4"; "s20" ; **@{-} ?>*\dir{>}; "s4"; "s24" ; **@{-} ?>*\dir{>};
"s5"; "s16" ; **@{-} ?>*\dir{>}; "s5"; "s20" ; **@{-} ?>*\dir{>}; "s5"; "s23" ; **@{-} ?>*\dir{>}; "s5"; "s24" ; **@{-} ?>*\dir{>}; 
"s10"; "s17" ; **@{-} ?>*\dir{>}; "s10"; "s24" ; **@{-} ?>*\dir{>};
"s11"; "s20" ; **@{-} ?>*\dir{>}; "s11"; "s23" ; **@{-} ?>*\dir{>}; "s11"; "s24" ; **@{-} ?>*\dir{>};
"s15"; "s20" ; **@{-} ?>*\dir{>}; "s15"; "s22" ; **@{-} ?>*\dir{>}; "s15"; "s24" ; **@{-} ?>*\dir{>};
"s16"; "s38" ; **@{-} ?>*\dir{>}; "s16"; "s39" ; **@{-} ?>*\dir{>};
"s17"; "s38" ; **@{-} ?>*\dir{>}; "s17"; "s44" ; **@{-} ?>*\dir{>};
"s20"; "s38" ; **@{-} ?>*\dir{>}; "s20"; "s39" ; **@{-} ?>*\dir{>}; "s20"; "s45" ; **@{-} ?>*\dir{>}; "s20"; "s49" ; **@{-} ?>*\dir{>};
"s22"; "s38" ; **@{-} ?>*\dir{>}; "s22"; "s49" ; **@{-} ?>*\dir{>};
"s23"; "s39" ; **@{-} ?>*\dir{>}; "s23"; "s45" ; **@{-} ?>*\dir{>};
"s24"; "s38" ; **@{-} ?>*\dir{>}; "s24"; "s49" ; **@{-} ?>*\dir{>};
"s24"; "s39" ; **@{-} ?>*\dir{>}; "s24"; "s44" ; **@{-} ?>*\dir{>}; "s24"; "s45" ; **@{-} ?>*\dir{>};
"s38"; "s51" ; **@{-} ?>*\dir{>}; "s38"; "s53" ; **@{-} ?>*\dir{>};
"s39"; "s52" ; **@{-} ?>*\dir{>}; "s39"; "s53" ; **@{-} ?>*\dir{>};
"s44"; "s53" ; **@{-} ?>*\dir{>};
"s45"; "s52" ; **@{-} ?>*\dir{>}; "s45"; "s53" ; **@{-} ?>*\dir{>};
"s49"; "s51" ; **@{-} ?>*\dir{>}; "s49"; "s53" ; **@{-} ?>*\dir{>};
\endxy  
\end{align*}

\subsection{Six dimensions}
In six dimensions, the semi-simple part $\so(4,\R)$ in the Levi decomposition of $\simalg(4)$ splits further as $\so(4,\R) \cong \su^+(2) \oplus \su^-(2)$, where $\su^\pm(2)$ are two copies of the simple Lie algebra $\su(2)$, which can be identified with the self-dual part and anti-self-dual part of $\so(4,\R)$. Here, the Hodge duality operator on $\so(4,\R)$ is given by $\varepsilon \ind{_{abcd}^{ef}} E \ind{_{ef}}$ for any choice of grading element $E \ind{_{ab}}$.

Such a splitting of $\simalg(4)$ is automatically taken care in the $\simalg(2,\C)$ decomposition of $\g$. To be precise, we can make the identifications $\su^+(2) = \dbl \g_0^{(2,0)} \dbr \oplus \g_0^\omega$ and $\su^-(2) = [ \g_0^{(1,1)_\circ} ]$. We can thus `regroup' some of the $\simalg(2,\C)$-modules into self-dual and anti-self-dual $\simalg(4)$-modules occuring in the classifications of the Cotton-York tensor and of the Weyl tensor as follows:
 \begin{align*}
   \mfA_{\pm1}^{1,+} & \cong \mfA_{\pm1}^{1,0} \oplus \mfA_{\pm1}^{1,2}  \, , &
  \mfA_{\pm1}^{1,-} & \cong \mfA_{\pm1}^{1,1} \, , \\
  \mfA_0^{2,+} & \cong \mfA_0^{2,0} \oplus \mfA_0^{2,1}  \, , &
  \mfA_0^{2,-} & \cong \mfA_0^{2,3} \, , \\
  \mfC_{\pm1}^{1,+} & \cong \mfC_{\pm1}^{1,0} \oplus \mfC_{\pm1}^{1,1}  \, , &
  \mfC_{\pm1}^{1,-} & \cong \mfC_{\pm1}^{1,3} \, , \\
  \mfC_0^{1,+} & \cong \mfC_0^{1,0} \oplus \mfC_0^{1,1}  \, , &
  \mfC_0^{1,-} & \cong \mfC_0^{1,2} \, , \\
  \mfC_0^{3,+} & \cong \mfC_0^{3,0} \oplus \mfC_0^{3,1} \oplus \mfC_0^{3,3} \, , &
  \mfC_0^{3,-} & \cong \mfC_0^{3,5} \, .
 \end{align*}

\bibliography{biblio}

\end{document}